\newtheorem{theorem}{Theorem}[section]
\newtheorem{corollary}[theorem]{Corollary}
\newtheorem{lemma}[theorem]{Lemma}
\newtheorem{proposition}[theorem]{Proposition}
\newtheorem{conjecture}[theorem]{Conjecture}
\newtheorem{claim}{Claim}
\theoremstyle{definition}
\newtheorem{definition}[theorem]{Definition}
\newtheorem{construction}[theorem]{Construction}
\newtheorem{case}{Case}
\newtheorem{assumption}[theorem]{Assumption}
\theoremstyle{remark}
\newtheorem{remark}[theorem]{Remark}
\newtheorem*{warning}{Warning}
\newcommand{\Z}{\mathbb{Z}}
\newcommand{\C}{\mathbb{C}}
\newcommand{\Q}{\mathbb{Q}}
\newcommand{\OO}{\mathcal{O}}
\newcommand{\Hom}{\mathrm{Hom}}
\newcommand{\id}{\mathrm{id}}
\title{Holomorphic 1-forms without zeros on K\"ahler threefolds}
\author{Simon Pietig}
\date{\today}
\begin{document}
\begin{abstract}
    We classify all smooth compact connected K\"ahler threefolds that admit the structure of a $C^\infty$-fiber bundle over the circle. This generalizes the work of Hao and Schreieder \cite{Hao_Schreieder_3-folds} in the projective case. In contrast to the projective case, there cannot always exist a smooth morphism to a positive-dimensional torus. Instead, we show that such a compact K\"ahler threefold admits a finite \'etale cover that is bimeromorphic to a $\mathbb{P}^1$-, $\mathbb{P}^2$-, or Hirzebruch surface-bundle over a locally trivial torus-fiber bundle over a smooth compact connected K\"ahler base. Our results prove Kotschick's conjecture in dimension 3.
\end{abstract}
\maketitle
\setcounter{tocdepth}{1}
\tableofcontents
\section{Introduction}
In \cite{Kotschick_1-forms_fibrations_char-numbers}, Kotschick conjectured the following:
\begin{conjecture}\label{conjecture_Kotschick}
    Let $X$ be a compact K\"ahler manifold. Then the following conditions are equivalent:
    \begin{enumerate}[label=\textit{(\Alph*)}]
        \item\label{conditionA} $X$ admits a holomorphic one-form without zeros;
        \item\label{conditionB} $X$ admits a real closed 1-form without zeros.
    \end{enumerate}
\end{conjecture}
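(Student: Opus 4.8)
The plan is to prove the two implications separately, noting that \ref{conditionA}$\Rightarrow$\ref{conditionB} is elementary and holds for \emph{every} compact K\"ahler manifold, while \ref{conditionB}$\Rightarrow$\ref{conditionA} carries the substantive content of the conjecture. For \ref{conditionA}$\Rightarrow$\ref{conditionB}, suppose $\omega \in H^0(X,\Omega^1_X)$ is nowhere zero. On a compact K\"ahler manifold every holomorphic $1$-form is closed, so $\alpha := \mathrm{Re}(\omega) = \tfrac12(\omega + \bar\omega)$ is a real closed $1$-form. I claim $\alpha$ is nowhere zero. At a point $p$ the form $\omega_p$ is a nonzero $\C$-linear functional on $T_p^{1,0}X$, hence surjective onto $\C$; for a real tangent vector $v = w + \bar w$ with $w \in T_p^{1,0}X$ one has $\alpha_p(v) = \mathrm{Re}(\omega_p(w))$, and choosing $w$ with $\omega_p(w)=1$ gives $\alpha_p(v) = 1 \neq 0$. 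Thus $\alpha$ realises condition \ref{conditionB}, with no dimension hypothesis.

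The remaining implication \ref{conditionB}$\Rightarrow$\ref{conditionA} is where the difficulty concentrates, and my strategy is Albanese-theoretic and driven by generic vanishing, following the framework that underlies the dimension-$3$ case. First, by Tischler's theorem a closed manifold carries a nowhere-zero closed $1$-form if and only if it is a smooth fiber bundle over $S^1$; since vanishing-freeness is an open condition and rational classes are dense, I may take $u = [\alpha] \in H^1(X;\Q)$ and obtain a surjection $\pi_1(X) \twoheadrightarrow \Z$ whose kernel is the finitely generated fundamental group of the compact fiber. Passing to the Albanese map $a\colon X \to \mathrm{Alb}(X)$, which induces isomorphisms on $H^1$ and on holomorphic $1$-forms, reduces the problem to producing a translation-invariant form $\eta$ on $\mathrm{Alb}(X)$ whose pullback $a^*\eta$ is nowhere zero, equivalently $\eta$ must not vanish on $\im(da_p)$ for any $p$.

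The heart of the argument is a structure theorem for the Albanese fibration. By Green--Lazarsfeld, Simpson and Arapura the cohomology support loci of $X$ are finite unions of torsion-translated subtori of $\pic^0(X)$, and the finite generation of the fiber's homology constrains the Novikov homology in the direction of $u$; together these should force a finite \'etale cover of $X$ to fiber, after a bimeromorphic modification, as a $\mathbb{P}^1$-, $\mathbb{P}^2$- or Hirzebruch-surface-bundle over a locally trivial torus-fiber bundle over a K\"ahler base, exactly as in the main theorem. On such a model the torus-bundle direction supplies the required nowhere-zero invariant form, whose pullback descends to a holomorphic $1$-form without zeros on $X$.

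The main obstacle is precisely this structure theorem in arbitrary dimension. In dimension $3$ one controls the Albanese fibers and the rationally connected quotient using the minimal model program for K\"ahler threefolds (H\"oring--Peternell) together with the explicit classification of curve- and surface-fibrations; neither the full K\"ahler MMP nor the classification of higher-dimensional fibers and their monodromy is available in dimension $\geq 4$. In particular, analysing the non-torus, rationally connected pieces of the fibration --- which in the K\"ahler category genuinely occur and obstruct the existence of a smooth morphism to a torus --- is the step I expect to resist a uniform treatment, and it is the reason the conjecture remains open beyond threefolds; a proof in full generality would require extending these structural inputs to all dimensions.
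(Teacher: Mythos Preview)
Your easy direction \ref{conditionA}$\Rightarrow$\ref{conditionB} is correct and is exactly the argument the paper attributes to Kotschick. But note that the statement you were given is a \emph{conjecture}: the paper does not prove it in general, and neither do you. What the paper actually proves is \Cref{conjecture_Kotisch_true_in_dim_3}, the dimension-$3$ case, so there is no ``paper's own proof'' of the full statement to compare against. Your proposal is honest about this---you explicitly say the structure theorem is unavailable in dimension $\geq 4$---so your write-up is really a strategy sketch rather than a proof, and should be read as such.

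Even restricted to dimension $3$, your outlined route for \ref{conditionB}$\Rightarrow$\ref{conditionA} differs substantially from the paper's. You propose to go directly from the fibering-over-$S^1$ condition to the Albanese geometry via generic vanishing (Green--Lazarsfeld, Simpson, Arapura) and Novikov-homology constraints. The paper instead inserts Schreieder's cohomological condition \ref{conditionC} as an intermediary: one has \ref{conditionB}$\Rightarrow$\ref{conditionC} by \cite[Thm.~1.2]{Schreieder_topology}, and the paper then proves \ref{conditionC}$\Rightarrow$\ref{conditionA} in dimension $3$ by establishing the structure theorem (\Cref{structure_theorem_introduction}) for threefolds satisfying \ref{conditionC}. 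The point is that condition \ref{conditionC} is a purely linear-algebraic exactness condition on cup product with $\omega$ over all finite \'etale covers, and it is this condition---not generic vanishing loci or Novikov homology---that drives the MMP analysis and the Weierstra\ss/tautological-model arguments in the paper. Your sketch does arrive at the same target structure (torus-fiber bundle base with $\mathbb{P}^1$/$\mathbb{P}^2$/Hirzebruch fibers), but the mechanism you invoke to get there is not the one the paper uses, and you give no indication of how the generic-vanishing input would actually force that structure even in dimension $3$.
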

Note that by Tischler \cite{Tischler}, the second condition is equivalent to $X$ being a $C^\infty$-fibre bundle over $S^1$. In \cite{Schreieder_topology}, Schreieder introduced a third condition:
\begin{enumerate}[label=\textit{(\Alph*)}]
    \setcounter{enumi}{2}
    \item\label{conditionC} \textit{There is a holomorphic 1-form $\omega\in H^0(X,\Omega^1_X)$, such that for any finite \'etale cover $\tau\colon Y\rightarrow X$, the sequence
    \begin{equation*}
        H^{i-1}(Y,\C)\stackrel{\wedge\tau^*\omega}{\longrightarrow}H^i(Y,\C)\stackrel{\wedge\tau^*\omega}{\longrightarrow}H^{i+1}(Y,\C)
    \end{equation*}
    given by the cup product, is exact for all $i$.}
\end{enumerate}
The implication $\ref{conditionA}\Rightarrow\ref{conditionB}$ was proven by Kotschick \cite[][Prop. 7]{Kotschick_1-forms_fibrations_char-numbers}, and the implication $\ref{conditionB}\Rightarrow\ref{conditionC}$ was proven by Schreieder \cite[][Thm. 1.2]{Schreieder_topology}. Our main result is as follows:
\begin{theorem}\label{conjecture_Kotisch_true_in_dim_3}
    Let $X$ be a compact K\"ahler manifold of dimension 3. Then all three conditions above are equivalent to each other: $\ref{conditionA}\Leftrightarrow\ref{conditionB}\Leftrightarrow\ref{conditionC}$.
\end{theorem}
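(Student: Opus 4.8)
Since the implications $\ref{conditionA}\Rightarrow\ref{conditionB}\Rightarrow\ref{conditionC}$ hold in arbitrary dimension, the whole content of the theorem is the remaining implication $\ref{conditionC}\Rightarrow\ref{conditionA}$. Thus let $X$ be a compact K\"ahler threefold carrying a holomorphic $1$-form $\omega$ with the cup-product exactness property \ref{conditionC}; we must produce a holomorphic $1$-form on $X$ (possibly a different one) without zeros. Following the strategy of Hao and Schreieder \cite{Hao_Schreieder_3-folds} in the projective case, the proof has two halves: first use \ref{conditionC} to determine the bimeromorphic structure of $X$ up to a finite \'etale cover, and then read a zero-free $1$-form off that structure.

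For the first half the starting point is the Albanese map $a\colon X\to\mathrm{Alb}(X)$ together with generic vanishing for compact K\"ahler manifolds: the cohomology support loci in $\mathrm{Pic}^0(X)$ are finite unions of (torsion) translates of subtori, with tangent cones at torsion points cut out by resonance. Condition \ref{conditionC}, imposed on every finite \'etale cover of $X$, says exactly that $[\omega]$ avoids all of these resonance loci on all covers, and by the structure of the support loci this forces --- after passing to a finite \'etale cover --- a surjective morphism with connected fibres $f\colon X\to T$ onto a positive-dimensional complex torus, with $\omega=f^*\eta$ for a translation-invariant $1$-form $\eta$ on $T$. Using the classification in dimension $\le 2$ (in which a compact K\"ahler manifold carries a zero-free holomorphic $1$-form precisely when, up to finite \'etale cover, it admits a smooth morphism onto a positive-dimensional complex torus) one then argues according to $\dim T$. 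The cases $\dim T\in\{2,3\}$ are comparatively direct: after bimeromorphic and \'etale modifications one lands on a locally trivial torus bundle over a K\"ahler base, degenerating to a bimeromorphic model of a torus when $\dim T=3$. The substantial case is $\dim T=1$, so that $T=E$ is an elliptic curve and $f\colon X\to E$ is a fibration with compact K\"ahler surface fibres; since $\eta$ is zero-free on $E$, the pullback $\omega=f^*\eta$ is zero-free exactly when $f$ is a submersion, and the remaining task is to show that, up to the allowed modifications, $f$ becomes one.

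To this end one runs the minimal model program for the K\"ahler threefold $X$ relative to $E$, using the existence of extremal contractions, flips, and the abundance- and classification-type results for (possibly non-projective) K\"ahler threefolds of H\"oring--Peternell and Campana--H\"oring--Peternell; each step is compatible with the pullback form $f^*\eta$. When the surface fibres have nonnegative Kodaira dimension one obtains, after the program, fibres that are --- up to finite covers --- tori or finite quotients of tori, which feeds back into the torus-bundle case; otherwise the fibres are rational, the relative program produces a Mori fibre space over $E$, and, after further bimeromorphic modifications and finite \'etale covers, a $\mathbb{P}^1$-, $\mathbb{P}^2$- or Hirzebruch-surface bundle over a locally trivial torus bundle over a smooth compact connected K\"ahler base. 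In each of these models the torus-bundle factor carries a zero-free relative $1$-form whose class extends to a zero-free holomorphic $1$-form on the total space; since holomorphic $1$-forms are bimeromorphic invariants and are controlled by the Albanese, the form obtained this way genuinely lives on the original $X$, establishing \ref{conditionA}.

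The principal obstacle is this middle step. Running the minimal model program for non-projective K\"ahler threefolds while retaining control of the $1$-form and of condition \ref{conditionC} through divisorial contractions, flips and Mori contractions --- and determining exactly which Mori fibre spaces occur --- is considerably more delicate than in the projective setting. It is here, too, that the genuinely new non-projective phenomenon appears: the resulting fibration need not admit \emph{any} smooth morphism onto a positive-dimensional torus, because the base of the torus bundle can be a positive-dimensional K\"ahler manifold from which no torus factor splits off, even after a finite \'etale cover. Consequently Hao and Schreieder's reduction to a smooth map onto a torus is unavailable in dimension $3$, and the classification must be carried out bimeromorphically and with repeated passage to \'etale covers.
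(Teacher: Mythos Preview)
Your two-step outline---establish a structure theorem, then read a zero-free $1$-form off the structure---matches the paper's strategy, but the structural target you aim for is the one that \emph{fails} in the K\"ahler setting, and the paper's whole point is to replace it. You propose to produce, after a finite \'etale cover, a morphism $f\colon X\to T$ to a positive-dimensional torus with $\omega=f^*\eta$, and then to make $f$ smooth by running the MMP. But as the Warning after \Cref{structure_theorem_projective_case} and \Cref{remark_example_no_splitting} show, even after all \'etale covers and bimeromorphic modifications there need not exist \emph{any} smooth morphism to a positive-dimensional torus; your last paragraph acknowledges this obstacle but does not say how to overcome it. The paper's replacement is \Cref{structure_theorem_introduction}: rather than mapping \emph{to} a torus, one exhibits (after an \'etale cover and blow-downs along elliptic curves) a locally trivial torus-fibre-bundle structure $Z\to B$ \emph{over} a K\"ahler base $B$ that itself does not satisfy \ref{conditionC}, and the construction of this bundle in Kodaira dimensions $1$ and $2$ goes through twisted equivariant Weierstra\ss{} and tautological models rather than through generic vanishing and resonance loci.

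There is also a concrete gap in your second half. You write that ``holomorphic $1$-forms are bimeromorphic invariants'' and conclude that a zero-free form found on a model ``genuinely lives on the original $X$, establishing \ref{conditionA}''. The space $H^0(X,\Omega^1_X)$ is indeed a bimeromorphic invariant, but the property of being nowhere zero is not: blowing up introduces an exceptional divisor on which the pulled-back form may vanish. This is exactly why the paper's structure theorem records, in \cref{structure_theorem_introduction_omega_restricts_non_trivial}, that the induced $1$-form restricts non-trivially to every elliptic centre of the successive blow-downs and to the torus fibres of $Z\to B$; these two conditions are precisely what is needed to propagate zero-freeness from $Z$ back up to $X'$ and hence to $X$. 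Your sketch omits this bookkeeping, and without it the implication $\ref{conditionC}\Rightarrow\ref{conditionA}$ does not follow.
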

The equivalence of \ref{conditionA}, \ref{conditionB} and \ref{conditionC} has already been proven in dimension 2 by Schreieder \cite[][Thm. 1.3]{Schreieder_topology} and for projective manifolds of dimension 3 by Hao and Schreieder \cite[][Thm 1.1]{Hao_Schreieder_3-folds}. Their result is a consequence of the following classification result:
\begin{theorem}\label{structure_theorem_projective_case}\cite[][Thm. 1.3]{Hao_Schreieder_3-folds}
    Let $X$ be a smooth projective threefold that satisfies condition \ref{conditionC} from \Cref{conjecture_Kotschick}. Then the minimal model program for $X$ yields a birational morphism $\sigma\colon X\rightarrow X^{min}$ to a smooth projective threefold $X^{min}$, such that:
    \begin{enumerate}[label=(\roman*)]
        \item\label{structure_theorem_projective_case_birational} $\sigma\colon X\rightarrow X^{min}$ is a sequence of blow-ups along elliptic curves that are not contracted via the natural map to the Albanese $Alb(X^{min})$.
        \item\label{structure_theorem_projective_case_smooth_morphism} There is a smooth morphism $\pi\colon X^{min}\rightarrow A$ to a positive-dimensional abelian variety $A$.
        \item\label{structure_theorem_projective_case_Kodaira-dim_non-neg} If $\kappa(X)\geq 0$, then a finite \'etale cover $\tau\colon X'\rightarrow X^{min}$ splits into a product $X'\cong S'\times A'$, where $S'$ is smooth projective and $A'$ is an abelian variety such that for all $s'\in S'$, the natural composition
        \begin{equation*}
            A'\cong\{s'\}\times A'\hookrightarrow S'\times A'\stackrel{\tau}{\longrightarrow}X^{min}\stackrel{\pi}{\longrightarrow}A
        \end{equation*}
        is a finite \'etale cover.
        \item If $\kappa(X)=-\infty$, then one of the following holds:
        \begin{enumerate}
            \item $X^{min}$ admits a del Pezzo fibration over an elliptic curve;
            \item $X^{min}$ has the structure of a conic bundle $f\colon X^{min}\rightarrow S$ over a smooth projective surface $S$ which satisfies $\ref{conditionA}\Leftrightarrow\ref{conditionB}\Leftrightarrow\ref{conditionC}$. Moreover, $f$ is either smooth, or $A$ is an elliptic curve and the degeneration locus of $f$ is a disjoint union of smooth elliptic curves on $S$ which are \'etale over $A$ via the map $S\rightarrow A$ induced by $\pi$.
        \end{enumerate}
    \end{enumerate}
\end{theorem}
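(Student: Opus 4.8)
The implications \ref{conditionA}$\Rightarrow$\ref{conditionB}$\Rightarrow$\ref{conditionC} being known, the task is to extract geometry from the cohomological hypothesis \ref{conditionC}. Taking $i=0$ in the exact triple shows that $\wedge\omega\colon H^0(X,\C)\hookrightarrow H^1(X,\C)$ is injective, so $\omega\neq 0$ and the Albanese map $a\colon X\to\mathrm{Alb}(X)$ is non-constant. The plan is to feed \ref{conditionC} into the theory of cohomology jump loci (Green--Lazarsfeld, Simpson, Pareschi--Popa, Popa--Schnell): the exactness forces the cohomology jump locus associated to $\omega$ to be positive-dimensional, and the structure theory of such loci then produces, after replacing $X$ by a finite \'etale cover, a fibration $f\colon X\to B$ onto a normal projective variety with $0<\dim B<3$ for which $\omega=f^*\omega_B$ and the general fibre again satisfies the corresponding exactness. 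Since $B$ carries a nonzero holomorphic $1$-form, I would use the already-established curve and surface cases (\cite[][Thm.~1.3]{Schreieder_topology}) together with the induction on dimension to rule out a higher-genus base and to constrain $B$ to be an elliptic curve, an abelian surface, or a surface satisfying the equivalences.

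Next I would run the minimal model program on $X$ and show that it consists exclusively of divisorial contractions, each a blow-down of a smooth elliptic curve whose image in $\mathrm{Alb}(X)$ is positive-dimensional; this is assertion \ref{structure_theorem_projective_case_birational}. Flips are excluded because a flipping curve on a smooth threefold is rational, hence contracted by $a$, so that $\omega$ restricts trivially near it, and the Poincar\'e duals of the flipped curves would then obstruct the surjectivity part of \ref{conditionC} after pulling back to a suitable \'etale cover. By Mori's and Cutkosky's classification of extremal divisorial contractions of smooth threefolds, each divisorial contraction is either the blow-up of a smooth curve or one of the finitely many ``point-like'' types; the point-like contractions are ruled out because their exceptional divisors are rationally connected and contribute cohomology classes incompatible with the cup-product exactness, while when the centre is a smooth curve the same incompatibility applied to the ruled exceptional divisor forces the curve to have genus one and non-trivial image in the Albanese. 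The same analysis shows that \ref{conditionC} descends from $X$ to $X^{min}$.

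It remains to describe $X^{min}$. If $\kappa(X)\geq 0$, then $X^{min}$ is a genuine minimal model; abundance in dimension three gives a good Iitaka fibration, and Kawamata's results on the Albanese map of varieties of nonnegative Kodaira dimension show that $a\colon X^{min}\to\mathrm{Alb}(X^{min})$ is surjective with connected fibres and, together with \ref{conditionC}, that it is a submersion onto a subtorus $A$ --- this is where the absence of zeros of the $1$-form is finally produced --- so $\pi\colon X^{min}\to A$ is the required smooth morphism of \ref{structure_theorem_projective_case_smooth_morphism}. For \ref{structure_theorem_projective_case_Kodaira-dim_non-neg} I would pass to the \'etale cover trivializing the monodromy of the fibre bundle $\pi$ and invoke a Beauville--Bogomolov-type decomposition to split off the abelian factor compatibly with $\pi$. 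If $\kappa(X)=-\infty$, then $X$ is uniruled and the MMP ends in a Mori fibre space $X^{min}\to Z$; since $\omega\neq 0$ is pulled back from $B$, the base $Z$ is positive-dimensional, and identifying $Z$ with $B$ leaves the two cases $Z$ an elliptic curve --- a del Pezzo fibration --- or $Z$ a surface $S$ --- a conic bundle $f\colon X^{min}\to S$. In the latter case $\omega$ descends to a $1$-form on $S$, a Leray spectral sequence computation shows $S$ inherits \ref{conditionC} and hence satisfies the equivalences, and the discriminant analysis of the conic bundle, combined with the constraint that the descended $1$-form (pulled back from $A$ when $A$ is an elliptic curve) must not vanish on the singular fibres, forces the degeneration locus to be a disjoint union of smooth elliptic curves \'etale over $A$.

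The step I expect to be the main obstacle is the control of the minimal model program: converting the global, cover-independent statement \ref{conditionC} into a concrete obstruction that excludes each flip and each ``bad'' divisorial contraction. Because \ref{conditionC} is a condition on \emph{all} finite \'etale covers of $X$, the argument must be genuinely functorial --- tracking the cup-product complex and its exactness under pull-back --- rather than carried out on $X$ alone; a subtler instance of the same difficulty recurs when choosing the \'etale cover in \ref{structure_theorem_projective_case_Kodaira-dim_non-neg}, which must be chosen so that the product decomposition refines, rather than merely coexists with, the Albanese fibration $\pi$.
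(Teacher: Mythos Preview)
This theorem is not proved in the present paper: it is quoted verbatim from \cite[Thm.~1.3]{Hao_Schreieder_3-folds}, and the paper's contribution is the K\"ahler analogue (\Cref{structure_theorem_Kaeler_3-folds}). So there is no ``paper's own proof'' to compare against, only the paper's summary of the Hao--Schreieder strategy in the outline section. Measured against that summary, your proposal has several genuine gaps.

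First, your route through cohomology jump loci is speculative: you assert that exactness of $(H^\bullet,\wedge\omega)$ forces the relevant jump locus to be positive-dimensional and that its structure theory then hands you a fibration with $\omega$ pulled back, but neither claim is substantiated, and this is not how the argument actually runs. According to the paper's outline, the organising fibration in the $\kappa\geq 1$ cases is the Iitaka fibration $f\colon X^{min}\to Y$ (available by abundance), and the core step is to show that the general Iitaka fibre is \emph{not} contracted by the Albanese, so that it lands on a translate of a fixed subtorus $A\subset Alb(X^{min})$; one then composes $X^{min}\to Alb(X^{min})\to Alb(X^{min})/A^\perp$ to produce the smooth map $\pi$. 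Your sentence ``Kawamata's results \ldots\ together with \ref{conditionC}\ldots\ [show] it is a submersion onto a subtorus'' hides exactly this step, which is the substance of the theorem.

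Second, your MMP discussion is miscalibrated. On a \emph{smooth} projective threefold there are no flipping contractions at all --- Mori's classification of extremal rays gives only divisorial contractions and Mori fibre spaces --- so your flip-exclusion argument is vacuous. The actual content of \ref{structure_theorem_projective_case_birational} (mirrored in this paper as \Cref{blow-downs_to_elliptic_center}) is that among Mori's five divisorial types, condition \ref{conditionC} rules out all but the blow-up of a smooth genus-one curve on which $\omega$ is nontrivial; crucially, the target of that blow-down is again smooth, so the MMP never leaves the smooth category and the question of flips never arises. Your sketch gestures at this but does not isolate the key smoothness-preservation point.

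Third, for \ref{structure_theorem_projective_case_Kodaira-dim_non-neg} you cannot invoke a Beauville--Bogomolov decomposition: that applies to manifolds with $c_1=0$, whereas here $\kappa(X)$ may be $1$ or $2$. The splitting $X'\cong S'\times A'$ in the projective case is obtained differently --- as the paper explains, one takes $S'$ to be a general fibre of the composite $X^{min}\to Alb(X^{min})\to A$ and uses a Poincar\'e-reducibility/isogeny argument on abelian varieties, a step that genuinely fails in the K\"ahler setting (this failure is precisely the obstacle the present paper is written to overcome).
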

Note that the \cref{structure_theorem_projective_case_birational} and \cref{structure_theorem_projective_case_smooth_morphism} above imply that the pullback of a general 1-form from $A$ to $X$ has no zeros.
\begin{warning}
    \Cref{structure_theorem_projective_case_smooth_morphism} and \cref{structure_theorem_projective_case_Kodaira-dim_non-neg} in \Cref{structure_theorem_projective_case} fail in the K\"ahler case. This is also the case for non-projective K\"ahler surfaces that satisfy condition \ref{conditionC}. For an example, see \Cref{remark_example_no_splitting} below.
\end{warning}
The key to generalizing the above result to the K\"ahler case is the insight that after a finite \'etale cover, $X^{min}$ resp.~ $S$, should admit the structure of a locally trivial torus fiber bundle with trivial monodromy if $\kappa(X)\geq 0$, resp.~ $\kappa(X)=-\infty$. This leads to our main result. For a more precise version, see \Cref{structure_theorem_Kaeler_3-folds} below.
\begin{theorem}\label{structure_theorem_introduction}
    Let $X$ be a compact connected K\"ahler manifold of dimension 3 that satisfies condition \ref{conditionC} from \Cref{conjecture_Kotschick}. Then there is a finite \'etale cover $X'\rightarrow X$ and a bimeromorphic morphism $\sigma\colon X'\rightarrow X^{min}$ such that
    \begin{enumerate}[label=(\roman*)]
        \item $\sigma\colon X'\rightarrow X^{min}$ is a sequence of blow-ups along elliptic curves that are not contracted via the natural map to the Albanese $Alb(X^{min})$.
        \item\label{structure_theorem_introduction_Kodaira-dim_non-neg} If $\kappa(X)\geq 0$, then $X^{min}$ admits the structure of a locally trivial fiber bundle $Z:=X^{min}\rightarrow B$ over a compact connected K\"ahler manifold $B$ whose fibers are all isomorphic to a positive-dimensional torus $A$. The manifold $B$ does not satisfy condition \ref{conditionC}.
        \item\label{structure_theorem_introduction_Kodaira-dim_neg} If $\kappa(X)=-\infty$, then one of the following holds:
        \begin{enumerate}[label=(\alph*)]
            \item\label{structure_theorem_introduction_fiber-dim2} $X^{min}$ admits a smooth morphism $X^{min}\rightarrow E$ to an elliptic curve $E$ whose fibers are all isomorphic to $\mathbb{P}^2$ or to a Hirzebruch surface;
            \item\label{structure_theorem_introduction_fiber-dim1} $X^{min}$ admits a smooth morphism $X^{min}\rightarrow S$ to a compact connected K\"ahler manifold $S$ of dimension 2 whose fibers are all isomorphic to $\mathbb{P}^1$. $S$ admits the structure of a locally trivial fiber bundle $Z:=S\rightarrow B$ over a compact connected K\"ahler manifold $B$ whose fibers are all isomorphic to a positive-dimensional torus $A$. The manifold $B$ does not satisfy condition \ref{conditionC}.
        \end{enumerate}
        \item\label{structure_theorem_introduction_omega_restricts_non_trivial} If $\omega\in H^0(X,\Omega^1_X)$ is a holomorphic 1-form such that $(X,\omega)$ satisfies condition \ref{conditionC}, then the induced holomorphic 1-forms on the successive blow-downs $\sigma\colon X'\rightarrow X^{min}$ restrict non-trivially on the elliptic centers and the induced holomorphic 1-form $\omega_Z\in H^0(Z,\Omega^1_Z)$ restricts non-trivially on the fibers of $Z\rightarrow B$.
    \end{enumerate}
\end{theorem}
\begin{corollary}
    In \Cref{structure_theorem_introduction} one can choose the finite \'etale cover $X'\rightarrow X$ such that depending on the Kodaira dimension $\kappa(X)$ one of the following possibilities holds true:
    \begin{enumerate}[label=(\roman*)]
        \item $\kappa=-\infty$: in \cref{structure_theorem_introduction_fiber-dim2} $X^{min}$ admits a smooth morphism $X^{min}\rightarrow E$ to an elliptic curve $E$ whose fibers are all isomorphic to $\mathbb{P}^2$ or to a Hirzebruch surface; in \cref{structure_theorem_introduction_fiber-dim1} $B$ is either a point and $Z$ a 2-torus or $B$ is a curve of genus at least 2 and $Z\rightarrow B$ is a locally trivial elliptic fiber bundle;
        \item $\kappa=0$: $Z$ splits into a product where one factor is a positive-dimensional torus and the other factor is $B$. The fiber bundle $Z\rightarrow B$ is the projection;
        \item $\kappa=1$: $B$ is either a curve of genus at least 2 and $Z\rightarrow B$ is a locally trivial 2-torus fiber bundle or $B$ is a surface of Kodaira dimension 1 and $Z\rightarrow B$ is a locally trivial elliptic fiber bundle;
        \item $\kappa=2$: $B$ is a surface of Kodaira dimension 2 and $Z\rightarrow B$ is an elliptic fiber bundle.
    \end{enumerate}
    Part (ii) follows from the Beauville--Bogomolov decomposition \cite[][Thm. 2]{Beauville_decomposition}.
\end{corollary}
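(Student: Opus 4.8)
The plan is to start from \Cref{structure_theorem_introduction} and, in each range of $\kappa=\kappa(X)$, refine the finite \'etale cover $X'\rightarrow X$ (together with the bundle structures it carries) so as to pin down the base $B$ and the fiber bundle $Z\rightarrow B$. As a first observation, $\kappa\in\{-\infty,0,1,2\}$: when $\kappa\geq 0$, \cref{structure_theorem_introduction_Kodaira-dim_non-neg} and \cref{structure_theorem_introduction_fiber-dim1} exhibit $X^{min}$, resp.\ the surface $S$, as a locally trivial fiber bundle with positive-dimensional torus fibers, so $\kappa(X^{min})=\kappa(X)$ is bounded by $\dim B<3$.

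The main tool when $\kappa\geq 0$ is the identity $\kappa(Z)=\kappa(B)$ for a locally trivial fiber bundle $f\colon Z\rightarrow B$ with torus fibers $A$. Since $K_A=\OO_A$, local triviality makes the relative canonical bundle $K_{Z/B}$ flat, and after a finite \'etale cover of $B$ it becomes trivial; then $K_Z=f^*K_B$, so $H^0(Z,mK_Z)=H^0(B,mK_B)$ for all $m$ and $\kappa(Z)=\kappa(B)$. As $Z=X^{min}$ is bimeromorphic to $X'$ and $X'\rightarrow X$ is \'etale, $\kappa(B)=\kappa(X)$; combined with $\dim B=3-\dim A\leq 2$ this determines $B$: for $\kappa=2$ it forces $\dim B=2$ with $B$ a surface of Kodaira dimension $2$ (hence $\dim A=1$); for $\kappa=1$ it forces either $\dim B=1$ with $B$ a curve of genus $\geq 2$ (hence $\dim A=2$), or $\dim B=2$ with $B$ a surface of Kodaira dimension $1$ (hence $\dim A=1$). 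For $\kappa=0$ I would invoke abundance for compact K\"ahler threefolds to see that $K_{X^{min}}$ is torsion, trivialize it by a finite \'etale cover, and then apply the Beauville--Bogomolov decomposition \cite[][Thm. 2]{Beauville_decomposition}: up to a further finite \'etale cover $X^{min}\cong A\times W$ with $A$ a torus and $W$ simply connected. Since $H^0(W,\Omega^1_W)=0$, the non-trivial holomorphic $1$-form on $X^{min}$ of \cref{structure_theorem_introduction_omega_restricts_non_trivial} lives on the factor $A$, forcing $\dim A>0$; thus $W$ is a point or a $K3$ surface, and taking $B:=W$ with $Z\rightarrow B$ the projection realizes \cref{structure_theorem_introduction_Kodaira-dim_non-neg} with $Z$ a product and makes \cref{structure_theorem_introduction_omega_restricts_non_trivial} automatic. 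At each step, the \'etale cover of $B$, $W$ or $X^{min}$ pulls back along $\sigma$ to a finite \'etale cover of $X$ under which the conclusions of \Cref{structure_theorem_introduction} persist.

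For $\kappa=-\infty$ there is nothing to do in case \cref{structure_theorem_introduction_fiber-dim2}, so assume we are in case \cref{structure_theorem_introduction_fiber-dim1}. The key point is that the surface $S$ again satisfies condition \ref{conditionC}, which I would prove by descending the condition successively from $X$ to $X'$, then to $X^{min}$, then to $S$: it passes to the \'etale cover $X'$ by definition; it passes to $X^{min}$ because $\sigma$ is a sequence of blow-ups along elliptic curves $C$ on which the $1$-form restricts non-trivially (\cref{structure_theorem_introduction_omega_restricts_non_trivial}), and the blow-up decomposition $H^{*}(\mathrm{Bl}_C Y,\C)\cong H^{*}(Y,\C)\oplus H^{*-2}(C,\C)$ is compatible with cup product by the pulled-back $1$-form, which on the second summand is wedging with a non-zero translation-invariant $1$-form on the elliptic curve $C$ (for which the three-term sequence is exact); and it passes to $S$ because $H^{*}(X^{min},\C)\cong H^{*}(S,\C)\oplus H^{*-2}(S,\C)$ for the $\mathbb{P}^1$-fibration $X^{min}\rightarrow S$, again compatibly with wedging by the pulled-back $1$-form; all of this is stable under finite \'etale covers of $S$, and $\omega_S\neq 0$ by \cref{structure_theorem_introduction_omega_restricts_non_trivial}. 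Now $S$ is a surface satisfying \ref{conditionC}; by \cite[][Thm. 1.3]{Schreieder_topology} it admits a nowhere-vanishing holomorphic $1$-form, and the classification of such surfaces shows that, after a finite \'etale cover, $S$ is a complex $2$-torus, a $\mathbb{P}^1$-bundle over an elliptic curve, or a locally trivial elliptic fiber bundle over a curve of genus $\geq 2$. The first case gives $B$ a point and $Z=S$ a $2$-torus; the third gives the asserted elliptic fiber bundle over a genus $\geq 2$ curve; and the second — combined with the torus-bundle structure $S\rightarrow B$ of \cref{structure_theorem_introduction_fiber-dim1} (a $\mathbb{P}^1$-bundle over an elliptic curve admits a positive-dimensional family of pairwise disjoint elliptic curves as fibers only when it is a product) — forces $S\cong\mathbb{P}^1\times E'$ and $B=\mathbb{P}^1$, in which case $X^{min}\rightarrow S\rightarrow E'$ is a smooth morphism with Hirzebruch-surface fibers and $X$ is placed into case \cref{structure_theorem_introduction_fiber-dim2} instead.

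I expect the main difficulty to lie in two places. First, the identity $\kappa(Z)=\kappa(B)$: one must verify carefully that local triviality makes $K_{Z/B}$ a flat line bundle which a finite \'etale cover of $B$ trivializes (this is essentially the statement, made precise in the main text, that after a further cover the torus bundle has trivial monodromy), and one must arrange all the \'etale covers used across the different Kodaira ranges to be compatible so that \Cref{structure_theorem_introduction} remains valid throughout. Second, the case $\kappa=-\infty$: the delicate steps are the successive descent of condition \ref{conditionC} down to the surface $S$ and the bookkeeping of which surfaces with a nowhere-vanishing holomorphic $1$-form carry a torus-bundle structure over a curve — that is, correctly handling the overlap between cases \cref{structure_theorem_introduction_fiber-dim2} and \cref{structure_theorem_introduction_fiber-dim1}. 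Beyond these, the argument reduces to dimension counting once abundance, Beauville--Bogomolov, and the classification of surfaces with condition \ref{conditionC} are in hand.
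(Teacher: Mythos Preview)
Your proposal is correct, but it differs in emphasis from the paper. The paper does not provide a separate proof of this corollary at all: it is meant to be read off from the proof of the more detailed \Cref{structure_theorem_Kaeler_3-folds}, together with \Cref{Kodaira_dim_2_Weierstrass}, \Cref{Kodaira_dim_1_Weierstrass}, and \Cref{structure_thm_surfaces}. In those results the fiber bundle $Z\rightarrow B$ is constructed explicitly in each Kodaira-dimension case, and the type of $B$ (surface of general type, curve of genus $\geq 2$, surface of Kodaira dimension $1$, etc.) is part of the construction, not something deduced a posteriori. Your approach is instead top-down: you take the abstract output of \Cref{structure_theorem_introduction} and recover the shape of $B$ from the identity $\kappa(Z)=\kappa(B)$ and dimension counting. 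This is perfectly valid and arguably more conceptual, but it depends on the monodromy-triviality of $Z\rightarrow B$ (equivalently, the $(Y\times A)^\eta$ description in \Cref{structure_theorem_Kaeler_3-folds}) to get $K_Z\cong f^*K_B$; the coarser statement of \Cref{structure_theorem_introduction} alone does not quite give this.

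For $\kappa=-\infty$, case \cref{structure_theorem_introduction_fiber-dim1}, your argument is again correct but more elaborate than necessary. You argue that a $\mathbb{P}^1$-bundle over an elliptic curve which is simultaneously a torus bundle over some $B$ must be a product, in order to push that case into \cref{structure_theorem_introduction_fiber-dim2}. The paper avoids this: in the proof of \Cref{structure_theorem_Kaeler_3-folds}, once $S'$ is a ruled surface over an elliptic curve $E'$, one simply composes $X^{min}\rightarrow S'\rightarrow E'$ to obtain a smooth morphism with Hirzebruch-surface fibers, landing directly in case \cref{structure_theorem_introduction_fiber-dim2} without any product claim for $S'$. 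Your descent of condition \ref{conditionC} to $S$ is what the paper packages as \Cref{geometry_conic_bundle}, and your surface classification is \Cref{structure_thm_surfaces}.
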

As in the work by Hao and Schreieder our main result \Cref{structure_theorem_introduction} allows us to prove Kotschick's conjecture in dimension 3.
\begin{proof}[Proof of \Cref{conjecture_Kotisch_true_in_dim_3}]
    By \cite[][Thm. 1.2]{Schreieder_topology}, it suffices to prove the implication $\ref{conditionC}\Rightarrow\ref{conditionA}$. Let $X$ be a compact K\"ahler manifold of dimension 3 that satisfies condition \ref{conditionC} via the holomorphic 1-form $\omega\in H^0(X,\Omega^1_X)$. We can assume that $X$ is connected. Let $X'$, $X^{min}$, $Z$ and $B$ be as in \Cref{structure_theorem_introduction}. To prove that $\omega$ has no zeros, it suffices to prove that its pullback $\omega'\in H^0(X',\Omega^1_{X'})$ has no zeros. As the induced holomorphic 1-forms on the successive blow-downs $\sigma\colon X'\rightarrow X^{min}$ restrict non-trivially on the elliptic centers, it suffices to prove that the induced holomorphic 1-form on $X^{min}$ has no zeros. This in turn can be reduced to showing that $\omega_Z\in H^0(Z,\Omega^1_Z)$ has no zeros, which is true since $\omega_Z$ restricts non-trivially on the torus fibers of $Z\rightarrow B$ by \Cref{structure_theorem_introduction} \cref{structure_theorem_introduction_omega_restricts_non_trivial}.
\end{proof}
\begin{remark}
    Note that in \Cref{structure_theorem_introduction} \cref{structure_theorem_introduction_Kodaira-dim_non-neg} and \cref{structure_theorem_introduction_Kodaira-dim_neg}, it can be easily arranged that $B$ does not satisfy condition \ref{conditionC} by an induction argument. However, without this arrangement \cref{structure_theorem_introduction_omega_restricts_non_trivial} fails. As an example, take $X=X'=X^{min}=Z$ to be the product of three elliptic curves, and let $Z\rightarrow B$ be the projection onto the last factor. The holomorphic 1-form coming from the last factor has no zeros but is trivial on the fibers of $Z\rightarrow B$. Since the above proof of \Cref{conjecture_Kotisch_true_in_dim_3} relies on \cref{structure_theorem_introduction_omega_restricts_non_trivial}, the detail that $B$ does not satisfy condition \ref{conditionC} is crucial.
\end{remark}
\subsection{Outline of the argument for \Cref{structure_theorem_introduction}}
Let $X$ be a compact connected K\"ahler manifold of dimension 3 that satisfies condition \ref{conditionC} from \Cref{conjecture_Kotschick}. Recall that, in contrast to the result of Hao and Schreieder, there cannot always be a smooth morphism to a positive-dimensional torus. We overcome this obstacle by shifting the perspective on the splitting $X'\cong S'\times A'$ in \Cref{structure_theorem_projective_case}: the morphism $X'\rightarrow A'$ does not always exist. However, the morphism $X'\rightarrow S'$ can be constructed from the Iitaka fibration and is the equivalent of the torus fiber bundle $Z\rightarrow B$ in \Cref{structure_theorem_introduction}.\par
In the projective case, the logical order is as follows: suppose that $X$ is a minimal smooth projective 3-dimensional manifold of Kodaira dimension 1 or 2 and satisfies condition \ref{conditionC}. Then one shows that after replacing $X$ by a finite \'etale cover if necessary, the general fiber of the Iitaka fibration $f\colon X\rightarrow Y$ is not contracted by the Albanese morphism. The general fiber is thus mapped to a translate of a subtorus $A\subset Alb(X)$. One then dualizes this inclusion to obtain $X\rightarrow Alb(X)\rightarrow A$. The projective manifold $S'$ as well as the splitting $X'\cong S'\times A'$ of a finite \'etale cover $X'\rightarrow X$ are then constructed from a general fiber of $X\rightarrow Alb(X)\rightarrow A$. As a consequence, one concludes that $X'\rightarrow S'$ is a torus fiber bundle. We omit the details here, the point being that since short exact sequences of complex tori in general do not split after a finite \'etale cover, one cannot expect to apply similar methods in the K\"ahler case \cite[][Sec. 1.5]{Hao_Schreieder_3-folds}. Instead, we use the theory of twisted, equivariant Weierstraß models and tautological models developed in \cite{Nakayama_Weierstrass} \cite{Nakayama_elliptic-fibrations}, \cite{Claudon_Hoering_Lin_fundamental_group} and \cite{Lin_algebraic_approximation_codim_1}.\par
In the following, we sketch the argument for minimal smooth compact connected 3-dimensional K\"ahler manifolds $X$ of Kodaira dimension 1 and 2. For the reduction to smooth minimal models as well as for the cases of Kodaira dimension 3, 0, and $-\infty$, we refer to Hao and Schreieder. Their arguments remain valid also in the K\"ahler case since the minimal model program for K\"ahler threefolds \cite{Hoering_Peternell_Kaehler-MMP}, \cite{Hoering_Peternell_Kaehler-MFS} is fully established and provides the same bimeromorphic morphisms as the projective minimal model program. Furthermore, the abundance theorem also holds for K\"ahler threefolds \cite[][Thm. 1.1]{Campana_Hoering_Peternell_Kaehler-abundance}, \cite{Erratum_Addendum_Kaehler-abundance}.\par
Suppose $\kappa(X)=2$. By the abundance theorem, we obtain the Iitaka fibration $f\colon X\rightarrow S$, which is an elliptic fibration over a normal projective surface $S$. One can show that the local monodromies are finite and are trivialized by a finite \'etale cover $S'\rightarrow S$. The induced elliptic fibration $f'\colon X'\rightarrow S'$ is then generically isotrivial, and all but finitely many singular fibers are multiples of smooth elliptic curves. We aim to remove the multiple fibers by a finite \'etale cover. Suppose that $S'$ is smooth and that we can find a Galois cover $\overline{S}\rightarrow S'$ with Galois group $G$ and $\overline{S}$ smooth such that the induced elliptic fibration $\overline{f}\colon\overline{X}\rightarrow\overline{S}$ has at most finitely many singular fibers. (In general, this can only be achieved after replacing $S'$ by a log-desingularization of $S'$ together with the discriminant divisor of $f'$. However, to simplify our situation, let us assume that this is not an issue.) Note that $\overline{X}\rightarrow X'$ is finite but may no longer be \'etale since the branch locus of $\overline{S}\rightarrow S$ may be too large. Our assumptions allow to replace $\overline{f}\colon\overline{X}\rightarrow\overline{S}$ by a twisted $G$-equivariant Weierstraß model $p\colon W\rightarrow\overline{S}$, which is an explicitly constructed elliptic fibration that is $G$-equivariantly bimeromorphic to $\overline{f}\colon\overline{X}\rightarrow\overline{S}$. One can show that $p\colon W\rightarrow\overline{S}$ is a locally trivial elliptic fiber bundle, where $G$ acts on the elliptic fibers by translations. Taking the quotient by the normal subgroup $H\subset G$ consisting of those elements that do not act on the fibers of $p\colon W\rightarrow\overline{S}$ results in the locally trivial elliptic fiber bundle with trivial monodromy $Y'':=W/H\rightarrow S'':=\overline{S}/H$ over the smooth surface $S''$. Suppose, for simplicity, that $S''$ is minimal. Note that the group $G/H$ acts fixed-point freely on $Y''$. Define $Y':=Y''/(G/H)\cong W/G$. Then $Y'\rightarrow S'$ is bimeromorphic to $f'\colon X'\rightarrow S'$ and $Y''\rightarrow Y'$ is finite \'etale. As $Y'$ is smooth and bimeromorphic to $X'$ by construction, we can find a finite \'etale cover $X''\rightarrow X'$ bimeromorphic to $Y''\rightarrow Y'$. Since $X$ was assumed to be minimal, and since $X''\rightarrow X'\rightarrow X$ is a finite \'etale cover, also $X''$ is minimal. Moreover, as $S''$ is smooth and minimal, the same holds for $Y''$. Thus, by \cite[][Thm. 4.9]{Kollar_flops}, $Y''$ and $X''$ are connected by a sequence of flops. However, one can show that $Y''$ does not admit any non-trivial flops as $Y''\rightarrow S''$ is a locally trivial elliptic fiber bundle. Therefore, $Y''$ and $X''$ are isomorphic. Hence $Y''\cong X''\rightarrow X'\rightarrow X$ is the desired finite \'etale cover and $Y''\rightarrow S''$ is the desired fiber bundle.\par
Recall that we assumed that $S'$ is smooth and that we can find a Galois cover $\overline{S}\rightarrow S'$ with Galois group $G$ and $\overline{S}$ smooth such that the induced elliptic fibration $\overline{f}\colon\overline{X}\rightarrow\overline{S}$ has at most finitely many singular fibers. Without these simplifications, we can adapt the above argument as follows. We replace $S'$ by a log-desingularization of $S'$ together with the discriminant divisor of $f'$, and construct the finite \'etale cover $Y''\rightarrow Y'$ as well as the locally trivial elliptic fiber bundle $Y''\rightarrow S''$ with trivial monodromy as before. One can show that there are unique smooth minimal models $Y''_{min}$, $Y'_{min}$, $S_{min}$ of $Y''$, $Y'$, resp.~ $S''$. Moreover, $Y''\rightarrow Y'$, resp.~ $Y''\rightarrow S''$ descends to $Y''_{min}\rightarrow Y'_{min}$, resp.~ $Y''_{min}\rightarrow S_{min}$ and still is finite \'etale, resp.~ a locally trivial elliptic fiber bundle with trivial monodromy:
\[\begin{tikzcd}
	{S''} & {Y''} & {Y'} \\
	{S_{min}} & {Y''_{min}} & {Y'_{min}.}
	\arrow[from=1-1, to=2-1]
	\arrow[from=1-2, to=1-1]
	\arrow[from=1-2, to=1-3]
	\arrow[from=1-2, to=2-2]
	\arrow[from=1-3, to=2-3]
	\arrow[from=2-2, to=2-1]
	\arrow[from=2-2, to=2-3]
\end{tikzcd}\]
Since $Y'_{min}$ is smooth and bimeromorphic to $X'$ by construction, there is a finite \'etale cover $X''\rightarrow X'$ which is bimeromorphic to $Y''_{min}\rightarrow Y'_{min}$. As before, one can show that $Y''_{min}$ does not admit any flops, and thus, by \cite[][Thm. 4.9]{Kollar_flops}, $Y''_{min}$ and $X''$ are isomorphic. Hence, $Y''_{min}\cong X''\rightarrow X'\rightarrow X$ is the desired finite \'etale cover and $Y''_{min}\rightarrow S^{min}$ is the desired fiber bundle.\par
Suppose $\kappa(X)=1$. By the abundance theorem, we obtain the Iitaka fibration $f\colon X\rightarrow C$, where $C$ is a smooth compact curve. By \cite[][Thm. 6.1]{Hao_Schreieder_3-folds}, the general fiber is a 2-torus or a bielliptic surface. We distinguish between three cases. The first case is that the general fiber of $f$ is not contracted to a curve by the Albanese morphism. A result of Campana and Peternell \cite[][Thm. 4.2 and the following remark]{Campana_Peternell_2-forms} shows that $f$ is quasi smooth. We then remove the multiple fibers by finding a finite \'etale cover $X'\rightarrow X$ induced by a finite morphism $C'\rightarrow C$ such that the induced morphism $f'\colon X'\rightarrow C'$ is smooth. One can show that the monodromy $R^1f'_*\Z$ of $f'$ is finite. In particular, it vanishes after a further finite \'etale cover $X''\rightarrow X'$ induced by a finite \'etale cover $C''\rightarrow C'$. Thus, $X''\rightarrow C''$ is the desired fiber bundle.\par
The second case is that for every finite \'etale cover $\tau\colon\tilde{X}\rightarrow X$, the general fiber of the induced fibration $\tilde{f}\colon\tilde{X}\rightarrow\tilde{C}$ by the Stein factorization is contracted to a curve by the Albanese morphism. If the genus of $C$ is positive, we can factor $f\colon X\rightarrow C$ through the normalization of the image of the Albanese morphism $Y\rightarrow Alb(X)$. We then construct a finite \'etale cover $X'\rightarrow X$ to obtain the diagram
\[\begin{tikzcd}
	{X'} && {Y'} \\
	& {C',}
	\arrow["g'", from=1-1, to=1-3]
	\arrow["f'"', from=1-1, to=2-2]
	\arrow["\pi'", from=1-3, to=2-2]
\end{tikzcd}\]
where $\pi'\colon Y'\rightarrow C'$ is a locally trivial elliptic fiber bundle with trivial monodromy, $g'$ is an elliptic fibration without multiple fibers, and the general fiber of $f'$ is a 2-torus. A similar diagram exists if the genus of $C$ is zero. However, as there are bad orbifold structures on $\mathbb{P}^1$, one is forced to work with a $G$-equivariant diagram. Let us assume for simplicity that this is not an issue. One can show that there is a finite \'etale cover $Y''\rightarrow Y'$ induced by a finite \'etale cover on the fibers of $Y'\rightarrow C'$ such that the general fiber of the induced fibration $f''\colon X'':=X'\times_{Y'}Y''\rightarrow C'$ is a product of two elliptic curves. This implies that the twisted Weierstraß model $p\colon W\rightarrow Y''$ of the induced elliptic fibration $X''\rightarrow Y''$ descends to a twisted Weierstraß model $q\colon V\rightarrow C'$, i.e., $W\cong V\times_{C'}Y''$. In particular, the morphism $W\rightarrow V$ is a locally trivial elliptic fiber bundle with trivial monodromy over a surface. Suppose for simplicity that $V$ (and hence $W$) is smooth and minimal. Then, as in the case of $\kappa(X)=2$, $W$ does not admit any flops and is thus isomorphic to $X''$ by \cite[][Thm. 4.9]{Kollar_flops}. The desired fiber bundle is then $X''\cong W\rightarrow V$.\par
To conclude the case of $\kappa(X)=1$, we show that the Albanese morphism cannot contract the general fiber onto a point since in this case $C$ must be elliptic and $f\colon X\rightarrow C$ smooth. A finite \'etale cover is thus a torus, which cannot have Kodaira dimension 1.
\subsection{Conventions and notation}
By a complex analytic variety, we mean a Hausdorff, second-countable, irreducible, and reduced complex space. A curve, surface, resp.~ threefold is a complex analytic variety of dimension 1, 2, resp.~ 3. A complex analytic variety is called K\"ahler if it admits a K\"ahler metric in the sense of \cite[][II. 1]{Varouchas_Kaehler_spaces_proper_open_morphisms}. A compact complex analytic K\"ahler variety $X$ is called minimal if it is $\Q$-factorial, has terminal singularities, and the canonical divisor $K_X$ is nef. A torus is a complex manifold isomorphic to the quotient $\C^g/\Lambda$, where $\Lambda\subset \C^g$ is a lattice of rank $2g$. A 1-dimensional torus is called an elliptic curve. If a torus is projective, we call it an abelian variety. (This should not be confused with the term complex analytic variety.) The dual of a vector space $V$ is denoted by $V^\vee$. Similarly, the dual of a vector bundle $\mathcal{V}$ on a complex manifold is denoted by $\mathcal{V}^\vee$.
\subsection*{Acknowledgments}
I would like to thank Stefan Schreieder for suggesting this problem to me, for very helpful discussions, and for comments on this manuscript. The research was conducted in the framework of the DFG-funded research training group RTG 2965: From Geometry to Numbers.
\section{Preliminaries}
\subsection{Singular K\"ahler spaces}
A complex analytic variety is called K\"ahler if it admits a K\"ahler metric in the sense of \cite[][II. 1]{Varouchas_Kaehler_spaces_proper_open_morphisms}. We recall some standard results about compact K\"ahler varieties.
\begin{proposition}\cite[][II. Prop. 1.3.1, Cor. 3.2.2]{Varouchas_Kaehler_spaces_proper_open_morphisms}, \cite[][Prop. 3.5]{Graf_Kirschner_finite_quotients_3-tori}
    Let $f\colon X\rightarrow Y$ be a morphism between compact complex analytic varieties. Then the following holds true.
    \begin{enumerate}[label=(\roman*)]
        \item If $Y$ is K\"ahler and $f$ is a closed embedding, then $X$ is K\"ahler.
        \item If $Y$ is K\"ahler and $f$ is a projective morphism, then $X$ is K\"ahler. In particular, by Hironaka's result on resolution of singularities, compact complex analytic K\"ahler varieties admit desingularizations by compact K\"ahler manifolds.
        \item If $f$ is a finite surjective morphism, then $X$ is K\"ahler if and only if $Y$ is K\"ahler.
    \end{enumerate}
\end{proposition}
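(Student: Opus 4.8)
I would prove the three assertions in the order (i), then (ii) from (i), then (iii) from (ii). It is convenient to recall first what a K\"ahler metric on a (possibly singular) compact complex variety $V$ amounts to: an open cover of $V$, local closed embeddings $\iota_j\colon U_j\hookrightarrow\Omega_j\subset\C^{N_j}$, and smooth strictly plurisubharmonic functions $\varphi_j$ on $\Omega_j$, such that on overlaps the difference $\iota_i^*\varphi_i-\iota_j^*\varphi_j$ agrees with the restriction of a pluriharmonic function; equivalently, the forms $i\,\partial\bar\partial\varphi_j$ patch to a global closed positive $(1,1)$-current with local potentials.

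For (i), given a closed embedding $f\colon X\hookrightarrow Y$ with $Y$ K\"ahler, I would simply restrict the K\"ahler data: composing the local embeddings of $Y$ with $f$ exhibits $X$ locally as a closed complex subspace of each $\Omega_j$, and the restrictions of the $\varphi_j$ remain strictly plurisubharmonic along $X$ and satisfy the same compatibility on overlaps. This is exactly \cite[][II. Prop. 1.3.1]{Varouchas_Kaehler_spaces_proper_open_morphisms}.

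For (ii), I would factor the projective morphism $f\colon X\to Y$ as a closed embedding $X\hookrightarrow\mathbb{P}(\mathcal{E})$ over $Y$, with $\mathcal{E}=f_*(L^{\otimes m})$ for an $f$-ample line bundle $L$ and $m\gg0$, followed by the projection $\mathbb{P}(\mathcal{E})\to Y$. By (i) it then suffices to see that $\mathbb{P}(\mathcal{E})$ is K\"ahler, which one gets by adding the pullback of a K\"ahler form on $Y$, the relative Fubini--Study form attached to the tautological quotient of $\mathcal{E}$, and a sufficiently large multiple of the pullback form to absorb the failure of positivity in the horizontal directions; the gluing over the singular locus of $Y$ is carried out in \cite[][II. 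Cor. 3.2.2]{Varouchas_Kaehler_spaces_proper_open_morphisms}. The ``in particular'' then drops out, since Hironaka's resolution $\widetilde{X}\to X$ is a finite composition of blow-ups along smooth centers, hence a projective morphism; thus $\widetilde{X}$ is a compact K\"ahler manifold whenever $X$ is K\"ahler.

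For (iii), the implication that $X$ is K\"ahler when $Y$ is is a special case of (ii), since every finite morphism is projective (it is proper and factors as a closed embedding into a projective bundle over the base). The reverse implication is where the work lies. First I would reduce to the normal case: if $X$ is not normal, pass to its normalization, which is finite --- hence projective --- over $X$ and so still K\"ahler by (ii), and likewise for $Y$. Then I would introduce the Galois closure $h\colon Y'\to Y$ of $f$, namely the normalization of an appropriate irreducible component of a fiber power $X\times_Y\cdots\times_Y X$, with Galois group $G$, and factor it as $g\colon Y'\to X$ followed by $f\colon X\to Y$. Since $g$ is again finite, hence projective, $Y'$ is K\"ahler by (ii); averaging a K\"ahler form of $Y'$ over $G$ yields a $G$-invariant K\"ahler form, and the point is that it descends to a K\"ahler form on $Y=Y'/G$. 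The hard part here is precisely this descent across the quotient singularities: locally $Y$ is the quotient of a local model of $Y'$ by a finite stabilizer group, and one must check that the invariant potentials descend to strictly plurisubharmonic functions that are compatible on overlaps. Rather than redo the local analysis of finite quotient singularities, I would invoke \cite[][Prop. 3.5]{Graf_Kirschner_finite_quotients_3-tori}, which is exactly this statement.
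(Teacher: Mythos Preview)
The paper does not give its own proof of this proposition; it is stated as a standard fact with references to Varouchas and Graf--Kirschner, and the text moves directly on to the next subsection. So there is no in-paper argument to compare against, and your sketch for (i) and (ii) lines up with the cited Varouchas results.

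For (iii) your outline has a small circularity. You propose to ``reduce to the normal case \ldots\ and likewise for $Y$'', then take a Galois closure $Y'\to Y$ and descend an averaged K\"ahler form to the quotient. But the Galois closure $Y'$ is by construction normal, and $Y'/G$ is the normalization $\widetilde{Y}$ of $Y$, not $Y$ itself; so your averaging-plus-descent only shows that $\widetilde{Y}$ is K\"ahler. Passing from $\widetilde{Y}$ K\"ahler to $Y$ K\"ahler along the finite map $\widetilde{Y}\to Y$ is precisely another instance of the implication you are trying to establish, and your ``normal target'' argument does not cover it. This is not fatal---Graf--Kirschner's Proposition~3.5 is formulated for arbitrary finite surjective morphisms between reduced compact complex spaces, so one may either invoke it wholesale (as the paper does by citation) or invoke it twice, once for $Y'\to\widetilde{Y}$ and once more for $\widetilde{Y}\to Y$---but the reduction step as you wrote it does not close the loop on its own.
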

\subsection{Torus fiber bundles}
Most of the following material can be found in \cite{Claudon_tori_fiber_bundles}.
\begin{definition}\label{definition_torus_fiber_bundle}
    A torus fiber bundle is defined as a smooth proper surjective morphism $f\colon X\rightarrow Y$ from a connected complex manifold $X$ to a connected complex manifold $Y$ such that every fiber is reduced and isomorphic to a positive-dimensional torus.
\end{definition}
Let $A=\C^g/\Lambda$ be a torus. Recall that the Albanese morphism with base point $0\in A$
\begin{equation*}
    A\stackrel{\sim}{\longrightarrow} Alb(A):=H^0(A,\Omega^1_A)^\vee/H_1(A,\Z)
\end{equation*}
is a canonical isomorphism of tori. Poincar\'e duality yields a canonical isomorphism $H_1(A,\Z)\cong H^{2g-1}(A,\Z)$, and Serre duality yields a canonical isomorphism $H^0(A,\Omega^1_A)^\vee\cong H^{g-1,g}(A)$. Writing furthermore $H^{g-1,g}(A)\cong H^{2g-1}(A,\C)/F^g$, where $F^g$ is the degree $g$-part of the Hodge filtration, gives rise to a canonical isomorphism
\begin{equation*}
    A\stackrel{\sim}{\longrightarrow}Alb(A)\cong (H^{2g-1}(A,\C)/F^g)/H^{2g-1}(A,\Z).
\end{equation*}
In particular, $A$ is determined by its Hodge structure in degree $2g-1$.
Let $f\colon X\rightarrow Y$ be a torus fiber bundle of fiber dimension $g>0$. Let $B\subset Y$ be an open subset isomorphic to an open ball $B\subset\C^n$ centered at the origin. Denote by $X_b$ the fiber over $b\in B$. Ehresmann's theorem \cite[][Thm. 9.3]{Voisin_HT_1} implies that the restriction maps $H^{2g-1}(f^{-1}(B),\C)\stackrel{\sim}{\rightarrow}H^{2g-1}(X_b,\C)$ are isomorphisms for all $b\in B$. We hence obtain canonical isomorphisms
    \begin{equation*}
        H^{2g-1}(X_b,\C)\cong H^{2g-1}(f^{-1}(B),\C)\cong H^{2g-1}(X_0,\C).
    \end{equation*}
    Denote by $F^g(b)$ the $g$-th part of the Hodge filtration $F^g(b)\subset H^{2g-1}(X_b,\C)$ for $b\in B$. By \cite[][Thm. 10.9]{Voisin_HT_1}, the period map
    \begin{align*}
        B&\longrightarrow Gr(g, H^{2g-1}(X_0,\C))\\
        b&\longmapsto F^g(b)
    \end{align*}
    to the Grassmannian $Gr(g, H^{2g-1}(X_0,\C))$ of $g$-dimensional subspaces in $H^{2g-1}(X_0,\C)$ is holomorphic. Thus, the subspaces $F^g(b)\subset H^{2g-1}(X_0,\C)$ vary holomorphically, and hence the quotient
    \begin{equation*}
        (R^{2g-1}f_*\Z\otimes\OO_Y)/(R^{g-1}f_*\Omega^g_{X/Y})
    \end{equation*}
    is a holomorphic vector bundle on $Y$. This gives rise to the following definitions.
\begin{definition}\label{def_variation_HS_Jacobian_fibration}
    Let $f\colon X\rightarrow Y$ be a torus fiber bundle of fiber dimension $g>0$.
    \begin{enumerate}[label=(\roman*)]
        \item The variation of Hodge structures $H$ associated to $f$ consists of the following data: the local system $H_\Z:=R^{2g-1}f_*\Z$ on $Y$ of free abelian groups of rank $2g$, the associated vector bundle $\mathcal{H}:=H_\Z\otimes\OO_Y$ on $Y$, the holomorphic subbundle $\mathcal{F}^{g}:=R^{g-1}f_*\Omega^g_{X/Y}$, and the quotient $\mathcal{H}^{g-1,g}:=\mathcal{H}/\mathcal{F}^{g}$.
        \item We say that $f$ has trivial monodromy if the local system $H_\Z$ is isomorphic to $\Z^{2g}$.
        \item We define the Jacobian fibration associated to $f$ as the complex manifold
        \begin{equation*}
            J_H:=\mathcal{H}^{g-1,g}/H_\Z
        \end{equation*}
        obtained by taking the quotient of the vector bundle $\mathcal{H}^{g-1,g}\rightarrow Y$ by the $H_\Z$-action together with the natural morphism $\pi\colon J_H\rightarrow Y$.
        \item The zero section of $\mathcal{H}^{g-1,g}\rightarrow Y$ induces the global section $\sigma_0\colon Y\rightarrow J_H$. The sheaf of local sections of $\pi\colon J_H\rightarrow Y$ is a sheaf of abelian groups with addition of local sections and neutral element $\sigma_0$. It is denoted by $\mathcal{J}_H=\mathcal{H}^{g-1,g}/H_\Z$, where we identify the vector bundle $\mathcal{H}^{g-1,g}$ with its sheaf of local sections.
        \item Let $\eta\in H^1(Y,\mathcal{J}_H)$ be a cohomology class. The $\mathcal{J}_H$-torsor associated to $\eta$ is denoted by
        \begin{equation*}
            \pi^\eta\colon J^\eta_H\rightarrow Y.
        \end{equation*}
        \item We denote by $\mathrm{exp}\colon H^1(Y,\mathcal{H}^{g-1,g})\rightarrow H^1(Y,\mathcal{J}_H)$ and $c\colon H^1(Y,\mathcal{J}_H)\rightarrow H^2(Y,H_\Z)$ the maps induced by the short exact sequence $0\rightarrow H_\Z\rightarrow\mathcal{H}^{g-1,g}\rightarrow\mathcal{J}_H\rightarrow 0$, and call them the exponential, resp.~ the Chern class map.
    \end{enumerate}
\end{definition}
\begin{proposition}\label{classification_torus_fiber_bundles}\cite[][Prop. 2.1]{Claudon_tori_fiber_bundles}
    Let $f\colon X\rightarrow Y$ be a torus fiber bundle with associated variation of Hodge structures $H$. Then there is a unique cohomology class $\eta\in H^1(Y,\mathcal{J}_H)$ and an isomorphism $\varphi\colon X\stackrel{\sim}{\rightarrow}J_H^\eta$ over $Y$. Moreover, if $f\colon X\rightarrow Y$ admits a global section $\sigma\colon Y\rightarrow X$ then $\eta=0$ and $\varphi\colon X\rightarrow J_H$ can be chosen such that $\varphi\circ\sigma=\sigma_0$. Conversely, any cohomology class $\eta\in H^1(Y,\mathcal{J}_H)$ defines a torus fiber bundle $\pi^\eta\colon J_H^\eta\rightarrow Y$.
\end{proposition}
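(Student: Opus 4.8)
The plan is to realize $X$ as a torsor under the Jacobian fibration $\pi\colon J_H\to Y$ and then to apply the classification of torsors under a sheaf of abelian groups by its first cohomology. The guiding technical point is that although each fiber $X_y$ is a torus, it carries no canonical origin, so throughout one must use the canonical torsor structure of $X_y$ under $Alb(X_y)$ rather than choose base points.

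First I would establish that $\pi\colon J_H\to Y$ is itself a torus fiber bundle whose associated variation of Hodge structures is \emph{canonically} $H$. That $J_H=\mathcal{H}^{g-1,g}/H_\Z$ is a complex manifold with $\pi$ smooth, proper and surjective follows from the discussion preceding \Cref{def_variation_HS_Jacobian_fibration}: $\mathcal{H}^{g-1,g}$ is a holomorphic vector bundle, and $H_\Z$ acts on its fibers as a lattice because, for the genuine torus $X_y$, the Hodge structure in degree $2g-1$ satisfies $H^{2g-1}(X_y,\C)=F^g\oplus\overline{F^g}$, so that $H^{2g-1}(X_y,\Z)$ embeds as a lattice in $H^{2g-1}(X_y,\C)/F^g$; the fibers of $\pi$ are then the reduced tori $Alb(X_y)$. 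Canonicity of the variation of Hodge structures: a base-point change on a torus is homotopic to the identity, hence acts trivially on cohomology and fixes translation-invariant forms, which yields canonical identifications $R^{2g-1}\pi_*\Z\cong H_\Z$ and $R^{g-1}\pi_*\Omega^g_{J_H/Y}\cong\mathcal{F}^g$. The same local model also settles the converse assertion: for $\eta\in H^1(Y,\mathcal{J}_H)$ represented by a cocycle on an open cover, regluing the local pieces $J_H|_{U_i}$ along translations produces a connected complex manifold $J_H^\eta$, smooth and proper over $Y$ with reduced torus fibers, and since translations act trivially on the cohomology of the fibers its variation of Hodge structures is again $H$.

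For the main direction I would exploit that $f$ is a holomorphic submersion: the constant rank theorem produces a holomorphic section of $f$ through any prescribed point of a fiber over a small enough open set, so $Y$ admits an open cover $(U_i)$ over which $f$ has sections $s_i$. Over $U_i$, the fiberwise Albanese map with base point $s_i(u)$ on $X_u$ (valued in $Alb(X_u)=\mathcal{H}^{g-1,g}|_u/H_\Z|_u=J_H|_u$) assembles into a holomorphic morphism $\varphi_i\colon X|_{U_i}\to J_H|_{U_i}$ over $U_i$, because the periods vary holomorphically; it is a biholomorphism on each fiber, hence an isomorphism of complex manifolds over $U_i$, and satisfies $\varphi_i\circ s_i=\sigma_0$. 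On $U_i\cap U_j$, the Albanese map of a torus induces on degree $2g-1$ cohomology of the fibers the canonical, base-point independent identification coming from Serre and Poincar\'e duality, so $\varphi_i\circ\varphi_j^{-1}$ acts trivially on $H_1$ of each fiber; an automorphism of a torus trivial on $H_1$ is a translation, so $\varphi_i\circ\varphi_j^{-1}$ is translation by the holomorphic section $\eta_{ij}:=(\varphi_i\circ\varphi_j^{-1})\circ\sigma_0\in\mathcal{J}_H(U_i\cap U_j)$. The family $(\eta_{ij})$ is an additive \v{C}ech cocycle, defining $\eta\in H^1(Y,\mathcal{J}_H)$, and the $\varphi_i$ glue to an isomorphism $\varphi\colon X\to J_H^\eta$ over $Y$. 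Since $\eta_{ij}=s_j-s_i$ in the torsor sense, $\eta$ is the class of the sheaf of holomorphic sections of $f$, which depends only on $X$ together with the canonical $\mathcal{J}_H$-action; hence $\eta$ is unique. If $f$ admits a global section $\sigma$, taking all $s_i=\sigma|_{U_i}$ gives $\eta=0$, and $\varphi\colon X\to J_H$ can be arranged so that $\varphi\circ\sigma=\sigma_0$.

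The step I expect to require the most care is the first one, and within it the \emph{canonicity} of the variation of Hodge structures on $J_H$, and on $J_H^\eta$. This canonicity is precisely what makes the class $\eta$ well defined, and it is where the absence of a natural origin on the fibers must be circumvented by arguing through the torsor structure and the homotopy invariance of singular cohomology instead of through chosen base points. Once this is in place, the remainder is the standard torsor/\v{C}ech formalism, the only additional input being the elementary fact that a holomorphic submersion admits local sections.
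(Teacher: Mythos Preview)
The paper does not give a proof of this proposition; it is quoted from \cite[Prop.~2.1]{Claudon_tori_fiber_bundles} and used as a black box. Your sketch is the standard argument (and essentially the one in Claudon's paper): produce local sections from the submersion property, use the fiberwise Albanese with these base points to locally identify $X$ with $J_H$, observe that the transition maps act trivially on $H_1$ of the fibers and are therefore translations by local sections of $\mathcal{J}_H$, and read off the \v{C}ech cocycle. Your observation that the resulting class is intrinsically the obstruction class of the $\mathcal{J}_H$-torsor of sections of $f$---with the canonical torsor action coming from the base-point independent action of $Alb(X_y)$ on $X_y$---is exactly the right way to justify uniqueness.

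One small remark on a point you flagged yourself: the canonicity of the identification $R^{2g-1}\pi_*\Z\cong H_\Z$ for $J_H$ (and for $J_H^\eta$) is indeed the only delicate step, and your argument via homotopy invariance under translations is correct. You might also note, for the converse direction, that the cocycle condition guarantees the glued space $J_H^\eta$ is Hausdorff and second countable (so a complex manifold in the sense of the paper's conventions); this is routine but should be mentioned. Otherwise the argument is complete.
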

\begin{corollary}\label{corollary_isotrivial_and_trivial_vhs}
    Let $f\colon X\rightarrow Y$ be a torus fiber bundle of fiber dimension $g$. Suppose that $f$ is isotrivial with fiber $A\cong\C^g/\Lambda$ and that its monodromy is trivial. Then $J_H=Y\times A$ and $\mathcal{J}_H\cong\OO_Y^{\oplus g}/\Lambda$.
\end{corollary}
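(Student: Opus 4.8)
The plan is to reduce the statement to the assertion that the period map of $f$ is constant, and then to read off both isomorphisms directly from the construction of $J_H$ and $\mathcal{J}_H$ in \Cref{def_variation_HS_Jacobian_fibration}. First I would use that $f$ has trivial monodromy: the local system $H_\Z=R^{2g-1}f_*\Z$ is then isomorphic to the constant local system with stalk $L:=H^{2g-1}(X_{b_0},\Z)\cong\Z^{2g}$ for a chosen point $b_0\in Y$, and I fix such an isomorphism. This trivializes $\mathcal{H}=H_\Z\otimes\OO_Y$ as $\OO_Y\otimes_\C L_\C$, and it makes the local identifications recalled before \Cref{def_variation_HS_Jacobian_fibration} glue to a single global holomorphic period map
\begin{equation*}
    P\colon Y\longrightarrow Gr(g,L_\C),\qquad b\longmapsto F^g(b),
\end{equation*}
holomorphic by \cite[][Thm. 10.9]{Voisin_HT_1}.

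The key step is to show that $P$ is constant. By hypothesis every fiber $X_b$ is isomorphic, as a complex torus, to $A$; an isomorphism $X_b\stackrel{\sim}{\longrightarrow}A$ induces an isomorphism of the weight-$(2g-1)$ Hodge structures on $H^{2g-1}$, so, transported to $L$ via the flat trivialization, $(L,F^g(b))$ is isomorphic to the fixed Hodge structure $(L,F^g(b_0))$ for every $b$. Hence $F^g(b)$ lies in the orbit of $F^g(b_0)$ under $GL(L)\cong GL_{2g}(\Z)$ acting on $Gr(g,L_\C)$, and this orbit is a countable subset. Since $Y$ is connected and a countable subset of a complex manifold is totally disconnected, $P$ is constant, say $P\equiv F^g_0:=F^g(b_0)$ for a fixed $g$-dimensional subspace $F^g_0\subset L_\C$. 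I expect this constancy to be the only substantial point; everything after it is bookkeeping.

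It remains to unwind the definitions. Constancy of $P$ means $\mathcal{F}^g\subset\mathcal{H}$ is the constant subbundle $\OO_Y\otimes_\C F^g_0$, so $\mathcal{H}^{g-1,g}=\mathcal{H}/\mathcal{F}^g\cong\OO_Y\otimes_\C(L_\C/F^g_0)$ is a trivial vector bundle inside which $H_\Z$ sits as the constant lattice $L$. Quotienting by the $H_\Z$-action therefore gives, over $Y$,
\begin{equation*}
    J_H\cong Y\times\bigl((L_\C/F^g_0)/L\bigr)\qquad\text{and}\qquad \mathcal{J}_H\cong\bigl(\OO_Y\otimes_\C(L_\C/F^g_0)\bigr)/L.
\end{equation*}
By the canonical isomorphism $X_{b_0}\stackrel{\sim}{\longrightarrow}(H^{2g-1}(X_{b_0},\C)/F^g_0)/H^{2g-1}(X_{b_0},\Z)$ recalled in the excerpt, the quotient $(L_\C/F^g_0)/L$ is isomorphic to $X_{b_0}\cong A$. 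Finally I would lift a chosen isomorphism of complex tori $X_{b_0}\stackrel{\sim}{\longrightarrow}\C^g/\Lambda$ to a $\C$-linear isomorphism $L_\C/F^g_0\stackrel{\sim}{\longrightarrow}\C^g$ carrying the lattice $L$ onto $\Lambda$; inserting this into the two displayed identities yields $J_H\cong Y\times A$ over $Y$ and $\mathcal{J}_H\cong\OO_Y^{\oplus g}/\Lambda$, which is the claim.
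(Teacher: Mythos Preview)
Your argument is correct. The paper states this corollary without proof, treating it as immediate from the definitions in \Cref{def_variation_HS_Jacobian_fibration} and the discussion preceding it; your write-up simply makes explicit the one nontrivial point, namely that isotriviality forces the globally defined period map to be constant. The countability argument you give is fine: a holomorphic map from a connected manifold whose image lies in a countable set must be constant (equivalently, countable subsets of metrizable spaces are totally disconnected). Once the period map is constant, the identifications $J_H\cong Y\times A$ and $\mathcal{J}_H\cong\OO_Y^{\oplus g}/\Lambda$ are, as you say, bookkeeping with the Albanese description recalled before \Cref{def_variation_HS_Jacobian_fibration}.
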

\begin{definition}\label{definition_isotrivial_and_trivial_vhs}
    In the situation of \Cref{corollary_isotrivial_and_trivial_vhs}, we denote the $\mathcal{J}_H$ torsor associated to $\eta\in H^1(Y,\mathcal{J}_Y)$ by $\pi^\eta\colon(Y\times A)^\eta\rightarrow Y$.
\end{definition}
\begin{proposition}\label{fundamental_properties_special_torus_bundles}
    Let $Y$ be a compact connected K\"ahler manifold, let $A=\C^g/\Lambda$ be a torus, and let $\eta\in H^1(Y,\OO_Y^{\oplus g}/\Lambda)$.
    \begin{enumerate}[label=(\roman*)]
        \item\label{torus_bundle_kaehler} The complex manifold $(Y\times A)^\eta$ is K\"ahler if and only if $c(\eta)\in H^2(Y,\Lambda)$ is torsion.
        \item\label{1-forms_on_torus_bundles} Suppose that $(Y\times A)^\eta$ is K\"ahler. Let $\pi\colon (Y\times A)^\eta\rightarrow Y$ be the projection. Then for every $y\in Y$ the restriction map
        \begin{equation*}
            H^0((Y\times A)^\eta,\Omega^1_{(Y\times A)^\eta})\longrightarrow H^0(\pi^{-1}(y),\Omega^1_{\pi^{-1}(y)})\cong H^0(A,\Omega^1_A)
        \end{equation*}
        fits into a short exact sequence
        \begin{equation*}
            0\longrightarrow H^0(Y,\Omega^1_Y)\longrightarrow H^0((Y\times A)^\eta,\Omega^1_{(Y\times A)^\eta})\longrightarrow H^0(A,\Omega^1_A)\longrightarrow 0.
        \end{equation*}
        In particular, any holomorphic 1-form $\omega\in H^0((Y\times A)^\eta,\Omega^1_{(Y\times A)^\eta})$ that does not come from $Y$ has no zeros.
        \item\label{torus_bundle_multisection} The following is equivalent:
        \begin{enumerate}[label=(\alph*)]
            \item\label{torus_bundle_multisection_a} The torus fiber bundle $(Y\times A)^\eta\rightarrow Y$ admits a multisection.
            \item\label{torus_bundle_multisection_b} There is a finite \'etale cover $Y'\rightarrow Y$ such that $(Y\times A)^\eta\times_YY'\cong Y'\times A$.
            \item\label{torus_bundle_multisection_c} The cohomology class $\eta\in H^1(Y,\OO_Y^{\oplus n}/\Lambda)$ is torsion.
        \end{enumerate}
    \end{enumerate}
\end{proposition}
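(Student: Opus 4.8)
The plan is to prove the three assertions in turn. Write $X^\eta:=(Y\times A)^\eta$ with projection $\pi\colon X^\eta\to Y$; recall that $X^\eta$ is the holomorphic principal $A$-bundle over $Y$ classified by $\eta$ (\Cref{definition_isotrivial_and_trivial_vhs}, \Cref{classification_torus_fiber_bundles}), that its monodromy is trivial, and that $c(\eta)\in H^2(Y,\Lambda)$ is the connecting image of $\eta$ for the exponential sequence $0\to\Lambda\to\OO_Y^{\oplus g}\to\mathcal J_H\to0$, which is also the topological Euler class of the underlying principal bundle. The substance lies in parts \ref{torus_bundle_kaehler} and \ref{torus_bundle_multisection}, and I expect the genuinely delicate point to be the implication \ref{torus_bundle_multisection_a}$\Rightarrow$\ref{torus_bundle_multisection_c}: a multisection is a priori only a possibly ramified finite cover of $Y$, and it must be converted into an honest section of a twist of $X^\eta$.

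\textbf{Part \ref{torus_bundle_kaehler}.} For ``$X^\eta$ Kähler $\Rightarrow c(\eta)$ torsion'' I would restrict a Kähler class of $X^\eta$ to a fibre $A$. The result $\omega_A\in H^2(A,\mathbb{R})=\bigwedge^2 H^1(A,\mathbb{R})$ is a non-degenerate alternating form; being the restriction of a global class it lies in the image of the edge map $H^2(X^\eta,\mathbb{R})\to H^2(A,\mathbb{R})$, hence in the kernel of $d_2\colon H^2(A,\mathbb{R})\to H^2(Y,R^1\pi_*\mathbb{R})$. Now $d_2$ on $H^\bullet(A,\mathbb{R})=\bigwedge^\bullet H^1(A,\mathbb{R})$ (constant, as the monodromy is trivial) is the derivation extending the transgression $H^1(A,\mathbb{R})\to H^2(Y,\mathbb{R})$, which is contraction with $c(\eta)_{\mathbb{R}}$; a short computation in a symplectic basis of $H^1(A,\mathbb{R})$ shows that $d_2\omega_A=0$ for non-degenerate $\omega_A$ forces $d_2|_{H^1(A,\mathbb{R})}=0$, i.e.\ $c(\eta)_{\mathbb{R}}=0$, i.e.\ $c(\eta)$ torsion. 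For the converse I would first reduce to $c(\eta)=0$: if $m\,c(\eta)=0$ then $c(m\eta)=0$, and the isogeny $X^\eta\to X^{m\eta}$ which is fibrewise multiplication by $m$ is finite surjective, so by the criterion for Kählerness under finite surjective morphisms recalled in the preliminaries, $X^\eta$ is Kähler iff $X^{m\eta}$ is. If $c(\eta)=0$ then $\eta=\exp(\alpha)$ with $\alpha\in H^1(Y,\OO_Y^{\oplus g})$; since the Hodge decomposition on the compact Kähler manifold $Y$ makes $H^1(Y,\C^g)\to H^1(Y,\OO_Y^{\oplus g})$ surjective, $\alpha$ lifts to a class in $H^1(Y,\C^g)=\Hom(\pi_1(Y),\C^g)$, hence to a translation representation $\rho\colon\pi_1(Y)\to A$, and by naturality the flat bundle $(\widetilde Y\times A)/\pi_1(Y)$ ($\widetilde Y\to Y$ the universal cover, $\pi_1(Y)$ acting by deck transformations on $\widetilde Y$ and by $\rho$-translations on $A$) has holomorphic-bundle class $\exp(\alpha)=\eta$, so $X^\eta\cong(\widetilde Y\times A)/\pi_1(Y)$. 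On the latter the $\pi_1(Y)$-invariant form ``pullback of a Kähler form of $Y$ plus translation-invariant Kähler form of $A$'' descends to a Kähler metric.

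\textbf{Part \ref{1-forms_on_torus_bundles}.} Since the fibres are connected, $\pi_*\OO_{X^\eta}=\OO_Y$; and since the fibrewise-translation transition functions of $X^\eta$ act trivially on relative $1$-forms (their differentials are horizontal), $\pi_*\Omega^1_{X^\eta/Y}$ is the trivial bundle $\OO_Y\otimes H^0(A,\Omega^1_A)$. Pushing the relative cotangent sequence $0\to\pi^*\Omega^1_Y\to\Omega^1_{X^\eta}\to\Omega^1_{X^\eta/Y}\to0$ forward and taking global sections over the connected compact $Y$ gives the left-exact sequence $0\to H^0(Y,\Omega^1_Y)\to H^0(X^\eta,\Omega^1_{X^\eta})\to H^0(A,\Omega^1_A)$, in which the last map is restriction to a (equivalently, any) fibre. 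For surjectivity: by part \ref{torus_bundle_kaehler}, Kählerness gives $c(\eta)$ torsion, whence $d_2|_{H^1(A,\C)}=0$ and the Leray five-term sequence exhibits $H^1(X^\eta,\C)$ as an extension of $H^1(A,\C)$ by $H^1(Y,\C)$, so $b_1(X^\eta)=b_1(Y)+2g$; combining with $b_1(X^\eta)=2h^{1,0}(X^\eta)$ and $b_1(Y)=2h^{1,0}(Y)$ (Kählerness of both) gives $h^{1,0}(X^\eta)=h^{1,0}(Y)+g$, and this forces the last map in the left-exact sequence to be onto. Finally, if $\omega\in H^0(X^\eta,\Omega^1_{X^\eta})$ does not come from $Y$, its image $\omega_0\in H^0(A,\Omega^1_A)$ is non-zero; the induced section of the trivial bundle $\pi_*\Omega^1_{X^\eta/Y}=\OO_Y\otimes H^0(A,\Omega^1_A)$ is the constant $\omega_0$, so $\omega$ restricts on every fibre to the nowhere-vanishing translation-invariant form $\omega_0$, and since the fibres cover $X^\eta$, $\omega$ has no zeros.

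\textbf{Part \ref{torus_bundle_multisection}.} For \ref{torus_bundle_multisection_c}$\Rightarrow$\ref{torus_bundle_multisection_b}: if $n\eta=0$ then $X^{n\eta}=Y\times A$ (a torsor with a section is trivial, \Cref{classification_torus_fiber_bundles}), the isogeny $X^\eta\to X^{n\eta}=Y\times A$ fibrewise $[n]$ is finite étale, and $Y':=$ its fibre over the zero-section is finite étale over $Y$, with the inclusion $Y'\hookrightarrow X^\eta$ a section of $X^\eta\times_Y Y'\to Y'$; hence the latter is trivial. The implication \ref{torus_bundle_multisection_b}$\Rightarrow$\ref{torus_bundle_multisection_a} is immediate (the zero-section of $Y'\times A$ maps to a multisection of $X^\eta$); one may also note \ref{torus_bundle_multisection_b}$\Rightarrow$\ref{torus_bundle_multisection_c} directly, since $p_*p^*=\deg p$ on $H^1$ for $p\colon Y'\to Y$ finite étale. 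It remains to prove \ref{torus_bundle_multisection_a}$\Rightarrow$\ref{torus_bundle_multisection_c}. Let $M\subset X^\eta$ be a multisection; passing to an irreducible component that dominates $Y$ and then to its normalization, we may assume $g\colon M\to Y$ is finite surjective of some degree $d$ with $M$ normal. Then $g_*\OO_M$ is coherent, torsion-free of generic rank $d$, hence locally free of rank $d$ outside a closed subset $Z\subset Y$ of codimension $\geq2$; over $Y^\circ:=Y\setminus Z$, $g$ is finite flat of degree $d$, so summing its $d$ points along each fibre via the $A$-torsor structure---étale-locally, trivialize $X^\eta=Y^\circ\times A$, take the induced point of $\mathrm{Sym}^d A$, and add, noting that a translation $t$ of the trivialization changes the sum by $dt$, which is the transition law of $X^{d\eta}$---produces a section of $X^{d\eta}$ over $Y^\circ$. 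Hence $(d\eta)|_{Y^\circ}=0$, and because $\mathrm{codim}_Y Z\geq2$ the restriction $H^1(Y,\mathcal J_H)\to H^1(Y^\circ,\mathcal J_H)$ is injective: it is so on $H^1(-,\OO_Y^{\oplus g})$ by Hartogs and on $H^2(-,\Lambda)$ because the local cohomology along $Z$ vanishes in degrees $\leq3$, and a diagram chase with the exponential sequence transfers injectivity to $H^1(-,\mathcal J_H)$. Therefore $d\eta=0$, i.e.\ $\eta$ is torsion. The delicacy here is precisely that a multisection may be ramified and non-flat over $Y$, so one must excise its non-flat locus and then verify that no information is lost in codimension $2$; this is exactly what the torsion-free/Hartogs/local-cohomology inputs provide.
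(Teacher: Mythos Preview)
Your proof is correct and takes a genuinely different, more self-contained route than the paper's.

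For part \ref{torus_bundle_kaehler}, the paper cites \cite[Prop.~2.5]{Claudon_tori_fiber_bundles} for the forward implication and proves the converse by a deformation argument: after reducing to $c(\eta)=0$, it embeds $X^\eta$ in the universal family over $V=H^1(Y,\OO_Y^{\oplus g})$, uses openness of the K\"ahler condition near the central fibre $Y\times A$, and then reaches every point of $V$ via the fibrewise multiplication-by-$n$ isogeny. You instead argue directly: for the forward direction you run the Leray--Serre spectral sequence and exploit non-degeneracy of the restricted K\"ahler class to kill the transgression; for the converse you realize $X^\eta$ as a flat $A$-bundle (using surjectivity of $H^1(Y,\C^g)\to H^1(Y,\OO_Y^{\oplus g})$) and descend a product K\"ahler metric. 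Your approach avoids deformation theory entirely.

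For part \ref{1-forms_on_torus_bundles}, the paper works purely in topological cohomology: it combines the Leray spectral sequence, the Hochschild--Serre spectral sequence for the \'etale cover $X^\eta\to X^{m\eta}$, and deformation invariance of Betti numbers to obtain the inequality $h^1(X^\eta,\C)\geq h^1(Y,\C)+h^1(A,\C)$. You instead push forward the relative cotangent sequence (using that translations act trivially on $\Omega^1_{X^\eta/Y}$), and then read off surjectivity from the Betti-number identity $b_1(X^\eta)=b_1(Y)+2g$ that your transgression vanishing from part \ref{torus_bundle_kaehler} already gives. This is shorter and more transparent.

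For part \ref{torus_bundle_multisection}, the paper simply cites \cite[Prop.~2.2]{Claudon_tori_fiber_bundles} for the equivalence \ref{torus_bundle_multisection_a}$\Leftrightarrow$\ref{torus_bundle_multisection_c} and the existence of an \'etale multisection. You give a direct proof of \ref{torus_bundle_multisection_a}$\Rightarrow$\ref{torus_bundle_multisection_c} via the norm construction over the flat locus and extension across codimension $2$ by a Hartogs/local-cohomology argument. This is the genuinely new content of your write-up. The one place to be slightly careful is the holomorphicity of the fibrewise sum when the multisection is ramified; this is the standard norm map for a finite flat cover with target a complex torus, and your description via $\mathrm{Sym}^dA\to A$ is adequate.
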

\begin{remark}
    \Cref{1-forms_on_torus_bundles} above fails if $c(\eta)$ is non-torsion. See \cite[][V. Prop. 5.3 (ii)]{Barth_Hulek_Peters_vandeVen_surfaces} for more details.
\end{remark}
\begin{proof}
    To prove \cref{torus_bundle_kaehler}, suppose first that $(Y\times A)^\eta$ is K\"ahler. Then \cite[][Prop. 2.5]{Claudon_tori_fiber_bundles} implies that $c(\eta)$ is torsion. Suppose now that $c(\eta)$ is torsion. Let $m$ be a positive integer such that $c(m\eta)=0$. The multiplication by $m$-map $m\colon A\rightarrow A$ induces a finite \'etale cover
    \begin{equation*}
        (Y\times A)^\eta\stackrel{m}{\longrightarrow}(Y\times A)^{m\eta}.
    \end{equation*}
    To show that $(Y\times A)^\eta$ is K\"ahler, it hence suffices to show that $(Y\times A)^{m\eta}$ is K\"ahler. We can thus assume $c(\eta)=0$ or, equivalently, we can assume that $\eta$ is in the image of the exponential map
    \begin{equation*}
        \mathrm{exp}\colon V:=H^1(Y,\OO_Y^{\oplus g})\longrightarrow H^1(Y,\OO_Y^{\oplus n}/\Lambda).
    \end{equation*}
    It hence suffices to show that $(Y\times A)^{\mathrm{exp}(v)}$ is K\"ahler for all $v\in V$. Let
    \begin{equation*}
        \Pi\colon\mathcal{X}\stackrel{q}{\longrightarrow} Y\times V\stackrel{pr_2}{\longrightarrow} V
    \end{equation*}
    be the universal family \cite[][Prop. 2.4]{Claudon_tori_fiber_bundles}, i.e. over every $v\in V$ lies the torus fiber bundle $\Pi^{-1}(v)\cong (Y\times A)^{\mathrm{exp}(v)}\rightarrow Y$. In particular, $\Pi^{-1}(0)\cong Y\times A$, which is K\"ahler. As $\Pi$ is a smooth morphism, there is an open neighbourhood $U\subset V$ of $0\in V$ such that $\Pi^{-1}(u)$ is K\"ahler for all $u\in U$ \cite[][9. Thm. 9.23]{Voisin_HT_1}. Let $v\in V$ be an arbitrary element. Then there is a positive integer $n$ such that $\frac{1}{n}v\in U$. The multiplication by $n$-map $n\colon A\rightarrow A$ induces a finite \'etale cover
    \begin{equation*}
        (Y\times A)^{\mathrm{exp}(\frac{1}{n}v)}\stackrel{n}{\longrightarrow}(Y\times A)^{\mathrm{exp}(v)}.
    \end{equation*}
    As the left-hand side is K\"ahler by assumption and the morphism is finite \'etale, it follows that also the right-hand side must be K\"ahler. This proves \cref{torus_bundle_kaehler}. A similar argument can be found in \cite[][Prop. 3.23]{Claudon_Hoering_Lin_fundamental_group}.\par
    To prove \cref{1-forms_on_torus_bundles}, note that $R^1\pi_*\C$ is a trivial local system. Indeed, as $R^{2g-1}\pi_*\Z=\Lambda\cong\Z^{2g}$ by construction also $R^{2g-1}\pi_*\C\cong\C^{2g}$. The cup product yields
    \begin{equation*}
        R^1\pi_*\C\otimes R^{2g-1}\pi_*\C\stackrel{\smile}{\longrightarrow}R^{2g}\pi_*\C\cong\C.
    \end{equation*}
    The local system $R^1\pi_*\C$ is thus trivial. The Leray spectral sequence 
    \begin{equation*}
        E^{p,q}_2=H^p(Y,R^q\pi_*\C)\Longrightarrow H^{p+q}((Y\times A)^\eta,\C)
    \end{equation*}
    induces an exact sequence
    \begin{equation*}
        0\longrightarrow H^1(Y,\C)\longrightarrow H^1((Y\times A)^\eta,\C)\longrightarrow H^0(Y,R^1\pi_*\C).
    \end{equation*}
    Suppose we have proved that the last map is surjective. Then for any $y\in Y$ the composition
    \begin{equation*}
        H^1((Y\times A)^\eta,\C)\longrightarrow H^0(Y,R^1\pi_*\C)\stackrel{\sim}{\longrightarrow} H^1(\pi^{-1}(y),\C)
    \end{equation*}
    is also surjective, where the last isomorphism uses that $R^1\pi_*\C$ is a trivial local system. Moreover, as this can be identified with the restriction map $H^1((Y\times A)^\eta,\C)\rightarrow H^1(\pi^{-1}(y),\C)$, we would then obtain the desired short exact sequence
    \begin{equation*}
        0\longrightarrow H^0(Y,\Omega^1_Y)\longrightarrow H^0((Y\times A)^\eta,\Omega^1_{(Y\times A)^\eta})\longrightarrow H^0(A,\Omega^1_A)\longrightarrow 0.
    \end{equation*}
    It is hence enough to show the surjectivity of $H^1((Y\times A)^\eta,\C)\rightarrow H^0(Y,R^1\pi_*\C)$. This is automatic if we can show
    \begin{equation}\label{inequality_cohomology_dim}
        h^1((Y\times A)^\eta,\C)\geq h^1(Y,\C)+ h^0(Y,R^1\pi_*\C)=h^1(Y,\C)+h^1(A,\C),
    \end{equation}
    where the last equality uses that $R^1\pi_*\C$ is a trivial local system with fiber isomorphic to $H^0(A,\C)$. To prove the inequality \cref{inequality_cohomology_dim}, note that since $(Y\times A)^\eta$ is K\"ahler, $c(\eta)$ is torsion. Let $m$ be a positive integer such that $c(m\eta)=0$. The multiplication by $m$-map $m\colon A\rightarrow A$ induces a finite \'etale cover
    \begin{equation*}
        (Y\times A)^\eta\stackrel{m}{\longrightarrow}(Y\times A)^{m\eta}.
    \end{equation*}
    This can also be written as follows: since $(Y\times A)^\eta\rightarrow Y$ is an $A$-torsor over $Y$, there is a canonical action of $A$ on $(Y\times A)^\eta$ over $Y$. Denote by $T_a\colon (Y\times A)^\eta\rightarrow (Y\times A)^\eta$ the automorphism induced by $a\in A$. On each fiber of $(Y\times A)^\eta\rightarrow Y$, the automorphism $T_a$ is given by translation with $a$. In particular, as $A$ is path-connected, $T_a$ is homotopic to the identity. Moreover, if we denote by $A[m]$ the $m$-torsion points of $A$, the \'etale cover $(Y\times A)^\eta\stackrel{m}{\rightarrow}(Y\times A)^{m\eta}$ can be identified with the quotient map $(Y\times A)^\eta\rightarrow (Y\times A)^\eta/A[m]$. Therefore, there is an isomorphism
    \begin{equation*}
        (Y\times A)^{m\eta}\cong (Y\times A)^\eta/A[m].
    \end{equation*}
    The Hochschild--Serre spectral sequence
    \begin{equation*}
        F^{p,q}=H^p(A[m],H^q((Y\times A)^\eta,\C))\Longrightarrow H^{p+q}((Y\times A)^{m\eta},\C)
    \end{equation*}
    induces an exact sequence
    \begin{equation}\label{exact_sequence_proof_1-forms_on_torus_bundles}
        0\longrightarrow H^1(A[m],H^0((Y\times A)^\eta,\C))\longrightarrow H^1((Y\times A)^{m\eta},\C)\longrightarrow H^0(A[m],H^1((Y\times A)^\eta,\C)).
    \end{equation}
    Since the action of $A[m]$ on $(Y\times A)^\eta$ is homotopic to the identity, the action of $A[m]$ on the cohomology groups $H^i((Y\times A)^\eta,\C)$ is trivial. Hence,
    \begin{align*}
        H^0(A[m],H^1((Y\times A)^\eta,\C))&=H^1((Y\times A)^\eta,\C),\\
        H^1(A[m],H^0((Y\times A)^\eta,\C))&\cong\Hom(A[m],H^0((Y\times A)^\eta,\C))=0,
    \end{align*}
    where the last equality in the second line uses that $H^0((Y\times A)^\eta,\C)$ is torsion free. Thus, the exact sequence \cref{exact_sequence_proof_1-forms_on_torus_bundles} implies
    \begin{equation*}
        h^1((Y\times A)^{m\eta},\C)\leq h^1((Y\times A)^\eta,\C).
    \end{equation*}
    Since the universal family $\Pi\colon\mathcal{X}\stackrel{q}{\rightarrow} Y\times V\stackrel{pr_2}{\longrightarrow} V$ allows to deform $(Y\times A)^{m\eta}$ to $Y\times A$, we obtain the desired inequality
    \begin{equation*}
        h^1(Y,\C)+h^1(A,\C)=h^1((Y\times A)^{m\eta},\C)\leq h^1((Y\times A)^\eta,\C),
    \end{equation*}
    which proves \cref{1-forms_on_torus_bundles}.\par
    To prove \cref{torus_bundle_multisection}, note that the equivalence of \ref{torus_bundle_multisection_a} and \ref{torus_bundle_multisection_c} is proven in \cite[][Prop. 2.2]{Claudon_tori_fiber_bundles}. It is also proven that in the case of $\ref{torus_bundle_multisection_a}\Leftrightarrow\ref{torus_bundle_multisection_c}$ one can choose the multisection $Y'\subset (Y\times A)^\eta$ such that $Y'\rightarrow Y$ is finite \'etale. Note that $(Y\times A)^\eta\times_YY'\rightarrow Y'$ has a section. It is hence isomorphic to $Y'\times A$. Thus, $\ref{torus_bundle_multisection_a}\Rightarrow\ref{torus_bundle_multisection_b}$. Conversely, suppose there is a finite \'etale cover $Y'\rightarrow Y$ such that $(Y\times A)^\eta\times_YY'\cong Y'\times A$. Then
    \begin{equation*}
        Y'\cong Y'\times\{0\}\longhookrightarrow Y'\times A\longrightarrow (Y\times A)^\eta\rightarrow Y
    \end{equation*}
    defines a multisection. Thus, $\ref{torus_bundle_multisection_b}\Rightarrow\ref{torus_bundle_multisection_a}$.
\end{proof}
\begin{lemma}\label{example_no_splitting}
    Let $Y$ be a compact connected K\"ahler manifold and let $E=\C/\Lambda$ be an elliptic curve. Suppose that $\eta\in H^1(Y,\OO_Y/\Lambda)$ is a cohomology class such that $(Y\times E)^\eta$ is K\"ahler. Then the following are equivalent:
    \begin{enumerate}[label=(\roman*)]
        \item\label{example_no_splitting1} There is a smooth morphism $\varphi\colon (Y\times E)^\eta\rightarrow A$ to a positive-dimensional torus $A$.
        \item\label{example_no_splitting2} There is a smooth morphism $Y\rightarrow B$ to a positive-dimensional torus $B$ or $\eta$ is torsion.
    \end{enumerate}
    In particular, if $Y$ does not admit any smooth morphisms to positive-dimensional tori then $(Y\times E)^\eta$ admits a smooth morphism to a positive-dimensional torus if and only if $\eta$ is torsion.
\end{lemma}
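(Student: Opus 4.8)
The plan is to prove the easy implication (ii)$\Rightarrow$(i) directly, and to prove (i)$\Rightarrow$(ii) by analysing how a smooth morphism $\varphi\colon X:=(Y\times E)^\eta\to A$ to a positive-dimensional torus interacts with the fibrewise $E$-action on $X$ over $Y$. For (ii)$\Rightarrow$(i): if $Y$ carries a smooth morphism $Y\to B$ to a positive-dimensional torus, I compose it with the projection $\pi\colon X\to Y$, which is smooth since $X\to Y$ is a torus fibre bundle; if instead $\eta$ is torsion, I pick $m>0$ with $m\eta=0$, so that the fibrewise multiplication-by-$m$ map gives a finite \'etale cover $X\to(Y\times E)^{m\eta}=(Y\times E)^0=Y\times E$ (the last identification by \Cref{corollary_isotrivial_and_trivial_vhs}), and composing with the second projection yields a smooth morphism $X\to E$.

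For (i)$\Rightarrow$(ii), let $T_a\colon X\to X$ denote the fibrewise translation by $a\in E$ and consider the morphism $X\times E\to A$, $(x,a)\mapsto\varphi(T_ax)-\varphi(x)$. On each slice $\{x\}\times E$ it restricts to a pointed holomorphic map $E\to A$, hence to a homomorphism of tori; since $X$ is connected and $\Hom(E,A)$ is discrete, this homomorphism is independent of $x$, so that $\varphi(T_ax)=\varphi(x)+\chi(a)$ for a fixed $\chi\in\Hom(E,A)$. I then distinguish according to $\chi$ and $\dim A$.

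If $\chi=0$, then $\varphi$ is constant on the fibres of $\pi$, so the image of $(\pi,\varphi)\colon X\to Y\times A$ is a closed irreducible subvariety which is finite and bijective over the normal variety $Y$, hence the graph of a morphism $h\colon Y\to A$ with $\varphi=h\circ\pi$; as $\varphi$ is a submersion and $\pi$ is a surjective submersion, $h$ is itself a submersion, that is, a smooth morphism to the positive-dimensional torus $A$, and (ii) holds. If $\chi\neq0$ but $\dim A\geq2$, I set $E':=\chi(E)$, a one-dimensional subtorus of $A$; then the composition of $\varphi$ with the quotient $A\to A/E'$ is constant on the fibres of $\pi$, hence factors through a morphism $\bar\varphi\colon Y\to A/E'$ which is smooth by the same submersion argument, so (ii) holds again because $\dim(A/E')\geq1$.

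The remaining case is $\chi\neq0$ with $\dim A=1$, so that $A=E'$ and $\chi\colon E\to A$ is an isogeny of some degree $d$. Here one checks that $(\pi,\varphi)\colon X\to Y\times A$ is finite \'etale: it is proper since $X$ is compact and quasi-finite since each fibre is a coset of $\ker\chi$, hence finite, and the kernel of its differential at a point $x$ is $\ker d\pi_x\cap\ker d\varphi_x=0$, because $\ker d\pi_x$ is the tangent space to the fibre of $\pi$ through $x$, on which $d\varphi_x$ agrees up to translations with the isomorphism $d\chi$. Pulling the section $Y\times\{a\}\subseteq Y\times A$ back along this finite \'etale map, every connected component of $\varphi^{-1}(a)\subseteq X$ is a subvariety finite \'etale over $Y$, that is, a multisection of $\pi\colon X\to Y$; by \Cref{fundamental_properties_special_torus_bundles} \cref{torus_bundle_multisection} this forces $\eta$ to be torsion, so (ii) holds. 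The final assertion then follows at once, since if $Y$ admits no smooth morphism to a positive-dimensional torus then condition (ii) reduces to $\eta$ being torsion. I expect the \'etaleness of $(\pi,\varphi)$ in the last case, together with the care needed to make the rigidity step for $\chi=0$ yield a genuine morphism $h$ rather than merely a set-theoretic map, to be the only delicate points; extracting the homomorphism $\chi$ is routine.
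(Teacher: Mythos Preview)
Your proof is correct and follows essentially the same route as the paper: both argue (ii)$\Rightarrow$(i) by composing with $\pi$ or with the multiplication-by-$m$ map, and both prove (i)$\Rightarrow$(ii) by distinguishing whether the fibres of $\pi$ are contracted by $\varphi$, quotienting $A$ by the image subtorus when they are not, and extracting a multisection in the residual one-dimensional case to invoke \Cref{fundamental_properties_special_torus_bundles}. Your explicit extraction of the homomorphism $\chi$ via rigidity and your verification that $(\pi,\varphi)$ is finite \'etale are more detailed than the paper's treatment, which simply observes that the fibres map to translates of a fixed subtorus $F\subset A$ and that a general fibre of $\varphi$ is a multisection, but the underlying argument is the same.
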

\begin{proof}
    To prove the implication $\ref{example_no_splitting1}\Rightarrow\ref{example_no_splitting2}$, suppose there is a smooth morphism $\varphi\colon (Y\times E)^\eta\rightarrow A$ to a positive-dimensional torus $A$. If the elliptic curves $E=\pi^{-1}(y)\subset (Y\times E)^\eta$ are contracted to a point in $A$, the smooth morphism $\varphi$ factors through a smooth morphism $\psi\colon Y\rightarrow A$. And hence \cref{example_no_splitting2} holds true with $B:=A$. If the elliptic curves are not contracted by $\varphi$, they are mapped to a translate of a positive-dimensional subtorus $F\subset A$. The composition $(Y\times E)^\eta\rightarrow A\rightarrow A/F$ hence factors through a smooth morphism $\psi\colon Y\rightarrow A/F$. If $A/F$ is positive dimensional, \cref{example_no_splitting2} holds true with $B:=A/F$. If $A/F$ is not positive-dimensional, $A=F$ and hence a general fiber of $(Y\times E)^\eta\rightarrow A=F$ defines a multisection of $(Y\times E)^\eta\rightarrow Y$. And so $\eta$ is torsion by \Cref{fundamental_properties_special_torus_bundles}. Thus, \cref{example_no_splitting2} holds true.\par
    To prove the implication $\ref{example_no_splitting2}\Rightarrow\ref{example_no_splitting1}$, suppose first that there is a smooth morphism $Y\rightarrow B$ to a positive-dimensional torus $B$. Then the composition $(Y\times E)^\eta\rightarrow Y\rightarrow B$ is also smooth. This shows that \cref{example_no_splitting1} holds true with $A:=B$. Suppose now that $\eta$ is torsion with order $m$, and consider the finite \'etale morphism
    \begin{equation*}
        (Y\times E)^\eta\longrightarrow (Y\times E)^{m\eta}\cong Y\times E.
    \end{equation*}
    Composing this with the projection $Y\times E\rightarrow E$ also defines a smooth morphism to a positive-dimensional torus. This shows that \cref{example_no_splitting1} holds true with $A:=E$.
\end{proof}
\begin{remark}\label{remark_example_no_splitting}
    We now obtain examples which show that \cref{structure_theorem_projective_case_smooth_morphism} and \cref{structure_theorem_projective_case_Kodaira-dim_non-neg} in \Cref{structure_theorem_projective_case} fail in the K\"ahler case: let $Y$ be of general type such that $H^1(Y,\OO_Y)\neq 0$. By a result of Popa and Schnell \cite[][Cor. 3.1]{Popa_Schnell_Kodaira_dimension_zeros_1-forms}, $Y$ does not admit a smooth morphism to a torus. Note that since $H^1(Y,\OO_Y)\neq 0$ and $\Lambda\cong\Z^2$, we have $H^1(Y,\Lambda)\neq 0$. Then any class $v\in H^1(Y,\OO_Y)$ that is not in the image of
    \begin{equation*}
        H^1(Y,\Lambda)\otimes\Q\longrightarrow H^1(Y,\OO_Y)
    \end{equation*}
    satisfies the property that $(Y\times E)^{\mathrm{exp}(v)}$ is K\"ahler and $\mathrm{exp}(v)$ is not torsion. Thus, by \Cref{fundamental_properties_special_torus_bundles}, $(Y\times E)^\eta$ admits a holomorphic 1-form without zeros but no smooth morphism to a positive-dimensional torus.
\end{remark}
In the following, we will state and prove some technical results that will be relevant later on.
\begin{lemma}\label{lemma_extension_of_etale_cover_of_elliptic_curves}
    Let $Y$ be a compact connected K\"ahler manifold and let $\varphi\colon A'=\C^g/\Lambda'\rightarrow A=\C^g/\Lambda$ be a finite \'etale cover of tori. Suppose $\eta\in H^1(Y,\OO_Y^{\oplus g}/\Lambda)$ is a cohomology class with trivial Chern class. Then there exists a non-unique cohomology class $\eta'\in H^1(Y,\OO_Y^{\oplus g}/\Lambda')$ that lifts $\eta$, i.e. the map
    \begin{equation*}
        H^1(Y,\OO_Y^{\oplus g}/\Lambda')\longrightarrow H^1(Y,\OO_Y^{\oplus g}/\Lambda)
    \end{equation*}
    obtained by applying cohomology to the short exact sequence $0\rightarrow\Lambda/\Lambda'\rightarrow\OO_Y^{\oplus g}/\Lambda'\rightarrow\OO_Y^{\oplus g}/\Lambda\rightarrow 0$ sends $\eta'$ to $\eta$. In particular, we can extend $\varphi\colon A'\rightarrow A$ to a finite \'etale cover
    \begin{equation*}
        (Y\times A')^{\eta'}\longrightarrow(Y\times A)^{\eta}.
    \end{equation*}
\end{lemma}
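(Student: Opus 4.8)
The plan is to study the long exact cohomology sequence of the short exact sequence of sheaves $0\to\Lambda/\Lambda'\to\OO_Y^{\oplus g}/\Lambda'\to\OO_Y^{\oplus g}/\Lambda\to 0$. This sequence is exact: since $\varphi$ is a finite \'etale cover of tori, $\Lambda'$ is a finite-index sublattice of $\Lambda$, so $\Lambda/\Lambda'$ is a finite constant sheaf on $Y$, and it is precisely the kernel of the surjection $\OO_Y^{\oplus g}/\Lambda'\twoheadrightarrow\OO_Y^{\oplus g}/\Lambda$. The class $\eta$ lies in the image of $H^1(Y,\OO_Y^{\oplus g}/\Lambda')\to H^1(Y,\OO_Y^{\oplus g}/\Lambda)$ if and only if its image $\delta(\eta)\in H^2(Y,\Lambda/\Lambda')$ under the connecting map vanishes, and the set of lifts is then a coset of the image of $H^1(Y,\Lambda/\Lambda')\to H^1(Y,\OO_Y^{\oplus g}/\Lambda')$; this image is in general non-zero, which explains the asserted non-uniqueness. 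So the whole lemma reduces to showing $\delta(\eta)=0$, which is where the hypothesis on the Chern class enters.

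I would prove this by identifying $\delta$ with the Chern class map of \Cref{def_variation_HS_Jacobian_fibration} followed by reduction modulo $\Lambda'$. Work on an open cover $(U_i)$ of $Y$ fine enough to represent $\eta$ by a \v{C}ech $1$-cocycle $(\eta_{ij})$ with values in $\OO_Y^{\oplus g}/\Lambda$ and with every $U_i$ and $U_{ij}:=U_i\cap U_j$ contractible. Lift each $\eta_{ij}$ to $\widetilde\eta_{ij}\in\Gamma(U_{ij},\OO_Y^{\oplus g})$ (possible since $H^1(U_{ij},\Lambda)=0$); by definition $c(\eta)\in H^2(Y,\Lambda)$ is represented by the $2$-cocycle $\widetilde\eta_{jk}-\widetilde\eta_{ik}+\widetilde\eta_{ij}$ with values in $\Lambda$. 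Using the reductions $\widetilde\eta_{ij}\bmod\Lambda'$ as lifts of $\eta_{ij}$ to $\OO_Y^{\oplus g}/\Lambda'$ and recomputing the connecting map, one sees that $\delta(\eta)$ is represented by $(\widetilde\eta_{jk}-\widetilde\eta_{ik}+\widetilde\eta_{ij})\bmod\Lambda'$. Hence $\delta(\eta)=\overline{\beta}\bigl(c(\eta)\bigr)$, where $\overline{\beta}\colon H^2(Y,\Lambda)\to H^2(Y,\Lambda/\Lambda')$ is induced by $\Lambda\to\Lambda/\Lambda'$ --- this is the standard compatibility of connecting maps for the flag $\Lambda'\subset\Lambda\subset\OO_Y^{\oplus g}$. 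Since $c(\eta)=0$ by hypothesis, $\delta(\eta)=0$, so a lift $\eta'$ exists.

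For the ``in particular'' clause, fix such a lift $\eta'$. On a sufficiently fine cover as above choose a representing cocycle $(\eta'_{ij})$ of $\eta'$; since it maps to a cocycle cohomologous to $(\eta_{ij})$, after modifying $(\eta'_{ij})$ by a coboundary (lifting the relevant $0$-cochain to $\OO_Y^{\oplus g}/\Lambda'$ over the contractible $U_i$) we may assume it maps term by term to $(\eta_{ij})$. By \Cref{classification_torus_fiber_bundles} and \Cref{definition_isotrivial_and_trivial_vhs}, $(Y\times A)^\eta$ is obtained by gluing the charts $U_i\times A$ over $U_{ij}$ along $(x,a)\mapsto(x,a+\eta_{ij}(x))$, and $(Y\times A')^{\eta'}$ likewise from $U_i\times A'$ along $(x,a')\mapsto(x,a'+\eta'_{ij}(x))$. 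Since $\varphi$ is a group homomorphism with $\varphi\circ\eta'_{ij}=\eta_{ij}$, the chart maps $\mathrm{id}_{U_i}\times\varphi$ respect these gluings and descend to a morphism $(Y\times A')^{\eta'}\to(Y\times A)^\eta$ over $Y$ that restricts to $\varphi$ on each fibre; being a morphism of fibre bundles over $Y$ which on every fibre is the finite \'etale cover $\varphi$, it is itself finite \'etale. (One may add that $c(\eta')$ maps to $c(\eta)=0$ in $H^2(Y,\Lambda)$, hence is torsion, so $(Y\times A')^{\eta'}$ is even K\"ahler by \Cref{fundamental_properties_special_torus_bundles} \cref{torus_bundle_kaehler}, although this is not asked for.)

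The main obstacle is the cocycle identity $\delta(\eta)=\overline{\beta}(c(\eta))$: one must verify that the lift of $\eta$ used to compute $\delta$ through the quotient sheaves can be chosen compatibly with the lift used to compute $c(\eta)$ through $0\to\Lambda\to\OO_Y^{\oplus g}\to\OO_Y^{\oplus g}/\Lambda\to 0$. Granting this, everything else is a routine diagram chase and gluing argument.
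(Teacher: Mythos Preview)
Your proof is correct and follows essentially the same idea as the paper's: both are diagram chases exploiting the vanishing of $c(\eta)$. The paper's argument is slightly more direct---it simply displays the commutative ladder of exponential sequences
\[\begin{tikzcd}
   {H^1(Y,\mathcal{O}_Y^{\oplus g})} & {H^1(Y,\mathcal{O}_Y^{\oplus g}/\Lambda')} & {H^2(Y,\Lambda')} \\
   {H^1(Y,\mathcal{O}_Y^{\oplus g})} & {H^1(Y,\mathcal{O}_Y^{\oplus g}/\Lambda)} & {H^2(Y,\Lambda)}
   \arrow["\mathrm{exp}", from=1-1, to=1-2]
   \arrow[equals, from=1-1, to=2-1]
   \arrow["c", from=1-2, to=1-3]
   \arrow[from=1-2, to=2-2]
   \arrow[from=1-3, to=2-3]
   \arrow["\mathrm{exp}", from=2-1, to=2-2]
   \arrow["c", from=2-2, to=2-3]
\end{tikzcd}\]
and observes that since $c(\eta)=0$, one may lift $\eta$ to some $v\in H^1(Y,\mathcal{O}_Y^{\oplus g})$ and then push it forward to $\eta':=\mathrm{exp}(v)\in H^1(Y,\mathcal{O}_Y^{\oplus g}/\Lambda')$. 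Your route through the obstruction $\delta(\eta)=\overline{\beta}(c(\eta))$ is the dual diagram chase and equally valid; your explicit \v{C}ech verification of the compatibility $\delta=\overline{\beta}\circ c$ and of the gluing for the ``in particular'' clause are careful but not strictly necessary, since both follow formally from the commutativity of the diagram above.
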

\begin{proof}
    This follows from the commutative diagram
    \[\begin{tikzcd}
	   {H^1(Y,\mathcal{O}_Y^{\oplus g})} & {H^1(Y,\mathcal{O}_Y^{\oplus g}/\Lambda')} & {H^2(Y,\Lambda')} \\
	   {H^1(Y,\mathcal{O}_Y^{\oplus g})} & {H^1(Y,\mathcal{O}_Y^{\oplus g}/\Lambda)} & {H^2(Y,\Lambda).}
	   \arrow["\mathrm{exp}", from=1-1, to=1-2]
	   \arrow[equals, from=1-1, to=2-1]
	   \arrow["c", from=1-2, to=1-3]
	   \arrow[from=1-2, to=2-2]
	   \arrow[from=1-3, to=2-3]
	   \arrow["\mathrm{exp}", from=2-1, to=2-2]
	   \arrow["c", from=2-2, to=2-3]
    \end{tikzcd}\]
\end{proof}
\begin{definition}\label{definition_extension_of_etale_cover_of_elliptic_curves}
    In the situation of \Cref{lemma_extension_of_etale_cover_of_elliptic_curves}, we call $(Y\times A')^{\eta'}\rightarrow(Y\times A)^{\eta}$ an extension of $\varphi\colon A'\rightarrow A$.
\end{definition}
\subsection{Elliptic fibrations}
If not stated otherwise, the following material can be found in \cite[][1.1]{Nakayama_elliptic-fibrations}.
\begin{definition}
    Let $f\colon X\rightarrow Y$ be a proper surjective morphism between normal complex analytic varieties.
    \begin{enumerate}[label=(\roman*)]
        \item The morphism $f$ is called an elliptic fibration if it has connected fibers and the general fiber is a smooth curve of genus 1.
        \item The discriminant locus of an elliptic fibration is the not necessarily reduced closed complex subspace $\Delta\subset Y$ over which $f$ is not smooth. The smooth locus is denoted by $Y^*:=Y\setminus\Delta$.
        \item The elliptic fibration $f\colon X\rightarrow Y$ is called projective, if there exists an $f$-ample line bundle on $X$. Similarly, $f$ is called locally projective, if there exists an open cover $Y=\bigcup U_i$ such that the elliptic fibrations $f^{-1}(U_i)\rightarrow U_i$ are projective for each $U_i$.
    \end{enumerate}
\end{definition}
In the following, we will only consider elliptic fibrations $f\colon X\rightarrow Y$ over a connected complex manifold $Y$.
\subsubsection{Equivariant Weierstraß models}\label{section_Weierstrass_models}
\begin{definition}\label{definition_Weierstrass_models}\cite[][Def. 1.1, 2]{Nakayama_Weierstrass}, \cite[][3E]{Claudon_Hoering_Lin_fundamental_group}
    Let $Y$ be a connected complex manifold and let $\mathcal{L}$ be a line bundle on $Y$. Choose global sections $\alpha\in H^0(Y,\mathcal{L}^{-4})$ and $\beta\in H^0(Y,\mathcal{L}^{-6})$ such that $4\alpha^3+27\beta^2\neq 0$ in $H^0(Y,\mathcal{L}^{-12})$.
    \begin{enumerate}[label=(\roman*)]
        \item The Weierstraß model $p\colon W(\mathcal{L},\alpha,\beta)\rightarrow Y$ is then defined as
        \begin{equation*}
            W(\mathcal{L},\alpha,\beta):=\{y^2z-(x^3+\alpha xz^2+\beta z^3)=0\}\subset\mathbb{P}:=\mathbb{P}(\OO_Y\oplus\mathcal{L}^2\oplus\mathcal{L}^3)\longrightarrow Y,
        \end{equation*}
        where $x\in H^0(\mathbb{P},\OO_\mathbb{P}(1)\otimes p^*\mathcal{L}^{-2})$, $y\in H^0(\mathbb{P},\OO_\mathbb{P}(1)\otimes p^*\mathcal{L}^{-3})$, $z\in H^0(\mathbb{P},\OO_\mathbb{P}(1))$ are given by the direct sum inclusions $\mathcal{L}^2\hookrightarrow\OO_Y\oplus\mathcal{L}^2\oplus\mathcal{L}^3$, $\mathcal{L}^3\hookrightarrow\OO_Y\oplus\mathcal{L}^2\oplus\mathcal{L}^3$, $\OO_Y\hookrightarrow\OO_Y\oplus\mathcal{L}^2\oplus\mathcal{L}^3$.
        \item The subspace $W(\mathcal{L},\alpha,\beta)^\#\subset W(\mathcal{L},\alpha,\beta)$ is defined as the locus where $p$ is smooth.
        \item The canonical section of $W(\mathcal{L},\alpha,\beta)\rightarrow Y$ is defined as $\Sigma:=\{x=z=0\}$. The sheaf of local meromorphic sections of $p\colon W(\mathcal{L},\alpha,\beta)\rightarrow Y$ is denoted by $\mathcal{W}(\mathcal{L},\alpha,\beta)^{mer}$ and the sheaf of local sections with image in $W(\mathcal{L},\alpha,\beta)^\#$ is denoted by $\mathcal{W}(\mathcal{L},\alpha,\beta)^\#$. Both sheaves are sheaves of abelian groups with addition of local (meromorphic) sections with respect to $\Sigma$.
        \item The Weierstraß model $p\colon W(\mathcal{L},\alpha,\beta)\rightarrow Y$ is called minimal if there is no prime divisor $\Delta$ on $Y$ such that $\mathrm{div}(\alpha)-4\Delta$ and $\mathrm{div}(\beta)-6\Delta$ are both effective divisors on $Y$.
        \item Let $G$ be a finite group. The Weierstraß model $p\colon W(\mathcal{L},\alpha,\beta)\rightarrow Y$ is called $G$-equivariant if $G$ acts on $W(\mathcal{L},\alpha,\beta)$ and on $Y$ such that $p$ is $G$-equivariant and the canonical section $\Sigma$ is fixed by the $G$-action.
    \end{enumerate}
    In the case where $\mathcal{L}$, $\alpha$, and $\beta$ are evident from the context, we omit these symbols in the notation.
\end{definition}
\begin{remark}
    The morphism $p\colon W(\mathcal{L},\alpha,\beta)\rightarrow Y$ is projective, flat, and surjective. Its discriminant locus is given by $\mathrm{div}(4\alpha^3+27\beta^2)$, and the total space $W(\mathcal{L},\alpha,\beta)$ is normal, see \cite[][1.2]{Nakayama_Weierstrass}. In the case that the discriminant locus is a normal crossing divisor, the Weierstraß model $p\colon W(\mathcal{L},\alpha,\beta)\rightarrow Y$ is minimal if and only if $W(\mathcal{L},\alpha,\beta)$ has at worst rational singularities by \cite[][Cor. 2.4]{Nakayama_Weierstrass}. Moreover, there is an isomorphism $R^1p_*\OO_{W(\mathcal{L},\alpha,\beta)}\cong\mathcal{L}$ \cite[][1.2]{Nakayama_Weierstrass}. Suppose that a finite group $G$ acts on $W(\mathcal{L},\alpha,\beta)$ and on $Y$ such that the Weierstraß model $p\colon W(\mathcal{L},\alpha,\beta)\rightarrow Y$ is $G$-equivariant. Then we can apply the flat base change theorem \cite[][III. Prop. 9.3]{Hartshorne_AG} to $R^1p_*\OO_{W(\mathcal{L},\alpha,\beta)}\cong\mathcal{L}$ to get a $G^{op}$-action on $\mathcal{L}$.
\end{remark}
\begin{theorem}\label{bimeromorphic_classification_elliptic_fibration_with_section}\cite[][Thm. 2.5]{Nakayama_Weierstrass}
    Let $f\colon X\rightarrow Y$ be an elliptic fibration over a connected complex manifold $Y$. Suppose that $f$ admits a meromorphic section. Then there exists a unique minimal Weierstraß model $p\colon W(\mathcal{L},\alpha,\beta)\rightarrow Y$ that is bimeromorphic to $f$.
\end{theorem}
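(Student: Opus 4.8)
The plan is to build the Weierstra\ss{} model first over the smooth locus of $f$, extend it across the discriminant, and finally deduce uniqueness from the essential uniqueness of a Weierstra\ss{} equation of an elliptic curve over a field. Write $\Delta\subset Y$ for the discriminant locus and $Y^*:=Y\setminus\Delta$, so that $f$ restricts to a smooth proper family of genus-$1$ curves over $Y^*$. The meromorphic section of $f$ is a genuine holomorphic section outside an analytic subset of codimension at least $2$; after shrinking $Y^*$ by such a subset (and using normality of $X$ to recover the lost locus afterwards) we may treat this family as one of elliptic curves with zero section $\Sigma$. First I would take $\mathcal{L}^*$ to be the line bundle on $Y^*$ with $R^1f_*\OO_X|_{Y^*}\cong\mathcal{L}^*$ and run the classical Weierstra\ss{} formulary relative to $Y^*$: the direct images of $\OO_X(n\Sigma)$ for small $n$ are built out of $\OO_{Y^*}$ and powers of $\mathcal{L}^*$, which yields in the standard way relative coordinates and hence canonical sections $\alpha^*\in H^0(Y^*,(\mathcal{L}^*)^{-4})$, $\beta^*\in H^0(Y^*,(\mathcal{L}^*)^{-6})$ with $4(\alpha^*)^3+27(\beta^*)^2$ nowhere vanishing, together with an isomorphism $X|_{Y^*}\cong W(\mathcal{L}^*,\alpha^*,\beta^*)^\#$ over $Y^*$.

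Next I would extend the triple $(\mathcal{L}^*,\alpha^*,\beta^*)$ to $Y$. As $Y$ is a manifold, $\mathcal{L}^*$ extends to a line bundle on $Y$; and since the poles of $\alpha^*$ and $\beta^*$ are supported on $\Delta$, twisting this extension by a sufficiently large effective combination of the prime components of $\Delta$ gives a line bundle $\mathcal{L}_1$ on $Y$ to which $\alpha^*$ and $\beta^*$ extend as holomorphic sections $\alpha_1\in H^0(Y,\mathcal{L}_1^{-4})$, $\beta_1\in H^0(Y,\mathcal{L}_1^{-6})$. The triple $(\mathcal{L}_1,\alpha_1,\beta_1)$ need not be minimal, but whenever a prime divisor $D$ has $\mathrm{ord}_D(\alpha_1)\geq 4$ and $\mathrm{ord}_D(\beta_1)\geq 6$ one replaces it by $(\mathcal{L}_1(D),\alpha_1/s_D^4,\beta_1/s_D^6)$ for a local equation $s_D$ of $D$; this does not change the induced elliptic fibration over $\C(Y)$, and since the orders along each component of $\Delta$ strictly drop the process terminates at a minimal triple $(\mathcal{L},\alpha,\beta)$. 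The resulting Weierstra\ss{} model $p\colon W(\mathcal{L},\alpha,\beta)\to Y$ restricts over $Y^*$ to $W(\mathcal{L}^*,\alpha^*,\beta^*)^\#\cong X|_{Y^*}$; since $W(\mathcal{L},\alpha,\beta)$ and $X$ are both normal and share this dense open subset, $p$ is bimeromorphic to $f$. This settles existence.

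For uniqueness, suppose $p'\colon W(\mathcal{L}',\alpha',\beta')\to Y$ is a second minimal Weierstra\ss{} model bimeromorphic to $f$. Over the generic point of $Y$ the fibres of $p$ and of $p'$ are elliptic curves, each isomorphic to the generic fibre of $f$ as a curve carrying the rational point induced by the section, hence isomorphic to each other. After identifying $\mathcal{L}$ and $\mathcal{L}'$ over $Y^*$ this forces $(\mathcal{L}',\alpha',\beta')=(\mathcal{L}(D),v^4\alpha,v^6\beta)$ for a divisor $D$ supported on $\Delta$ and an element $v\in\C(Y)^*$ whose divisor is determined by $D$ up to units. Comparing the orders of vanishing of $\alpha,\beta,\alpha',\beta'$ along each prime component of $\Delta$ and using that neither triple has a component along which $\alpha$ vanishes to order $\geq 4$ and $\beta$ to order $\geq 6$ forces $D=0$, hence $v$ to be a unit, and therefore $W(\mathcal{L},\alpha,\beta)\cong W(\mathcal{L}',\alpha',\beta')$ over $Y$.

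The step I expect to be the main obstacle is the extension across $\Delta$: one has to understand the pole orders of $\alpha^*$ and $\beta^*$ along the prime components of the discriminant and check that a single twist of the line bundle can simultaneously make both sections regular and attain the minimal form. This amounts to the local analysis of a minimal Weierstra\ss{} model along a prime divisor of the discriminant, i.e. Kodaira's table of singular fibres obtained by a Tate-style reduction; the same local rigidity of the minimal normalisation is exactly what forces uniqueness. The details are carried out in \cite[][Thm. 2.5]{Nakayama_Weierstrass}.
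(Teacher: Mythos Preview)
The paper does not give its own proof of this theorem; it is simply quoted from \cite[][Thm.~2.5]{Nakayama_Weierstrass} and used as a black box. Your sketch follows the classical line of argument---build the Weierstra\ss{} data from the filtration $f_*\OO_X(n\Sigma)$ over the locus where $f$ is smooth and the section is holomorphic, extend the triple $(\mathcal{L},\alpha,\beta)$ across the discriminant by twisting, reduce to minimal form, and deduce uniqueness from the essential uniqueness of a minimal Weierstra\ss{} equation---which is indeed how Nakayama proceeds, and you acknowledge as much in your final paragraph.

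One small point worth tightening: the indeterminacy locus of the meromorphic section has codimension $\geq 2$ in $Y$, so you are really constructing $(\mathcal{L}^*,\alpha^*,\beta^*)$ on the complement of a codimension-$2$ set in $Y^*$ and then extending across both that set and $\Delta$. The extension across the codimension-$2$ part is automatic by Hartogs for $\alpha^*,\beta^*$ and by $\pic$ being insensitive to codimension-$2$ removal on a manifold; only the extension across $\Delta$ requires the twisting argument you describe. With that caveat, the outline is sound.
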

\begin{definition}\cite[][3E]{Claudon_Hoering_Lin_fundamental_group}
    Let $Y$ be a connected complex manifold and let $G$ be a finite group acting on $Y$. Let $\Delta\subset Y$ be a $G$-equivariant normal crossing divisor and let $H$ be a $G$-equivariant variation of Hodge structures on $j\colon Y^*:=Y\setminus\Delta\hookrightarrow Y$ associated to some elliptic fiber bundle over $Y^*$. We define
    \begin{equation*}
        \mathcal{E}_G(Y,\Delta,H)
    \end{equation*}
    to be the set of equivalence classes of $G$-equivariant elliptic fibrations $f\colon X\rightarrow Y$ that satisfy the following properties:
    \begin{enumerate}[label=(\roman*)]
        \item $f\colon X\rightarrow Y$ has local meromorphic sections over every point of $Y$,
        \item the elliptic fibration $f^{-1}(Y^*)\rightarrow Y^*$ is bimeromorphic to an elliptic fiber bundle over $Y^*$, and
        \item this elliptic fibration induces $H$ as its $G$-equivariant variation of Hodge structures.
    \end{enumerate}
    Two such elliptic fibrations are equivalent if they are $G$-equivariantly bimeromorphic.
\end{definition}
\begin{lemma}\label{existence_unique_eq_Weierstrass_model}
    Let $Y$ be a connected complex manifold and let $G$ be a finite group acting on $Y$. Let $\Delta\subset Y$ be a $G$-equivariant normal crossing divisor and let $H$ be a $G$-equivariant variation of Hodge structures on $j\colon Y^*:=Y\setminus\Delta\hookrightarrow Y$ associated to some elliptic fiber bundle over $Y^*$. Then there exists a unique minimal Weierstraß model $p\colon W(\mathcal{L},\alpha,\beta)\rightarrow Y$ that is smooth over $Y^*$ and induces $H$ as is $G$-equivariant variation of Hodge structures.
\end{lemma}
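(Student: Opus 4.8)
The plan is to deduce the statement from its non-equivariant version --- which is part of Nakayama's theory of Weierstra\ss{} models \cite{Nakayama_Weierstrass}, \cite{Nakayama_elliptic-fibrations} --- and then to install the $G$-action by a rigidity argument. For the non-equivariant statement, forget the action: $H$ is the variation of Hodge structures of the elliptic fiber bundle $\pi\colon J_H\rightarrow Y^*$ from \Cref{def_variation_HS_Jacobian_fibration}, which carries the zero section $\sigma_0$, and its local monodromies around the branches of $\Delta$ lie in $\mathrm{SL}_2(\Z)$, hence are quasi-unipotent. First I would pick a $\pi$-ample line bundle, take the closure of $J_H$ inside the associated projective bundle over $Y$ to obtain an elliptic fibration $\bar f\colon\bar X\rightarrow Y$ with a meromorphic section restricting to $J_H$ over $Y^*$, and apply \Cref{bimeromorphic_classification_elliptic_fibration_with_section} to get the unique minimal Weierstra\ss{} model $p\colon W:=W(\mathcal L,\alpha,\beta)\rightarrow Y$ bimeromorphic to $\bar f$. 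Its discriminant is $\mathrm{div}(4\alpha^3+27\beta^2)$, which is supported on $\Delta$ since $\bar f$ is smooth over the dense open $Y^*$ and $p$ is its minimal Weierstra\ss{} model \cite{Nakayama_Weierstrass}; hence $p$ is smooth over $Y^*$, and there $W|_{Y^*}$ is an elliptic fiber bundle with section, which I would canonically identify with $J_H$ over $Y^*$ using \Cref{classification_torus_fiber_bundles} --- the remaining ambiguity being killed because an automorphism of $J_H$ over $Y^*$ fixing $\sigma_0$ is fiberwise a group automorphism and therefore acts faithfully on $H$. Thus $p$ induces $H$ over $Y^*$, and any two minimal Weierstra\ss{} models with this property are isomorphic over $Y$ by a \emph{unique} isomorphism compatible with the identifications of their Hodge structures over $Y^*$. (Alternatively, Nakayama's construction produces $(\mathcal L,\alpha,\beta)$ naturally from $(Y,\Delta)$ and $H$, which already yields the $G$-equivariance directly; see \cite[][3E]{Claudon_Hoering_Lin_fundamental_group}. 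I would nonetheless present the descent argument below, as it uses only the results recalled above.)

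To install the $G$-action on $W$, write $\rho_g\colon Y\rightarrow Y$ for the biholomorphism attached to $g\in G$; since $\Delta$ is $G$-equivariant, $\rho_g^{-1}(Y^*)=Y^*$. Then $\rho_g^*W$ is again a minimal Weierstra\ss{} model over $Y$, smooth over $Y^*$, with variation of Hodge structures $\rho_g^*H$ there; composing with the structural isomorphism $\psi_g\colon\rho_g^*H\stackrel{\sim}{\rightarrow}H$ of the $G$-equivariant VHS exhibits it as a minimal Weierstra\ss{} model inducing $H$, so by the uniqueness above there is a unique biholomorphism $\phi_g\colon\rho_g^*W\stackrel{\sim}{\rightarrow}W$ over $Y$ compatible with $\psi_g$ on Hodge structures. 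It carries $\rho_g^*\Sigma$ to $\Sigma$: over $Y^*$ it is the isomorphism of Jacobian fibrations induced by $\psi_g$, hence a fiberwise homomorphism preserving zero sections, and $\Sigma$ and $\phi_g(\rho_g^*\Sigma)$ are then sections agreeing over the dense open $Y^*$ while $W$ is separated over $Y$. The hard part is to see that the fiberwise-unique identifications $\phi_g$ assemble into an honest $G$-action (and not merely a family of isomorphisms with an a priori cocycle defect): this is exactly where the cocycle identity built into the notion of a $G$-equivariant VHS enters, for $\phi_{gh}$ and $\phi_g\circ\rho_g^*\phi_h$ are both isomorphisms $\rho_{gh}^*W\stackrel{\sim}{\rightarrow}W$ compatible with $\psi_{gh}$, and hence coincide by the uniqueness clause. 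So the $\phi_g$ define a $G$-action on $W$ lying over the $G$-action on $Y$ and fixing $\Sigma$, making $p\colon W\rightarrow Y$ a $G$-equivariant minimal Weierstra\ss{} model, smooth over $Y^*$, that induces $H$ as its $G$-equivariant variation of Hodge structures.

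For uniqueness, a competing $G$-equivariant minimal Weierstra\ss{} model inducing $H$ is, after forgetting the action, a minimal Weierstra\ss{} model smooth over $Y^*$ inducing $H|_{Y^*}$, hence isomorphic to $W$ over $Y$ by the first paragraph; the same rigidity forces the isomorphism to be compatible with the two structural isomorphisms of the equivariant variations of Hodge structures, so it is $G$-equivariant. I expect the only genuine obstacle to be the rigidity step highlighted above --- proving that an isomorphism between two minimal Weierstra\ss{} models over $Y$ which is required to induce a prescribed isomorphism of their variations of Hodge structures over $Y^*$ is unique, so that no cohomological obstruction to the group action can arise; everything else amounts to bookkeeping with the equivalences of \Cref{section_Weierstrass_models} and with Nakayama's construction.
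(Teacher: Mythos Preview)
Your proposal is correct and follows essentially the same route as the paper: construct the minimal Weierstra\ss{} model non-equivariantly from a compactification of $J_H$ via \Cref{bimeromorphic_classification_elliptic_fibration_with_section}, then install the $G$-action by pulling back along each $g$ and invoking uniqueness of the minimal model. You are in fact more careful than the paper in spelling out the rigidity step (uniqueness of an isomorphism of minimal Weierstra\ss{} models compatible with a prescribed VHS identification over $Y^*$) and in explicitly verifying the cocycle identity $\phi_{gh}=\phi_g\circ\rho_g^*\phi_h$, both of which the paper leaves implicit.
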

\begin{proof}
    Choose a $G$-equivariant $X\rightarrow Y$ compactification of the Jacobian fibration $J_H\rightarrow Y^*$. The zero section of the Jacobian fibration defines a $G$-equivariant meromorphic section of $X\rightarrow Y$. By \Cref{bimeromorphic_classification_elliptic_fibration_with_section} there is a unique minimal Weierstraß model $p\colon W(\mathcal{L},\alpha,\beta)\rightarrow Y$ bimeromorphic to $X\rightarrow Y$. By \cite[][Cor. 2.7]{Nakayama_Weierstrass}, the morphism $p\colon W(\mathcal{L},\alpha,\beta)\rightarrow Y$ is smooth over $Y^*$.\par
    Note that by \Cref{classification_torus_fiber_bundles}, the existence of a section shows that $W(\mathcal{L},\alpha,\beta)\vert_{Y^*}\cong J_H$. We claim that we can extend the $G$-action on $J_H\rightarrow Y^*$ to $p\colon W(\mathcal{L},\alpha,\beta)\rightarrow Y$. Indeed, let $g\in G$ and consider the following fiber product diagram:
    \[\begin{tikzcd}
	   {W(g^*\mathcal{L},g^*\alpha,g^*\beta)} & {W(\mathcal{L},\alpha,\beta)\times_gY} & {W(\mathcal{L},\alpha,\beta)} \\
	   & Y & {Y.}
	   \arrow["\sim", from=1-1, to=1-2]
	   \arrow["g^*p", from=1-1, to=2-2]
	   \arrow[from=1-2, to=1-3]
	   \arrow[from=1-2, to=2-2]
	   \arrow["p", from=1-3, to=2-3]
	   \arrow["g", from=2-2, to=2-3]
    \end{tikzcd}\]
    As $g^*p\colon W(g^*\mathcal{L},g^*\alpha,g^*\beta)\rightarrow Y$ and $p\colon W(\mathcal{L},\alpha,\beta)\rightarrow Y$ are both isomorphic to $J_H\rightarrow Y^*$ over $Y^*$, $g^*p\colon W(g^*\mathcal{L},g^*\alpha,g^*\beta)\rightarrow Y$ and $p\colon W(\mathcal{L},\alpha,\beta)\rightarrow Y$ bimeromorphic Weierstraß models. Note that since $W(\mathcal{L},\alpha,\beta)\rightarrow Y$ is minimal the same holds for $W(g^*\mathcal{L},g^*\alpha,g^*\beta)\rightarrow Y$. Therefore, by \Cref{bimeromorphic_classification_elliptic_fibration_with_section}, the uniqueness of minimal Weierstraß models implies that there is an isomorphism $W(g^*\mathcal{L},g^*\alpha,g^*\beta)\cong W(\mathcal{L},\alpha,\beta)$ over $Y$. Moreover, this isomorphism has to leave the canonical section fixed and restricts over $Y^*$ to the automorphism induced by $g\in G$ of $J_H\rightarrow Y^*$. Thus, we obtain a $G$-action on $W(\mathcal{L},\alpha,\beta)$ by
    \[\begin{tikzcd}
	   {W(\mathcal{L},\alpha,\beta)} & {W(g^*\mathcal{L},g^*\alpha,g^*\beta)} & {W(\mathcal{L},\alpha,\beta)} \\
	   & Y & Y
	   \arrow["\sim", from=1-1, to=1-2]
	   \arrow["p", from=1-1, to=2-2]
	   \arrow["g^*p", from=1-2, to=1-3]
	   \arrow[from=1-2, to=2-2]
	   \arrow["p", from=1-3, to=2-3]
	   \arrow["g", from=2-2, to=2-3]
    \end{tikzcd}\]
    that gives $p\colon W(\mathcal{L},\alpha,\beta)\rightarrow Y$ the structure of a $G$-equivariant minimal Weierstraß model.
\end{proof}
We can construct elements of $\mathcal{E}_G(Y,\Delta,H)$ as follows: Let $G$ be a finite group and let $p\colon W(\mathcal{L},\alpha,\beta)\rightarrow Y$ be a minimal $G$-equivariant Weierstraß model that is smooth over $Y^*$ and has $H$ as its associated variation of Hodge structures, which exists by \Cref{existence_unique_eq_Weierstrass_model}. The subspace $W(\mathcal{L},\alpha,\beta)^\#\subset W(\mathcal{L},\alpha,\beta)$ defines a complex analytic group variety over $Y$ with zero section $\Sigma\subset W(\mathcal{L},\alpha,\beta)^\#$, where the group action is given by the addition of local sections with respect to $\Sigma$. The action of $W(\mathcal{L},\alpha,\beta)^\#$ on $W(\mathcal{L},\alpha,\beta)^\#$ can be extended to an action of $W(\mathcal{L},\alpha,\beta)^\#$ on $W(\mathcal{L},\alpha,\beta)$ \cite[][5.1.1]{Nakayama_global_elliptic_fibration}. By gluing local patches of $p\colon W(\mathcal{L},\alpha,\beta)\rightarrow Y$ along a cocycle of local sections representing the cohomology class $\eta_G\in H_G^1(Y,\mathcal{W}(\mathcal{L},\alpha,\beta)^\#)$, one can thus construct the twisted $G$-equivariant Weierstraß model $p^{\eta_G}\colon W(\mathcal{L},\alpha,\beta)^{\eta_G}\longrightarrow Y$ \cite[][3E]{Claudon_Hoering_Lin_fundamental_group}. This construction defines a map
\begin{equation}\label{twisted_Weierstrass_models}
    H_G^1(Y,\mathcal{W}(\mathcal{L},\alpha,\beta)^\#)\longrightarrow\mathcal{E}_G(Y,\Delta,H).
\end{equation}
This is the analogue of the construction of twisted Jacobian fibration of torus fiber bundles.\par
Recall that for a torus fiber bundle $f\colon X\rightarrow Y$ of fiber dimension $g>0$ with associated variation of Hodge structures $H$ there is the short exact sequence
\begin{equation*}
    0\longrightarrow H_\Z\longrightarrow\mathcal{H}^{g-1,g}\longrightarrow\mathcal{J}_H\longrightarrow 0.
\end{equation*}
\begin{proposition}\label{bimeromorphic_classification_elliptic_fibrations}\cite[][Prop. 2.10]{Nakayama_Weierstrass}\cite[][Prop. 5.5.1]{Nakayama_global_elliptic_fibration}\cite[][3E]{Claudon_Hoering_Lin_fundamental_group}
    Let $p\colon W(\mathcal{L},\alpha,\beta)\rightarrow Y$ be a minimal $G$-equivariant Weierstraß model, such that the discriminant locus $\Delta:=\mathrm{div}(4\alpha^3+27\beta^2)$ is a normal crossing divisor. Denote by $j\colon Y^*\subset Y$ the inclusion and let $H:=R^1p_*\Z\vert_{Y^*}$ be the associated variation of Hodge structures.
    \begin{enumerate}[label=(\roman*)]
        \item There is a $G$-equivariant short exact sequence
        \begin{equation}\label{ses_vhs_line_bundle_W}
            0\longrightarrow j_*H_\Z\longrightarrow\mathcal{L}\longrightarrow\mathcal{W}^\#\longrightarrow 0.
        \end{equation}
        \item There is an injective map
        \begin{equation}\label{injection_into_H1(Wmer)}
            \mathcal{E}_G(Y,\Delta,H)\longhookrightarrow H_G^1(Y,\mathcal{W}^{mer})
        \end{equation}
        such that the composition
        \begin{equation}\label{composition_construction_injection_into_H1(Wmer)}
            H_G^1(Y,\mathcal{W}(\mathcal{L},\alpha,\beta)^\#)\longrightarrow\mathcal{E}_G(Y,\Delta,H)\longhookrightarrow H_G^1(Y,\mathcal{W}^{mer})
        \end{equation}
        equals the map in cohomology induced by the inclusion $\mathcal{W}^\#\subset\mathcal{W}^{mer}$.
    \end{enumerate}
\end{proposition}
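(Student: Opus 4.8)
The plan is to reduce both assertions to their non-equivariant counterparts, due to Nakayama, and to promote them to the $G$-equivariant setting by checking that every sheaf and every morphism appearing in the construction is canonical, hence automatically compatible with the $G$-action. So the first step is to recall precisely what \cite[][Prop. 2.10]{Nakayama_Weierstrass} and \cite[][Prop. 5.5.1]{Nakayama_global_elliptic_fibration} provide when $G$ is trivial, and only then to track the group action.

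For \cref{ses_vhs_line_bundle_W} I would begin from the non-equivariant short exact sequence $0\to j_*H_\Z\to\mathcal{L}\to\mathcal{W}^\#\to 0$ on all of $Y$ given by Nakayama, stressing that the normal crossing hypothesis on $\Delta$ together with the minimality of $p\colon W(\mathcal{L},\alpha,\beta)\to Y$ is exactly what forces exactness across the discriminant, not merely over $Y^*$. Next I would describe the terms and maps intrinsically: the middle term is $R^1p_*\OO_{W(\mathcal{L},\alpha,\beta)}\cong\mathcal{L}$, the surjection onto $\mathcal{W}^\#$ is the relative exponential map of the complex analytic group variety $W(\mathcal{L},\alpha,\beta)^\#\to Y$, and $j_*H_\Z$ is its kernel, the sheaf of relative periods, whose restriction to $Y^*$ is the exponential sequence $0\to H_\Z\to\mathcal{H}^{g-1,g}\to\mathcal{J}_H\to 0$ of \Cref{def_variation_HS_Jacobian_fibration} (with $g=1$). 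Since the given $G$-action on $W(\mathcal{L},\alpha,\beta)$ induces a $G^{op}$-action on $\mathcal{L}$ by flat base change (as recalled after \Cref{definition_Weierstrass_models}), since the $G$-equivariance of $H$ equips $j_*H_\Z$ with a $G$-action, and since all maps involved are functorial, the sequence inherits a $G$-equivariant structure; there is essentially nothing to prove beyond quoting Nakayama and invoking naturality.

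For \cref{injection_into_H1(Wmer)}, given $f\colon X\to Y$ representing a class in $\mathcal{E}_G(Y,\Delta,H)$, I would first invoke \Cref{existence_unique_eq_Weierstrass_model} to fix the canonical $G$-equivariant minimal Weierstraß model $p\colon W(\mathcal{L},\alpha,\beta)\to Y$ attached to $(Y,\Delta,H)$, the crucial point being that it does not depend on $f$. Non-equivariantly, $f$ is obtained from $p$ by regluing local patches by translation along a cocycle of local meromorphic sections, and Nakayama shows that the resulting class in $H^1(Y,\mathcal{W}^{mer})$ determines $f$ up to bimeromorphic equivalence over $Y$, which yields a well-defined and injective map. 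I would then observe that, $W(\mathcal{L},\alpha,\beta)$ and $f$ both being $G$-equivariant, this regluing cocycle is canonically determined by the pair $(W(\mathcal{L},\alpha,\beta),X)$ — over $Y^*$ it is the class exhibiting $X|_{Y^*}$ as a torsor under $\mathcal{J}_H\cong\mathcal{W}^\#|_{Y^*}$, hence intrinsic — so it is $G$-invariant and defines a class in $H_G^1(Y,\mathcal{W}^{mer})$; injectivity then follows from the non-equivariant injectivity together with the fact that when two $G$-equivariant fibrations yield cohomologous cocycles the identifying $0$-cochain of meromorphic sections can be taken $G$-equivariant, so that translation by it produces a $G$-equivariant bimeromorphism. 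Finally, for \cref{composition_construction_injection_into_H1(Wmer)} I would merely unwind definitions: the twisted $G$-equivariant Weierstraß model $W(\mathcal{L},\alpha,\beta)^{\eta_G}$ is $W(\mathcal{L},\alpha,\beta)$ reglued along a cocycle valued in $W(\mathcal{L},\alpha,\beta)^\#$, and reading that same cocycle as one of meromorphic sections computes its image in $H_G^1(Y,\mathcal{W}^{mer})$, which is tautologically the image of $\eta_G$ under $H_G^1(Y,\mathcal{W}^\#)\to H_G^1(Y,\mathcal{W}^{mer})$.

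The only content beyond Nakayama's non-equivariant work is the $G$-equivariance, and I expect the main obstacle to be purely the equivariant bookkeeping: setting up $G$-equivariant cohomology of $G$-sheaves carefully and verifying that the canonically defined regluing cocycles genuinely assemble into classes in $H_G^1$, and that the comparison bimeromorphisms used for injectivity can be chosen $G$-equivariant. The two facts that make this go through are that the Weierstraß model attached to $(Y,\Delta,H)$ is canonical (\Cref{existence_unique_eq_Weierstrass_model}) and that the classifying cocycle of a $G$-equivariant fibration is canonically determined, hence $G$-invariant; granting these, injectivity and the compatibility \cref{composition_construction_injection_into_H1(Wmer)} are formal. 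All of this is carried out in \cite[][3E]{Claudon_Hoering_Lin_fundamental_group}, whose treatment I would follow.
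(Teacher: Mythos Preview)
The paper does not give a proof of this proposition: it is stated as a result imported from the literature, with citations to \cite[][Prop. 2.10]{Nakayama_Weierstrass}, \cite[][Prop. 5.5.1]{Nakayama_global_elliptic_fibration}, and \cite[][3E]{Claudon_Hoering_Lin_fundamental_group}. Your proposal is therefore not being compared against an argument in the paper but against the cited sources, and in that respect it is accurate: the non-equivariant exponential sequence and the injection into $H^1(Y,\mathcal{W}^{mer})$ are Nakayama's, and the $G$-equivariant upgrade is carried out in \cite[][3E]{Claudon_Hoering_Lin_fundamental_group}, exactly as you describe. Your sketch of how the equivariance is tracked---via the canonicity of the minimal Weierstra\ss{} model (\Cref{existence_unique_eq_Weierstrass_model}), the flat-base-change action on $\mathcal{L}$, and the intrinsic nature of the regluing cocycle---matches the strategy in those references, and your final sentence acknowledging that you would follow \cite[][3E]{Claudon_Hoering_Lin_fundamental_group} is the honest summary.

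One small caution: in the injectivity step you assert that ``the identifying $0$-cochain of meromorphic sections can be taken $G$-equivariant''. This is the only place where something beyond pure naturality is needed, and it is not entirely formal---one has to average or otherwise argue that a $G$-invariant cohomology class in $H^1(Y,\mathcal{W}^{mer})$ being zero in $H^1_G$ really does come from a $G$-equivariant coboundary. In the references this is handled via the equivariant \v{C}ech/derived-functor formalism for $G$-sheaves, so if you were writing this out in full you would want to set that up carefully rather than leave it as a parenthetical. But as an outline of what the cited proofs do, your proposal is correct.
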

Recall that for a torus fiber bundle $f\colon X\rightarrow Y$ over a compact connected K\"ahler manifold $Y$ with associated variation of Hodge structures $H$, the Jacobian fibration $J_H^\eta\rightarrow Y$ twisted by $\eta\in H^1(Y,\mathcal{J}_H)$ is K\"ahler if and only if its Chern class is torsion by \Cref{fundamental_properties_special_torus_bundles}. The following proposition is an analogue of this statement.
\begin{proposition}\label{Weierstrass_model_Kaehler}\cite[][Thm. 3.20, Prop. 3.23]{Claudon_Hoering_Lin_fundamental_group}
    Let $p\colon W\rightarrow Y$ be a minimal Weierstraß model over a compact K\"ahler manifold such that the discriminant locus $\Delta$ is a normal crossing divisor. Denote by $j\colon Y^*:=Y\setminus\Delta\subset Y$ the inclusion and let $H:=R^1p_*\Z\vert_{Y^*}$ be the associated variation of Hodge structures. Let $\eta\in H^1(Y,\mathcal{W}^\#)$ be a cohomology class. Then the following are equivalent:
    \begin{enumerate}[label=(\roman*)]
        \item The total space $W(\mathcal{L},\alpha,\beta)^\eta$ is bimeromorphic to a compact K\"ahler manifold.
        \item The boundary map associated to the short exact sequence $0\rightarrow j_*H_\Z\rightarrow\mathcal{L}\rightarrow\mathcal{W}^\#\rightarrow 0$ sends $\eta$ to a torsion class in $H^2(Y,j_*H_\Z)$.
    \end{enumerate}
\end{proposition}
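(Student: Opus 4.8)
The plan is to follow the strategy used in the proof of \Cref{fundamental_properties_special_torus_bundles}\,\cref{torus_bundle_kaehler} for the implication (ii)$\Rightarrow$(i), and to pass to the smooth locus $Y^*=Y\setminus\Delta$ and invoke the Kähler criterion for torus fibre bundles for the converse. Write $\delta\colon H^1(Y,\mathcal{W}^\#)\to H^2(Y,j_*H_\Z)$ for the boundary map of \eqref{ses_vhs_line_bundle_W}, and $\mathrm{exp}\colon H^1(Y,\mathcal{L})\to H^1(Y,\mathcal{W}^\#)$ for the map induced by $\mathcal{L}\to\mathcal{W}^\#$ (the analogue of the exponential map for torus bundles). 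Then $\ker\delta=\im(\mathrm{exp})$, so condition (ii) says precisely that $m\eta\in\im(\mathrm{exp})$ for some integer $m\geq 1$. Note also that $H^1(Y,\mathcal{L})$ is a finite-dimensional complex vector space, since $Y$ is compact.

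For (ii)$\Rightarrow$(i) I would first treat the case $\eta=\mathrm{exp}(v)$ with $v\in H^1(Y,\mathcal{L})$. The twisted Weierstraß models $W(\mathcal{L},\alpha,\beta)^{\mathrm{exp}(w)}$, $w\in H^1(Y,\mathcal{L})$, are obtained by regluing the \emph{fixed} local patches of $p\colon W(\mathcal{L},\alpha,\beta)\to Y$; in particular they are locally on $Y$ all isomorphic to $W(\mathcal{L},\alpha,\beta)$, have the same (rational, by minimality and the normal crossing hypothesis) singularities along a fixed analytic subset, and admit a simultaneous resolution, yielding a smooth proper family $\widetilde{\mathcal{W}}\to H^1(Y,\mathcal{L})$ whose central fibre resolves $W(\mathcal{L},\alpha,\beta)$. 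That fibre is Kähler, since $W(\mathcal{L},\alpha,\beta)$ is projective over the Kähler manifold $Y$ and compact Kähler varieties admit Kähler resolutions; by openness of the Kähler condition in smooth proper families \cite[][9. Thm. 9.23]{Voisin_HT_1}, the fibre over every $w$ in a neighbourhood $U$ of $0$ is Kähler, so $W(\mathcal{L},\alpha,\beta)^{\mathrm{exp}(w)}$ is bimeromorphic to a compact Kähler manifold for every such $w$. Given an arbitrary $v$, choose $n$ with $\tfrac{1}{n}v\in U$; the multiplication-by-$n$ isogeny of the relative group scheme $\mathcal{W}^\#$, extended to an action on the total space \cite[][5.1.1]{Nakayama_global_elliptic_fibration}, induces a finite surjective morphism $W^{\mathrm{exp}(v/n)}\to W^{\mathrm{exp}(v)}$, so $W^{\mathrm{exp}(v)}$ is bimeromorphic to a compact Kähler manifold as well, by the permanence of Kählerness for compact complex varieties under finite surjective morphisms \cite[][Prop. 3.5]{Graf_Kirschner_finite_quotients_3-tori}. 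Finally, for a general $\eta$ with $m\eta=\mathrm{exp}(v)$, the same construction yields a finite surjective morphism $W^\eta\to W^{m\eta}=W^{\mathrm{exp}(v)}$, and hence $W^\eta$ is bimeromorphic to a compact Kähler manifold.

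For the converse (i)$\Rightarrow$(ii) I would restrict everything to $Y^*$. There $j_*H_\Z|_{Y^*}=H_\Z$, the sequence \eqref{ses_vhs_line_bundle_W} restricts to $0\to H_\Z\to\mathcal{H}^{0,1}\to\mathcal{J}_H\to 0$, the elliptic fibration $W^\eta|_{Y^*}$ is the torus fibre bundle $J_H^{\eta|_{Y^*}}\to Y^*$, and $\delta(\eta)|_{Y^*}$ equals its Chern class $c(\eta|_{Y^*})\in H^2(Y^*,H_\Z)$. After a further resolution we may choose a compact Kähler manifold $M$ together with a bimeromorphic morphism $M\to W^\eta$, so over $Y^*$ we obtain a Kähler manifold dominating the smooth total space $J_H^{\eta|_{Y^*}}$. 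One then argues, as in the Hodge-theoretic proof of the criterion \cite[][Prop. 2.5]{Claudon_tori_fiber_bundles} and its surface prototype \cite[][V. Prop. 5.3]{Barth_Hulek_Peters_vandeVen_surfaces}, that a non-torsion class $c(\eta|_{Y^*})$ is incompatible with a Kähler class on such a model: through the Leray spectral sequence for the fibration over $Y^*$ it would force the existence of a cohomology class of the wrong Hodge type. Hence $c(\eta|_{Y^*})$ is torsion, and it remains to promote this to torsion-ness of $\delta(\eta)$ on all of $Y$, using that $\delta(\eta)$ already maps to $0$ in $H^2(Y,\mathcal{L})$ together with the behaviour of $H^{\bullet}(Y,j_*H_\Z)$ near the normal crossing divisor $\Delta$.

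The main obstacle is the implication (i)$\Rightarrow$(ii): making rigorous that a compact Kähler structure on a bimeromorphic model of $W^\eta$ forces $c(\eta|_{Y^*})$ to be rationally trivial, and then extending torsion-ness across $\Delta$. This is precisely where the full machinery of twisted Weierstraß models over a Kähler base developed in \cite{Claudon_Hoering_Lin_fundamental_group} — the fine interplay between $\mathcal{W}^\#$, $\mathcal{L}$, $j_*H_\Z$ and the geometry along $\Delta$ — is needed. By comparison, the direction (ii)$\Rightarrow$(i) is soft, using only the already-established openness of Kählerness in smooth proper families and its permanence under finite surjective morphisms.
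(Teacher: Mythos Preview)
The paper does not give its own proof of this proposition; it is quoted verbatim from \cite[Thm.~3.20, Prop.~3.23]{Claudon_Hoering_Lin_fundamental_group}. So there is no in-paper argument to compare against, only the cited one.

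Your sketch of (ii)$\Rightarrow$(i) is essentially the argument of \cite[Prop.~3.23]{Claudon_Hoering_Lin_fundamental_group}, and it mirrors the proof of \Cref{fundamental_properties_special_torus_bundles}\,\cref{torus_bundle_kaehler} in the paper (which even remarks that ``a similar argument can be found in \cite[Prop.~3.23]{Claudon_Hoering_Lin_fundamental_group}''). One imprecision: the map $W^{\mathrm{exp}(v/n)}\to W^{\mathrm{exp}(v)}$ you need is \emph{not} produced by the $\mathcal{W}^\#$-action (that action is by translations), but by the group-scheme endomorphism $[n]$ of $\mathcal{W}^\#/Y$, which intertwines translation by $s$ with translation by $ns$ and hence descends to the twists. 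You should check that $[n]$ extends from $W^\#$ to a \emph{finite} morphism $W\to W$; it does (degree $n^2$ on smooth fibres, $n$ on nodal, $1$ on cuspidal), but it is not flat, so ``isogeny'' is a misnomer. With that fix your argument goes through.

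For (i)$\Rightarrow$(ii) there is a genuine gap, which you yourself flag. Restricting to $Y^*$ destroys compactness, so neither \cite[Prop.~2.5]{Claudon_tori_fiber_bundles} nor the Hodge-theoretic argument behind it applies as stated: those arguments use that a K\"ahler class lives in $H^{1,1}$ of a \emph{compact} manifold. Your proposed two-step fix --- first show $c(\eta|_{Y^*})$ is torsion, then ``promote'' across $\Delta$ --- runs into two problems: (a) over the open $Y^*$ the notion of torsion in $H^2(Y^*,H_\Z)$ is not obviously constraining, and (b) the restriction $H^2(Y,j_*H_\Z)\to H^2(Y^*,H_\Z)$ can have a large kernel supported on $\Delta$, so torsion on $Y^*$ does not immediately yield torsion on $Y$. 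The proof in \cite[Thm.~3.20]{Claudon_Hoering_Lin_fundamental_group} avoids this by working globally on $Y$ from the start, using the locally projective structure of $p^\eta$ and the canonical extension of the variation of Hodge structure across the normal crossing divisor $\Delta$ to control where the class of a K\"ahler form on a resolution lands in the Leray filtration. That is the ``full machinery'' you allude to, and it is not bypassed by restriction to $Y^*$.
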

\subsubsection{Tautological models}
There are classes in $\mathcal{E}_G(Y,\Delta,H)$ that cannot be represented by a twisted Weierstraß model, the reason being that the map $H^1(Y,\mathcal{W}^\#)\rightarrow H^1(Y,\mathcal{W}^{mer})$ may not be surjective. Lin introduced the notion of a tautological model \cite[][3.4]{Lin_algebraic_approximation_codim_1}, which are finite covers of Weierstraß models and showed that every class in $\mathcal{E}_G(Y,\Delta,H)$ that comes from an elliptic fibration $f\colon X\rightarrow Y$, where $X$ is bimeromorphic to a compact K\"ahler manifold, can be represented by a tautological model.
\begin{proposition}\label{tautological_models}\cite[][Lem. 3.19, Lem. 3.24]{Claudon_Hoering_Lin_fundamental_group}\cite[][3.4]{Lin_algebraic_approximation_codim_1}
    Let $f\colon X\rightarrow Y$ be a $G$-equivariant elliptic fibration from a compact complex analytic variety $X$ bimeromorphic to a compact K\"ahler manifold to a connected complex manifold $Y$. Suppose that the discriminant divisor of $f$ is a normal crossing divisor and that $f$ admits local meromorphic sections over every point of $Y$. Denote by $H:=R^1f_*\Z\vert_{Y^*}$ the variation of Hodge structures induced by $f$ and denote by $W(\mathcal{L},\alpha,\beta)\rightarrow Y$ the unique the minimal Weierstraß model associated to $(Y,\Delta,H)$ by \Cref{existence_unique_eq_Weierstrass_model}. Suppose that the zero section $\Sigma\subset W(\mathcal{L},\alpha,\beta)$ is $G$-stable.
    \begin{enumerate}[label=(\roman*)]
        \item Let $\eta_G\in H^1_G(Y,\mathcal{W}^{mer})$ be the image of the class of $f$ under the map \cref{injection_into_H1(Wmer)} $\mathcal{E}_G(Y,\Delta,H)\rightarrow H^1_G(Y,\mathcal{W}^{mer})$. Then there is a positive integer $m$ such that $m\eta_G$ can be lifted to a class in $H^1_G(Y,\mathcal{W}^\#)$.
        \item There is a $G$-equivariant commutative diagram
        \[\begin{tikzcd}
	       {\mathcal{X}} && {W(\mathcal{L},\alpha,\beta)^{m\eta_G}} \\
	       & {Y} && {,}
	       \arrow["m", from=1-1, to=1-3]
	       \arrow["g"', from=1-1, to=2-2]
	       \arrow["p", from=1-3, to=2-2]
        \end{tikzcd}\]
        where $g\colon\mathcal{X}\rightarrow Y$ is an elliptic fibration $G$-equivariantly bimeromorphic to $f\colon X\rightarrow Y$, and $m\colon\mathcal{X}\rightarrow W(\mathcal{L},\alpha,\beta)^{m\eta_G}$ is finite of degree $m$.
        \item The total space of $\mathcal{X}$ is normal and the discriminant locus of $g$ equals the discriminant locus of $p$.
        \item Over the smooth locus of $g$ and $p$, $m$ is the multiplication by $m$-map.
    \end{enumerate}
\end{proposition}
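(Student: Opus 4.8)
The plan is to deduce the statement from Lin's theory of tautological models \cite[][3.4]{Lin_algebraic_approximation_codim_1}, together with \cite[][Lem. 3.19, Lem. 3.24]{Claudon_Hoering_Lin_fundamental_group}, by assembling it from the machinery recalled above. Throughout, all sheaves carry their natural $G$-action and one works with $G$-equivariant cohomology, so every choice below can be made $G$-equivariantly.

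For part (i) I would use the short exact sequence of $G$-equivariant sheaves $0\to\mathcal{W}^\#\to\mathcal{W}^{mer}\to\mathcal{Q}\to 0$, where $\mathcal{Q}:=\mathcal{W}^{mer}/\mathcal{W}^\#$. Over $Y^*$ the Weierstraß fibration is smooth with irreducible fibres, so every local meromorphic section is a genuine section landing in the smooth locus; hence $\mathcal{Q}$ is supported on $\Delta$, and the local analysis of Weierstraß fibres over a normal crossing discriminant \cite[][1.2]{Nakayama_Weierstrass} shows that $\mathcal{Q}$ is a torsion sheaf. The connecting maps then yield an obstruction class in $H^1_G(Y,\mathcal{Q})$ which vanishes exactly when $\eta_G$ lifts to $H^1_G(Y,\mathcal{W}^\#)$; by \cite[][Lem. 3.19]{Claudon_Hoering_Lin_fundamental_group}, and using that $X$ is bimeromorphic to a compact Kähler manifold, this obstruction is torsion (in the intended applications $Y$, hence $\Delta$, is compact and $H^1_G(Y,\mathcal{Q})$ is finite, which already suffices). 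Taking any $m>0$ that annihilates it produces the desired lift $\tilde\eta\in H^1_G(Y,\mathcal{W}^\#)$ of $m\eta_G$.

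For part (ii), I would first feed $\tilde\eta$ into the construction \eqref{twisted_Weierstrass_models} to obtain the twisted $G$-equivariant Weierstraß model $p\colon W(\mathcal{L},\alpha,\beta)^{m\eta_G}\to Y$; by \Cref{bimeromorphic_classification_elliptic_fibrations} its class under the injection \eqref{injection_into_H1(Wmer)} is $m\eta_G$, which justifies the notation. To construct $\mathcal{X}$ I would follow Lin's tautological construction: over $Y^*$ the multiplication-by-$m$ map of the relative group $W(\mathcal{L},\alpha,\beta)^\#$ — which extends to an action on all of $W(\mathcal{L},\alpha,\beta)$ by \cite[][5.1.1]{Nakayama_global_elliptic_fibration} — carries the $\mathcal{J}_H$-torsor of class $\eta_G|_{Y^*}$ to the one of class $m\eta_G|_{Y^*}$, and one takes $\mathcal{X}$ to be the normalisation of $W(\mathcal{L},\alpha,\beta)^{m\eta_G}$ in the resulting finite extension of function fields (equivalently, glues the local tautological models of \cite[][3.4]{Lin_algebraic_approximation_codim_1}). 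The uniqueness of minimal Weierstraß models (\Cref{bimeromorphic_classification_elliptic_fibration_with_section}, \Cref{existence_unique_eq_Weierstrass_model}) propagates the $G$-action to $\mathcal{X}$, and the finite morphism $m\colon\mathcal{X}\to W(\mathcal{L},\alpha,\beta)^{m\eta_G}$ is $G$-equivariant by construction. The remaining point — that $g\colon\mathcal{X}\to Y$ lies in $\mathcal{E}_G(Y,\Delta,H)$ with class $\eta_G$ there, and is therefore $G$-equivariantly bimeromorphic to $f\colon X\to Y$ by the injectivity in \Cref{bimeromorphic_classification_elliptic_fibrations} — is the core of \cite[][Lem. 3.24]{Claudon_Hoering_Lin_fundamental_group}, and is the step I expect to be the main obstacle, as it requires tracking the bimeromorphism through the multiplication-by-$m$ construction compatibly over the discriminant.

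Parts (iii) and (iv) are then local checks. Normality of $\mathcal{X}$ is automatic, being a normalisation. Over $Y^*$ the morphism $m$ is, by construction, the multiplication-by-$m$ map of the relative group, hence étale there, so it creates no new degenerations; a direct computation on Weierstraß fibres over the generic points of the components of $\Delta$ (again \cite[][1.2]{Nakayama_Weierstrass}, \cite[][3.4]{Lin_algebraic_approximation_codim_1}) shows that the singular fibres persist, so the discriminant loci of $g$ and $p$ agree. This gives (iii), and (iv) is immediate from the construction of $m$.
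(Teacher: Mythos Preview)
The paper does not give its own proof of this proposition: it is stated as a citation to \cite[][Lem.~3.19, Lem.~3.24]{Claudon_Hoering_Lin_fundamental_group} and \cite[][3.4]{Lin_algebraic_approximation_codim_1}, followed only by the remark that the resulting $g\colon\mathcal{X}\to Y$ is called the tautological model and that the construction behaves well in families and under flat pullback. There is thus no ``paper's proof'' to compare against; your sketch is an attempt to unpack what those references do.

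As such an unpacking, your outline is broadly in the right spirit, but one point in part~(i) is shakier than you suggest. The quotient $\mathcal{Q}=\mathcal{W}^{mer}/\mathcal{W}^\#$ is indeed supported on $\Delta$, but it is \emph{not} a torsion sheaf in general: over a component of $\Delta$ with multiplicative reduction the local sections of $\mathcal{W}^{mer}$ modulo $\mathcal{W}^\#$ contain a copy of $\mathbb{G}_m$, which is divisible, not torsion. The reason a multiple of $\eta_G$ lifts is not a blanket torsion property of $\mathcal{Q}$ but the K\"ahler hypothesis on $X$, which via \cite[][Lem.~3.19]{Claudon_Hoering_Lin_fundamental_group} forces the relevant obstruction (ultimately the Chern class in $H^2(Y,j_*H_\Z)$, cf.\ \Cref{Weierstrass_model_Kaehler}) to be torsion. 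Your parenthetical ``using that $X$ is bimeromorphic to a compact K\"ahler manifold'' is the correct hook, but it is the essential input, not a side remark; the preceding sentence about $\mathcal{Q}$ being torsion should be dropped or reformulated. The remainder of your sketch for (ii)--(iv) matches the construction in \cite[][3.4]{Lin_algebraic_approximation_codim_1} and \cite[][Lem.~3.24]{Claudon_Hoering_Lin_fundamental_group}.
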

The fibration $g\colon\mathcal{X}\rightarrow Y$ is called the tautological model of $f\colon X\rightarrow Y$. This construction behaves well in families \cite[][3.5]{Lin_algebraic_approximation_codim_1} and under flat pullbacks \cite[][Rem. 3.13]{Lin_algebraic_approximation_codim_1}.
\subsubsection{Equivariant elliptic fiber bundles with trivial monodromy}
The rest of this section is devoted to proving the following proposition.
\begin{proposition}\label{trivial_vhs_constant_j_all_invariant}
    Let $f\colon X\rightarrow Y$ be an elliptic fibration between compact connected complex manifolds, which has local meromorphic sections over every point of $Y$ and is smooth outside a normal crossing divisor $\Delta\subset Y$. Suppose that a finite group $G$ acts on $X$ and $Y$ such that $f$ is $G$-equivariant. Suppose that the local system $\Lambda:=R^1f_*\Z\vert_{Y^*}$ of the variation of Hodge structures induced by $f$ is $G$-equivariantly isomorphic to $\Z^2$, where $\Z^2$ is endowed with the trivial $G$-action, and that the smooth fibers of $f$ are isomorphic to $\C/\Lambda$. Then there is a cohomology class $\eta\in H^1(Y,\OO_Y/\Lambda)^G$ and a group homomorphism $\sigma\colon G\rightarrow E$ such that $f\colon X\rightarrow Y$ is $G$-equivariantly bimeromorphic to $\pi^\eta\colon(Y\times E)^\eta\rightarrow Y$, where $G$ acts diagonally: on $Y$ by the given action and on $E$ by translations with $\sigma(g)$. Moreover, if $X$ and $Y$ are K\"ahler, then $(Y\times E)^\eta$ is also K\"ahler.
\end{proposition}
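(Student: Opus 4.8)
The plan is to route $f$ through the equivariant Weierstraß/tautological-model theory of \Cref{section_Weierstrass_models}, reducing to a torus fiber bundle, and then to isolate the translation part $\sigma$ by an equivariant cohomology computation.

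\emph{Step 1.} Apply \Cref{existence_unique_eq_Weierstrass_model} to $(Y,\Delta,H)$, where $H$ is the variation of Hodge structures on $Y^*$ induced by $f$, to obtain the unique minimal $G$-equivariant Weierstraß model $p\colon W(\mathcal L,\alpha,\beta)\to Y$, smooth over $Y^*$ and inducing $H$. I claim $p$ is smooth over all of $Y$: since $\Delta$ is a normal crossing divisor, a small loop around a generic point of any component $\Delta_i$ lies in $Y^*$, so by triviality of the monodromy of $H$ the monodromy of $R^1p_*\Z|_{Y^*}$ along it is trivial, whereas over a singular fiber of $p$ this monodromy would be a nontrivial Kodaira monodromy. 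Hence the discriminant of $p$ is empty, so $p$ is a smooth elliptic fiber bundle with section $\Sigma$. Then $R^1p_*\Z$ is a local system on $Y$ restricting to the trivial one on the dense open $Y^*$, hence trivial, with trivial $G$-action; as all fibers are isomorphic to $E=\C/\Lambda$, \Cref{corollary_isotrivial_and_trivial_vhs} and \Cref{classification_torus_fiber_bundles} give $W\cong Y\times E$ over $Y$ with $\Sigma=Y\times\{0\}$. Since $G$ fixes $\Sigma$ it acts fiberwise by automorphisms of $(E,0)$, which are trivial on $H_1(E,\Z)$ (because $G$ acts trivially on $R^1p_*\Z$) and hence trivial, as $\mathrm{Aut}(E,0)\hookrightarrow\mathrm{GL}(H_1(E,\Z))$. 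So $W\cong Y\times E$ with $G$ acting solely through $Y$.

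\emph{Step 2.} Because $W\to Y$ is a torus fiber bundle, every local meromorphic section is holomorphic (meromorphic maps to the torus $E$ are holomorphic), so $\mathcal W^{mer}=\mathcal W^\#=\mathcal J_H\cong\OO_Y/\Lambda$. By \Cref{bimeromorphic_classification_elliptic_fibrations} and \Cref{tautological_models} (here automatic, since $H^1_G(Y,\mathcal W^\#)\to H^1_G(Y,\mathcal W^{mer})$ is an isomorphism), $f$ is $G$-equivariantly bimeromorphic over $Y$ to the twisted model of a unique class $\eta_G\in H^1_G(Y,\OO_Y/\Lambda)$; forgetting $G$, this twisted model is the torus fiber bundle $(Y\times E)^\eta$, where $\eta\in H^1(Y,\OO_Y/\Lambda)$ is the image of $\eta_G$. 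The $G$-action gives isomorphisms $(Y\times E)^{g^*\eta}\cong(Y\times E)^\eta$ over $Y$, so $g^*\eta=\eta$ by the uniqueness in \Cref{classification_torus_fiber_bundles}, i.e.\ $\eta\in H^1(Y,\OO_Y/\Lambda)^G$.

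\emph{Step 3.} Now isolate $\sigma$. In the $G$-equivariant Čech bicomplex of a $G$-invariant cover $\{U_i\}$ of $Y$, the class $\eta_G$ is represented by data $(e_{ij},c_{g,i})$ with $e_{ij}\colon U_i\cap U_j\to E$ a cocycle representing $\eta$ and $c_{g,i}\colon U_i\to E$, so that $(Y\times E)^\eta=\bigsqcup U_i\times E/\!\sim$ with $G$ acting by $g\cdot(i,u,e)=(gi,gu,e+c_{g,i}(u))$; the ``diagonal action via $\sigma$'' is exactly the case where $e_{ij}$ is strictly $G$-invariant and $c_{g,i}\equiv\sigma(g)$ for a homomorphism $\sigma\colon G\to E$. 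First gauge (by translating the charts) so that $e_{ij}$ becomes strictly $G$-invariant --- this is possible because $\eta$ lifts to $H^1_G(Y,\OO_Y/\Lambda)$ --- after which the $c_{g,i}$ glue to a class $(\tilde c_g)_g\in Z^1(G,H^0(Y,\OO_Y/\Lambda))$. It remains to modify the lift so that $\tilde c_g$ becomes constant, i.e.\ lands in the constants $E\subset H^0(Y,\OO_Y/\Lambda)$; the obstruction is the image of $[(\tilde c_g)]$ in $H^1(G,N)$, where $N=H^0(Y,\OO_Y/\Lambda)/E$ is a group of homomorphisms $\mathrm{Alb}(Y)\to E$. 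This is the crux: one has to show this excess class vanishes --- I would try to absorb it into $\eta$ using $G$-equivariance (functoriality) of the Albanese together with a further $G$-equivariant bimeromorphic modification --- leaving $[(\tilde c_g)]\in H^1(G,E)=\Hom(G,E)$, which gives $\sigma$.

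\emph{Step 4.} Finally, if $X$ and $Y$ are K\"ahler then $X$ is bimeromorphic to a compact K\"ahler manifold, so by \Cref{Weierstrass_model_Kaehler} the Chern class $c(\eta)\in H^2(Y,\Lambda)$ is torsion, whence $(Y\times E)^\eta$ is K\"ahler by \Cref{fundamental_properties_special_torus_bundles} \cref{torus_bundle_kaehler}. The main obstacle is Step 3: verifying that the equivariant class of $f$ lies in the diagonal family parametrized by $(\eta,\sigma)$, i.e.\ that the automorphisms of $(Y\times E)^\eta$ over $Y$ coming from $H^0(Y,\OO_Y/\Lambda)\setminus E$ do not contribute genuinely new $G$-equivariant structures.
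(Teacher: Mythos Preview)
Your Steps 1, 2, and 4 are essentially the paper's argument and are fine. The gap is exactly where you flag it, in Step 3, and your proposed fix (``absorb into $\eta$ via the Albanese'') is not the mechanism that works.

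The paper does not try to kill an obstruction class in $H^1(G,N)$ with $N=H^0(Y,\OO_Y/\Lambda)/E$. Instead it proceeds as follows. First it proves a small lemma (\Cref{automorphism_smooth_basic_fibration}): every automorphism of $(Y\times E)^\eta$ over $Y$ that is the identity on $R^1\pi^\eta_*\Z$ is translation by a \emph{global} section in $H^0(Y,\OO_Y/\Lambda)$. Using the Hochschild--Serre description of $H^1_G(Y,\OO_Y/\Lambda)$, one writes the equivariant class $\eta_G$ as the pair of a \v{C}ech cocycle $(\eta_{ij})$ together with the chain $(\lambda_i(g))$ realising $g^*\eta=\eta$ and a trivialisation $\mu$ of the resulting $2$-cocycle. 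One then builds an auxiliary $G$-action $\varphi_g$ on $(Y\times E)^\eta$ from $(\lambda_i(g)-\mu(g))$ and compares it to the given action $f_g$: their discrepancy is, by the lemma, translation by some $\sigma(g)\in H^0(Y,\OO_Y/\Lambda)$. An explicit (but elementary) computation, using that $f_h$ is locally translation by a local section and hence commutes with global translations, shows $\sigma(gh)=\sigma(g)+\sigma(h)$, i.e.\ $\sigma$ is a genuine \emph{homomorphism}, not merely a $1$-cocycle.

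The punchline you are missing is then immediate and has nothing to do with the Albanese: since $G$ is finite, every $\sigma(g)$ satisfies $|G|\cdot\sigma(g)=0$ in $H^0(Y,\OO_Y/\Lambda)=\mathrm{Mor}(Y,E)$, so $\sigma(g)$ is a holomorphic map $Y\to E$ with image contained in the finite set $E[|G|]$. As $Y$ is connected this map is constant, i.e.\ $\sigma(g)\in E$. This simultaneously shows that your obstruction in $H^1(G,N)$ vanishes (indeed $N=\Hom(\mathrm{Alb}(Y),E)$ is torsion-free, but one never needs to analyse it) and produces the homomorphism $\sigma\colon G\to E$. So the missing idea is: upgrade the cocycle to a homomorphism via \Cref{automorphism_smooth_basic_fibration}, then use finiteness of $G$ plus discreteness of torsion in $E$.
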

The following lemma will be useful in the proof.
\begin{lemma}\label{automorphism_smooth_basic_fibration}
    Let $T$ be a connected complex manifold and let $H$ be a variation of Hodge structures on $T$ induced by some elliptic fiber bundle $S\rightarrow T$. Let $\eta\in H^1(T,\mathcal{J}_H)$ be a cohomology class. Then any automorphism of $J_H^\eta$ over $T$ that induces the identity on $R^1\pi^\eta_*\Z\cong H$ is given by translation with a section of $\pi\colon J_H\rightarrow T$.
\end{lemma}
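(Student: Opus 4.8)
The plan is to reduce to a fibrewise analysis and then globalise. First I would fix $t\in T$ and study the restriction $\varphi_t$ of $\varphi$ to the fibre $P_t:=(J_H^\eta)_t$. By \Cref{classification_torus_fiber_bundles} the fibre $P_t$ is a torsor under the elliptic curve $A_t:=(J_H)_t$; choosing any base point identifies $P_t$ with $A_t$ as complex manifolds, and $\varphi_t$ then becomes a biholomorphism of $A_t$. Any such biholomorphism is the composition of a translation with a group automorphism $\psi\in\mathrm{Aut}(A_t,0)$, since a base-point preserving biholomorphism of a complex torus lifts to a $\C$-linear automorphism of the universal cover. As translations act trivially on cohomology, the automorphism induced by $\varphi_t$ on $H^1(P_t,\Z)$ is $\psi^*$, and the hypothesis that $\varphi$ induces the identity on $R^1\pi^\eta_*\Z$ says exactly that $\psi^*=\mathrm{id}$ for every $t$. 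Since the representation $\mathrm{Aut}(A_t,0)\hookrightarrow\mathrm{Aut}(H^1(A_t,\Z))$ is faithful --- an automorphism acting trivially on $H_1(A_t,\Z)$ has a $\C$-linear lift fixing the lattice pointwise, hence is the identity --- we get $\psi=\mathrm{id}$, so $\varphi_t$ is translation by a (base-point independent) element $c_t\in A_t$.

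Next I would assemble the $c_t$ into a global section. Since $\eta\in H^1(T,\mathcal{J}_H)$ is represented by a cocycle for some open cover $T=\bigcup_\alpha U_\alpha$, the fibration $\pi^\eta$ admits holomorphic sections $s_\alpha\colon U_\alpha\to J_H^\eta$. Using the holomorphic difference map $J_H^\eta\times_T J_H^\eta\to J_H$ attached to the torsor structure, I set $\delta_\alpha:=(\varphi\circ s_\alpha)-s_\alpha$, a holomorphic section of $\pi$ over $U_\alpha$. The fibrewise computation of the first step gives $\delta_\alpha(t)=c_t$ for $t\in U_\alpha$; in particular $\delta_\alpha$ is independent of the index $\alpha$ and of the chosen local section, so the $\delta_\alpha$ glue to a global holomorphic section $\delta\colon T\to J_H$ with $\delta(t)=c_t$ for all $t$. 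Then for every $t\in T$ and $p\in P_t$ one has $\varphi(p)=c_t\cdot p=\delta(t)\cdot p$, that is, $\varphi$ is translation by the section $\delta$, as desired.

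I expect the only real content to lie in the first step: extracting from the single cohomological hypothesis that the fibrewise automorphism is a translation, which rests on the faithfulness of the Hodge-theoretic representation of $\mathrm{Aut}(A_t,0)$ --- this is where it matters that the fibres are tori (in fact one only needs that they are complex tori, not necessarily of dimension one). Once this is in place the globalisation is routine, the point being that $\delta_\alpha$ does not depend on the auxiliary section $s_\alpha$, so no cocycle condition has to be verified by hand.
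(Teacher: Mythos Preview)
Your proof is correct and follows the same two-step strategy as the paper: first show that on each fibre (equivalently, over each small open set) the automorphism is a translation, then assemble these into a global section of $\pi\colon J_H\to T$. The paper carries this out by choosing local trivialisations $\varphi_i\colon(\pi^\eta)^{-1}(U_i)\xrightarrow{\sim}J_{H|_{U_i}}$, invoking \cite[Lem.~1.2.1]{Nakayama_elliptic-fibrations} for the local step, and then checking by hand, via the cocycle $(\eta_{ij})$, that the resulting local sections $\psi_i$ agree on overlaps. Your packaging is slightly cleaner on both counts: you give the fibrewise statement directly from the decomposition $\mathrm{Aut}(A_t)=A_t\rtimes\mathrm{Aut}(A_t,0)$ together with faithfulness of $\mathrm{Aut}(A_t,0)\hookrightarrow\mathrm{GL}(H^1(A_t,\Z))$, and your use of the holomorphic torsor difference map $J_H^\eta\times_T J_H^\eta\to J_H$ to form $\delta_\alpha=(\varphi\circ s_\alpha)-s_\alpha$ makes the independence of the auxiliary choice $s_\alpha$ automatic, so no overlap compatibility needs to be verified. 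The content is the same; your version just avoids the explicit cocycle bookkeeping.
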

\begin{proof}
    Choose an open cover $\{U_i\}_{i\in I}$ of $T$ and fix isomorphisms $\varphi_i\colon V_i:=(\pi^\eta)^{-1}(U_i)\stackrel{\sim}{\rightarrow}J_{H\vert_{U_i}}$. The composition $\varphi_j\circ\varphi^{-1}_i$ is then equal to translation with a local section $\eta_{ij}$, such that $(\eta_{ij})_{i,j}$ satisfies the cocycle condition and gives rise to the class $\eta$. Let $\psi\colon J_H^\eta\rightarrow J_H^\eta$ be the given automorphism. Then the composites $\psi_i$ given by
    \begin{equation*}
        J_{H\vert_{U_i}}\stackrel{\varphi^{-1}_i}{\longrightarrow} V_i\stackrel{\psi}{\longrightarrow} V_i\stackrel{\varphi_i}{\longrightarrow}J_{H\vert_{U_i}}
    \end{equation*}
    are automorphisms of $J_{H\vert_{U_i}}$ over $U_i$ that induce the identity on the variation of Hodge structures. Thus, by \cite[][Lem. 1.2.1]{Nakayama_elliptic-fibrations} they are given by translations with a section $\psi_i$ of $J_{H\vert_{U_i}}\rightarrow U_i$. Moreover, they fit into the commutative diagram
    \[\begin{tikzcd}
	   {J_{H\vert_{U_{ij}}}} & {J_{H\vert_{U_{ij}}}} \\
	   {J_{H\vert_{U_{ij}}}} & {J_{H\vert_{U_{ij}}},}
	   \arrow["{\psi_i}", from=1-1, to=1-2]
	   \arrow["{\eta_{ij}}"', from=1-1, to=2-1]
	   \arrow["{\eta_{ij}}", from=1-2, to=2-2]
	   \arrow["{\psi_j}"', from=2-1, to=2-2]
    \end{tikzcd}\]
    which shows that $\psi_i$ and $\psi_j$ agree on $U_i\cap U_j$. Therefore, we can glue the local sections $(\psi_i)_{i\in I}$ to obtain a global section of $\pi\colon J_H\rightarrow T$.\par
    Conversely, let $s$ be a global section of $\pi\colon J_H\rightarrow T$. Translation with $s$ induces local automorphisms
    \begin{equation*}
        \psi_i\colon J_{H\vert_{U_i}}\stackrel{s\vert_{U_i}}{\longrightarrow}J_{H\vert_{U_i}}
    \end{equation*}
    over $U_i$. These local automorphisms fit into the commutative diagram
    \[\begin{tikzcd}
	   {J_{H\vert_{U_{ij}}}} & {J_{H\vert_{U_{ij}}}} \\
	   {J_{H\vert_{U_{ij}}}} & {J_{H\vert_{U_{ij}}},}
	   \arrow["{\psi_i}", from=1-1, to=1-2]
	   \arrow["{\eta_{ij}}"', from=1-1, to=2-1]
	   \arrow["{\eta_{ij}}", from=1-2, to=2-2]
	   \arrow["{\psi_j}"', from=2-1, to=2-2]
    \end{tikzcd}\]
    since $s$ is globally defined. Therefore, they glue to an automorphism $\psi$ of $J_H^\eta$ over $T$ that induces the identity on $R^1\pi^\eta_*\Z\cong H$.
\end{proof}
\begin{proof}[Proof of \Cref{trivial_vhs_constant_j_all_invariant}]
    By \Cref{corollary_isotrivial_and_trivial_vhs}, the Jacobian fibration $J_H\rightarrow Y^*$ is isomorphic to $Y^*\times E\rightarrow Y^*$. Let $a$, $b\in\C$ such that
    \begin{equation*}
        E\cong\{y^2z-(x^3+axz^2+bz^3)=0\}\subset\mathbb{P}^2.
    \end{equation*}
    Then $W(\OO_Y,a,b)\cong Y\times E\rightarrow Y$ is the unique minimal Weierstraß model that extends $J_H\rightarrow Y^*$. By \cite[][Lem. 1.3.5]{Nakayama_elliptic-fibrations}, every local meromorphic section of $Y\times E\rightarrow Y$ is holomorphic. Thus, the sheaves $\mathcal{W}^\#$ and $\mathcal{W}^{mer}$ agree, and, by \Cref{corollary_isotrivial_and_trivial_vhs}, they are isomorphic to $\OO_Y/\Lambda$, where $\Lambda\subset\C$ is a rank 2 lattice such that $E\cong\C/\Lambda$. In particular, the composition \cref{composition_construction_injection_into_H1(Wmer)} is of the form
    \begin{equation*}
        H^1_G(Y,\OO_Y/\Lambda)\longrightarrow\mathcal{E}_G(Y,\Delta,H)\longhookrightarrow H^1_G(Y,\OO_Y/\Lambda).
    \end{equation*}
    Therefore, by \Cref{bimeromorphic_classification_elliptic_fibrations}, there is a class $\eta_G\in H^1_G(Y,\OO_Y/\Lambda)$ such that $f\colon X\rightarrow Y$ is $G$-equivariantly bimeromorphic to $(Y\times E)^{\eta_G}\rightarrow Y$. Denote by $\eta\in H^1(Y,\OO_Y/\Lambda)$ the image of $\eta_G$ via the canonical map $H_G^1(Y,\OO_Y/\Lambda)\rightarrow H^1(Y,\OO_Y/\Lambda)$. By combining \Cref{Weierstrass_model_Kaehler} and \Cref{fundamental_properties_special_torus_bundles} we conclude that $(Y\times E)^\eta$ is K\"ahler.\par
    It remains to determine the $G$-action on $(Y\times E)^\eta$ induced by $\eta_G$. The group $G$ acts on $Y\times E$ by acting on the first component. Moreover, as the $G^{op}$-action on $\Lambda$ is trivial, and as the inclusion $\Lambda\subset\OO_Y$ is $G^{op}$-equivariant, the $G^{op}$-action on $\OO_Y$ is trivial as well. By the same reasoning, also the $G^{op}$-action on $\OO_Y/\Lambda$ is trivial. Consider the Hochschild--Serre spectral sequence
    \begin{equation*}
        E^{p,q}_2:=H^2(G^{op},H^q(Y,\OO_Y/\Lambda))\Longrightarrow H^{p+q}_G(Y,\OO_Y/\Lambda).
    \end{equation*}
    For $p+q=1$ we obtain the short exact sequence
    \begin{align*}
        0&\longrightarrow H^1(G^{op},H^0(Y,\OO_Y/\Lambda))\longrightarrow H^1_G(Y,\OO_Y/\Lambda)\\
        &\longrightarrow\ker(H^1(Y,\OO_Y/\Lambda)^G\stackrel{u}{\longrightarrow} H^2(G^{op},H^0(Y,\OO_Y/\Lambda)))\longrightarrow 0.
    \end{align*}
    Note that $u(\eta)=0$. Let us describe the map $u$. We will see that this yields a description of the $G$ action on $(Y\times E)^\eta$. We can choose a good $G$-invariant cover for $Y$, i.e. an open cover $\{U_i\}_{i\in I}$ of $Y$ such that $g(U_i)=U_j$ for some $j\in I$ and such that $g(U_i)$ and $U_i$ are either equal or disjoint. This allows to define a $G^{op}$-action on $I$ by setting $gi$ to be the index of the open subset $g^{-1}(U_i)=U_{gi}$. Let $(\theta_{ij})_{i,j}$ be a cocycle representing a $G$-invariant class $\theta\in H^1(Y,\OO_Y/\Lambda)^G$. Then by definition the two cocycles $\theta_{ij}$ and $g\theta_{(gi)(gj)}$ differ by a coboundary for every $g\in G$: there are tuples $(\lambda_i(g))_{i\in I}$ such that
    \begin{equation*}
        g\theta_{(gi)(gj)}+\lambda_j(g)-\lambda_i(g)=\theta_{ij}
    \end{equation*}
    for all $i$, $j\in I$. For $g=1$ we set $\lambda_i(1):=0$ for all $i\in I$. We define
    \begin{equation*}
        (\nu_i(g,h))_{i\in I}:=(g\lambda_{gi}(h)-\lambda_i(g*h)+\lambda_i(g))_{i\in I},
    \end{equation*}
    where $g*h$ denotes the opposite multiplication. Comparing the equations $h(g\theta_{(gi)(gj)}+\lambda_j(g)-\lambda_i(g))=h(\theta_{ij})$ and $(g*h)\theta_{((g*h)i)((g*h)j)}+\lambda_j(g*h)-\lambda_i(g*h)=\theta_{ij}$ shows
    \begin{equation*}
        \nu_j(g,h)-\nu_i(g,h)=0
    \end{equation*}
    for all $i$, $j\in I$ and all $g$, $h\in G$. Thus, the local sections $\nu_i(g,h)$ glue to a global section $\nu(g,h)\in H^0(Y,\OO_Y/\Lambda)$. The map $G\times G\rightarrow H^0(Y,\OO_Y/\Lambda)$ defined by sending $(g,h)$ to $\nu(g,h)$ satisfies the identity
    \begin{equation*}
        \nu(g,h)+\nu(g*h,k)=g\nu(h,k)+\nu(g,h*k)
    \end{equation*}
    for all $g$, $h$, $k\in G$, i.e $\nu$ is a normalized $G^{op}$ 2-cocycle (the conditions $\nu(g,1)=0=\nu(1,g)$ are satisfies since we chose $\lambda_i(1)=0$). It hence defines a cohomology class $[\nu]\in H^2(G^{op},H^0(Y,\OO_Y/\Lambda))$ \cite[][IV Thm. 3.12]{Brown_group_cohomology}. The map $u\colon H^1(Y,\OO_Y/\Lambda)^G\rightarrow H^2(G^{op},H^0(Y,\OO_Y/\Lambda))$ is then defined by this construction.\par
    We apply this to $\theta:=\eta$. Let $(\eta_{ij})_{i,j}$ be a $G$-invariant cocycle representing the given class $\eta\in H^1(Y,\OO_Y/\Lambda)^G$. Let $\nu'$ be the normalized 2-cocycle constructed from $(\eta_{ij})_{i,j}$ as above. Since $u(\eta)=0$, the 2-cocycle $\nu'$ is trivial. Thus, there is a function $\mu\colon G\rightarrow H^0(Y,\OO_Y/\Lambda)$ such that
    \begin{equation*}
        \nu'(g,h)=g\mu(h)-\mu(g*h)+\mu(g)
    \end{equation*}
    for all $g$, $h\in G$ \cite[][IV Thm. 3.12]{Brown_group_cohomology}. Suppose, for now, that the local morphisms
    \begin{equation}\label{local_morphisms_in_proof}
        (id\times tr(\lambda_i(g)-\mu(g)))\colon U_i\times E\longrightarrow U_i\times E
    \end{equation}
    glue to a morphism $\varphi_g\colon (Y\times E)^\eta\times_gY\rightarrow (Y\times E)^\eta$ over $Y$, where $(Y\times E)^\eta\times_gY$ denotes the fiber product
    \[\begin{tikzcd}
	   {(Y\times E)^\eta\times_gY} & {(Y\times E)^\eta} \\
	   Y & {Y.}
	   \arrow["{\psi_g}", from=1-1, to=1-2]
	   \arrow[from=1-1, to=2-1]
	   \arrow[from=1-2, to=2-2]
	   \arrow["g"', from=2-1, to=2-2]
    \end{tikzcd}\]
    Moreover, we suppose, for now, that the identities
    \begin{align}
        \varphi_h\circ(\varphi_g\times_hY)&=\varphi_{gh}\label{identity1_in_proof}\\
        \psi_h\circ(\psi_g\times_hY)&=\psi_{gh}\label{identity2_in_proof}
    \end{align}
    hold true for all $g$, $h\in G$. Both assertions will be proven in the next lemma. Note that the construction of the morphisms $\varphi_g$ is independent of $\eta_G\in H_G^1(Y,\OO_Y/\Lambda)$.\par
    Denote by $f_{g}$ is the automorphism of $(Y\times E)^\eta$ induced by the $G$-action on $(Y\times E)^\eta$ given by the class $\eta_G\in H_G^1(Y,\OO_Y/\Lambda)$. Consider the commutative diagram
    \[\begin{tikzcd}
	   {(Y\times E)^\eta} & {(Y\times E)^\eta\times_gY} & {(Y\times E)^\eta} & {(Y\times E)^\eta} \\
	   Y & Y & Y & {Y,}
	   \arrow["{\varphi^{-1}_g}", from=1-1, to=1-2]
	   \arrow[from=1-1, to=2-1]
	   \arrow["{\psi_g}", from=1-2, to=1-3]
	   \arrow[from=1-2, to=2-2]
	   \arrow["{f_{g^{-1}}}", from=1-3, to=1-4]
	   \arrow[from=1-3, to=2-3]
	   \arrow[from=1-4, to=2-4]
	   \arrow["\id"', from=2-1, to=2-2]
	   \arrow["g"', from=2-2, to=2-3]
	   \arrow["{g^{-1}}"', from=2-3, to=2-4]
    \end{tikzcd}\]
    Then from \Cref{automorphism_smooth_basic_fibration} it follows that the composition of the upper row equals the translation with a section $\sigma(g)\in H^0(Y,\OO_Y/\Lambda)$. In particular, as the morphisms $\varphi_g$ only depend on the \u{C}ech cocycle for $\eta$ and are independent of $\eta_G$, the morphism $f_g$ is determined by $\sigma(g)$. Therefore, the $G$-action on $(Y\times E)^\eta$ is determined by the assignment $g\mapsto\sigma(g)$. It thus remains to prove that the map $\sigma$ defines a group homomorphism $\sigma\colon G\rightarrow E\subset H^0(Y,\OO_Y/\Lambda)$. To see this, we calculate
    \begin{align*}
        tr(\sigma(g))&=\psi_h\circ(tr(\sigma(g))\times_hY)\circ\psi^{-1}_h\\
        &=\psi_h\circ(f_{g^{-1}}\times_hY)\circ(\psi_g\times_hY)\circ(\varphi^{-1}_g\times_hY)\circ\psi^{-1}_h\\
        &=f_{g^{-1}}\circ\psi_{gh}\circ\varphi^{-1}_{gh}\circ\varphi_h\circ\psi^{-1}_h,
    \end{align*}
    where we used $f_{g^{-1}}\times_hY=\psi^{-1}_h\circ f_{g^{-1}}\circ\psi_h$, \cref{identity1_in_proof} and \cref{identity2_in_proof} in the last equality. Thus,
    \begin{align*}
        tr(\sigma(g))&=f_{g^{-1}}\circ\psi_{gh}\circ\varphi^{-1}_{gh}\circ\varphi_h\circ\psi^{-1}_h\\
        &=f_{g^{-1}}\circ f_{gh}\circ f_{(gh)^{-1}}\circ\psi_{gh}\circ\varphi^{-1}_{gh}\circ\varphi_h\circ\psi^{-1}_h\\
        &=f_{h}\circ tr(\sigma(gh))\circ\varphi_h\circ\psi^{-1}_h\circ f_h\circ f_{h^{-1}}\\
        &=f_h\circ tr(\sigma(gh))\circ tr(-\sigma(h))\circ f_{h^{-1}}\\
        &=tr(\sigma(gh)-\sigma(h)).
    \end{align*}
    To justify the last step, we claim that the $G$-action on $(Y\times E)^\eta$ is locally given by translations with a local section. In this case, as $E$ is abelian and the action on $\OO_Y/\Lambda$ is trivial, $f_h$ commutes with $tr(\sigma(gh)-\sigma(h))$, which justifies the last step. To see that the $G$-action is locally given by translations with local sections, consider the commutative diagram
    \[\begin{tikzcd}
	   {U_i\times E} & {g(U_i)\times E} & {U_i\times E} \\
	   {U_i} & {g(U_i)} & {U_i.}
	   \arrow["{f_g\vert_{U_i}}", from=1-1, to=1-2]
	   \arrow[from=1-1, to=2-1]
	   \arrow["{g^{-1}\times\id}", from=1-2, to=1-3]
	   \arrow[from=1-2, to=2-2]
	   \arrow[from=1-3, to=2-3]
	   \arrow["g"', from=2-1, to=2-2]
	   \arrow["{g^{-1}}"', from=2-2, to=2-3]
    \end{tikzcd}\]
    The upper row is a morphism over $U_i$ that leaves the variation of Hodge structures fixed. By \Cref{automorphism_smooth_basic_fibration}, it is given by translation with a section. To see that $\sigma\colon G\rightarrow H^0(Y,\OO_Y/\Lambda)$ has image in $E\subset H^0(Y,\OO_Y/\Lambda)$, note that since $G$ is finite, the group homomorphism $\sigma$ has image in the subset of torsion sections. Moreover, as the torsion points of a given order are disjoint, $\sigma(g)$ must be a constant section, i.e., $\sigma(g)\in E$.
\end{proof}
\begin{lemma}
    The local morphisms \cref{local_morphisms_in_proof}
    \begin{equation*}
        (id\times tr(\lambda_i(g)-\mu(g)))\colon U_i\times E\longrightarrow U_i\times E
    \end{equation*}
    glue to a morphism $\varphi_g\colon (Y\times E)^\eta\times_gY\rightarrow (Y\times E)^\eta$ over $Y$, where $(Y\times E)^\eta\times_gY$ denotes the fiber product
    \[\begin{tikzcd}
	   {(Y\times E)^\eta\times_gY} & {(Y\times E)^\eta} \\
	   Y & {Y.}
	   \arrow["{\psi_g}", from=1-1, to=1-2]
	   \arrow[from=1-1, to=2-1]
	   \arrow[from=1-2, to=2-2]
	   \arrow["g"', from=2-1, to=2-2]
    \end{tikzcd}\]
    Moreover, the identities \cref{identity1_in_proof}, \cref{identity2_in_proof}
    \begin{align*}
        \varphi_h\circ(\varphi_g\times_hY)=\varphi_{gh}\\
        \psi_h\circ(\psi_g\times_hY)=\psi_{gh}
    \end{align*}
    hold true for all $g$, $h\in G$.
\end{lemma}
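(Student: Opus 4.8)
The plan is to verify all three assertions locally, in the good $G$-invariant atlas for $(Y\times E)^\eta$ subordinate to $\{U_i\}_{i\in I}$, using that all the morphisms in play --- the chart transitions of $(Y\times E)^\eta$ and of its base changes, the prescribed local morphisms $\id\times tr(\lambda_i(g)-\mu(g))$, and their composites --- are fibrewise translations $\id\times tr(s)$ by sections $s$ of $\OO_Y/\Lambda$, so that composing two of them amounts to adding the corresponding sections (as $E$ is abelian). Under this dictionary the gluing assertion and the two cocycle identities all become additive identities among local sections of $\OO_Y/\Lambda$, and these will be exactly the $G$-invariance of the cocycle $(\eta_{ij})_{i,j}$ and the coboundary relation $\nu'=d\mu$ that were set up in the main proof.

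For the gluing, fix the atlas in which $(Y\times E)^\eta$ is obtained by identifying $U_j\times E$ with $U_i\times E$ over $U_i\cap U_j$ via $\id\times tr(\eta_{ij})$. Since $\{U_i\}_{i\in I}$ is $G$-invariant, pulling this atlas back along $g\colon Y\to Y$ and relabelling the index set by the $G^{\mathrm{op}}$-action exhibits $(Y\times E)^\eta\times_gY$ with an atlas whose transition over $U_i\cap U_j$ is a $g$-transport of $\eta_{(g\cdot i)(g\cdot j)}$. The relation
\begin{equation*}
g\,\eta_{(g\cdot i)(g\cdot j)}+\lambda_j(g)-\lambda_i(g)=\eta_{ij}
\end{equation*}
defining the tuple $(\lambda_i(g))_i$ then says precisely that this transition and $(\eta_{ij})$ differ by the coboundary of $(\lambda_i(g))_i$; hence the local translations $\id\times tr(\lambda_i(g))$ intertwine them and descend to an isomorphism $(Y\times E)^\eta\times_gY\stackrel{\sim}{\longrightarrow}(Y\times E)^\eta$ over $Y$. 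As $\mu(g)$ is a \emph{global} section, subtracting it leaves the overlap condition on each $U_i\cap U_j$ unchanged, so the maps $\id\times tr(\lambda_i(g)-\mu(g))$ of \cref{local_morphisms_in_proof} also glue, to an isomorphism $\varphi_g$ over $Y$.

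For the two identities, the one for the base-change projections, $\psi_h\circ(\psi_g\times_hY)=\psi_{gh}$, is formal: it is the compatibility of iterated fibre products, namely that the canonical isomorphism $\bigl((Y\times E)^\eta\times_gY\bigr)\times_hY\cong(Y\times E)^\eta\times_{gh}Y$ intertwines the two composites of projections, which is immediate from the universal property of the fibre product. For $\varphi_h\circ(\varphi_g\times_hY)=\varphi_{gh}$ one again argues chart by chart: writing $\kappa_i(g):=\lambda_i(g)-\mu(g)$ for the translation amount of $\varphi_g$ over $U_i$, the right-hand side is $\id\times tr(\kappa_i(gh))$ over $U_i$, while, $E$ being abelian, the left-hand side over $U_i$ is $\id\times tr$ of $\kappa_i(h)$ plus an $h$-transport of $\kappa_{h\cdot i}(g)$. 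The resulting required identity among the $\kappa_i$ is exactly what one gets by inserting $\nu'(\cdot,\cdot)=d\mu$ into the definition $\nu'_i(g,h)=g\lambda_{g\cdot i}(h)-\lambda_i(g*h)+\lambda_i(g)$ (with the roles of $g$ and $h$ exchanged and $g*h=hg$), so the $\varphi_g$ satisfy the cocycle condition.

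The one genuine difficulty is bookkeeping. One must fix once and for all mutually consistent conventions --- for the $G^{\mathrm{op}}$-action on the index set $I$, for the opposite product $g*h$, for the sign in the transition cocycle $(\eta_{ij})$, and for the direction in which a local section of $\OO_Y/\Lambda$ is transported by an element of $G$ --- and then verify that with these conventions the relation $\nu'=d\mu$ matches the cocycle condition for $\{\varphi_g\}$ in exactly the stated form (and not in a $g\mapsto g^{-1}$- or sign-twisted variant), and likewise that the prescribed local translations are the correct primitive of the coboundary above. Once the conventions are pinned down there is no further content, since abelianness of $E$ turns every compatibility check into one of the additive identities above.
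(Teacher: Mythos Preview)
Your proposal is correct and follows essentially the same approach as the paper: both verify the gluing chart-by-chart via the defining relation $g\eta_{(gi)(gj)}+\lambda_j(g)-\lambda_i(g)=\eta_{ij}$ (noting that subtracting the global $\mu(g)$ does not affect overlaps), deduce the $\psi$-identity from the universal property of the fibre product, and obtain the $\varphi$-identity by reducing the local translation amounts to an additive identity that is exactly $\nu'=d\mu$. The paper's computation indeed involves the role-swap you anticipate (it ends up using $\nu(h,g)$ rather than $\nu(g,h)$), so your caveat about fixing conventions consistently is well placed.
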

\begin{proof}
    The local morphisms glue if and only if the diagram
    \[\begin{tikzcd}
	   {U_{ij}\times E} && {U_{ij}\times E} \\
	   {U_{ij}\times E} && {U_{ij}\times E}
	   \arrow["{\lambda_i(g)-\mu(g)}", from=1-1, to=1-3]
	   \arrow["{g\eta_{(gi)(gj)}}"', from=1-1, to=2-1]
	   \arrow["{\eta_{ij}}", from=1-3, to=2-3]
	   \arrow["{\lambda_j(g)-\mu(g)}"', from=2-1, to=2-3]
    \end{tikzcd}\]
    commutes. This is the case since $g\eta_{(gi)(gj)}+\lambda_j(g)-\lambda_i(g)=\eta_{ij}$ by construction.\par
    Now, note that the first identity \cref{identity1_in_proof} holds if and only if the diagram
    \[\begin{tikzcd}
	   {U_{i}\times E} && {U_{i}\times E} \\
	   {U_i\times E} && {U_{i}\times E}
	   \arrow["{h\lambda_{hi}(g)-h\mu(g)}", from=1-1, to=1-3]
	   \arrow["\id"', from=1-1, to=2-1]
	   \arrow["{\lambda_i(h)-\mu(h)}", from=1-3, to=2-3]
	   \arrow["{\lambda_i(gh)-\mu(gh)}"', from=2-1, to=2-3]
    \end{tikzcd}\]
    commutes. The composite through the upper right-hand corner is given by
    \begin{align*}
        &h\lambda_{hi}(g)+\lambda_i(h)-(h\mu(g)+\mu(h))\\
        =&h\lambda_{hi}(g)-\lambda_i(h*g)+\lambda_i(h)+\lambda_i(h*g)-(h\mu(g)-\mu(h*g)+\mu(h)+\mu(h*g))\\
        =&\nu(h,g)+\lambda(h*g)-(\nu(h,g)+\mu(h*g))\\
        =&\lambda(gh)-\mu(gh).
    \end{align*}
    The second identity \cref{identity2_in_proof} holds because of the universal property of the fiber product.
\end{proof}
\section{Bimeromorphic Geometric Aspects}
Recall that a compact K\"ahler manifold $X$ satisfies condition \ref{conditionC} from \Cref{conjecture_Kotschick} if there is a holomorphic 1-form $\omega\in H^0(X,\Omega^1_X)$ such that for any finite \'etale cover $\tau\colon Y\rightarrow X$, the sequence
\begin{equation*}
    H^{i-1}(Y,\C)\stackrel{\wedge\tau^*\omega}{\longrightarrow}H^i(Y,\C)\stackrel{\wedge\tau^*\omega}{\longrightarrow}H^{i+1}(Y,\C)
\end{equation*}
given by the cup product is exact for all $i$.
\begin{proposition}\label{blow-downs_to_elliptic_center}\cite[][Prop. 3.1]{Hao_Schreieder_3-folds}
    Let $X$ be a smooth compact K\"ahler threefold that admits a 1-form $\omega\in H^0(X,\Omega^1_X)$ such that the complex $(H^\bullet(X,\C),\wedge\omega)$, i.e.
    \begin{equation*}
        \cdots\stackrel{\wedge\omega}{\longrightarrow} H^{i-1}(X,\C)\stackrel{\wedge\omega}{\longrightarrow}H^i(X,\C)\stackrel{\wedge\omega}{\longrightarrow}H^{i+1}(X,\C)\stackrel{\wedge\omega}{\longrightarrow}\cdots
    \end{equation*}
    is exact. If $K_X$ is not nef and $X$ does not carry the structure of a Mori fiber space, then there is a smooth compact K\"ahler threefold $Y$ such that $X$ is the blow-up of $Y$ along an elliptic curve $E\subset Y$. Moreover, if $\omega'\in H^0(Y,\Omega^1_Y)$ denotes the 1-form induced by $\omega$, then $(H^\bullet(Y,\C),\wedge\omega')$ is exact and $\omega'\vert_E$ is non-trivial.
\end{proposition}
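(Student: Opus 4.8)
The proof follows Hao and Schreieder \cite[][Prop.~3.1]{Hao_Schreieder_3-folds}, the Kähler minimal model program for threefolds \cite{Hoering_Peternell_Kaehler-MMP,Hoering_Peternell_Kaehler-MFS} being the only extra ingredient.

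\smallskip
\emph{Step 1: produce a divisorial contraction.} Since $K_X$ is not nef and $X$ is not a Mori fiber space, the Kähler MMP provides a $K_X$-negative extremal ray whose contraction $\phi\colon X\to Y$ is birational; as $X$ is smooth it is not small, hence divisorial onto a compact Kähler threefold $Y$ with terminal singularities. Mori's classification of extremal divisorial contractions from a smooth threefold --- being local, it applies verbatim --- leaves five possibilities: either (E1) $Y$ is smooth, $C:=\phi(E)$ is a smooth curve and $\phi$ is the blow-up of $Y$ along $C$; or (E2)--(E5) $\phi(E)$ is a point and the exceptional divisor $E$ is $\mathbb{P}^2$ or a (smooth or singular) quadric surface, with $Y$ acquiring an isolated terminal singularity. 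In all cases the fibres of $\phi$ are connected with vanishing $H^1$, so the Leray spectral sequence yields an isomorphism $\phi^*\colon H^1(Y,\C)\xrightarrow{\sim}H^1(X,\C)$; in particular $\omega=\phi^*\omega'$ for a class $\omega'\in H^1(Y,\C)$. Exactness of the complex at $H^0(X,\C)$ forces $\omega\neq 0$.

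\smallskip
\emph{Step 2: exclude (E2)--(E5).} In these cases $\phi$ contracts $E$ to a point $p$ and $E$ has cohomology only in even degrees, so Leray also gives $H^3(X,\C)=\phi^*H^3(Y,\C)$. Now $[E]\wedge\omega=0$ in $H^3(X,\C)$: for every $\beta\in H^3(Y,\C)$ one has $\int_X[E]\wedge\omega\wedge\phi^*\beta=\int_E\iota^*\phi^*(\omega'\wedge\beta)=0$, because $\omega'\wedge\beta\in H^5(Y,\C)$, $\phi|_E$ factors through $p$, and $H^5(\{p\},\C)=0$; since $H^3(X,\C)=\phi^*H^3(Y,\C)$ and the cup-product pairing on $H^3(X,\C)$ is perfect, this gives $[E]\wedge\omega=0$. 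On the other hand $[E]\neq 0$ (a nonzero effective divisor on a Kähler manifold), and $[E]$ is not in $\operatorname{im}\!\big(\wedge\omega\colon H^1(X,\C)\to H^2(X,\C)\big)=\phi^*\!\big(\omega'\wedge H^1(Y,\C)\big)\subseteq\phi^*H^2(Y,\C)$, because $[E]$ has negative degree on a line of $E$ contracted by $\phi$ whereas every class in $\phi^*H^2(Y,\C)$ has degree zero there. Hence $(H^\bullet(X,\C),\wedge\omega)$ is not exact at $H^2(X,\C)$ --- a contradiction. So we are in case (E1): $Y$ is a smooth compact Kähler threefold, $C\subset Y$ a smooth curve, $\phi\colon X=\mathrm{Bl}_CY\to Y$, and $\omega'\in H^0(Y,\Omega^1_Y)$ is now a genuine holomorphic $1$-form with $\phi^*\omega'=\omega$.

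\smallskip
\emph{Step 3: identify $C$ as an elliptic curve.} For the blow-up along a smooth curve one has $H^k(X,\C)=\phi^*H^k(Y,\C)\oplus\iota_*\pi^*H^{k-2}(C,\C)$, with $\iota\colon E\hookrightarrow X$ and $\pi=\phi|_E\colon E\to C$ a $\mathbb{P}^1$-bundle. As $\omega=\phi^*\omega'$, the projection formula gives $(\iota_*\pi^*c)\wedge\omega=\iota_*\pi^*\big(c\wedge(\omega'|_C)\big)$, so $\wedge\omega$ preserves this decomposition and $(H^\bullet(X,\C),\wedge\omega)$ splits as the direct sum of $(H^\bullet(Y,\C),\wedge\omega')$ and the complex $0\to H^0(C)\to H^1(C)\to H^2(C)\to 0$ with differential $\wedge(\omega'|_C)$ (in cohomological degrees $2,3,4$). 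Exactness of the total complex forces exactness of both summands. Exactness of the second one forces $\omega'|_C\neq 0$ (exactness at the ends) and then $\dim\ker\!\big(\wedge(\omega'|_C)\colon H^1(C)\to H^2(C)\big)=1$, i.e. $g(C)=1$ --- using $(\omega'|_C)\wedge H^{1,0}(C)=0$ and the perfectness of $H^{1,0}(C)\times H^{0,1}(C)\to H^2(C)$; conversely these conditions do make the complex exact. Thus $C$ is an elliptic curve, $\omega'|_E=\pi^*(\omega'|_C)$ is non-trivial, and $(H^\bullet(Y,\C),\wedge\omega')$ is exact, as required.

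\smallskip
\emph{Main obstacle.} The heart of the matter is Step 2: one must understand the cohomology of the (possibly singular) threefold $Y$ and of $\phi$ well enough to see both that $[E]\wedge\omega$ vanishes and that $[E]$ is a new class not hit by $\wedge\omega$. The rest is either a direct appeal to the established Kähler MMP or a formal manipulation with the projection formula and Poincaré duality on the smooth threefold $X$.
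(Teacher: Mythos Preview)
Your proof is correct and follows essentially the same approach as the paper, which simply invokes the K\"ahler MMP \cite{Hoering_Peternell_Kaehler-MMP,Hoering_Peternell_Kaehler-MFS} together with Mori's classification \cite[][Thm.~3.3]{Mori_3-folds_not_nef} and then defers entirely to the argument of \cite[][Prop.~3.1]{Hao_Schreieder_3-folds}; you have written out that deferred argument in detail. One small remark: in Step~2 your claim $H^3(X,\C)=\phi^*H^3(Y,\C)$ is correct but deserves a word of justification in the singular cases (E3)--(E5), namely that the only nonzero $E_2$-term contributing to $H^3(X)$ is $E_2^{3,0}=H^3(Y,\C)$, so the edge map $\phi^*$ is surjective; alternatively one can bypass this entirely by noting directly that $[E]\wedge\omega=\iota_*\iota^*\omega$ and $\iota^*\omega=(\phi\circ\iota)^*\omega'=0$ since $\phi\circ\iota$ factors through a point.
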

\begin{proof}
    By \cite[][Thm. 1.3]{Hoering_Peternell_Kaehler-MMP} and \cite[][Thm. 1.1]{Hoering_Peternell_Kaehler-MFS} extremal contractions exist in the K\"ahler category. As contractions are projective morphisms, they are described by \cite[][Thm. 3.3]{Mori_3-folds_not_nef}. We can hence apply the argument in \cite[][Prop. 3.1]{Hao_Schreieder_3-folds}.
\end{proof}
\begin{corollary}\label{reduction_minimal_model_Mori_fiber_space}\cite[][Cor. 3.2]{Hao_Schreieder_3-folds}
    Let $X$ be a smooth compact K\"ahler threefold which satisfies condition \ref{conditionC} from \Cref{conjecture_Kotschick}. Then there is a sequence of blow-downs
    \begin{equation*}
        X=:X_0\longrightarrow\cdots\longrightarrow X_n
    \end{equation*}
    of smooth compact K\"ahler threefolds $X_i$ along elliptic curves $E_i\subset X_i$ such that $X_n$ is either a minimal model or a Mori fiber space. Moreover, if $\omega\in H^0(X,\Omega^1_X)$ is a holomorphic 1-form such that $(X,\omega)$ satisfies condition \ref{conditionC}, then the induced holomorphic 1-forms $\omega_i\in H^0(X_i,\Omega^1_{X_i})$ restrict non-trivially on the center of blow-ups $X_{i-1}\rightarrow X_i$, and each pair $(X_i,\omega_i)$ satisfies condition \ref{conditionC}.
\end{corollary}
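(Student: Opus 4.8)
The plan is an induction (on the second Betti number) in which \Cref{blow-downs_to_elliptic_center} provides the inductive step and a short descent argument propagates condition \ref{conditionC} through a single blow-down. Put $X_0:=X$ and $\omega_0:=\omega$, and suppose inductively that $(X_i,\omega_i)$ is a smooth compact K\"ahler threefold with a holomorphic $1$-form satisfying condition \ref{conditionC}. If $K_{X_i}$ is nef then $X_i$ is already a minimal model (a smooth variety is automatically $\Q$-factorial and terminal), and if $X_i$ admits the structure of a Mori fiber space we also stop. Otherwise $K_{X_i}$ is not nef and $X_i$ is not a Mori fiber space; applying condition \ref{conditionC} to the identity cover $X_i\to X_i$ shows that $(H^\bullet(X_i,\C),\wedge\omega_i)$ is exact, so \Cref{blow-downs_to_elliptic_center} yields a smooth compact K\"ahler threefold $X_{i+1}$, an elliptic curve $E_{i+1}\subset X_{i+1}$, the blow-down $\rho_i\colon X_i\to X_{i+1}$ with center $E_{i+1}$, and the induced form $\omega_{i+1}\in H^0(X_{i+1},\Omega^1_{X_{i+1}})$ (characterised by $\rho_i^*\omega_{i+1}=\omega_i$) such that $(H^\bullet(X_{i+1},\C),\wedge\omega_{i+1})$ is exact and $\omega_{i+1}\vert_{E_{i+1}}\neq 0$. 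The last fact is exactly the claimed non-triviality on the center; what remains is to upgrade the exactness for $X_{i+1}$ itself to the full condition \ref{conditionC}, and to argue termination.

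For the descent, let $\tau\colon Y\to X_{i+1}$ be any finite \'etale cover; we may assume $Y$ connected, treating connected components separately. Since blowing up commutes with flat base change, $\widetilde{Y}:=Y\times_{X_{i+1}}X_i\to Y$ is the blow-up of $Y$ along $\tau^{-1}(E_{i+1})$, a disjoint union of elliptic curves (each component being a finite \'etale cover of the elliptic curve $E_{i+1}$); in particular $\widetilde{Y}$ is again a smooth compact K\"ahler threefold. The projection $\widetilde{Y}\to X_i$ is finite \'etale, being a base change of $\tau$, so condition \ref{conditionC} for $(X_i,\omega_i)$ gives that $(H^\bullet(\widetilde{Y},\C),\wedge\widetilde{\omega})$ is exact, where $\widetilde{\omega}$ is the pullback of $\omega_i$; note that $\widetilde{\omega}$ equals the pullback of $\tau^*\omega_{i+1}$ along the blow-down $r\colon\widetilde{Y}\to Y$, since both describe the pullback of $\omega_{i+1}$ along $\widetilde{Y}\to X_{i+1}$. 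Then I would invoke the blow-up formula for cohomology: with $Z:=\tau^{-1}(E_{i+1})$ one has $H^k(\widetilde{Y},\C)=r^*H^k(Y,\C)\oplus H^{k-2}(Z,\C)$, the second summand realised via the Gysin map of the exceptional divisor. The projection formula shows that $\wedge\widetilde{\omega}$ preserves this splitting, acting as $\wedge\tau^*\omega_{i+1}$ on the first summand and as $\wedge(\tau^*\omega_{i+1}\vert_Z)$ on the second. Thus the exact complex $(H^\bullet(\widetilde{Y},\C),\wedge\widetilde{\omega})$ is a direct sum of two subcomplexes, and a direct summand of an exact complex is exact; hence $(H^\bullet(Y,\C),\wedge\tau^*\omega_{i+1})$ is exact. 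As $\tau$ was arbitrary, $(X_{i+1},\omega_{i+1})$ satisfies condition \ref{conditionC}.

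For termination, the blow-down $\rho_i\colon X_i\to X_{i+1}$ along a smooth curve in a threefold satisfies $b_2(X_i)=b_2(X_{i+1})+1$, with the class of the exceptional divisor accounting for the extra generator, while $b_2\geq 1$ for every compact K\"ahler manifold; hence the process stops after at most $b_2(X)-1$ steps, and it stops precisely when $X_n$ is a minimal model or a Mori fiber space. This produces the sequence $X=X_0\to\cdots\to X_n$ with all centers elliptic curves; the pairs $(X_i,\omega_i)$ satisfy condition \ref{conditionC} by the induction, and the forms restrict non-trivially on the successive centers by \Cref{blow-downs_to_elliptic_center}. The one genuinely technical ingredient is the compatibility of the blow-up decomposition of cohomology with the operator $\wedge\omega$, which I expect to be the main point, although it is standard; everything else is formal once \Cref{blow-downs_to_elliptic_center} is available. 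The whole argument is that of Hao and Schreieder \cite[][Cor.~3.2]{Hao_Schreieder_3-folds}, which carries over verbatim.
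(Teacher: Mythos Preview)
Your proposal is correct and matches the approach the paper implicitly relies on: the paper states the corollary without proof, deferring to \cite[][Cor.~3.2]{Hao_Schreieder_3-folds}, and your argument is precisely the Hao--Schreieder proof---iterated application of \Cref{blow-downs_to_elliptic_center}, descent of condition \ref{conditionC} through a blow-down via the blow-up decomposition of cohomology, and termination by the drop in $b_2$. The only point worth flagging is cosmetic: rather than saying ``if $X_i$ admits the structure of a Mori fiber space we also stop,'' it is more in line with the MMP logic to say that one contracts an extremal ray and stops if that contraction is a Mori fiber space, but this does not affect the correctness of your argument.
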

\begin{proposition}\label{reduction_to_bimeromorphic_problem}
    Let $X$ be a smooth minimal K\"ahler threefold which satisfies condition \ref{conditionC} from \Cref{conjecture_Kotschick}. Suppose that there is a smooth compact K\"ahler threefold $Y$ that is bimeromorphic to $X$, a smooth compact K\"ahler surface $S$, an elliptic curve $E=\C/\Lambda$, a cohomology class $\eta\in H^1(S,\OO_S/\Lambda)$, and a finite \'etale cover $\varphi\colon (S\times E)^\eta\rightarrow Y$. Then there is a smooth minimal K\"ahler surface $T$ bimeromorphic to $S$ and a finite \'etale cover
    \begin{equation*}
        \psi\colon (T\times E)^\theta\longrightarrow X,
    \end{equation*}
    where $\theta\in H^1(T,\OO_T/\Lambda)$. Moreover, if $\varphi$ is an isomorphism, then $\psi$ is an isomorphism, and if $\eta=0$, then $\theta=0$.
\end{proposition}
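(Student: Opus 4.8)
The plan is to transport the isotrivial elliptic fibre bundle structure from the bimeromorphic model $Y$ down to the minimal model $X$. I will proceed in three steps: descend the bundle $Z:=(S\times E)^\eta\to S$ along a minimal model morphism of the surface $S$; produce a finite \'etale cover of $X$ that is bimeromorphic to the descended bundle; and identify the two using the K\"ahler minimal model theory for threefolds.

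First, $Z=(S\times E)^\eta$ is a smooth compact connected K\"ahler threefold (a finite \'etale cover of the K\"ahler manifold $Y$), and the projection $\pi\colon Z\to S$ is a locally trivial, isotrivial elliptic fibre bundle with fibre $E$ and trivial monodromy. Running the minimal model program on the surface $S$ yields a bimeromorphic morphism $\mu\colon S\to T$ (a composition of blow-downs of $(-1)$-curves) onto a smooth minimal K\"ahler surface $T$. The Leray spectral sequence for $\mu$, applied to $0\to\Lambda\to\mathcal{O}_S\to\mathcal{O}_S/\Lambda\to0$ and using $\mu_*\mathcal{O}_S=\mathcal{O}_T$ and $R^1\mu_*\Lambda=0$, shows that $\mu^*\colon H^1(T,\mathcal{O}_T/\Lambda)\to H^1(S,\mathcal{O}_S/\Lambda)$ is injective with image exactly the classes that restrict trivially to every fibre of $\mu$, equivalently to every irreducible component $D\cong\mathbb{P}^1$ of such a fibre. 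Now for each such $D$, the preimage $\pi^{-1}(D)\cong(\mathbb{P}^1\times E)^{\eta|_D}$ is a closed submanifold of the K\"ahler manifold $Z$, hence itself K\"ahler, so $c(\eta|_D)\in H^2(\mathbb{P}^1,\Lambda)$ is torsion by \Cref{fundamental_properties_special_torus_bundles} \cref{torus_bundle_kaehler}; since $H^2(\mathbb{P}^1,\Lambda)$ is torsion free and the Chern class map on $\mathbb{P}^1$ is injective (because $H^1(\mathbb{P}^1,\mathcal{O}_{\mathbb{P}^1})=0$), we conclude $\eta|_D=0$. Therefore $\eta=\mu^*\theta$ for a unique $\theta\in H^1(T,\mathcal{O}_T/\Lambda)$, whence $Z\cong(T\times E)^\theta\times_TS$ over $S$; in particular the induced morphism $Z\to W:=(T\times E)^\theta$ is bimeromorphic and $W$ is smooth. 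If $\eta=0$ then $\theta=0$, since $\mu^*$ is injective.

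Next I would construct the \'etale cover of $X$. As $X$ and $Y$ are bimeromorphic smooth compact K\"ahler threefolds, $\pi_1$ is a bimeromorphic invariant, so the finite-index subgroup $\im(\pi_1(Z)\to\pi_1(Y))\subseteq\pi_1(Y)\cong\pi_1(X)$ determines a finite \'etale cover $\tilde X\to X$ of degree $\deg\varphi$; pulling everything back to a common smooth resolution of $X$ and $Y$ shows that $\tilde X$ is bimeromorphic to $Z$, hence to $W$. Both $\tilde X$ and $W$ are smooth minimal K\"ahler threefolds: $\tilde X$ because it is a smooth \'etale cover of the minimal threefold $X$ (so $K_{\tilde X}$ is the pullback of the nef class $K_X$, and $\tilde X$ is K\"ahler as a finite cover of a K\"ahler manifold); and $W$ because $K_{W/T}$ is trivial on the elliptic fibres and $R^1\pi_*\mathcal{O}_W\cong\mathcal{O}_T$, so $K_W=\pi^*K_T$ is nef as $T$ is minimal, while $W$ is K\"ahler because $c(\theta)$ is torsion ($\mu^*c(\theta)=c(\eta)$ is torsion and $\mu^*$ is injective on $H^2(-,\Lambda)$), again by \Cref{fundamental_properties_special_torus_bundles} \cref{torus_bundle_kaehler}. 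By \cite[][Thm. 4.9]{Kollar_flops}, the birational smooth minimal threefolds $\tilde X$ and $W$ are connected by a sequence of flops; since a locally trivial elliptic fibre bundle over a surface admits no nontrivial flops, $\tilde X\cong W=(T\times E)^\theta$, and the composite $\psi\colon(T\times E)^\theta\stackrel{\sim}{\longrightarrow}\tilde X\to X$ is the desired finite \'etale cover. If $\varphi$ is an isomorphism, then $\deg\psi=\deg\varphi=1$, so $\psi$ is an isomorphism.

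I expect the main obstacle to be the last step: upgrading ``bimeromorphic'' to ``isomorphic''. The descent of $\eta$ is relatively soft, once one observes that the exceptional curves of $\mu$ can only carry a K\"ahler-trivial --- hence trivial --- piece of the twisting class, and the construction of $\tilde X$ is routine; but the identification $\tilde X\cong W$ rests essentially on the K\"ahler minimal model theory for threefolds (Kollár's flop theorem) together with the rigidity statement that a locally trivial elliptic fibre bundle over a surface, having trivial relative canonical bundle, admits no nontrivial flops. It is exactly at this point that the hypothesis that $X$ be minimal is used.
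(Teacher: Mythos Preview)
Your proof is correct and reaches the same conclusion, but the route differs from the paper's in an instructive way. The paper runs the minimal model program on the \emph{threefold} $Y$: using \Cref{blow-downs_to_elliptic_center} each contraction $Y_{i-1}\to Y_i$ is a blow-down along an elliptic curve, and a somewhat involved intersection-theoretic computation (via the blow-up formula for Chow groups) shows that the image in $S_{i-1}$ of the corresponding exceptional divisor is a $(-1)$-curve, so the surface contracts in lockstep. The \'etale cover is then transported step by step through Stein factorization and \Cref{etale_endomorphisms}. You instead run the MMP directly on the \emph{surface} $S$, descend $\eta$ in one shot via the Leray five-term sequence (observing that K\"ahlerness forces $\eta$ to be trivial on each exceptional $\mathbb{P}^1$), and build the cover $\tilde X\to X$ in a single stroke from the $\pi_1$-correspondence. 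Your route avoids the Chow-group computation and the repeated use of \Cref{etale_endomorphisms}, and is arguably cleaner for the purpose at hand; the paper's route, on the other hand, makes the parallel between the two MMPs explicit. Both arguments rest on the same endgame: $\tilde X$ and $(T\times E)^\theta$ are bimeromorphic smooth minimal K\"ahler threefolds, hence connected by flops, and the bundle admits none. You correctly flag this last point as the crux; the paper supplies the missing detail by noting that a flopping curve not contained in a fibre of $\pi$ would, under the translation action of $E$, sweep out a divisor---contradicting smallness of the flopping contraction---while a curve contained in a fibre would force the contraction to factor through $\pi$ and hence fail to be bimeromorphic.
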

To prove this proposition, we need some preparation.
\begin{lemma}\label{transport_of_twist_along_blow-down}
    Let $S$ be a smooth compact K\"ahler surface and let $\tilde{S}\rightarrow S$ be a blow-up of a point $s\in S$. Let $E=\C/\Lambda$ an elliptic curve and let $\tilde{\eta}\in H^1(\tilde{S},\OO_{\tilde{S}}/\Lambda)$ be a cohomology class such that the twisted Jacobian fibration $\smash{(\tilde{S}\times E)^{\tilde{\eta}}\rightarrow\tilde{S}}$ is K\"ahler. Then there exists a unique class $\eta\in H^1(S,\OO_S/\Lambda)$ such that $(S\times E)^{\eta}$ is K\"ahler and such that 
    \begin{equation*}
        (\tilde{S}\times E)^{\tilde{\eta}}\cong Bl_{\pi^{-1}(s)}(S\times E)^{\eta}.
    \end{equation*}
    In particular, if $\tilde{\eta}=0$, then $\eta=0$.
\end{lemma}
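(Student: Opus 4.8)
The plan is to descend the class $\tilde{\eta}$ along the pullback map $\rho^*$ of the blow-up $\rho\colon\tilde{S}\to S$ of the point $s$, and then to identify both complex manifolds with the same fiber product. First I would set up the cohomological comparison. Let $C\subset\tilde{S}$ be the exceptional curve. Since $\rho$ is the blow-up of a smooth point, $R^q\rho_*\OO_{\tilde{S}}=0$ for $q>0$ and $\rho_*\OO_{\tilde{S}}=\OO_S$, so $\rho^*$ is an isomorphism on $H^i(-,\OO)$ for all $i$; and the blow-up formula for singular cohomology gives $H^1(\tilde{S},\Z)\cong H^1(S,\Z)$ and $H^2(\tilde{S},\Z)\cong H^2(S,\Z)\oplus\Z[C]$. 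As $\Lambda\cong\Z^2$ is free, tensoring gives $\rho^*\colon H^1(\tilde{S},\Lambda)\cong H^1(S,\Lambda)$ and an injection $\rho^*\colon H^2(S,\Lambda)\hookrightarrow H^2(\tilde{S},\Lambda)$ whose cokernel $\Z[C]\otimes\Lambda\cong\Z^2$ is torsion-free. Comparing the long exact sequences of $0\to\Lambda\to\OO_S\to\OO_S/\Lambda\to 0$ and of its analogue on $\tilde{S}$ via $\rho^*$ and chasing the resulting ladder, I get that $\rho^*\colon H^1(S,\OO_S/\Lambda)\to H^1(\tilde{S},\OO_{\tilde{S}}/\Lambda)$ is injective, and that a class lies in its image if and only if the image of its Chern class in $H^2(\tilde{S},\Lambda)/\rho^*H^2(S,\Lambda)$ vanishes.

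Now I invoke the hypothesis that $(\tilde{S}\times E)^{\tilde{\eta}}$ is K\"ahler: by \Cref{fundamental_properties_special_torus_bundles}~\cref{torus_bundle_kaehler} the Chern class $c(\tilde{\eta})\in H^2(\tilde{S},\Lambda)$ is torsion, so its image in the torsion-free group $H^2(\tilde{S},\Lambda)/\rho^*H^2(S,\Lambda)$ vanishes, and hence by the previous step there is a unique $\eta\in H^1(S,\OO_S/\Lambda)$ with $\rho^*\eta=\tilde{\eta}$. Since $\rho^*c(\eta)=c(\tilde{\eta})$ is torsion and $\rho^*$ is injective on $H^2(-,\Lambda)$, the class $c(\eta)$ is torsion, so $(S\times E)^{\eta}$ is K\"ahler by \Cref{fundamental_properties_special_torus_bundles}~\cref{torus_bundle_kaehler}. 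If $\tilde{\eta}=0$ then $\eta=0$ by uniqueness.

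It remains to realise the blow-up. Using the explicit \v{C}ech description of the twisted torsor from \Cref{definition_isotrivial_and_trivial_vhs}: if $(\eta_{ij})$ is a cocycle for $\eta$ on an open cover $\{U_i\}$ of $S$, then $(\rho^*\eta_{ij})$ is a cocycle for $\rho^*\eta=\tilde{\eta}$ on $\{\rho^{-1}(U_i)\}$, and gluing the patches $\rho^{-1}(U_i)\times E$ accordingly identifies $(S\times E)^{\eta}\times_S\tilde{S}$ with $(\tilde{S}\times E)^{\tilde{\eta}}$ over $\tilde{S}$; this is just functoriality of the construction under pullback. On the other hand the bundle projection $\pi\colon(S\times E)^{\eta}\to S$ is smooth, in particular flat, so the blow-up of $(S\times E)^{\eta}$ commutes with the base change along $\tilde{S}=Bl_s(S)\to S$, which gives $(S\times E)^{\eta}\times_S\tilde{S}\cong Bl_{\pi^{-1}(s)}\bigl((S\times E)^{\eta}\bigr)$, the center $\pi^{-1}(s)\cong E$ being the (smooth, since $\pi$ is smooth) fiber over $s$. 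Composing the two identifications yields $(\tilde{S}\times E)^{\tilde{\eta}}\cong Bl_{\pi^{-1}(s)}(S\times E)^{\eta}$, compatibly with the morphisms to $\tilde{S}$. For uniqueness, any $\eta$ as in the statement makes $Bl_{\pi^{-1}(s)}(S\times E)^{\eta}\cong(S\times E)^{\eta}\times_S\tilde{S}$ a torus fiber bundle over $\tilde{S}$ of class $\rho^*\eta$, forcing $\rho^*\eta=\tilde{\eta}$ by \Cref{classification_torus_fiber_bundles}, after which $\eta$ is pinned down by the injectivity of $\rho^*$.

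The step I expect to be the main obstacle is the cohomological one: one has to pin down exactly the contribution of the exceptional divisor to $H^2(\tilde{S},\Lambda)$ — a torsion-free summand not seen on $S$ — and then run the diagram chase carefully enough to obtain both the injectivity of $\rho^*$ on $H^1(-,\OO/\Lambda)$ and the torsion-freeness of its cokernel. That is precisely what turns the K\"ahler hypothesis (equivalently, the torsion of $c(\tilde{\eta})$) into the descent of $\tilde{\eta}$; once the class is descended, the remaining identifications are formal, being consequences of the functoriality of the torsor construction and of the compatibility of blow-ups with flat base change.
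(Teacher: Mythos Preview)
Your proof is correct and follows essentially the same approach as the paper: both set up the ladder of long exact sequences for $0\to\Lambda\to\OO\to\OO/\Lambda\to 0$ on $S$ and $\tilde S$, use the blow-up formula to see that $\rho^*$ is injective on $H^1(-,\OO/\Lambda)$ with torsion-free cokernel on $H^2(-,\Lambda)$, and then invoke the K\"ahler hypothesis (equivalently, torsion Chern class) to descend $\tilde\eta$. The only cosmetic difference is in the final identification: the paper writes down the natural morphism $(\tilde S\times E)^{\tilde\eta}\to(S\times E)^\eta$ and factors it through the blow-up via the universal property, whereas you identify both sides with the fiber product $(S\times E)^\eta\times_S\tilde S$ via functoriality of the torsor construction and flat base change for blow-ups---these are equivalent, and your uniqueness argument (forcing $\rho^*\eta=\tilde\eta$ via \Cref{classification_torus_fiber_bundles}) is in fact slightly more explicit than the paper's.
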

\begin{proof}
    Consider the commutative diagram
    \[\begin{tikzcd}
    	{H^1(S,\Lambda)} & {H^1(S,\mathcal{O}_S)} & {H^1(S,\mathcal{O}_S/\Lambda)} & {H^2(S,\Lambda)} & {H^2(S,\mathcal{O}_S)} \\
    	{H^1(\tilde{S},\Lambda)} & {H^1(\tilde{S},\mathcal{O}_{\tilde{S}})} & {H^1(\tilde{S},\mathcal{O}_{\tilde{S}}/\Lambda)} & {H^2(\tilde{S},\Lambda)} & {H^2(\tilde{S},\mathcal{O}_{\tilde{S}}).}
    	\arrow[from=1-1, to=1-2]
    	\arrow["\cong", from=1-1, to=2-1]
    	\arrow[from=1-2, to=1-3]
    	\arrow["\cong", from=1-2, to=2-2]
    	\arrow[from=1-3, to=1-4]
    	\arrow[from=1-3, to=2-3]
    	\arrow[from=1-4, to=1-5]
    	\arrow[from=1-4, to=2-4]
    	\arrow["\cong", from=1-5, to=2-5]
    	\arrow[from=2-1, to=2-2]
    	\arrow[from=2-2, to=2-3]
    	\arrow[from=2-3, to=2-4]
    	\arrow[from=2-4, to=2-5]
    \end{tikzcd}\]
    From $H^2(\tilde{S},\Z)\cong H^1(S,\Z)\oplus\Z$ \cite[][Thm. 7.31]{Voisin_HT_1} it follows that $H^2(\tilde{S},\Lambda)\cong H^1(S,\Lambda)\oplus\Lambda$. Thus, a diagram chase shows that $H^1(S,\OO_S/\Lambda)\rightarrow H^1(\tilde{S},\OO_{\tilde{S}}/\Lambda)$ is injective. In particular, if $\eta$ exists, then it is unique. Moreover, this also shows that $H^2(S,\Lambda)\rightarrow H^2(\tilde{S},\Lambda)$ is an isomorphism on the torsion subgroups. As $\smash{(\tilde{S}\times E)^{\tilde{\eta}}}$ is Kähler, $c(\tilde{\eta})$ is torsion by \Cref{fundamental_properties_special_torus_bundles}. Another diagram chase then shows that there is a cohomology class $\eta\in H^1(S,\OO_S/\Lambda)$ that is mapped to $\tilde{\eta}$ by the pullback map $H^1(S,\OO_S/\Lambda)\rightarrow H^1(\tilde{S},\OO_{\tilde{S}}/\Lambda)$. Since $H^2(S,\Lambda)\rightarrow H^2(\tilde{S},\Lambda)$ is an isomorphism on the torsion subgroups, the commutative diagram
    \[\begin{tikzcd}
	   {H^1(S,\mathcal{O}_{S}/\Lambda)} & {H^2(S,\Lambda)} \\
	   {H^1(\tilde{S},\mathcal{O}_{\tilde{S}}/\Lambda)} & {H^2(\tilde{S},\Lambda)}
	   \arrow[from=1-1, to=1-2]
	   \arrow[from=1-1, to=2-1]
	   \arrow[from=1-2, to=2-2]
	   \arrow[from=2-1, to=2-2]
    \end{tikzcd}\]
    shows that $c(\eta)$ is torsion. Hence, by \Cref{fundamental_properties_special_torus_bundles}, $(S\times E)^\eta$ is K\"ahler. Consider the natural morphism
    \begin{equation*}
        (\tilde{S}\times E)^{\tilde{\eta}}\longrightarrow (S\times E)^\eta.
    \end{equation*}
    Note that this factors through $Bl_{\pi^{-1}(s)}(S\times E)^{\eta}$ by the universal property of blowing-up. It is easy to see that this defines the desired isomorphism.
\end{proof}
\begin{lemma}\label{etale_endomorphisms}\cite[][Lem. 3.9]{Nakayama_Zhang_etale_endomorphism}
    Let $X$, $Y$ be complex manifolds, let $V$ and $W$ be normal complex analytic varieties, and suppose there is a commutative diagram
    \[\begin{tikzcd}
	   X & Y \\
	   V & W
	   \arrow["f", from=1-1, to=1-2]
	   \arrow["\varphi", from=1-1, to=2-1]
	   \arrow["\psi", from=1-2, to=2-2]
	   \arrow["h", from=2-1, to=2-2]
    \end{tikzcd}\]
    of proper surjective morphisms. If $\varphi$ and $\psi$ have connected fibers, the induced morphism $X\rightarrow V\times_WY$ is an isomorphism on a dense open subset of $V$, $f$ is finite \'etale, $h$ is finite, $R^i\varphi_*\OO_X=0$, and $R^i\psi_*\OO_Y=0$ for all $i>0$, then also $h$ is \'etale.
\end{lemma}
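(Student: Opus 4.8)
The plan is to prove the two halves of ``\'etale'' separately: first that $h$ is flat, then that it is unramified (equivalently, \'etale in codimension $1$).

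For flatness, I would start by upgrading the hypotheses: since $\varphi$ and $\psi$ are proper surjective with connected fibres and $V,W$ are normal, one has $\varphi_*\OO_X=\OO_V$ and $\psi_*\OO_Y=\OO_W$, so with the vanishing of higher direct images one gets $R\varphi_*\OO_X=\OO_V$ and $R\psi_*\OO_Y=\OO_W$ in the derived category; by a criterion of Kov\'acs (the identity $\OO_V\to R\varphi_*\OO_X$ splits and $X$ is smooth) $V$ has rational, hence Cohen--Macaulay, singularities, and likewise $W$. Writing $d$ for the degree of $f$, the relation $\psi f=h\varphi$ together with finiteness of $f$ and $h$ gives
\[
    h_*\OO_V = h_*R\varphi_*\OO_X = R(h\varphi)_*\OO_X = R(\psi f)_*\OO_X = R\psi_*(f_*\OO_X).
\]
Since $f_*\OO_X$ is locally free on the smooth $Y$ and $\psi$ is proper, the right-hand side is a perfect complex on $W$; being concentrated in degree $0$, it presents $h_*\OO_V$ as an $\OO_W$-module of finite projective dimension. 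But $h_*\OO_V$ is also a maximal Cohen--Macaulay $\OO_W$-module (as $V$ is Cohen--Macaulay of dimension $\dim W$ and $h$ is finite), so Auslander--Buchsbaum forces $\mathrm{pd}_{\OO_W}h_*\OO_V=0$: thus $h_*\OO_V$ is locally free, i.e.\ $h$ is finite flat, of rank $d$ because $X\cong V\times_W Y$ over a dense open of $V$.

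Next I would reduce unramifiedness to codimension $1$: a finite flat morphism is \'etale exactly when its branch locus, the zero scheme of the discriminant section (a Cartier divisor on $W$), is empty, so it is enough to show $h$ is \'etale over every prime divisor of $W$. The natural arena is $Z:=V\times_W Y$, which is finite flat over the smooth $Y$, hence Cohen--Macaulay; it is reduced (it is $S_1$ and generically reduced, being $\cong X$ over a dense open) and in fact irreducible (the component carrying the copy of $X$ already has full generic degree $d$ over $Y$). The induced $\varphi'\colon X\to Z$ over $Y$ is finite (a $Y$-morphism between schemes finite over the separated $Y$) and birational, so $X$ is the normalization of $Z$ and $\varphi'$ is an isomorphism precisely over the normal locus of $Z$; thus $h$ is \'etale iff $Z$ is normal, and since $Z$ is $S_2$ it suffices to check $Z$ is regular in codimension $1$. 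At a codimension-$1$ point of $Z$ the local ring is a localization of $\OO_{V,v'}\otimes_{\OO_{W,w'}}\OO_{Y,y'}$ with $v',y'$ of codimension $\le 1$, hence (as $V,Y$ are normal) a tensor product of regular local rings of dimension $\le 1$ over such a ring; provided $\psi$ is reduced in codimension $1$ the map $\OO_{W,w'}\to\OO_{Y,y'}$ of DVRs is unramified, the base change is along a regular morphism and preserves normality, and the local ring is a DVR. Then $Z$ is normal, $Z\cong X$, $\mathrm{pr}_Y=f$ is \'etale, and since $\psi$ is surjective and $h$ is already flat, reducedness of the fibres of $\mathrm{pr}_Y$ descends in characteristic $0$ to reducedness of the fibres of $h$, so $h$ is \'etale.

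The hard part will be the input just used: that $\psi_*\OO_Y=\OO_W$ and $R^{>0}\psi_*\OO_Y=0$ force $\psi$ to be reduced in codimension $1$ (and, more generally, to contract no divisor with multiplicity), equivalently that the non-normal locus of $Z$ has codimension $\ge 2$. I would package this via the defect sheaf $\mathcal Q:=\varphi'_*\OO_X/\OO_Z$, supported on the non-normal locus of $Z$: using $R\varphi_*\OO_X=R\mathrm{pr}_{V*}\OO_Z=\OO_V$ one finds $R\mathrm{pr}_{V*}\mathcal Q=0$, while $\mathrm{pr}_{Y*}\mathcal Q$ is the cokernel of an injection of locally free rank-$d$ sheaves on $Y$, hence, if nonzero, is supported on a pure codimension-$1$ divisor; playing these two facts against the hypothesis that $\varphi'$ is an isomorphism over a dense open pulled back from $V$ is what must be pushed through to get $\mathcal Q=0$. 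Controlling this interplay of the dense-open hypothesis with the fibre structure of $\psi$ and $\mathrm{pr}_V$ is, I expect, the main technical obstacle.
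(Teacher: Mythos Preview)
The paper does not give its own proof of this lemma; it simply quotes \cite[Lem.~3.9]{Nakayama_Zhang_etale_endomorphism} and uses the result. So there is nothing to compare your sketch against, and it has to stand on its own.

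Your flatness step contains an unacknowledged gap. The assertion that $R\psi_*(f_*\OO_X)$ is a perfect complex on $W$ does not follow from ``$f_*\OO_X$ locally free on the smooth $Y$ and $\psi$ proper'': proper pushforward sends $D^b_{\mathrm{coh}}$ to $D^b_{\mathrm{coh}}$, but it preserves perfection only when $\psi$ has finite Tor--dimension, which is not among the hypotheses. For instance, if $\psi\colon Y\to W$ is the minimal resolution of an $A_1$ surface singularity (so $R\psi_*\OO_Y=\OO_W$ holds), then $R\psi_*\OO_Y(E)$ for the $(-2)$--curve $E$ has a nonzero $R^1$ supported at the singular point, and over the non--regular local ring there this contributes a summand of infinite projective dimension; so even with $R\psi_*\OO_Y=\OO_W$ one cannot conclude perfection of $R\psi_*$ of an arbitrary line bundle. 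Consequently the Auslander--Buchsbaum step (``MCM of finite projective dimension, hence free'') is unjustified: you have established that $V$ and $W$ are Cohen--Macaulay via Kov\'acs, but MCM modules over a non--regular CM local ring need not be free.

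For the second half you honestly flag the crux --- that $\psi$ is reduced in codimension~$1$, equivalently that the defect sheaf $\mathcal Q$ vanishes --- but the outlined mechanism does not close. Knowing $R\mathrm{pr}_{V*}\mathcal Q=0$ and that $\mathrm{pr}_{Y*}\mathcal Q$, if nonzero, has pure divisorial support does not by itself yield a contradiction: a divisor in $Y$ can be $\psi$--exceptional while its image in $Z$ still dominates a divisor in $V$, and you have not explained how the ``dense open of $V$'' hypothesis rules this out. As written, both halves of the argument are incomplete; if you want to salvage the strategy, the flatness part probably needs a different input (for example, a direct fibre--cardinality count using connectedness of the fibres of $\varphi$ and $\psi$ together with the covering--space behaviour of $f$), rather than the perfection/Auslander--Buchsbaum route.
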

\begin{proof}[Proof of \Cref{reduction_to_bimeromorphic_problem}]
    We first claim that we can assume that $Y$ is minimal. For this, we use the notation $Y_0:=Y$, $S_0:=S$, $\eta_0:=\eta$, $\varphi_0:=\varphi\colon (S_0\times E)^{\eta_0}\rightarrow Y_0$, and $\pi_0\colon (S_0\times E)^{\eta_0}\rightarrow S_0$. Then, by \Cref{blow-downs_to_elliptic_center}, there is a blow-down $b\colon Y_0\rightarrow Y_1$ with an elliptic curve as the center, and $Y_1$ is smooth. The Stein factorization applied to $b\circ\varphi_0$ induces the commutative diagram
    \[\begin{tikzcd}
	   {(S_0\times E)^{\eta_0}} & {X_1} \\
	   {Y_0} & {Y_1,}
	   \arrow["a", from=1-1, to=1-2]
	   \arrow["{\varphi_0}"', from=1-1, to=2-1]
	   \arrow["{\varphi_1}", from=1-2, to=2-2]
	   \arrow["b"', from=2-1, to=2-2]
    \end{tikzcd}\]
    where $\varphi_1$ is finite and $a$ is proper bimeromorphic. Note that $R^ia_*\OO_{(S_0\times C)^{\eta_0}}=0$ and $R^ib_*\OO_{Y_0}=0$ for all $i>0$. This is enough to conclude that $\varphi_1\colon X_1\rightarrow Y_1$ is finite \'etale due to \Cref{etale_endomorphisms}. In particular, $X_1$ is smooth. Let $D\subset Y_1$ be the elliptic curve along which is blown up, and denote by $\tilde{D}\subset X_1$ its pullback to $X_1$. This is a disjoint union of finitely many elliptic curves. Let $\tilde{X}\rightarrow X_1$ be the blow-up along $\tilde{D}$. Then the universal property of blowing-up shows that $a$ factors through $(S_0\times E)^{\eta_0}\stackrel{\sim}{\rightarrow}\tilde{X}$, which must be an isomorphism. Thus, $a\colon (S_0\times E)^{\eta_0}\rightarrow X_1$ is a sequence of blow-ups along elliptic curves. Let $E_i\subset (S_0\times E)^{\eta_0}$ be the exceptional divisor over the elliptic curve $D_i\subset\tilde{D}$. Then $E_i\rightarrow D_i$ is a projective bundle, hence of Kodaira dimension $-\infty$. In particular, $E_i$ cannot dominate $S_0$ via $\pi_0\colon (S_0\times E)^{\eta_0}\rightarrow S_0$. Since $\pi_0$ is equi-dimensional, $E_i$ is mapped to an irreducible curve in $F_i\subset S_0$. The pullback $\pi_0^*F_i\in CH^1((S_0\times E)^{\eta_0})$ equals $E_i$, as $\pi_0$ is an elliptic fiber bundle. Let $\mathcal{N}_i$ be the normal bundle of $D_i\subset X_1$. Then there is a canonical identification $E_i\cong\mathbb{P}(\mathcal{N}_i)$ over $D_i$. Denote by $\zeta_i\in CH^2((S_0\times E)^{\eta_0})$ the push forward of the first Chern class of $\OO_{\mathbb{P}(\mathcal{N}_i)}(1)$. Using the blow-up formula \cite[][Prop. 13.12]{Eisenbud_Harris_3264}, we calculate:
    \begin{align*}
        \pi_0^*(F^2_i)=(\pi_0^*F_i)^2=E^2_i=-\zeta_i.
    \end{align*}
    Note that for a point $x\in F_i$, the pullback $\pi_0^*x\in CH^2((S_0\times E)^{\eta_0})$ is equal to $\zeta_i$, and thus $F^2_i=-1$. Similarly, since $\pi_0^*K_{S_0}=K_{(S_0\times E)^{\eta_0}}$, we have
    \begin{equation*}
        \pi_0^*(K_{S_0}\cdot F_i)=K_{(S_0\times E)^{\eta_0}}\cdot E_i=-\zeta_i,
    \end{equation*}
    and hence $K_{S_0}\cdot F_i=-1$. By \cite[][Lem. 1.1.4]{Matsuki_introduction_mmp}, the curves $F_i$ must be a $(-1)$-curves. Castelnuovo's contractibility criterion then implies that we can blow down the curves $F_i$ to get a contraction $S_0\rightarrow S_1$ to a smooth compact K\"ahler surface. Applying \Cref{transport_of_twist_along_blow-down} to $S_0\rightarrow S_1$ shows that there is a cohomology class $\eta_1\in H^1(S_1,\OO_{S_1}/\Lambda)$ that pulls back to $\eta_1$ such that $(S_1\times E)^{\eta_1}$ is K\"ahler and such that the morphisms $(S_0\times E)^{\eta_0}\rightarrow X_1$ and $(S_0\times E)^{\eta_0}\rightarrow (S_1\times E)^{\eta_1}$ are the same blow-ups. Therefore, there is an isomorphism $X_1\cong (S_1\times E)^{\eta_1}$. Continuing this process, we obtain a sequence of blow-downs $Y_0\rightarrow\cdots\rightarrow Y_n$ such that $Y_n$ is a smooth minimal threefold, a sequence of blow-downs $S_0\rightarrow\cdots\rightarrow S_n$ of smooth compact K\"ahler surfaces, and cohomology classes $\eta_i\in H^1(S_i,\OO_{S_i}/\Lambda)$ for $i=1,\ldots,n$ such that each $(S_i\times E)^{\eta_i}$ is K\"ahler and such that the diagram
    \[\begin{tikzcd}
	   {(S_0\times E)^{\eta_0}} & \cdots & {(S_n\times E)^{\eta_n}} \\
	   {Y_0} & \cdots & {Y_n}
	   \arrow[from=1-1, to=1-2]
	   \arrow["{\varphi_0}"', from=1-1, to=2-1]
	   \arrow[from=1-2, to=1-3]
	   \arrow["{\varphi_n}", from=1-3, to=2-3]
	   \arrow[from=2-1, to=2-2]
	   \arrow[from=2-2, to=2-3]
    \end{tikzcd}\]
    commutes, where the vertical morphisms are finite \'etale. Since the vertical morphisms are of the same degree as $\varphi_0$, we see that if $\varphi_0$ is an isomorphism, then also $\varphi_n$ is an isomorphism. Furthermore, if $\eta=0$, then by \Cref{transport_of_twist_along_blow-down}, $\eta_n=0$. Thus, by replacing $\varphi\colon (S\times E)^\eta\rightarrow Y$ with $\varphi_n\colon (S_n\times E)^{\eta_n}\rightarrow Y_n$ we conclude that it suffices to prove \Cref{reduction_to_bimeromorphic_problem} in the case where $Y$ is minimal.\par
    Suppose that $Y$ is minimal. Let $\tau\colon X'\rightarrow X$ be the unique finite \'etale cover bimeromorphic to the finite \'etale cover $\varphi\colon (S\times E)^\eta\rightarrow Y$. We claim that there is an isomorphism $(S\times E)^\eta\cong X'$. In this case, the composition $\psi\colon (S\times E)^\eta\cong X'\rightarrow X$ is the desired finite \'etale cover. Moreover, if $\varphi$ is an isomorphism, then $\psi$ is also an isomorphism.\par
    To prove the claimed isomorphism, note that as $\varphi\colon (S\times E)^{\eta}\rightarrow Y$, resp.~ $\tau\colon X'\rightarrow X$ is finite \'etale and as $Y$, resp.~ $X$ is minimal, $(S\times E)^\eta$, resp $X'$ is minimal as well. Moreover, since the canonical divisor on $(S\times E)^\eta$ is isomorphic to the pullback of the canonical divisor on $S$, $S$ is also minimal. (Here, the term minimal means that the respective canonical divisor is nef. This should not be confused with the term minimal Weierstraß model.) Therefore, by \cite[][Thm. 4.9]{Kollar_flops}, $(S\times E)^\eta$ and $X'$ are connected by a sequence of flops. It is thus enough to show that $(S\times E)^\eta$ does not admit any non-trivial flops.\par
    Suppose there is a flop $(S\times E)^\eta\rightarrow Z\leftarrow W$. By definition \cite[][Def. 2.1]{Kollar_flops}, the flopping contraction $(S\times E)^\eta\rightarrow Z$ is a projective morphism. We can hence argue similarly as in the proof of \cite[][Thm. 3.4]{Hao_Nowhere_Vanishing_Holomorphic_One-Forms_on_Varieties_of_Kodaira_Codimension_One}. First, note that if the flopping contraction $(S\times E)^\eta\rightarrow Z$ contracts a fiber of $\pi\colon (S\times E)^\eta\rightarrow S$, then it must factor through $S$. In particular, the flopping contraction could not be a bimeromorphic morphism, which is absurd. Therefore, the exceptional locus of the flopping contraction $(S\times E)^\eta\rightarrow Z$ is not contained in the fibers of $\pi\colon (S\times E)^\eta\rightarrow S$. Thus, if we apply the action of $E$ on $(S\times E)^\eta$ given by translation to an irreducible component of the exceptional locus, we sweep up a divisor in $(S\times E)^\eta$. This divisor must be contracted by the flopping contraction $(S\times E)^\eta\rightarrow Z$. It hence is not small, a contradiction.
\end{proof}
\section{Surfaces}
\begin{proposition}\label{structure_thm_surfaces}
    Let $S$ be a smooth compact K\"ahler surface that satisfies condition \ref{conditionC} from \Cref{conjecture_Kotschick}. Then there is a finite \'etale cover $S'\rightarrow S$, a compact connected K\"ahler manifold $Y$, a torus $A=\C^g/\Lambda$, a cohomology class $\eta\in H^1(Y,\OO^{\oplus g}_Y/\Lambda)$ and a smooth morphism
    \begin{equation*}
        S'\longrightarrow (Y\times A)^\eta,
    \end{equation*}
    that is either an isomorphism or a $\mathbb{P}^1$-bundle. Moreover, if $\omega\in H^0(S,\Omega^1_S)$ is a holomorphic 1-form such that $(S,\omega)$ satisfies condition \ref{conditionC}, then the induced holomorphic 1-form on $(Y\times A)^\eta$ restricts non-trivially on the fibers of $(Y\times A)^\eta\rightarrow Y$.
\end{proposition}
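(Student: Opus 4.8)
We may take $\omega\in H^0(S,\Omega^1_S)$ to be any $1$-form with $(S,\omega)$ satisfying condition \ref{conditionC}; by \cite[Thm.~1.3]{Schreieder_topology} such $\omega$ has no zeros. The plan is to go through the possible Kodaira dimensions of $S$, after first observing that $S$ is itself minimal or a Mori fibre space. Indeed, if not, the surface minimal model program writes $S$ as the blow-up $\sigma\colon S\to\bar S$ of a point, so $H^0(S,\Omega^1_S)=\sigma^*H^0(\bar S,\Omega^1_{\bar S})$, and for the exceptional curve $\iota\colon\ell\hookrightarrow S$ ($\ell\cong\mathbb{P}^1$) the projection formula gives $\omega\wedge[\ell]=\iota_*(\iota^*\omega)=0$ in $H^3(S,\C)$ because $H^1(\mathbb{P}^1,\C)=0$, while $[\ell]\notin\sigma^*H^2(\bar S,\C)=\operatorname{im}\bigl(\wedge\omega\colon H^1(S,\C)\to H^2(S,\C)\bigr)$ since $[\ell]^2=-1$; hence $(H^\bullet(S,\C),\wedge\omega)$ fails to be exact at $H^2$, contradicting \ref{conditionC}. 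If $\kappa(S)=2$ then $S$ is of general type and admits no nowhere-vanishing $1$-form by \cite[Cor.~3.1]{Popa_Schnell_Kodaira_dimension_zeros_1-forms}, a contradiction; so $\kappa(S)\in\{-\infty,0,1\}$.

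For $\kappa(S)=-\infty$ the Mori fibre space $S$ is not $\mathbb{P}^2$ (no $1$-forms), hence is a $\mathbb{P}^1$-bundle $S\to C$; as every $1$-form on $S$ is pulled back from $C$ and $\omega$ has no zeros, $C$ carries a nowhere-vanishing $1$-form, so $C=:A$ is an elliptic curve and we take $Y$ a point, $\eta=0$, and $S'=S\to A=(Y\times A)^\eta$ the given $\mathbb{P}^1$-bundle. For $\kappa(S)=0$, the Enriques--Kodaira classification \cite{Barth_Hulek_Peters_vandeVen_surfaces} together with $H^0(S,\Omega^1_S)\neq 0$ leaves only $S$ a torus (take $S'=S$, $Y$ a point, $A=S$, $\eta=0$) or $S$ bielliptic, $S=(E_1\times E_2)/G$, in which case $S':=E_1\times E_2\to S$ is finite étale and a $2$-torus (take $Y$ a point, $A=S'$, $\eta=0$). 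In all these cases the induced $1$-form on $A=(Y\times A)^\eta$ is the pullback of $\omega$, which is nonzero on $A$, the unique fibre of $(Y\times A)^\eta\to Y$.

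The remaining case, $\kappa(S)=1$ with $S$ minimal, is the main one. Abundance for surfaces gives the Iitaka fibration $f\colon S\to C$, an elliptic fibration over a smooth curve. I would first show $\omega$ restricts non-trivially to a general fibre $F$: otherwise $\omega$ lies in $f^*\Omega^1_C\subset\Omega^1_S$ over the smooth locus, hence equals $f^*\omega_C$ for some $\omega_C\in H^0(C,\Omega^1_C)$; since $\omega$ has no zeros, $f$ is then a submersion everywhere, hence a smooth elliptic fibre bundle, $\omega_C$ has no zeros, $g(C)=1$, and $K_S=f^*(K_C\otimes f_*\Omega^1_{S/C})$ is the pullback of a degree-zero line bundle (using $\chi(\OO_S)=\tfrac1{12}\chi_{\mathrm{top}}(S)=0$), so $\kappa(S)\le 0$ — a contradiction. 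In particular $F$ is not contracted by the Albanese morphism (else $\omega|_F=0$ as $\omega$ descends from $Alb(S)$). Moreover \ref{conditionC} forces $\chi_{\mathrm{top}}(S)=0$, being the alternating sum of Betti numbers, which vanishes by exactness of $(H^\bullet(S,\C),\wedge\omega)$; since $S$ is a minimal elliptic surface, $\chi(\OO_S)=\tfrac1{12}\chi_{\mathrm{top}}(S)=\tfrac1{12}\sum_v e(f^{-1}(v))=0$ with all summands $\geq 0$, so every singular fibre of $f$ is a multiple of a smooth elliptic curve. The step I expect to be the main obstacle is then to remove these multiple fibres by a finite étale cover: following the $\kappa=1$ argument of \cite[§6]{Hao_Schreieder_3-folds} — using that $F$ is not Albanese-contracted, the quasi-smoothness result \cite[Thm.~4.2 and the following remark]{Campana_Peternell_2-forms}, and (when the orbifold base of $f$ is bad, i.e.\ $\mathbb{P}^1$ with few cone points) a $G$-equivariant twisted Weierstraß or tautological model over a Galois cover $\bar C\to C$ as in \Cref{existence_unique_eq_Weierstrass_model}, \Cref{bimeromorphic_classification_elliptic_fibrations} and \Cref{tautological_models}, which one checks is a locally trivial elliptic fibre bundle — one produces a finite étale cover $S'\to S$ together with a smooth elliptic fibration $f'\colon S'\to C'$; here, unlike in the threefold case, smooth minimal surfaces have unique minimal models, so no flop argument is needed to identify $S'$ with the bundle model. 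By \ref{conditionC}-invariance and Kodaira dimension, $\kappa(S')=1$ and $g(C')\geq 2$.

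Finally, a smooth proper elliptic fibration over a compact curve is isotrivial (the $j$-invariant extends to a morphism $C'\to\mathbb{P}^1$, which if non-constant is surjective and meets $j=\infty$, forcing a singular fibre), so $f'$ is isotrivial with fibre a fixed elliptic curve $E=\C/\Lambda$ and its monodromy takes values in the finite group $\operatorname{Aut}(E,0)$; trivializing it by a further finite étale cover $C''\to C'$ gives a finite étale cover $S''\to S'$ with $f''\colon S''\to C''$ a smooth isotrivial elliptic fibre bundle of trivial monodromy over the compact Kähler curve $C''$. By \Cref{corollary_isotrivial_and_trivial_vhs} and \Cref{classification_torus_fiber_bundles} there is $\eta\in H^1(C'',\OO_{C''}/\Lambda)$ with $S''\cong(C''\times E)^\eta$ over $C''$, which is Kähler, so $c(\eta)$ is torsion by \Cref{fundamental_properties_special_torus_bundles}; taking $Y:=C''$, $A:=E$ and the isomorphism $S''\cong(Y\times A)^\eta$ gives the ``isomorphism'' alternative. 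For the last assertion, the pullback $\omega''$ of $\omega$ to $S''\cong(C''\times E)^\eta$ restricts non-trivially to a general fibre by the first step of this case, hence does not come from $C''$, so by \Cref{1-forms_on_torus_bundles} it restricts non-trivially to every fibre of $(Y\times A)^\eta\to Y$.
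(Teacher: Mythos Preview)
Your argument is correct and, in the main case $\kappa(S)=1$, is arranged differently from the paper. The paper cites \cite[Cor.~3.1]{Schreieder_topology} to place $S$ directly into one of four classes, and in the elliptic-fibration case proceeds as follows: constant $j$-invariant via the Albanese image, trivialize the (finite) monodromy by an \'etale base change $C_1\to C$, take a possibly ramified Galois cover $C_2\to C_1$ via \cite[Prop.~3.11]{Claudon_Hoering_Lin_fundamental_group} to remove multiple fibres, then invoke the $G$-equivariant \Cref{trivial_vhs_constant_j_all_invariant} and a quotient by $\ker(\sigma\colon G\to E')$ to descend back to a genuine finite \'etale cover of $S$. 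You instead re-derive the classification via MMP and Kodaira dimension, and for $\kappa=1$ reverse the order: first remove the multiple fibres by an orbifold cover (so the normalized base change is already \'etale over $S$), then deduce isotriviality from the $j$-map, then trivialize monodromy by a further \'etale base change. This bypasses the $G$-equivariant descent entirely, which is a real simplification --- and it works because $\kappa(S)=1$ forces $\deg\bigl(K_C+\sum_i(1-1/m_i)p_i\bigr)>0$, so the orbifold base is hyperbolic and in particular always good; the bad-orbifold branch you allude to is therefore vacuous, and it would be cleaner to say so rather than gesture at the equivariant machinery. Two small points: your citation of \cite[Thm.~4.2]{Campana_Peternell_2-forms} is for threefolds, not surfaces, but you have already obtained quasi-smoothness from $\chi_{\mathrm{top}}(S)=0$, so the reference is superfluous; and your claim $g(C')\ge 2$ likewise follows from the hyperbolicity of the orbifold just noted.
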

\begin{proof}
    By \cite[][Cor. 3.1]{Schreieder_topology}, $S$ is isomorphic to one of the following:
    \begin{enumerate}[label=(\roman*)]
        \item\label{cases_in_proof_minimal_ruled_over_elliptic} a minimal ruled surface over an elliptic curve,
        \item\label{cases_in_proof_2-torus} a 2-torus,
        \item\label{cases_in_proof_smooth_elliptic_fibration} a minimal elliptic surface $f\colon S\rightarrow E$ such that $f$ is smooth and $E$ is elliptic, or
        \item\label{cases_in_proof_singular_elliptic_fibration} a minimal elliptic surface $f\colon S\rightarrow C$ such that all singular fibers are multiples of smooth elliptic curves.
    \end{enumerate}
    In \cref{cases_in_proof_smooth_elliptic_fibration}, $S$ must be bielliptic or a 2-torus, and there is a finite \'etale cover that is a 2-torus.\par
    In \cref{cases_in_proof_singular_elliptic_fibration}, we can exclude the case in which $C$ is elliptic and $f$ is smooth (this case is dealt with in \cref{cases_in_proof_smooth_elliptic_fibration}). In particular, the holomorphic 1-form on $S$ that satisfies condition \ref{conditionC} does not come from $C$. Thus, the Albanese morphism $S\rightarrow Alb(S)$ does not contract the fibers. Therefore, the fibers are mapped to a translate of a fixed elliptic curve in $Alb(S)$, and hence the smooth fibers of $f$ have constant $j$-invariant. As all singular fibers are multiples of smooth elliptic curves, topological base change \cite[\href{https://stacks.math.columbia.edu/tag/09V6}{Tag 09V6}]{stacks-project} shows that $R^1f_*\Z$ is a local system. Let $E$ be the general fiber of $f$. Since the identity component $Aut^0(E)\subset Aut(E)$ acts trivially on $H^1(E,\Z)$, the monodromy of $R^1f_*\Z$ is finite. Hence, there exists a finite \'etale cover $C_1\rightarrow C$ such that $S_1:=S\times_CC_1\rightarrow C_1$ is an elliptic fibration with trivial monodromy and constant j-invariant. By \cite[][Prop. 3.11]{Claudon_Hoering_Lin_fundamental_group}, there is a finite Galois cover $C_2\rightarrow C_1$ with Galois group $G$ such that the induced elliptic fibration $S_2\rightarrow C_2$ from the normalization $S_2$ of $S_1\times_{C_1}C_2$ has local meromorphic sections over every point of $C_2$. Thus, $S_2\rightarrow C_2$ is a smooth elliptic fibration. By \Cref{trivial_vhs_constant_j_all_invariant}, there is an elliptic curve $E'=\C/\Lambda$ and a cohomology class $\eta_G\in H^1_G(C_2,\OO_{C_2}/\Lambda)$ such that $S_2$ is $G$-equivariantly bimeromorphic to $(C_2\times E')^\eta$, where $\eta\in H^1(C_2,\OO_{C_2}/\Lambda)$ denotes the image of $\eta_G$ along $H_G^1(C_2,\OO_{C_2}/\Lambda)\rightarrow H^1(C_2,\OO_{C_2}/\Lambda)$ and $(C_2\times E')^\eta$ is K\"ahler. Moreover, the group $G$ acts diagonally on $(C_2\times E')^\eta$: on $C_2$ the action is already defined and on $E'$ it is given by a group homomorphism $\sigma\colon G\rightarrow E'$. Define $H$ to be the kernel of $\sigma\colon G\rightarrow E'$, and let $C_3:=C_2/H$. Then
    \begin{equation*}
        (C_2\times E')^\eta/H\rightarrow C_3
    \end{equation*}
    is a locally trivial elliptic fiber bundle between compact connected complex manifolds with trivial monodromy. Thus, by \Cref{classification_torus_fiber_bundles}, there is a cohomology class $\tilde{\eta}\in H^1(C_3,\OO_{C_3}/\Lambda)$ such that $(C_2\times E')^\eta/H\cong (C_3\times E')^{\tilde{\eta}}$. Since $(C_2\times E')^{\eta}$ is K\"ahler and $(C_2\times E')^{\eta}\rightarrow (C_3\times E')^{\tilde{\eta}}$ is finite, also $(C_3\times E')^{\tilde{\eta}}$ is K\"ahler. Note that $G/H$ acts fixed-point freely on $(C_3\times E')^{\tilde{\eta}}$. Thus, $(C_3\times E')^{\tilde{\eta}}\rightarrow (C_3\times E')^{\tilde{\eta}}/(G/H)$ is finite \'etale. In particular, $(C_3\times E')^{\tilde{\eta}}/(G/H)\cong (C_2\times E')^\eta/G$ is a smooth minimal K\"ahler surface, which is bimeromorphic to $S_1$ by construction. Therefore, as $(C_3\times E')^{\tilde{\eta}}/(G/H)$ and $S_1$ are minimal of non-negative Kodaira dimension, they are isomorphic. Hence, we obtain the desired finite \'etale cover
    \begin{equation*}
        (C_3\times E')^{\tilde{\eta}}\longrightarrow (C_3\times E')^{\tilde{\eta}}/(G/H)\cong S_1\longrightarrow S.
    \end{equation*}
\end{proof}
\section{Kodaira dimension 2}
\begin{theorem}\label{Kodaira_dim_2_Weierstrass}
    Let $X$ be a smooth minimal K\"ahler threefold of Kodaira dimension 2 that satisfies condition \ref{conditionC} from \Cref{conjecture_Kotschick}. Then there exists a smooth compact K\"ahler surface $S$ of general type, an elliptic curve $E=\C/\Lambda$, a cohomology class $\eta\in H^1(S,\OO_S/\Lambda)$ and a finite \'etale cover
    \begin{equation*}
        (S\times E)^\eta\longrightarrow X.
    \end{equation*}
    Moreover, if $\omega\in H^0(X,\Omega^1_X)$ is a holomorphic 1-form such that $(X,\omega)$ satisfies condition \ref{conditionC}, then the induced holomorphic 1-form on $(S\times E)^\eta$ restricts non-trivially on the fibers of $(S\times E)^\eta\rightarrow S$.
\end{theorem}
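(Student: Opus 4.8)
The plan is to realize $X$, up to a finite \'etale cover, as a locally trivial elliptic fiber bundle $(S\times E)^{\eta}\to S$ over a smooth minimal surface $S$ of general type, following the strategy sketched in the introduction. First I would invoke abundance for K\"ahler threefolds to obtain the Iitaka fibration $f\colon X\to S_0$, an elliptic fibration onto a normal projective surface $S_0$. Using condition \ref{conditionC} --- a surface of general type does not satisfy it by \cite[][Thm. 1.3]{Schreieder_topology}, while \ref{conditionC} descends to the base of a fibration whose defining $1$-form comes from that base --- one shows, as in \cite{Hao_Schreieder_3-folds}, that after replacing $X$ by a finite \'etale cover the general fiber of the Iitaka fibration is not contracted by the Albanese morphism. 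Consequently $\omega$ restricts non-trivially to the general fiber, $f$ is generically isotrivial with elliptic fiber $E_0$, the smooth fibers have constant $j$-invariant, every degenerate fiber is a multiple of a smooth elliptic curve, and the monodromy of $R^1f_*\Z$ lies in the finite group $\mathrm{Aut}(E_0)$. A finite \'etale cover $S_1\to S_0$ trivializes this monodromy; setting $X_1:=X\times_{S_0}S_1$ and passing to a log-desingularization of $S_1$ together with the discriminant of the induced fibration, we obtain a generically isotrivial elliptic fibration $f'\colon X'\to S'$ over a smooth projective surface $S'$, where $X'$ is bimeromorphic to the finite \'etale cover $X_1$ of $X$, with trivial local monodromies and all degenerate fibers multiples of smooth elliptic curves.

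Next I would remove the multiple fibers. Choose a finite Galois cover $\overline{S}\to S'$ with group $G$ and $\overline{S}$ smooth that kills all the multiplicities (cf.~\cite[][Prop. 3.11]{Claudon_Hoering_Lin_fundamental_group}), so that the normalized base change $\overline{f}\colon\overline{X}\to\overline{S}$ has local meromorphic sections over every point and at most finitely many singular fibers; here $\overline{X}$ is K\"ahler, being finite over the K\"ahler space $X'\times_{S'}\overline{S}$, although $\overline{X}\to X'$ need not be \'etale. Over $\overline{S}$ the now-empty divisorial discriminant together with the trivial monodromy forces $\mathcal{W}^{\#}=\mathcal{W}^{mer}\cong\OO_{\overline{S}}/\Lambda$, so $\overline{f}$ is $G$-equivariantly bimeromorphic to a twisted $G$-equivariant Weierstra\ss\ model $p\colon W\to\overline{S}$ and no tautological model is needed. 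Since $W\to\overline{S}$ is smooth with trivial monodromy and constant $j$-invariant, and $W$ is bimeromorphic to a compact K\"ahler manifold, \Cref{trivial_vhs_constant_j_all_invariant} applies --- together with \Cref{Weierstrass_model_Kaehler} for K\"ahlerness --- and yields a K\"ahler isomorphism $W\cong(\overline{S}\times E)^{\overline{\eta}}$ on which $G$ acts diagonally: on $\overline{S}$ by the given action and on $E$ by translations through a group homomorphism $\sigma\colon G\to E$.

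Set $H:=\ker\sigma$ and $S'':=\overline{S}/H$. The quotient $Y'':=W/H\to S''$ is a locally trivial elliptic fiber bundle with trivial monodromy, hence of the form $(S''\times E)^{\eta}$ by \Cref{classification_torus_fiber_bundles}; since every element of $G\setminus H$ acts on $E$ by a non-zero translation, $G/H$ acts fixed-point freely on $Y''$, so $Y':=Y''/(G/H)\cong W/G$ is smooth and bimeromorphic to $X_1$, with $Y''\to Y'$ finite \'etale. Replacing $Y''$, $Y'$ and $S''$ by their unique smooth minimal models --- using \Cref{transport_of_twist_along_blow-down} to see that $Y''$ remains of the form $(S''\times E)^{\eta}$ --- we may assume $S''$ and hence $Y''$ are minimal. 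As $Y'$ is smooth and bimeromorphic to the smooth minimal threefold $X_1$, there is a finite \'etale cover $X''\to X_1$ corresponding under $\pi_1$ to $Y''\to Y'$; then $X''$ is smooth, minimal, and bimeromorphic to $Y''$, so by \cite[][Thm. 4.9]{Kollar_flops} they are connected by a sequence of flops. But $(S''\times E)^{\eta}$ admits no non-trivial flop, exactly as in the proof of \Cref{reduction_to_bimeromorphic_problem}: a flopping contraction is projective and bimeromorphic, it cannot contract a fiber of the projection $\pi\colon(S''\times E)^{\eta}\to S''$, and translating an exceptional component by $E$ would sweep out a divisor that would have to be contracted, contradicting smallness. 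Hence $X''\cong(S''\times E)^{\eta}$, and with $S:=S''$ the composition $X''\to X_1\to X$ is the desired finite \'etale cover. Finally $\kappa(S)=\kappa\bigl((S\times E)^{\eta}\bigr)=\kappa(X)=2$, so $S$ is of general type, and the assertion about $\omega$ follows from \Cref{fundamental_properties_special_torus_bundles}\cref{1-forms_on_torus_bundles}: the pullback of $\omega$ restricts non-trivially to the general fiber of $\pi$ by the first step, hence to every fiber.

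The main obstacle is the bookkeeping compressed into the parenthetical remarks above. The Iitaka base $S_0$ and the Galois cover $\overline{S}$ are in general singular, so one must repeatedly log-desingularize and then carefully descend the finite \'etale cover $Y''\to Y'$, the locally trivial fiber bundle structure $Y''\to S''$ with trivial monodromy, and the K\"ahler property simultaneously to the unique smooth minimal models $Y''_{min}\to Y'_{min}$ and $Y''_{min}\to S_{min}$, all while preserving $G$-equivariance and verifying that the equivariant Weierstra\ss\ formalism and \Cref{trivial_vhs_constant_j_all_invariant} continue to apply at each stage. This diagram-chase, together with establishing in the first step --- via condition \ref{conditionC} --- that the general Iitaka fiber is not Albanese-contracted, is the technical heart of the proof.
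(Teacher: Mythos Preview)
Your proof plan is correct and follows essentially the same route as the paper: Iitaka fibration via abundance, trivialize monodromy by a finite \'etale cover (the paper cites \cite[Prop.~5.8]{Hao_Schreieder_3-folds} for this), log-desingularize, take a Galois cover $\overline{S}\to S'$ to obtain local meromorphic sections via \cite[Prop.~3.11]{Claudon_Hoering_Lin_fundamental_group}, apply \Cref{trivial_vhs_constant_j_all_invariant} to get the $G$-equivariant model $(\overline{S}\times E)^{\overline{\eta}}$ with diagonal action, quotient by $H=\ker\sigma$ and then by $G/H$, and finally descend to minimal models. The only organizational difference is that the paper packages your entire final paragraph---the descent of $Y''\to Y'$ and $Y''\to S''$ to minimal models together with the no-flops argument---into \Cref{reduction_to_bimeromorphic_problem}, which it then invokes as a black box, whereas you unfold that argument inline.
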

\begin{proof}
    We first follow \cite[][Sec. 5.2]{Hao_Schreieder_3-folds}. Let $X$ be a smooth minimal  K\"ahler threefold of Kodaira dimension 2 that satisfies condition \ref{conditionC}. The abundance theorem \cite[][Thm. 1.1]{Campana_Hoering_Peternell_Kaehler-abundance}, \cite{Erratum_Addendum_Kaehler-abundance} allows to construct the Iitaka fibration $f\colon X\rightarrow S$, where $S$ is a normal projective surface. Since \cite[][Lem. 2.5, Prop. 5.3]{Hao_Schreieder_3-folds} also hold in the K\"ahler case (their proofs do not require projectivity of $X$), $f$ has only finitely many singular fibers that are not multiples of an elliptic curve, and the $j$-invariants of the smooth fibers are constant. By \cite[][Prop. 5.8]{Hao_Schreieder_3-folds} we can find a finite \'etale cover $\tau\colon X'\rightarrow X$, such that the elliptic fibration $f'\colon X'\rightarrow S'$ that is induced by $f$ via Stein factorization has trivial monodromy, i.e $R^1f'_*\Z\vert_{S'\setminus \Delta'}\cong\Z^{2}$, where $\Delta'\subset S'$ is the discriminant locus (as before, the proof of \cite[][Prop. 5.8]{Hao_Schreieder_3-folds} does not require projectivity and remains true for cohomology with integral coefficients).\par
    Choose a log-desingularization $S''\rightarrow S'$ of $(S',\Delta')$ and a desingularization $X''\rightarrow X\times_{S'}S''$. Then, by \cite[][Thm. 3.3.3]{Nakayama_elliptic-fibrations}, the elliptic fibration $f''\colon X''\rightarrow S''$ is locally projective. We can therefore, by \cite[][Prop. 3.11]{Claudon_Hoering_Lin_fundamental_group}, find a finite Galois cover $\overline{X}\rightarrow X''$ such that $\overline{X}$ is smooth and the elliptic fibration $\overline{f}\colon\overline{X}\rightarrow\overline{S}$ induced by the Stein factorization has local meromorphic sections over every point in $\overline{S}$. Since the monodromy of $f'$ is trivial and the $j$-invariants of the smooth fibers are constant, the same holds for $\overline{f}$, and $G$ must act trivially on the variation of Hodge structures associated to $\overline{f}$. We can thus apply \Cref{trivial_vhs_constant_j_all_invariant} to conclude that there is an elliptic curve $E=\C/\Lambda$ and a cohomology class $\eta_G\in H_G^1(\overline{S},\OO_{\overline{S}}/\Lambda)$ such that $\overline{f}$ is $G$-equivariantly bimeromorphic to 
    \begin{equation*}
        (\overline{S}\times E)^\eta\longrightarrow\overline{S},
    \end{equation*}
    where $\eta$ denotes the image of $\eta_G$ along $H_G^1(\overline{S},\OO_{\overline{S}}/\Lambda)\rightarrow H^1(\overline{S},\OO_{\overline{S}}/\Lambda)$ and $(\overline{S}\times E)^\eta$ is K\"ahler. Moreover, the group $G$ acts diagonally: on $\overline{S}$ the action is already defined and on $E$ it is given by a group homomorphism $\sigma\colon G\rightarrow E$. Define $H:=\ker(\sigma)$ and $T:=\overline{S}/H$. Note that $T$ is smooth. Then $(\overline{S}\times E)^\eta/H\rightarrow T$ is a locally trivial elliptic fiber bundle between complex manifolds with trivial monodromy. Thus, by \Cref{classification_torus_fiber_bundles}, there is a cohomology class $\eta'\in H^1(T,\OO_{T}/\Lambda)$ such that $(\overline{S}\times E)^\eta/H\cong (T\times E)^{\eta'}$. Since $(\overline{S}\times E)^{\eta}$ is K\"ahler and $(\overline{S}\times E)^{\eta}\rightarrow (T\times E')^{\eta'}$ is finite, also $(T\times E')^{\eta'}$ is K\"ahler. Define $Y:=(T\times E)^{\eta'}/(G/H)\cong (\overline{S}\times E)^\eta/G$. Note that the quotient group $G/H$ acts fixed-point freely on $(T\times E)^{\eta'}$. Thus, the quotient map $\varphi\colon (T\times E)^{\eta'}\rightarrow Y$ is finite \'etale and hence $Y$ is a smooth compact K\"ahler threefold. Moreover, by construction $Y$ is bimeromorphic to $X$. Thus, by \Cref{reduction_to_bimeromorphic_problem} there exists a smooth minimal K\"ahler surface $T'$ bimeromorphic to $T$, a cohomology class $\eta''\in H^1(T',\OO_{T'}/\Lambda)$ and a finite \'etale cover $(T'\times E)^{\eta''}\rightarrow X'$. Therefore,
    \begin{equation*}
        (T'\times E)^{\eta''}\longrightarrow X'\longrightarrow X
    \end{equation*}
    is the desired finite \'etale cover.
\end{proof}
\section{Kodaira dimension 1}
\begin{theorem}\label{Kodaira_dim_1_Weierstrass}
    Let $X$ be a smooth minimal K\"ahler threefold of Kodaira dimension 1 that satisfies condition \ref{conditionC} from \Cref{conjecture_Kotschick}. Then one of the two possibilities holds true.
    \begin{enumerate}[label=(\roman*)]
        \item There is a smooth compact curve $C$ of genus at least 2, a 2-torus $A=\C^2/\Lambda$, a cohomology class $\eta\in H^1(C,\OO^{\oplus 2}_C/\Lambda)$ and a finite \'etale cover
        \begin{equation*}
            (C\times A)^\eta\longrightarrow X.
        \end{equation*}
        \item There exists a smooth compact K\"ahler surface $S$ of Kodaira dimension 1 that does not satisfy condition \ref{conditionC}, an elliptic curve $E=\C/\Lambda$, a cohomology class $\eta\in H^1(S,\OO_S/\Lambda)$ and a finite \'etale cover
        \begin{equation*}
            (S\times E)^\eta\longrightarrow X.
        \end{equation*}
    \end{enumerate}
    Moreover, if $\omega\in H^0(X,\Omega^1_X)$ is a holomorphic 1-form such that $(X,\omega)$ satisfies condition \ref{conditionC}, then the induced holomorphic 1-form on $(C\times A)^\eta$, resp.~ $(S\times E)^\eta$ restricts non-trivially on the fibers of $(C\times A)^\eta\rightarrow C$, resp.~ $(S\times E)^\eta\rightarrow S$.
\end{theorem}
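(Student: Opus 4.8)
The plan is to run the three-case analysis of Hao and Schreieder \cite[][Sec.~6]{Hao_Schreieder_3-folds} in the K\"ahler category, substituting the twisted $G$-equivariant Weierstra\ss{} and tautological models of Section~2 for the algebraic base changes available in the projective setting. By the abundance theorem for K\"ahler threefolds \cite[][Thm.~1.1]{Campana_Hoering_Peternell_Kaehler-abundance}, \cite{Erratum_Addendum_Kaehler-abundance}, the Iitaka fibration $f\colon X\to C$ fibers $X$ over a smooth compact curve $C$, and by \cite[][Thm.~6.1]{Hao_Schreieder_3-folds} (whose proof does not use projectivity) a general fiber $F$ of $f$ is a $2$-torus or a bielliptic surface. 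The whole argument is organised by the behaviour of the Albanese morphism $a\colon X\to\mathrm{Alb}(X)$ on $F$: either $a$ contracts $F$ to a point, or $a(F)$ is a curve, or $a(F)$ is a surface. This trichotomy is unchanged by replacing $X$ by a finite \'etale cover and $f$ by the fibration induced via Stein factorisation; since condition \ref{conditionC} is inherited by every finite \'etale cover, one may freely pass to such covers throughout.

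First I would eliminate the case in which $a$ contracts $F$ to a point: there the $1$-form $\omega$ witnessing \ref{conditionC} is pulled back from $C$, exactness of $(H^\bullet(X,\C),\wedge\omega)$ forces $C$ to be elliptic, and after a finite \'etale cover $f$ becomes smooth with trivial monodromy, so the total space is a torus, contradicting $\kappa(X)=1$. Next, suppose $a(F)$ is a surface; I expect this to yield alternative~(i). A general fiber then maps to a translate of a positive-dimensional subtorus of $\mathrm{Alb}(X)$, so by \cite[][Thm.~4.2 and the following remark]{Campana_Peternell_2-forms} the fibration $f$ is quasi-smooth, with finitely many singular fibers, all multiples of smooth elliptic curves; after a suitable finite base change $C'\to C$ inducing a finite \'etale cover $X'\to X$ the fibration $f'\colon X'\to C'$ is smooth, and after a further finite \'etale cover its general fiber becomes a $2$-torus. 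The local system $R^1f'_*\Z$ has finite monodromy, since $\mathrm{Aut}^0(F)$ acts trivially on $H^1(F,\Z)$, so a finite \'etale cover $C''\to C'$ trivialises it, and $X''\to C''$ is then a torus fiber bundle with trivial monodromy of the form $(C''\times A)^\eta\to C''$ with $A$ a $2$-torus, by \Cref{classification_torus_fiber_bundles}. The curve $C''$ has genus $\geq 2$: genus $0$ forces $\eta=0$ and $(C''\times A)^\eta\cong\mathbb{P}^1\times A$ (by \Cref{fundamental_properties_special_torus_bundles}, as $H^2(\mathbb{P}^1,\Lambda)$ is torsion-free and $H^1(\mathbb{P}^1,\OO_{\mathbb{P}^1})=0$), which has $\kappa=-\infty$; genus $1$ makes $(C''\times A)^\eta$ a $3$-torus; either case contradicts $\kappa(X)=1$.

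The remaining case — for every finite \'etale cover the general fiber is contracted by $a$ onto a curve — is where the main obstacle lies, and should give alternative~(ii). I would factor $f$ through the normalisation $Y$ of the image of the Albanese morphism (working $G$-equivariantly to accommodate the bad orbifold structures occurring when $C\cong\mathbb{P}^1$), obtaining after a finite \'etale cover a commutative triangle consisting of an elliptic fibration $g'\colon X'\to Y'$ without multiple fibers, a locally trivial elliptic fiber bundle $\pi'\colon Y'\to C'$ with trivial monodromy, and $f'=\pi'\circ g'$ whose general fiber is a $2$-torus. A further finite \'etale cover $Y''\to Y'$, \'etale along the fibers of $\pi'$, can be arranged so that the general fiber of $f''\colon X'':=X'\times_{Y'}Y''\to C'$ is a product of two elliptic curves; the decisive point is that the twisted (tautological) Weierstra\ss{} model $p\colon W\to Y''$ of the elliptic fibration $X''\to Y''$ then descends to a twisted Weierstra\ss{} model $q\colon V\to C'$, i.e.\ $W\cong V\times_{C'}Y''$, so that $W\to V$ is a locally trivial elliptic fiber bundle with trivial monodromy over the surface $V$. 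Passing to smooth minimal models as in the proof of \Cref{Kodaira_dim_2_Weierstrass} and invoking \Cref{reduction_to_bimeromorphic_problem}, one checks that $W$ admits no non-trivial flops — a flopping contraction is projective, hence its exceptional locus is not contained in a fiber of $W\to V$ and is swept into a divisor by the fiberwise translation action, contradicting smallness — so by \cite[][Thm.~4.9]{Kollar_flops} it is isomorphic to the corresponding finite \'etale cover of $X$. Setting $S:=V$ and using that the canonical bundle of $(S\times E)^\eta$ is pulled back from $S$ gives $\kappa(S)=\kappa(X)=1$, and $S$ may be taken not to satisfy \ref{conditionC} by an induction on dimension. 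The hardest parts will be: organising the $G$-equivariant geometry over a possibly orbifold $\mathbb{P}^1$; showing that the enlargement $Y''\to Y'$ really forces the Weierstra\ss{} model to descend to the base curve; and the smooth-minimal-model bookkeeping needed to apply \cite[][Thm.~4.9]{Kollar_flops}.

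It remains to address the statement on $1$-forms. If $\omega$ witnesses \ref{conditionC}, then its pullback $\omega'$ to the finite \'etale cover still witnesses \ref{conditionC}, so it suffices to show that $\omega'$ is not pulled back from the base of the torus fiber bundle — the curve $C''$ in case~(i), the surface $S$ in case~(ii). If $\omega'$ were pulled back from $C''$, then since a compact curve of genus $\geq 2$ does not satisfy \ref{conditionC} and the cup-product complex of the bundle splits as a tensor product over the corresponding complex of the base, the complex $(H^\bullet,\wedge\omega')$ of the cover would not be exact; if $\omega'$ were pulled back from $S$, the same contradiction arises after a further finite \'etale cover, because $S$ does not satisfy \ref{conditionC}. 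Hence $\omega'$ restricts non-trivially on the torus fibers, and \Cref{fundamental_properties_special_torus_bundles} \cref{1-forms_on_torus_bundles} shows it has no zeros.
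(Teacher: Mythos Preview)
Your proposal follows the same three-case Albanese analysis as the paper, and in the two constructive cases (Albanese image of the general fiber a surface, respectively a curve) you have correctly identified both the main ingredients and the genuine difficulties --- the $G$-equivariant bookkeeping over a bad orbifold $\mathbb{P}^1$, the descent of the Weierstra\ss{} model to the base curve after enlarging the elliptic fiber, and the no-flops argument feeding into \cite[][Thm.~4.9]{Kollar_flops}. That is exactly the paper's route.

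There is, however, a genuine gap in your elimination of the case where the Albanese contracts the general fiber $F$ to a point. You assert that ``after a finite \'etale cover $f$ becomes smooth with trivial monodromy'', but this is the step that needs work. To apply \cite[][Thm.~4.2]{Campana_Peternell_2-forms} and get quasi-smoothness you need a holomorphic $2$-form on $X$ that is not vertical with respect to $f$. When $h^{2,0}(X)\neq 0$ one checks, using exactness of $\wedge\omega$ together with $h^{1,0}(X)=h^{1,0}(C)=1$, that any nonzero $\xi\in H^{2,0}(X)$ satisfies $\xi\wedge\omega\neq 0$ and is therefore non-vertical. But $h^{2,0}(X)$ may well vanish, and then there is no such $2$-form on $X$ itself. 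The paper closes this gap by invoking Koll\'ar's torsion-freeness theorem in the K\"ahler setting (via Saito's mixed Hodge modules, \cite[][Thm.~3.21, Rem.~2]{Saito_mixed_hodge_modules}) to produce, following \cite[][Sec.~6.4]{Hao_Schreieder_3-folds}, a finite \'etale cover $X'\to X$ with $h^{2,0}(X')\neq 0$; this is a nontrivial analytic input that your sketch omits entirely.

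Two minor slips, neither fatal: in the surface-image case the singular fibers of $f$ are multiples of smooth \emph{abelian surfaces}, not elliptic curves, and no further \'etale cover is needed to make the general fiber a $2$-torus (a bielliptic surface has one-dimensional Albanese image, so cannot occur here). Also, in the point-image case the total space after the cover need not literally be a $3$-torus; what you actually get is a smooth abelian-surface bundle over an elliptic curve, hence $K$ trivial and $\kappa=0$, which is already the desired contradiction.
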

Let $X$ be a smooth minimal K\"ahler threefold of Kodaira dimension 1 that satisfies condition \ref{conditionC} from \Cref{conjecture_Kotschick}. The abundance theorem \cite[][Thm. 1.1]{Campana_Hoering_Peternell_Kaehler-abundance}, \cite{Erratum_Addendum_Kaehler-abundance} allows to construct the Iitaka fibration $f\colon X\rightarrow C$, where $C$ is a smooth compact curve.
\begin{lemma}\label{smooth_fibers_2-tori_bielliptic}\cite[][Lem. 6.1]{Hao_Schreieder_3-folds}
    Let $X$ be a smooth minimal K\"ahler threefold of Kodaira dimension 1 that satisfies condition \ref{conditionC} from \Cref{conjecture_Kotschick}. Let $f\colon X\rightarrow C$ be the Iitaka fibration, where $C$ is a smooth compact curve. Then the smooth fibers of $f$ are 2-tori or bielliptic surfaces.
\end{lemma}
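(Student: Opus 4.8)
The plan is to follow the proof of \cite[][Lem.~6.1]{Hao_Schreieder_3-folds}, checking that each ingredient survives in the K\"ahler category. First I would use the abundance theorem for K\"ahler threefolds \cite[][Thm.~1.1]{Campana_Hoering_Peternell_Kaehler-abundance}, \cite{Erratum_Addendum_Kaehler-abundance}: $K_X$ is semiample, so the Iitaka fibration $f\colon X\rightarrow C$ is a genuine morphism with $mK_X=f^*H$ for some $m>0$ and an ample $H$ on $C$. A general fibre $F$ is a smooth compact K\"ahler surface with trivial normal bundle in $X$, so adjunction gives $K_F=K_X|_F=\tfrac1m f^*H|_F\equiv 0$; thus $F$ is a minimal surface with numerically trivial canonical class, and by the Enriques--Kodaira classification of compact K\"ahler surfaces $F$ is a K3 surface, an Enriques surface, a complex $2$-torus, or a bielliptic surface. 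Since being a 2-torus and being bielliptic are preserved under deformation, and the fibre type of the general fibre propagates to all smooth fibres, it suffices to exclude the cases where $F$ is K3 or Enriques.

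Next I would exploit that a K3 or Enriques surface carries no nonzero holomorphic $1$-form ($h^{1,0}=0$). Fix $\omega\in H^0(X,\Omega^1_X)$ with $(X,\omega)$ satisfying condition \ref{conditionC}. Exactness of $(H^\bullet(X,\C),\wedge\omega)$ forces $\wedge\omega\colon H^0(X,\C)\rightarrow H^1(X,\C)$ to be injective, so $[\omega]\neq 0$. If $F$ is K3 or Enriques then $\omega|_F=0$ for the general fibre; using the relative cotangent sequence $0\rightarrow f^*\Omega^1_C\rightarrow\Omega^1_X\rightarrow\Omega^1_{X/C}\rightarrow 0$ over the smooth locus of $f$, together with a pole-order estimate to extend across the finitely many singular fibres, one gets $\omega=f^*\alpha$ for some $\alpha\in H^0(C,\Omega^1_C)$, and $\alpha\neq 0$ because $[\omega]\neq 0$; hence $g(C)\geq 1$. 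Now for every $\beta\in H^0(C,\Omega^1_C)$ the cup product $f^*\beta\cup[\omega]=f^*(\beta\cup\alpha)$ vanishes, since $\beta\wedge\alpha$ is a holomorphic $2$-form on a curve and therefore zero. Thus $f^*H^0(C,\Omega^1_C)\subseteq\ker(\wedge\omega\colon H^1(X,\C)\rightarrow H^2(X,\C))$, which by exactness equals $\C[\omega]$; as $f^*$ is injective on cohomology this gives $h^0(C,\Omega^1_C)=g(C)\leq 1$, so $g(C)=1$ and $C$ is an elliptic curve. In particular the Albanese morphism of $X$ factors through $f$ onto (a translate of) the elliptic curve $C$.

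It then remains to rule out the case in which $C$ is elliptic and the general fibre is K3 or Enriques, and this is where I expect the main difficulty to lie, since it is the only point at which condition \ref{conditionC} must be invoked beyond exactness of the cup-product complex on $X$ itself. The cleanest approach is a rigidity argument: if $f$ were smooth, then the family of K3 (resp.\ Enriques) surfaces over $C$ would have constant period, because over the simply connected cover $\C$ of $C$ the period map lifts to a holomorphic map into the bounded period domain and is hence constant; so the family is isotrivial, and after a finite \'etale cover $C'\rightarrow C$ (again an elliptic curve) it becomes $C'\times F$ up to a free finite group action, giving $\kappa(X)=\kappa(C'\times F)=\kappa(C')+\kappa(F)=0$, contradicting $\kappa(X)=1$. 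The genuine obstacle is the case where $f$ has singular fibres: one must first reduce it to the smooth situation, using that $X$ is K\"ahler (so $b_1(X)=2q(X)=2g(C)$, which by the Leray sequence of $f$ forces $R^1f_*\C$ to have no global sections) and applying condition \ref{conditionC} to the finite \'etale covers of $X$ pulled back from the isogenies $C'\rightarrow C$ to exclude any degeneration; this mirrors the argument of Hao and Schreieder, whose remaining steps (the classification of the fibre and the Hodge-theoretic bookkeeping above) carry over to the K\"ahler case without change.
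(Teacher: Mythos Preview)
Your overall strategy is in line with the paper's own proof, which simply invokes \cite[][Lem.~2.5, Lem.~6.1]{Hao_Schreieder_3-folds} and observes that those arguments carry over verbatim to the K\"ahler setting. You go further and attempt to reconstruct the argument; your first three steps (classification of $F$ via adjunction and Enriques--Kodaira; $\omega=f^*\alpha$ when $h^{1,0}(F)=0$; $g(C)=1$ from exactness at $H^1$) are correct, though the descent $\omega=f^*\alpha$ is obtained more cleanly by noting that the Albanese of $X$ factors through $C$ than via a ``pole-order estimate''.

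The final step, however, contains a genuine error and an unresolved gap. The period domain for (unpolarised) K3 surfaces is \emph{not} a bounded domain, so your Liouville-type argument does not apply as stated; constancy of the period map over $\C$ does hold, but it follows from the Griffiths--Schmid curvature estimates on the horizontal distribution, not from boundedness. Even granting isotriviality, the claim ``after a finite \'etale cover it becomes $C'\times F$'' is unjustified: the monodromy $\pi_1(E)\to\mathrm{Aut}(F)$ can have infinite image for a K3 surface, so it need not trivialise after a finite cover. One instead argues that $K_{X/E}=f^*L$ with $L\in\pic^0(E)$ (using $\pic^0(F)=0$ for $F$ K3 or Enriques), and then abundance forces $L$ to be torsion, whence $\kappa(X)=0$, the desired contradiction. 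Most seriously, the reduction from the case with singular fibres to the smooth case---which you rightly flag as the crux---is only gestured at: pulling back along isogenies $C'\to C$ does not remove singular fibres, and you have not supplied the mechanism by which condition~\ref{conditionC} on covers excludes degenerations. Since both you and the paper ultimately defer this point to \cite{Hao_Schreieder_3-folds}, your write-up is no less complete than the paper's one-line citation, but the sketch you offer for that step is not correct as written.
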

\begin{proof}
    Note that the proofs of \cite[][Lem. 2.5, Lem. 6.1]{Hao_Schreieder_3-folds} also hold in the K\"ahler case as they do not require projectivity of $X$.
\end{proof}
The multiple fibers of the Iitaka fibration $f\colon X\rightarrow C$ define an effective divisor $D$ on $C$, i.e., an orbifold structure on $C$. We call the orbifold structure good if there exists a finite orbifold cover with trivial orbifold structure. The classification of orbifolds is the following:
\begin{theorem}\label{classification_orbifolds_curves}\cite[][Cor. 2.29]{Cooper_Hodgson_Kerckhoff_orbifolds}
    Let $C$ be a smooth compact curve, and let $D$ be an effective divisor on $C$. The orbifold structure induced by $D$ on $C$ is good except for the cases $C\cong\mathbb{P}^1$ and either $D$ consists of one point with some multiplicity or $D$ consists of two points with different multiplicities. In particular, every orbifold structure on a curve with positive genus is good.
\end{theorem}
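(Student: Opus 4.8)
The plan is to restate the theorem as a branched–covering existence problem. Write $D=\sum_{i=1}^{k}m_ip_i$ with all $m_i\ge 2$ (a point of multiplicity one carries no orbifold structure and may be discarded), and observe that a finite orbifold cover with trivial orbifold structure is exactly a finite morphism $h\colon C'\to C$ of smooth curves that is unramified over $C\setminus\{p_1,\dots,p_k\}$ and has ramification index exactly $m_i$ over every point of $h^{-1}(p_i)$: over a point of orbifold multiplicity $m$ with ramification index $e$ the induced multiplicity upstairs equals $m/e$ (so in particular $e\mid m$), and this is $1$ iff $e=m$. So it suffices to decide, for each signature $(g;m_1,\dots,m_k)$ with $g=g(C)$, whether such a cover exists.

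First I would eliminate the two exceptional families on $\mathbb{P}^1$ by Riemann--Hurwitz. If $C\cong\mathbb{P}^1$ and $D=m_1p_1$, a connected cover of degree $d$ branched only over $p_1$, with $r\ge 1$ preimages of $p_1$, satisfies $-2\le 2g'-2=-d-r\le -d-1$, which forces $d=1$; hence ramification $m_1\ge 2$ cannot be realized, and the orbifold is bad. If $C\cong\mathbb{P}^1$ and $D=m_1p_1+m_2p_2$ with $m_1\ne m_2$, then $\pi_1(\mathbb{P}^1\setminus\{p_1,p_2\})\cong\mathbb{Z}$, so every connected cover branched over $\{p_1,p_2\}$ is $z\mapsto z^d$ and has equal ramification index $d$ over both points; one cannot have $d=m_1$ and $d=m_2$ simultaneously, so again no manifold cover exists. (By contrast, if $m_1=m_2=m$ the cover $z\mapsto z^{m}$ of $\mathbb{P}^1$ by $\mathbb{P}^1$ trivializes the orbifold structure, so that case is good.)

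For every remaining signature --- $g\ge 1$ with arbitrary $D$, or $g=0$ with $k\ge 3$ --- I would construct a connected Galois cover with the prescribed ramification. Branched covers of $C$ branched over $\{p_1,\dots,p_k\}$ correspond to finite quotients $\rho$ of $\pi_1(C\setminus\{p_1,\dots,p_k\})$, and a $\rho$-Galois cover has all ramification indices over $p_i$ equal to the order of $\rho(\gamma_i)$, where $\gamma_i$ is a small loop around $p_i$; so it is enough to produce a finite group $Q$ and a surjection $\rho$ from the orbifold fundamental group $\langle a_j,b_j,x_1,\dots,x_k\mid \prod_j[a_j,b_j]\prod_i x_i=1\rangle$ onto $Q$ with $\rho(x_i)$ of order exactly $m_i$ for all $i$. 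When $g\ge 1$ there is slack in the commutator factors: taking $Q$ to be a large alternating group, which is perfect, contains elements of the orders $m_i$, and in which every element is a commutator, one can solve for $\rho(a_1),\rho(b_1)$ after choosing the $\rho(x_i)$. When $g=0$ and $k\ge 3$ one must instead solve $\rho(x_1)\cdots\rho(x_k)=1$ with the $\rho(x_i)$ of prescribed orders, which is a classical branched–cover existence fact, verifiable signature by signature (equivalently, such a $2$-orbifold is spherical, Euclidean, or hyperbolic, hence a global quotient of $S^2$, $\mathbb{E}^2$, or $\mathbb{H}^2$: in the spherical case $S^2$ itself is a manifold cover, and in the Euclidean and hyperbolic cases the orbifold fundamental group is a finitely generated linear group, so by Selberg's lemma it has a finite-index torsion-free normal subgroup $\Gamma_0$, and the quotient of $\mathbb{E}^2$ or $\mathbb{H}^2$ by $\Gamma_0$ is a closed smooth curve finitely covering the orbifold). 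I expect the genuine content to be exactly this step --- the Hurwitz-type realizability of a finite group, equivalently the geometric uniformization, especially in the genus-$0$ case where the constraint $\prod_i\rho(x_i)=1$ is restrictive --- whereas the exceptional cases are routine Riemann--Hurwitz. Since the statement is \cite[][Cor.~2.29]{Cooper_Hodgson_Kerckhoff_orbifolds}, I would in the end simply cite it.
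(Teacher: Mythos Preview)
The paper does not give its own proof of this statement: it is stated as a theorem with a citation to \cite[][Cor.~2.29]{Cooper_Hodgson_Kerckhoff_orbifolds} and is used as a black box thereafter. So there is no proof in the paper to compare your proposal against.

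Your sketch is a correct outline of the standard argument. The elimination of the teardrop and the spindle with unequal multiplicities via Riemann--Hurwitz and the computation of $\pi_1(\mathbb{P}^1\setminus\{p_1,p_2\})$ is exactly right. For the remaining signatures, the uniformization-plus-Selberg route you describe for $g=0$, $k\ge 3$ is the cleanest general method and in fact also covers $g\ge 1$ uniformly (the orbifold Euler characteristic is then nonpositive, so the orbifold is Euclidean or hyperbolic and Selberg applies), making the separate alternating-group construction for $g\ge 1$ unnecessary. If you do keep that construction, note that you also need $\rho$ to be surjective, which requires a small extra argument (e.g.\ arranging that the images of $a_1,b_1$ already generate $A_n$). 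Your closing remark---that in practice one simply cites the reference---matches precisely what the paper does.
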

\subsection{The general fiber is not contracted via the Albanese morphism}\label{Iitaka_fiber_not_contracted}
Let $X$ be a smooth minimal K\"ahler threefold of Kodaira dimension 1 and let $f\colon X\rightarrow C$ be the Iitaka fibration. The goal of this section is to prove \Cref{Kodaira_dim_1_Weierstrass} under the following assumption:
\begin{assumption}\label{assumption_no_contraction}
    The general fiber of the Iitaka fibration $f\colon X\rightarrow C$ is not contracted by the Albanese morphism $X\rightarrow Alb(X)$.
\end{assumption}
\begin{definition}\cite[][Def. 1.9]{Campana_Peternell_2-forms}
    Let $X$ be a smooth compact K\"ahler threefold with a surjective morphism $f\colon X\rightarrow C$ to a smooth curve $C$. A holomorphic 2-form $\xi$ on $X$ is called vertical with respect to $f$ if it is in the kernel of the natural map $\Omega^2_X\rightarrow\Omega^2_{X/C}$.
\end{definition}
\begin{theorem}\label{smooth_fibers_after_cover}\cite[][Thm. 4.2]{Campana_Peternell_2-forms}
    Let $X$ be a smooth minimal K\"ahler threefold of Kodaira dimension 1 and let $f\colon X\rightarrow C$ be its Iitaka fibration. If there is a holomorphic 2-form $\xi$ on $X$ which is not vertical with respect to $f$, then there is a finite morphism $C'\rightarrow C$ such that the induced morphism $f'\colon X'\rightarrow C'$ is smooth, where $X'$ is the normalization of $X\times_CC'$. In particular, $f$ is quasi-smooth. Moreover, the fibers of $f'$ are either K3 surfaces of abelian surfaces.
\end{theorem}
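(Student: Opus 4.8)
The plan is to read the isomorphism type of the general fibre off the non-vertical $2$-form, then to constrain the singular fibres of $f$ using minimality of $X$ together with abundance, and finally to remove the resulting multiple fibres by a ramified base change $C'\to C$.

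\emph{The general fibre.} Let $F=f^{-1}(c)$ be a general fibre. By generic smoothness $F$ is a smooth surface, and since $X$ is minimal, adjunction gives $K_F=K_X|_F$, which is nef; as the Iitaka fibration has general fibre of Kodaira dimension $0$ by construction, the Enriques--Kodaira classification leaves only K3, Enriques, abelian and bielliptic surfaces for $F$. The image of $\xi$ under $\Omega^2_X|_F\to\Omega^2_F$ is a nonzero holomorphic $2$-form, so $p_g(F)\geq 1$, and only the K3 and abelian cases survive. As $f$ and $f'$ have the same general fibre, this already proves the last assertion of the theorem, and it remains to construct $C'$.

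\emph{Constraining and removing the singular fibres.} A nonzero holomorphic $2$-form on a K3 or an abelian surface is nowhere vanishing, so the section of $\omega_{X/C}$ induced by $\xi$ is nowhere zero over $f^{-1}(C^*)$, where $C^*\subset C$ is the cofinite set over which $f$ is smooth; hence $\omega_{X/C}\cong\OO_X(E)$ for an effective divisor $E$ supported on the finitely many singular fibres. By abundance for K\"ahler threefolds $mK_X=f^*H$ for some $m>0$ and some divisor $H$ on $C$, and since $K_X=\omega_{X/C}+f^*K_C$, a computation with the intersection theory of the fibres shows that $E$, and hence $\omega_{X/C}$, is the pullback of an effective $\Q$-divisor on $C$; thus $K_X$ is $f$-trivial and $f$ is a relatively minimal degeneration of K3 resp.~abelian surfaces. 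Feeding this into the classification of degenerations of K3 and of abelian surfaces (Kulikov, Persson--Pinkham and the abelian analogue) together with the fact that $X$ has terminal singularities, one concludes that each singular fibre is a multiple fibre $m_cF_c$ with $F_c$ a smooth K3 resp.~abelian surface, and that the local monodromy around each $c\in C\setminus C^*$ is finite. It then suffices to take a finite morphism $\phi\colon C'\to C$ that is suitably ramified over $C\setminus C^*$ and \'etale over $C^*$: the normalization $X'$ of $X\times_CC'$ has a reduced smooth fibre over each point of $\phi^{-1}(C\setminus C^*)$ and agrees with $X\times_CC^*$ over $\phi^{-1}(C^*)$, so $f'\colon X'\to C'$ is smooth.

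\emph{Main obstacle.} The crux is showing that each singular fibre is merely a smooth multiple fibre. One must combine the structure theory of relatively minimal degenerations of K3 and abelian surfaces --- which a priori also allows genuine semistable (Kulikov type II and III, and analogous abelian) degenerations --- with the constraints that $X$ be terminal and that $\xi$ extend as a global holomorphic $2$-form across the central fibre; it is the interplay between the Reid--Tai description of terminal threefold quotient singularities and the geometry of Kulikov-type models that should force the degenerations to be of the mildest possible kind.
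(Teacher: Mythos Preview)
The paper does not prove this theorem; it is quoted from \cite[Thm.~4.2]{Campana_Peternell_2-forms} and used as a black box. So there is no proof in the paper to compare against, and the question is whether your outline stands on its own.

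Your treatment of the general fibre is correct: non-verticality of $\xi$ forces $p_g(F)\geq 1$, which among minimal surfaces of Kodaira dimension $0$ singles out K3 and abelian surfaces.

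The gap is exactly where you flag it. You invoke the Kulikov--Persson--Pinkham classification, but that classification applies to \emph{semistable} degenerations, and the fibres of $f$ need not be semistable. Passing to a semistable model via Mumford's theorem requires a base change \emph{and} a birational modification of the total space; after that modification you lose both smoothness and minimality of $X$, so the constraint ``$X$ is terminal'' (in fact $X$ is smooth here) no longer applies to the model to which Kulikov speaks. Conversely, if you stay with the original $X$, Kulikov says nothing. The sentence ``the interplay between the Reid--Tai description of terminal threefold quotient singularities and the geometry of Kulikov-type models should force the degenerations to be of the mildest possible kind'' is not an argument: $X$ is smooth, so Reid--Tai is vacuous, and you have not explained how the global $2$-form $\xi$ obstructs type II or III behaviour after semistable reduction. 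This step is the entire content of the Campana--Peternell result and needs a genuine argument, not a pointer to classification theorems whose hypotheses are not met.

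A secondary issue: your claim that $\omega_{X/C}\cong\OO_X(E)$ with $E$ effective and supported on singular fibres requires producing a section of $\omega_{X/C}$ from $\xi$. But $\xi$ lives in $\Omega^2_X$, and the natural map $\Omega^2_X\to\Omega^2_{X/C}$ only identifies the target with $\omega_{X/C}$ over the smooth locus of $f$; extending this across the singular fibres and controlling its vanishing there is part of the same unresolved problem.
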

\begin{proof}[Proof of \Cref{Kodaira_dim_1_Weierstrass} under \Cref{assumption_no_contraction}]
    Let $X$ be a smooth minimal K\"ahler threefold of Kodaira dimension 1 that satisfies condition \ref{conditionC} from \Cref{conjecture_Kotschick}. Let $f\colon X\rightarrow C$ be the Iitaka fibration, where $C$ is a smooth compact curve. We will prove \Cref{Kodaira_dim_1_Weierstrass} in the case where the general fiber of $f$ is not contracted by the Albanese morphism.\par
    We first follow the argument in \cite[][Sec. 6.2]{Hao_Schreieder_3-folds}: Note that since the Albanese morphism does not contract a general fiber of $f$, the smooth fibers of $f$ must be 2-tori by \Cref{smooth_fibers_2-tori_bielliptic}. Hence, the Albanese morphism maps a general fiber of $f$ to a translate of a fixed 2-dimensional subtorus of $Alb(X)$. Thus, the pullback of a general 2-form on $Alb(X)$ to $X$ is not vertical with respect to $f$. By \Cref{smooth_fibers_after_cover} the Iitaka fibration $f\colon X\rightarrow C$ is quasi-smooth. Let $D\subset C$ be the divisor induced by the multiple fibers. By \cite[][Sec. 6.2]{Hao_Schreieder_3-folds}, the orbifold structure of $D\subset C$ is good. Let $C'\rightarrow C$ be a finite orbifold cover with trivial orbifold structure. Define $X'$ as the normalization of $X\times_CC'$ and note that the natural map $f'\colon X'\rightarrow C'$ is smooth. Moreover, a local analysis shows that $X'\rightarrow X$ is finite \'etale.\par
    We then follow the argument in \cite[][Prop. 6.4]{Hao_Schreieder_3-folds}: Note that $R^1f'_*\Z$ is a local system. Let $F$ be a fiber of $f'$ and note that $Aut(F)$ acts with finite quotient on $H^1(F,\Z)$. Hence, the local system $R^1f'_*\Z$ has finite monodromy. There is therefore a finite \'etale cover $C''\rightarrow C'$ such that the monodromy of $R^1f'_*\Z$ becomes trivial after pulling back to $C''$. The induced finite \'etale cover $X'':=X'\times_{C'}X'\rightarrow X'$ comes thus with a smooth fibration $f''\colon X''\rightarrow C''$ such that the local system $R^1f''_*\Z$ has trivial monodromy.\par
    By relative Poincar\'e duality, the relative cup-product
    \begin{equation*}
        R^1f''_*\Z\otimes R^3f''_*\Z\stackrel{\smile}{\longrightarrow}R^4f''_*\Z\cong\Z
    \end{equation*}
    is an isomorphism. Therefore, the monodromy of $R^3f''_*\Z$ is trivial as well. Recall that the Albanese morphism $X\rightarrow Alb(X)$ maps the fibers of $f$ to translates of a fixed subtorus in $Alb(X)$. Thus, also the fibers of $f''$ are mapped to translates of a fixed subtorus $A\subset Alb(X'')$. Therefore, the fibers of $f''$ are all isomorphic to a fixed \'etale cover $F\rightarrow A$. In particular, $f''$ is isotrivial. Hence, by writing $F=\C^2/\Lambda$ \Cref{corollary_isotrivial_and_trivial_vhs} implies that there is a cohomology class $\eta\in H^1(C'',\OO^{\oplus 2}_{C''}/\Lambda)$ such that
    \begin{equation*}
        X''\cong (C''\times F)^\eta\longrightarrow C''.
    \end{equation*}
\end{proof}
\subsection{The general fiber is contracted to a curve via the Albanese morphism}\label{Iitaka_fiber_contracted_to_curve}
Let $X$ be a smooth minimal K\"ahler threefold of Kodaira dimension 1 and let $f\colon X\rightarrow C$ be the Iitaka fibration. The goal of this section is to prove \Cref{Kodaira_dim_1_Weierstrass} under the following assumption:
\begin{assumption}\label{assumption}
    For every finite \'etale cover $\tau\colon\tilde{X}\rightarrow X$ the general fiber of the induced fibration $\tilde{f}\colon\tilde{X}\rightarrow\tilde{C}$ by the Stein factorization is contracted to a curve by the Albanese morphism, i.e. to a translate of a fixed elliptic curve $E\subset Alb(X)$.
\end{assumption}
The first step is to prove the following proposition:
\begin{proposition}\label{key_proposition_kodaira_1_G-equivariant}
    Let $X$ be a smooth minimal K\"ahler threefold of Kodaira dimension 1 and let $f\colon X\rightarrow C$ be the Iitaka fibration. Suppose that \Cref{assumption} holds true. Then there is a finite group $G$ and a $G$-equivariant commutative diagram of compact connected K\"ahler manifolds
    \[\begin{tikzcd}
	   {X'} && {(C'\times E')^{\eta'}} \\
	   & {C'} && {,}
	   \arrow["{g'}", from=1-1, to=1-3]
	   \arrow["{f'}"', from=1-1, to=2-2]
	   \arrow["{\pi'}", from=1-3, to=2-2]
    \end{tikzcd}\]
    such that the following holds:
    \begin{enumerate}[label=(\roman*)]
        \item\label{item1_key_proposition_kodaira_1_G-equivariant} There is a finite \'etale cover $X'/G\rightarrow X$ and a finite cover $C'/G\rightarrow C$.
        \item\label{item2_key_proposition_kodaira_1_G-equivariant} $E'=\C/\Lambda'$ is an elliptic curve and $\eta'\in H^1_G(C',\OO_{C'}/\Lambda')$ is a cohomology class.
        \item\label{item3_key_proposition_kodaira_1_G-equivariant} The morphism $g'$ is a locally projective elliptic fibration with local meromorphic sections over every point of $(C'\times E')^{\eta'}$, and its discriminant divisor is a finite union of fibers of $\pi'$.
        \item\label{item4_key_proposition_kodaira_1_G-equivariant} The general fiber of $f'$ is a 2-torus.
        \item\label{item5_key_proposition_kodaira_1_G-equivariant} $G$ is either trivial or $C'/G\cong\mathbb{P}^1$.
        \item\label{item6_key_proposition_kodaira_1_G-equivariant} If $C\cong\mathbb{P}^1$ (for example if $G$ is non-trivial) then $\eta'=0$.
    \end{enumerate}
\end{proposition}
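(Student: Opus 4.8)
The plan is to construct the elliptic fibration $g'$ and its base directly out of the Albanese morphism, realising $(C'\times E')^{\eta'}$ as a torus fibre bundle coming from a fibre product inside $C\times\mathrm{Alb}(X)$, and then to correct this bundle by three kinds of finite covers; the group $G$ will enter only when $C\cong\mathbb P^1$ carries a bad orbifold structure.

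\emph{The torus bundle.} Write $A:=\mathrm{Alb}(X)$ with Albanese morphism $\mathrm{alb}\colon X\to A$, let $E\subset A$ be the fixed elliptic curve from \Cref{assumption}, and put $B:=A/E$. Since $q\circ\mathrm{alb}\colon X\to B$ contracts every general fibre $F_c$ of $f$ to a point, the map $(f,q\circ\mathrm{alb})\colon X\to C\times B$ has image a graph over $C$ and thus defines a morphism $\bar h\colon C\to B$ with $\bar h\circ f=q\circ\mathrm{alb}$. I would then set $\widetilde C_0:=C\times_B A$: a closed submanifold of $C\times A$, hence a smooth compact Kähler surface, and — being the pullback along $\bar h$ of the locally trivial principal $E$-bundle $A\to B$ — a torus fibre bundle $\pi_0\colon\widetilde C_0\to C$ with fibre $E$ and trivial monodromy, so by \Cref{classification_torus_fiber_bundles} and \Cref{corollary_isotrivial_and_trivial_vhs} one has $\widetilde C_0\cong(C\times E)^{\eta_0}$ for some $\eta_0\in H^1(C,\OO_C/\Lambda_E)$ with $c(\eta_0)$ torsion by \Cref{fundamental_properties_special_torus_bundles}. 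Because $\mathrm{alb}(F_c)=q^{-1}(\bar h(c))$ for general $c$, the induced map $g_0:=(f,\mathrm{alb})\colon X\to\widetilde C_0$ is surjective with connected general fibre an elliptic curve (a fibre of $F_c\to E$), hence an elliptic fibration, and $f=\pi_0\circ g_0$. Note that if $C\cong\mathbb P^1$ then $\bar h$ is constant, so $\widetilde C_0\cong\mathbb P^1\times E$ and $\eta_0=0$.

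\emph{The covers.} By \Cref{smooth_fibers_2-tori_bielliptic} (or directly: K3 and Enriques surfaces have trivial Albanese and so cannot be contracted onto a curve) the general fibre of $f$ is a $2$-torus or a bielliptic surface. For item (iv) I would untwist the bielliptic fibres: a bielliptic surface $(E_1\times E_2)/G_0$ becomes the abelian surface $E_1\times E_2$ after the finite étale cover $E_1\to E_1/G_0$, and $E_1/G_0\cong\mathrm{Alb}$ of the fibre $\cong E$; extending the resulting finite étale cover $E'\to E$ by \Cref{lemma_extension_of_etale_cover_of_elliptic_curves} (valid since $c(\eta_0)$ is torsion) gives a finite étale cover $\widetilde C_1:=(C\times E')^{\eta_1}\to\widetilde C_0$, and flatness of $g_0$ makes $X_1:=X\times_{\widetilde C_0}\widetilde C_1\to X$ finite étale, with $X_1\to C$ having general fibre a $2$-torus and $g_1\colon X_1\to\widetilde C_1$ an elliptic fibration. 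For the multiple-fibre-freeness in item (iii) I would remove the multiple fibres of $f_1\colon X_1\to C$, which define an orbifold structure on $C$: by \Cref{classification_orbifolds_curves} it is good — giving a finite étale orbifold cover $C'\to C$ and $G$ trivial — unless $C\cong\mathbb P^1$ carries a bad orbifold structure, in which case $\widetilde C_0\cong\mathbb P^1\times E$ and $\eta_0=0$ (so $\eta_1=0$), and one replaces $C$ by a finite ramified Galois cover $C'\to\mathbb P^1$ with group $G$ killing the multiple fibres, handled $G$-equivariantly through the Weierstraß-model formalism as in \Cref{trivial_vhs_constant_j_all_invariant}. Finally one applies \cite[][Prop. 3.11]{Claudon_Hoering_Lin_fundamental_group} to the resulting locally projective elliptic fibration, after a further finite Galois cover absorbed into $G$, to obtain local meromorphic sections over every point. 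Pulling everything back over the covers of $C$ yields the $G$-equivariant diagram: $X'$ is the resulting cover of $X$, finite and étale after dividing by $G$; $C'/G\to C$ is finite; $(C'\times E')^{\eta'}$ is the pullback of $\widetilde C_1$, with $\eta'\in H^1_G(C',\OO_{C'}/\Lambda')$ the ($G$-equivariant) pullback of $\eta_1$; $\eta'=0$ whenever $C\cong\mathbb P^1$ (item (vi)); and $C'/G\cong\mathbb P^1$ exactly in the bad case (item (v)). Items (i), (ii), (iv) are then built in.

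\emph{Item (iii).} The base $(C'\times E')^{\eta'}$ is a smooth compact Kähler surface, so \cite[][Thm. 3.3.3]{Nakayama_elliptic-fibrations} shows $g'$ is locally projective. Over the cofinite open $C'_{\mathrm{sm}}\subset C'$ where $f'$ is smooth, each fibre of $f'$ is a $2$-torus mapping submersively (as a homomorphism of tori) onto a translate of $E'$, so $g'$ is smooth over $(\pi')^{-1}(C'_{\mathrm{sm}})$; hence the discriminant of $g'$ is a divisor supported over the finite set $C'\setminus C'_{\mathrm{sm}}$, i.e.\ a finite union of fibres of $\pi'$, and these are pairwise disjoint smooth copies of $E'$, in particular a normal crossing divisor. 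Local meromorphic sections over every point hold by construction. The delicate point of the whole argument is the bad-orbifold case, where the ramified Galois cover $C'\to\mathbb P^1$ eliminating the multiple fibres of $f_1$ and the subsequent cover producing local meromorphic sections must be chosen $G$-equivariantly, compatibly with the (here trivial) torus-bundle structure $\widetilde C_1\cong\mathbb P^1\times E'$, and so that $X'\to X$ remains étale after dividing by $G$; this is exactly the situation calling for the $G$-equivariant Weierstraß- and tautological-model machinery of \cite{Nakayama_Weierstrass}, \cite{Claudon_Hoering_Lin_fundamental_group} and \cite{Lin_algebraic_approximation_codim_1}, mirroring the projective argument of \cite[][Sec. 6.3]{Hao_Schreieder_3-folds}. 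The construction of $\widetilde C_0$ and the verification of item (iii) are by comparison routine.
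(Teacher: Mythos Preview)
Your fibre-product construction $\widetilde C_0:=C\times_B A$ is a genuine improvement on the paper's Case~1 approach: the paper takes the normalisation $Y$ of the Albanese image, then spends Lemmas~6.6--6.8 removing multiple fibres of $Y\to C$, trivialising its monodromy, and finally identifying the result with some $(C_1\times E_1)^{\eta_1}$. Your $\widetilde C_0$, being the pullback of the principal $E$-bundle $A\to A/E$, is already smooth and already of the form $(C\times E)^{\eta_0}$ with trivial monodromy, so those three lemmas collapse into one line. This also treats all genera of $C$ uniformly. (Minor point: you invoke \Cref{lemma_extension_of_etale_cover_of_elliptic_curves}, which needs $c(\eta_0)=0$, not merely torsion; but over a curve $H^2(C,\Lambda)$ is torsion-free, so this is fine.)

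The gap is your claim that $g_0=(f,\mathrm{alb})\colon X\to\widetilde C_0$ has \emph{connected} general fibre. Over a general $c\in C$ with $F_c$ a $2$-torus, the restriction $F_c\to E$ is (a translate of) a surjective homomorphism of complex tori, and the kernel of such a map need not be connected: it is an extension of a finite group by an elliptic curve. So the general fibre of $g_0$ can be a disjoint union of elliptic curves, and $g_0$ is then not an elliptic fibration. Your later ``untwist bielliptic'' cover $E'\to E$ does not repair this, because the phenomenon already occurs for abelian-surface fibres. The paper handles exactly this point via Stein factorisation (Lemma~6.9 in Case~1, and Step~2 of \cite[Sec.~6.3]{Hao_Schreieder_3-folds} in Case~2): one factors $X_1\to Z\to (C_1\times E_1)^{\eta_1}$, observes that $Z_c\to E_1$ is a finite \'etale cover of elliptic curves so that the branch locus of $Z\to(C_1\times E_1)^{\eta_1}$ is a union of $\pi_1$-fibres, and then takes a further orbifold cover $C_2\to C_1$ so that the normalised pullback of $Z$ becomes $(C_2\times E_2)^{\eta_2}$ with $E_2\to E_1$ finite \'etale. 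Only after this does one get an honest elliptic fibration $g_2\colon X_2\to(C_2\times E_2)^{\eta_2}$ with connected fibres. You should insert this Stein-factorisation step between your construction of $\widetilde C_0$ and the subsequent covers; once that is done, your streamlined torus-bundle construction together with the remaining covers gives the proposition.
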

The proof of \Cref{key_proposition_kodaira_1_G-equivariant} in the case where the genus of $C$ is positive requires some preparation.
\begin{construction}\label{construction_for_key_proposition_kodaira_1_G-equivariant}
    Let $X$ be a smooth minimal K\"ahler threefold of Kodaira dimension 1 and let $f\colon X\rightarrow C$ be the Iitaka fibration. Suppose that \Cref{assumption} holds true. Suppose furthermore that the genus of $C$ is positive. Let $Y\rightarrow Alb(X)$ be the normalization of the image of the Albanese morphism $X\rightarrow Alb(X)$. Note that this comes with a morphism to the Jacobian of $C$, which has image equal to the image of the Jacobian embedding. Therefore, we obtain a morphism $Y\rightarrow C$. Since the general fiber is isomorphic to $E$, the morphism $Y\rightarrow C$ is an elliptic fibration. The universal property of the normalization induces the commutative diagram
    \[\begin{tikzcd}
	   X && Y \\
	   & C && {.}
	   \arrow[from=1-1, to=1-3]
	   \arrow["f"', from=1-1, to=2-2]
	   \arrow[from=1-3, to=2-2]
    \end{tikzcd}\]
\end{construction}
Recall that a finite morphism $h\colon Z'\rightarrow Z$ between normal complex analytic varieties is called quasi-\'etale if it is \'etale in codimension 1 \cite[][Def. 1.1]{Catanese_quasi_etale}. If $Z$ is smooth, then this implies that $f$ is \'etale \cite[][Rem. 3.1]{Catanese_quasi_etale}.
\begin{lemma}\label{step0_key_proposition_kodaira_1_G_equivariant}
    In the notation of \Cref{construction_for_key_proposition_kodaira_1_G-equivariant}, there is a finite cover $C_0\rightarrow C$, such that $Y_0\rightarrow C_0$ has no multiple fibers, where $Y_0$ is defined as the normalization of $Y\times_CC_0$. Moreover, $Y_0\rightarrow Y$ is finite quasi-\'etale and $X_0:=X\times_YY_0\rightarrow X$ is finite \'etale. 
\end{lemma}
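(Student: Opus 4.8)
The plan is to base-change $Y\to C$ along a branched cover of $C$ whose ramification is adapted to the multiple fibres, take $Y_0$ to be the normalisation of the base change, and then extract étaleness from the local structure of multiple fibres of isotrivial elliptic fibrations together with purity of branch loci. Throughout I keep the notation of \Cref{construction_for_key_proposition_kodaira_1_G-equivariant}: $Y$ is a normal surface, $\pi\colon Y\to C$ is an isotrivial elliptic fibration with general fibre $E$, and $C$ has positive genus.

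\emph{Choosing $C_0$.} The fibration $\pi$ has only finitely many singular fibres; let $p_1,\dots,p_r\in C$ be the points over which $\pi$ has a multiple fibre and let $m_i\ge 2$ be the multiplicity of the fibre over $p_i$. Since $C$ has positive genus, \Cref{classification_orbifolds_curves} shows that the orbifold structure $\sum_i(1-\tfrac1{m_i})p_i$ on $C$ is good, so I take a connected finite orbifold cover $h\colon C_0\to C$ with trivial orbifold structure: $h$ is étale over $C\setminus\{p_1,\dots,p_r\}$ and totally ramified of order exactly $m_i$ over each point of $h^{-1}(p_i)$. Define $Y_0$ to be the normalisation of $Y\times_C C_0$, with induced elliptic fibration $\pi_0\colon Y_0\to C_0$.

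\emph{No multiple fibres, and $Y_0\to Y$ quasi-étale.} Over $C\setminus\{p_i\}$ the map $Y_0\to Y$ is the étale base change of $h$, so it is étale there and $\pi_0$ acquires no new multiple fibres. Near a multiple fibre the isotrivial fibration $\pi$ has an analytic model $(\Delta\times E)/\mu_{m_i}$, where $\mu_{m_i}$ rotates $\Delta$ and translates $E$ by an $m_i$-torsion point; when $j(E)\notin\{0,1728\}$ this action is free, $Y$ is smooth there, and the base change $t\mapsto t^{m_i}$ normalises to an étale cover of $Y$ near the fibre whose central fibre is reduced. This is the classical way base change removes a multiple fibre of an elliptic surface; see \cite[][1.1]{Nakayama_elliptic-fibrations} and \cite{Barth_Hulek_Peters_vandeVen_surfaces}. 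When $j(E)\in\{0,1728\}$ one argues identically after incorporating the extra automorphism, the only difference being that $Y$ may carry a quotient singularity on such a fibre, at which $Y_0\to Y$ can fail to be étale. In all cases $\pi_0$ has no multiple fibres, and $Y_0\to Y$ is étale away from a finite set of points contained in $\mathrm{Sing}(Y)$ (finite, as $Y$ is a normal surface). Hence $Y_0\to Y$ is finite quasi-étale.

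\emph{$X_0\to X$ is étale.} The morphism $X_0:=X\times_Y Y_0\to X$ is finite, and as a base change of $Y_0\to Y$ it is étale over the complement of the preimage in $X$ of the finite bad set above. The crucial point is that this preimage has codimension $\ge 2$ in $X$: since $X$ is a smooth minimal threefold, no divisor of $X$ is contracted to a point by $X\to Y$---equivalently, by the Albanese morphism of $X$, through which $X\to Y$ factors because $Y$ is finite over $\mathrm{Alb}(X)$---and this rests on $X$ being terminal with $K_X$ nef (no such divisorial contraction exists, as one checks by restricting $K_X$ to a covering family of curves of the hypothetical contracted divisor). Granting this, $X_0\to X$ is quasi-étale over the smooth variety $X$, so by purity of the branch locus (\cite[][Rem.~3.1]{Catanese_quasi_etale}) it is étale. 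Finally, pulling $X_0$ over $Y_0$ and $Y_0$ over $C_0$ yields the asserted structure. I expect the non-existence of a divisorial contraction $X\to Y$ onto a point---and, secondarily, the local analysis of the multiple fibres in the previous step---to be where the genuine content lies; everything else is formal bookkeeping.
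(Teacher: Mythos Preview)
Your overall approach---choose an orbifold cover of $C$ adapted to the multiple fibres, normalize the base change, do a local analysis to see that $Y_0\to Y$ is \'etale off $\mathrm{Sing}(Y)$, then invoke purity over the smooth threefold $X$---is exactly the paper's. The paper's proof is terser than yours (it compresses your local analysis into the phrase ``a local analysis shows''), but the skeleton is identical. Your $j$-invariant case split is harmless but unnecessary: since $Y$ maps finitely into a torus, it contains no rational curves, so every singular fibre of $Y\to C$ is already a multiple of a smooth elliptic curve.

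There is, however, a genuine error in your final step. You assert that no divisor of $X$ can be contracted to a point by $X\to Y$ because $X$ is terminal with $K_X$ nef, and you justify this by saying ``no such divisorial contraction exists.'' This conflates two different notions. A \emph{divisorial contraction} in the MMP sense is a $K_X$-negative extremal contraction; those indeed do not exist on a minimal variety. But an arbitrary morphism from a minimal variety can certainly contract a divisor to a point: take $X=S\times E$ with $S$ a K3 surface carrying $(-2)$-curves, and project to the surface obtained by contracting those curves. Restricting $K_X$ to a covering family of curves on the hypothetical contracted divisor gives $K_X\cdot C\ge 0$ by nefness, which is no contradiction at all. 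So your argument does not establish that $g^{-1}(\mathrm{Sing}(Y))$ has codimension $\ge 2$ in $X$. The paper, for its part, does not attempt this argument either; it simply asserts that $X_0\to X$ is quasi-\'etale because $Y_0\to Y$ is, leaving the passage implicit. You were right to flag that something needs to be said here, but the fix you propose does not work.
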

\begin{proof}
    The multiple fibers of $Y\rightarrow C$ induce an orbifold structure on $C$. Since the genus of $C$ is positive, there is a finite orbifold cover $C_0\rightarrow C$ with trivial orbifold structure by \Cref{classification_orbifolds_curves}. Let $Y_0$ be the normalization of $Y\times_CC_0$. A local analysis shows that $Y_0\rightarrow C_0$ has no multiple fibers. The finite morphism $Y_0\rightarrow Y$ is \'etale away from the singular locus of $Y$. Since $Y$ is normal, the singular locus has codimension at least 2. Thus, as $X_0$ is defined as the fiber product $X_0:=X\times_YY_0$, also $X_0\rightarrow X$ is finite quasi-\'etale. Since $X$ is smooth, this implies that $X_0\rightarrow X$ is in fact \'etale.
\end{proof}
\begin{lemma}\label{step1_key_proposition_kodaira_1_G_equivariant}
    In the notation of \Cref{step0_key_proposition_kodaira_1_G_equivariant}, there is a finite \'etale cover $C_1\rightarrow C_0$, such that $Y_1:=Y_0\times_{C_0}C_1\rightarrow C_1$ is an elliptic fibration with trivial monodromy. Moreover, $Y_1\rightarrow Y_0$ and $X_1:=X_0\times_{Y_0}Y_1\rightarrow X_0$ are both finite \'etale.
\end{lemma}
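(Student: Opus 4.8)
The plan is to show that \Cref{assumption} forces $Y_0\to C_0$ to be a \emph{smooth} isotrivial elliptic fiber bundle all of whose fibers are isomorphic to $E$ and whose monodromy is already trivial; once this is known, one may take $C_1:=C_0$ (or, if one prefers, any finite \'etale cover of $C_0$), and then $Y_1:=Y_0\times_{C_0}C_1\to C_1$ is again an elliptic fibration with trivial monodromy, while $Y_1\to Y_0$ and $X_1:=X_0\times_{Y_0}Y_1\to X_0$ are finite \'etale as base changes of the finite \'etale morphisms $C_1\to C_0$ and $Y_1\to Y_0$.

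So the real content is the structure of $Y_0\to C_0$. By \Cref{construction_for_key_proposition_kodaira_1_G-equivariant} its general fiber is isomorphic to $E$, so the $j$-invariant of the smooth fibers is the constant $j(E)$ and, over $\C$, every smooth fiber is isomorphic to $E$; by \Cref{step0_key_proposition_kodaira_1_G_equivariant} there are no multiple fibers. The step that needs \Cref{assumption} is the exclusion of \emph{all} singular fibers. Since $X_0\to X$ is finite \'etale (\Cref{step0_key_proposition_kodaira_1_G_equivariant}), \Cref{assumption} applies to it, so the general fibers of the induced fibration are contracted by the Albanese morphism onto translates of the \emph{fixed} elliptic curve $E\subset Alb(X)$; tracing through $X_0\to Y_0\to Alb(X)$, a general fiber of $Y_0\to C_0$ embeds into $Alb(X)$ as a translate of $E$. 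Restricting cohomology classes from $Alb(X)$ therefore yields, over a dense open subset of $C_0$, a surjection from the constant local system $H^1(Alb(X),\Z)$ onto the monodromy local system of $Y_0\to C_0$; hence that monodromy is trivial over a dense open, and therefore trivial on all of $C_0\setminus\Delta_0$. In particular every local monodromy around a point of $\Delta_0$ is trivial, so---there being no multiple fibers---every fiber over $\Delta_0$ is already smooth. Thus $\Delta_0=\emptyset$, $Y_0\to C_0$ is a smooth $E$-fiber bundle, and it has trivial monodromy.

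The step I expect to be the main obstacle is exactly this passage from ``isotrivial with no multiple fibers'' to ``smooth''. It is false for a general isotrivial elliptic surface: a fiber of type $I_0^\ast$ is not multiple but has local monodromy $-\id$, of order $2$, which no \'etale base change can remove. What rules such fibers out here is precisely \Cref{assumption}, which pins the smooth fibers inside $Alb(X)$ as translates of the single curve $E$ and thereby makes the monodromy on $C_0\setminus\Delta_0$ trivial rather than merely finite---so that, together with the absence of multiple fibers from \Cref{step0_key_proposition_kodaira_1_G_equivariant}, no singular fiber can survive.
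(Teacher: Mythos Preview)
Your argument that the monodromy of $Y_0\to C_0$ is already trivial is correct and in fact gives more than the paper proves at this stage. The paper only shows that the monodromy is \emph{finite}: isotriviality forces the monodromy representation of $R^1(\pi_0)_*\Z|_{C_0\setminus\Delta_0}$ to factor through $\mathrm{Aut}(E_0)/\mathrm{Aut}^0(E_0)$, which is finite, and then one kills it with a genuine finite \'etale cover $C_1\to C_0$. You instead use the map $Y_0\to Y\to Alb(X)$: since $E\subset Alb(X)$ is a subtorus, the restriction $H^1(Alb(X),\Z)\to H^1(E,\Z)$ is surjective, so $R^1(\pi_0)_*\Z|_{C_0\setminus\Delta_0}$ is a quotient of the constant system $\underline{H^1(Alb(X),\Z)}$ and hence has trivial monodromy. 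Both routes establish the lemma; yours does so with $C_1=C_0$.

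The additional smoothness claim, however, has a gap. The implication ``trivial local monodromy and no multiple fiber $\Rightarrow$ smooth fiber'' is Kodaira's classification of singular fibers, which presupposes that the total space is nonsingular. At this point $Y_0$ is only known to be \emph{normal} --- it was built in \Cref{step0_key_proposition_kodaira_1_G_equivariant} as the normalization of a fibre product --- and a normal surface fibered in genus-one curves can carry isolated singularities over $\Delta_0$ without this being visible in the monodromy on the punctured disc. The paper proves smoothness only in the next step, \Cref{bimeromorphic_to_elliptic_torsor_over_curve_implies_smooth}, and the decisive extra input there is that $Y_1$ admits a morphism with finite fibers to a torus and so contains no rational curves; this is what forces the minimal resolution of $Y_1$ to be an isomorphism. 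Since the present lemma does not assert smoothness, you can simply drop that claim and your proof stands; if you want to keep it, you need to import the no-rational-curves argument.
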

\begin{proof}
    We argue as in \cite[][Prop. 5.8]{Hao_Schreieder_3-folds}. The fibers of $f$ are connected. Thus, so are the fibers of $\pi_0\colon Y_0\rightarrow C_0$, and hence it is an elliptic fibration. Note that the $j$-invariant of the general fiber of $Y\rightarrow C$ is constant. Therefore, the same holds for $\pi_0\colon Y_0\rightarrow C_0$. Let $E_0$ be a general fiber of $\pi_0$, let $\Delta\hookrightarrow C_0$ be the discriminant divisor, and let $j\colon U:=C_0\setminus\Delta\subset C_0$ be the inclusion. Define $H:=R^1(\pi_0)_*\Z\vert_U$. The identity component $Aut^0(E_0)\subset Aut(E_0)$ acts trivially on $H^1(E_0,\Z)$. Therefore, $j_*H$ has finite monodromy. Thus, there is a finite \'etale cover $C_1\rightarrow C_0$ such that $Y_1:=Y_0\times_{C_0}C_1\rightarrow C_1$ has trivial monodromy. Since \'etale morphisms are stable under fiber products, also $Y_1\rightarrow Y_0$ and $X_1\rightarrow X_0$ are finite \'etale.
\end{proof}
\begin{lemma}\label{bimeromorphic_to_elliptic_torsor_over_curve_implies_smooth}
    In the notation of \Cref{step0_key_proposition_kodaira_1_G_equivariant} and \Cref{step1_key_proposition_kodaira_1_G_equivariant}, there is an isomorphism $Y_1\cong(C_1\times E_1)^{\eta_1}$, where $E_1=\C/\Lambda_1$ is an elliptic curve and $\eta_1\in H^1(C_1,\OO_{C_1}/\Lambda_1)$ is a cohomology class such that $(C_1\times E_1)^{\eta_1}$ is K\"ahler.
\end{lemma}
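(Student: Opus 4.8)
The plan is to show that $\pi_1\colon Y_1\to C_1$ is actually a smooth elliptic fibre bundle, and then to read off the desired isomorphism from the classification of torus fibre bundles; the geometric input that makes this work is that $Y_1$ contains no rational curve.

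First I would record two properties of $Y_1$. By construction there is a finite morphism $Y_1\to Y_0\to Y\to\mathrm{alb}(X)\subseteq Alb(X)$ into the complex torus $Alb(X)$, obtained by composing the normalization $Y\to\mathrm{alb}(X)$ of the Albanese image, the finite quasi-\'etale cover $Y_0\to Y$ of \Cref{step0_key_proposition_kodaira_1_G_equivariant}, the finite \'etale cover $Y_1\to Y_0$ of \Cref{step1_key_proposition_kodaira_1_G_equivariant}, and a closed embedding. Since $\mathrm{alb}(X)$ is a closed subvariety of a torus it is K\"ahler, and hence $Y_1$ is K\"ahler, because closed embeddings and finite surjective morphisms preserve the K\"ahler property \cite{Varouchas_Kaehler_spaces_proper_open_morphisms}, \cite{Graf_Kirschner_finite_quotients_3-tori}. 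Moreover a complex torus contains no rational curve and a finite morphism contracts no curve, so $Y_1$ contains no rational curve. Finally $Y_1$ is normal (it is \'etale over the normal surface $Y_0$), so $\mathrm{Sing}(Y_1)$ is a finite set.

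The heart of the argument is to prove that $\pi_1$ is smooth. Let $\mu\colon\tilde{Y}_1\to Y_1$ be the minimal resolution. Then $\tilde{Y}_1$ contains no $(-1)$-curve: such a curve would be carried by $\mu$ either onto a point, which is impossible since the exceptional locus of a minimal resolution contains no $(-1)$-curve, or birationally onto a curve, which would then be a rational curve in $Y_1$ and hence does not exist. Therefore $\tilde{\pi}_1:=\pi_1\circ\mu\colon\tilde{Y}_1\to C_1$ is a relatively minimal elliptic fibration from a smooth compact surface; it has constant and finite $j$-invariant, inherited from $Y\to C$, and its discriminant is contained in that of $\pi_1$, so the surjectivity on fundamental groups of the inclusion of the smooth locus of $\pi_1$ into that of $\tilde{\pi}_1$ and the triviality of the monodromy of $\pi_1$ force the monodromy of $\tilde{\pi}_1$ to be trivial. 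By Kodaira's classification of singular fibres, a relatively minimal elliptic fibration over a curve with constant finite $j$-invariant and trivial monodromy can have only fibres of type $I_0$ and multiple fibres $mF$ with $F$ a smooth elliptic curve; such a multiple fibre is irreducible, so $\mu$ restricts to an isomorphism over it (it cannot contract a positive-dimensional part of a one-dimensional fibre of $\pi_1$ onto a point), whence it descends to a multiple fibre of $\pi_1$, which is excluded because $\pi_1$ has no multiple fibres (this holds for $Y_0\to C_0$ and is preserved under the \'etale base change $C_1\to C_0$). Hence every fibre of $\tilde{\pi}_1$ is a smooth elliptic curve, so $\tilde{\pi}_1$ is smooth; and since all its fibres are then irreducible, $\mu$ can contract nothing and is an isomorphism. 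Thus $Y_1$ is smooth and $\pi_1$ is a smooth morphism. It follows that $\pi_1$ is an isotrivial elliptic curve fibre bundle with trivial monodromy; writing $E_1=\C/\Lambda_1$ for its common fibre, \Cref{corollary_isotrivial_and_trivial_vhs} identifies its Jacobian fibration with $C_1\times E_1$ and gives $\mathcal{J}_H\cong\OO_{C_1}/\Lambda_1$, and \Cref{classification_torus_fiber_bundles} then produces a unique $\eta_1\in H^1(C_1,\OO_{C_1}/\Lambda_1)$ together with an isomorphism $Y_1\cong(C_1\times E_1)^{\eta_1}$ over $C_1$; since $Y_1$ is K\"ahler, so is $(C_1\times E_1)^{\eta_1}$.

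The step I expect to be the main obstacle is precisely the passage from \enquote{$Y_1$ has no rational curve} to \enquote{$\pi_1$ is smooth}: one must control the $j$-invariant, the monodromy and the multiple fibres under the minimal resolution while remembering that $Y_1$ is a priori only normal. Working on the relatively minimal model $\tilde{\pi}_1$ as above circumvents this, but it relies on the compatibility of \enquote{no multiple fibres} with the resolution and on the routine but essential enumeration of which Kodaira fibre types are compatible with constant finite $j$-invariant and trivial monodromy.
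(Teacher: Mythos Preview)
Your proof is correct and shares the paper's overall architecture: both arguments show that $Y_1$ admits a finite map to a torus, hence contains no rational curve, then take a resolution with no $(-1)$-curves in the exceptional locus and argue that the resolution must be an isomorphism. The difference lies in how this last step is carried out. The paper first applies \Cref{trivial_vhs_constant_j_all_invariant} to the resolution $\tilde{Y}$ to obtain a bimeromorphic morphism $b\colon\tilde{Y}\to (C_1\times E_1)^{\eta_1}$, and then compares the fibre $\tilde{Y}_c$ over a point $c$ lying under a hypothetical singularity from the two sides: via $\mu$ it must be reducible, while via $b$ it is a tree of curves whose leaves are $(-1)$-curves or the elliptic strict transform, forcing the existence of a $(-1)$-curve that $\mu$ would have to contract. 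You bypass this comparison entirely by invoking Kodaira's table directly on the relatively minimal fibration $\tilde{\pi}_1$: trivial local monodromy forces every fibre to be of type $mI_0$, and since an $mI_0$-fibre is irreducible it cannot contain an exceptional curve of $\mu$ (else the corresponding fibre of $\pi_1$ would be a point), so any multiplicity would descend to $\pi_1$, contradicting the absence of multiple fibres inherited from $Y_0\to C_0$.

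Your route is somewhat more elementary in that it does not need the machinery behind \Cref{trivial_vhs_constant_j_all_invariant} (equivariant Weierstra\ss\ models, the identification $\mathcal{W}^\#=\mathcal{W}^{mer}$, etc.) at this stage; it trades that for an explicit appeal to the monodromy--fibre dictionary and for the observation that absence of multiple fibres survives the \'etale base change $C_1\to C_0$ and the resolution. The paper's approach, on the other hand, packages the passage to $(C_1\times E_1)^{\eta_1}$ and the K\"ahler property into one invocation of \Cref{trivial_vhs_constant_j_all_invariant}, which is consistent with how that proposition is reused throughout the paper.
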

\begin{proof}
    Consider the following commutative diagram
    \[\begin{tikzcd}
	   & {A_1} && {A_0} && {Alb(X)} \\
	   {Y_1} && {Y_0} && Y \\
	   & {Alb(C_1)} && {Alb(C_0)} && {Alb(C)} \\
	   {C_1} && {C_0} && {C,}
	   \arrow[from=1-2, to=1-4]
	   \arrow[from=1-2, to=3-2]
	   \arrow[from=1-4, to=1-6]
	   \arrow[from=1-4, to=3-4]
	   \arrow[from=1-6, to=3-6]
	   \arrow[from=2-1, to=1-2]
	   \arrow[from=2-1, to=2-3]
	   \arrow[from=2-1, to=4-1]
	   \arrow[from=2-3, to=1-4]
	   \arrow[from=2-3, to=2-5]
	   \arrow[from=2-3, to=4-3]
	   \arrow[from=2-5, to=1-6]
	   \arrow[from=2-5, to=4-5]
	   \arrow[from=3-2, to=3-4]
	   \arrow[from=3-4, to=3-6]
	   \arrow[from=4-1, to=3-2]
	   \arrow[from=4-1, to=4-3]
	   \arrow[from=4-3, to=3-4]
	   \arrow[from=4-3, to=4-5]
	   \arrow[from=4-5, to=3-6]
    \end{tikzcd}\]
    where $A_0:=Alb(C_0)\times_{Alb(C)}Alb(X)$ and $A_1:=Alb(C_1)\times_{Alb(C_0)}A_0$. Note that $A_0$ and $A_1$ are tori. By construction of $Y$, the fibers of $Y\rightarrow Alb(X)$ are finite. Since $Y_0\rightarrow Y$ is a finite morphism, the fibers of $Y_0\rightarrow A_0$ are also finite. By the same reasoning, also the fibers of $Y_1\rightarrow A_1$ are finite. Therefore, $Y_1$ cannot contain any rational curve.\par
    Let $\mu\colon\tilde{Y}\rightarrow Y_1$ be a desingularization, obtained by blowing up smooth points in the singular locus, and such that $\mu$ does not contract any $(-1)$-curves. Note that by construction, $Y$ is K\"ahler and hence so is $\tilde{Y}$. By \Cref{trivial_vhs_constant_j_all_invariant}, the elliptic fibration $\tilde{Y}\rightarrow C_1$ is bimeromorphic to $\pi_1\colon (C_1\times E_1)^{\eta_1}\rightarrow C_1$ for some $\eta_1\in H^1(C_1,\OO_{C_1}/\Lambda_1)$ such that $(C_1\times E_1)^{\eta_1}$ is K\"ahler, where $E_1\cong\C/\Lambda_1$. The canonical bundle of $(C_1\times E_1)^{\eta_1}$ is given by the pullback of the canonical bundle of $C_1$. Since the genus of $C_1$ is positive, the canonical bundle of $(C_1\times E_1)^{\eta_1}$ is nef. Therefore, $(C_1\times E_1)^{\eta_1}$ is the unique minimal model of $\tilde{Y}$. We obtain a commutative diagram of the form
    \[\begin{tikzcd}
	   {\tilde{Y}} & {(C_1\times E_1)^{\eta_1}} \\
	   {Y_1} & {C_1,}
	   \arrow["b", from=1-1, to=1-2]
	   \arrow["\mu"', from=1-1, to=2-1]
	   \arrow["{\pi_1}", from=1-2, to=2-2]
	   \arrow["f_1", from=2-1, to=2-2]
    \end{tikzcd}\]
    where $b$ is a finite sequence of blow-ups of points.\par
    To show that $Y_1$ is isomorphic to $(C_1\times E_1)^{\eta_1}$, let $y\in Y$ be a singular point and denote its image in $C_1$ by $c$. We calculate the fiber $\tilde{Y}_c$ of $\tilde{Y}\rightarrow C_1$ in two different ways.\par
    On the one hand, let $\tilde{F}$ be the strict transform of the fiber $(Y_1)_c$ over $c$ in $\tilde{Y}$. Then $\tilde{Y}_c$ is given by the union of $\tilde{F}$ and the irreducible components of the exceptional divisor of $\mu$ lying over $\tilde{F}$. Since $y$ is a singular point, the fiber of $\mu$ over $y$ contains at least one component of the exceptional divisor of $\mu$. Hence, $\tilde{Y}_c$ is not irreducible. On the other hand, by the commutativity of the diagram, $\tilde{Y}_c$ can be explicitly calculated as follows: let $\tilde{E}\subset\tilde{Y}$ be the strict transform of $\pi^{-1}_1(c)\cong E_1$ in $\tilde{Y}$. Then $\tilde{Y}_c$ is given by a tree of curves, where one curve is $\tilde{E}$, all other curves are isomorphic to $\mathbb{P}^1$ and the leaves of this tree are either $(-1)$-curves or $\tilde{E}$. As $\tilde{Y}_c$ is not irreducible, it contains at least one $(-1)$-curve. However, as we saw above, $Y_1$ does not contain any rational curves. Thus, $\mu$ must contract all rational curves in $\tilde{Y}_c$. In particular, $\mu$ contracts at least one $(-1)$-curve. This is a contradiction. Therefore, $Y_1$ is smooth and $\mu$ is an isomorphism. Moreover, $b$ is an isomorphism as $Y_1$ does not contain any rational curves.
\end{proof}
\begin{lemma}\label{step2_key_proposition_kodaira_1_G_equivariant}
    In the notation of \Cref{step1_key_proposition_kodaira_1_G_equivariant} and \Cref{bimeromorphic_to_elliptic_torsor_over_curve_implies_smooth}, let $Z\rightarrow(C_1\times E_1)^{\eta_1}$ be the finite morphism in the Stein factorization of $X_1\rightarrow Y_1\cong (C_1\times E_1)^{\eta_1}$. Then there exists a finite cover $h\colon C_2\rightarrow C_1$ and an elliptic curve $E_2=\C/\Lambda_2$ that admits a finite \'etale cover $E_2\rightarrow E_1$ such that the normalization of
    \begin{equation*}
        Z\times_{(C_1\times E_1)^{\eta_1}}(C_2\times E_1)^{h^*\eta_1}
    \end{equation*}
    is isomorphic to $(C_2\times E_2)^{\eta_2}$ for some cohomology class $\eta_2\in H^1(C_2,\OO_{C_2}/\Lambda_2)$ such that $(C_2\times E_2)^{\eta_2}$ is K\"ahler. Moreover, the natural morphism
    \begin{equation*}
        X_2:=X_1\times_Z(C_2\times E_2)^{\eta_2}\longrightarrow X_1
    \end{equation*}
    is finite \'etale and the morphism $g_2\colon X_2\rightarrow (C_2\times E_2)^{\eta_2}$ is an elliptic fibration.
\end{lemma}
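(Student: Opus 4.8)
I would first study the elliptic fibration $g\colon Z\rightarrow C_1$ obtained by composing the finite morphism $Z\rightarrow Y_1=(C_1\times E_1)^{\eta_1}$ coming from the Stein factorization with the projection $\pi_1\colon(C_1\times E_1)^{\eta_1}\rightarrow C_1$, show that after removing its multiple fibers it becomes a torus fiber bundle of the desired shape, and then pull everything back to $X_1$. Since $X_1\rightarrow Z$ and $X_1\rightarrow C_1$ both have connected fibers, so does $g$, so $g$ is an elliptic fibration; and since $Z\rightarrow Y_1$ is finite and $Y_1$ is a surface, $Z$ is a surface and $g$ has $1$-dimensional fibers. Every fiber of $g$ maps finitely onto the corresponding fiber of $\pi_1$, which is the smooth elliptic curve $E_1$; hence no component of a fiber of $g$ has geometric genus $0$, and as the fibers are connected of arithmetic genus $1$ this forces each fiber to be, up to multiplicity, an irreducible curve of geometric genus $1$, i.e.\ a smooth elliptic curve of Kodaira type ${}_{m}I_{0}$. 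By Kodaira's local classification of such fibrations, $Z$ is then a smooth surface and $g$ is relatively minimal; its general fiber $E_{2}:=Z_{c}$ is an elliptic curve which is finite étale over $E_1$ (a finite morphism between elliptic curves is unramified), and $g$ is isotrivial. Moreover, over the open set $C_1^{*}\subseteq C_1$ where $g$ and $\pi_1$ are smooth, the restriction of $Z\rightarrow Y_1$ is a morphism of elliptic fiber bundles over $C_1^{*}$ and induces a $\pi_{1}(C_1^{*})$-equivariant injection $R^{1}(\pi_1)_{*}\Z\hookrightarrow R^{1}g_{*}\Z$ with finite cokernel on stalks; since $(C_1\times E_1)^{\eta_1}$ has trivial monodromy and a finite-index subgroup of a torsion-free abelian group that is fixed pointwise forces the whole group to be fixed pointwise, $R^{1}g_{*}\Z|_{C_1^{*}}$ has trivial monodromy.

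Next I would remove the multiple fibers. The multiplicities of the ${}_{m}I_{0}$ fibers of $g$ define an orbifold structure on $C_1$, which is good by \Cref{classification_orbifolds_curves} because $C_1$ has positive genus (it covers $C$). Let $h\colon C_2\rightarrow C_1$ be a finite orbifold cover with trivial orbifold structure. Since the twisted-torsor construction is compatible with base change, $(C_2\times E_1)^{h^{*}\eta_1}$ is the pullback $(C_1\times E_1)^{\eta_1}\times_{C_1}C_2$, hence $Z\times_{(C_1\times E_1)^{\eta_1}}(C_2\times E_1)^{h^{*}\eta_1}=Z\times_{C_1}C_2$; let $Z_2$ be its normalization and $g_2\colon Z_2\rightarrow C_2$ the induced map. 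Kodaira's logarithmic transformation — available since $Z$ is smooth near each ${}_{m}I_{0}$ fiber — shows that a base change ramified to order exactly $m$ turns a ${}_{m}I_{0}$ fiber into a reduced smooth elliptic fiber and makes the normalization étale over $Z$; since $h$ is étale away from the multiple fibers, $Z_2\rightarrow Z$ is finite étale everywhere and $Z_2$ is smooth. Thus $g_2$ is an elliptic fibration with only smooth elliptic fibers, i.e.\ a torus fiber bundle, still isotrivial and with trivial monodromy (pulled back from $g$), so by \Cref{corollary_isotrivial_and_trivial_vhs} and \Cref{classification_torus_fiber_bundles} there is a class $\eta_2\in H^{1}(C_2,\OO_{C_2}/\Lambda_2)$, with $E_2=\C/\Lambda_2$ the constant fiber, such that $Z_2\cong(C_2\times E_2)^{\eta_2}$. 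It is K\"ahler: $(C_2\times E_1)^{h^{*}\eta_1}$ is K\"ahler by \Cref{fundamental_properties_special_torus_bundles}, its Chern class being the pullback of the torsion class $c(\eta_1)$, and $Z_2$ is finite surjective over it, hence K\"ahler by \cite[][Prop.~3.5]{Graf_Kirschner_finite_quotients_3-tori}.

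Finally I would descend to $X_1$. Set $X_2:=X_1\times_Z(C_2\times E_2)^{\eta_2}=X_1\times_Z Z_2$. The projection $X_2\rightarrow X_1$ is the base change of the finite étale morphism $Z_2\rightarrow Z$ along $X_1\rightarrow Z$, hence finite étale; since $Z_2$ is connected and $g_2$ has connected fibers, $X_2$ is a smooth compact connected K\"ahler threefold. The morphism $g_2\colon X_2\rightarrow(C_2\times E_2)^{\eta_2}$ is the base change, along the finite morphism $Z_2\rightarrow Z$, of $X_1\rightarrow Z$, which is an elliptic fibration: it has connected fibers (Stein factorization) and its general fiber is a fiber of the fibration $F\rightarrow E_2$, where $F$ — a $2$-torus or bielliptic surface by \Cref{smooth_fibers_2-tori_bielliptic} — is the general fiber of $X_1\rightarrow C_1$, so this general fiber is a smooth elliptic curve. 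Hence $g_2\colon X_2\rightarrow(C_2\times E_2)^{\eta_2}$ is an elliptic fibration, as required. The main obstacle is the structural analysis in the first step: one must exclude every degenerate fiber of $g$ other than a multiple of a smooth elliptic curve, deduce smoothness of $Z$, and then control these fibers under ramified base change — this is exactly where the finiteness of $Z\rightarrow Y_1$ over the \emph{smooth} elliptic fibration $Y_1\rightarrow C_1$, together with Kodaira's description of ${}_{m}I_{0}$ fibers and the logarithmic transformation, is used; everything else is formal.
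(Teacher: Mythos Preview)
Your overall strategy is correct, but the sentence ``By Kodaira's local classification of such fibrations, $Z$ is then a smooth surface'' is circular: Kodaira's classification is stated for elliptic fibrations on \emph{smooth} surfaces, so you cannot invoke it to deduce smoothness of $Z$. The claim is nonetheless true. Here is one way to justify it. Since $Z$ admits a finite morphism to $(C_1\times E_1)^{\eta_1}$, which contains no rational curves (its fibers over $C_1$ are elliptic and $C_1$ has positive genus), $Z$ contains no rational curves either. Take the minimal resolution $\mu\colon\tilde Z\to Z$; over each $c\in C_1$ the fiber of $\tilde g:=g\circ\mu$ consists of the strict transform $\tilde E_c$ of the reduced fiber (which maps finitely to $E_1$, hence has geometric genus $\ge 1$) together with the exceptional curves of $\mu$, each of self-intersection $\le -2$. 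Thus no fiber of $\tilde g$ contains a $(-1)$-curve, so $\tilde g$ is already relatively minimal, and now Kodaira's list legitimately forces each fiber to be of type ${}_mI_0$, hence irreducible. So $\mu$ has no exceptional curves and $Z$ is smooth. With this in hand, your logarithmic-transformation argument and the remainder of the proof go through. One further small point: you assert that $g$ is isotrivial without proof; this holds because each smooth fiber $Z_c$ admits a degree-$d$ isogeny to $E_1$ (with $d=\deg(Z/Y_1)$), there are only finitely many isomorphism classes of elliptic curves with this property, and the $j$-invariant varies holomorphically.

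For comparison, the paper does not prove that $Z$ is smooth. It defines $h\colon C_2\to C_1$ via the branch divisor of $r\colon Z\to Y_1$ rather than via the fiber multiplicities of $g$, so that the normalized base change $Z'\to Z$ is only \emph{quasi}-\'etale; it then shows $Z'\cong(C_2\times E_2)^{\eta_2}$ by reusing the ``no rational curves plus desingularisation'' argument from \Cref{bimeromorphic_to_elliptic_torsor_over_curve_implies_smooth}, and deduces that $X_2\to X_1$ is \'etale from the smoothness of $X_1$ rather than of $Z$. Your route, once the smoothness of $Z$ is properly established, is slightly more direct.
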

\begin{proof}
    To construct the finite cover $h\colon C_2\rightarrow C_1$, consider the commutative diagram
    \[\begin{tikzcd}
	   {X_1} & Z & {(C_1\times E_1)^{\eta_1}} \\
	   & {C_1} & {.}
	   \arrow[from=1-1, to=1-2]
	   \arrow["{f_1}"', from=1-1, to=2-2]
	   \arrow["r", from=1-2, to=1-3]
	   \arrow["t", from=1-2, to=2-2]
	   \arrow["{\pi_1}", from=1-3, to=2-2]
    \end{tikzcd}\]
    Choose a point $c\in C_1$ such that $X_c:=f_1^{-1}(c)$ is smooth. Define $Z_c:=t^{-1}(c)$. Since $f_1$ has connected fibers, the same holds for $t$. In particular, $Z_c$ is a smooth compact curve. Since $X_c$ is a 2-torus or bielliptic, $Z_c$ has genus at most 1. On the other hand, as $Z_c$ admits a finite morphism to an elliptic curve, it must have genus equal to 1, and $Z_c\rightarrow\pi^{-1}_1(c)\cong E_1$ is finite \'etale. Therefore, the branch locus of $r$ consists of a finite union of multiples of fibers of $\pi_1$. We hence obtain an orbifold structure on $C_1$. By \Cref{classification_orbifolds_curves}, we can define $h$ to be an orbifold cover $h\colon C_2\rightarrow C_1$ with trivial orbifold structure.\par
    Denote by $Z'$ the normalization of $Z\times_{(C_1\times E_1)^{\eta_1}}(C_2\times E_1)^{h^*\eta_1}$. Note that $Z'\rightarrow C_2$ is an elliptic fibration with trivial monodromy and constant $j$-invariant. Thus, by \Cref{corollary_isotrivial_and_trivial_vhs}, the minimal model of $Z'$ is given by $(C_2\times E_2)^{\eta_2}$ for some elliptic curve $E_2\cong\C/\Lambda_2$ and some cohomology class $\eta_2\in H^1(C_2,\OO_{C_2}/\Lambda_2)$ such that $(C_2\times E_2)^{\eta_2}$ is K\"ahler. We claim that $Z'$ is in fact isomorphic to $(C_2\times E_2)^{\eta_2}$. To prove this claim, note that $Z'$ does not contain any rational curves. Indeed, the composition
    \begin{equation*}
        Z'\longrightarrow Z\longrightarrow (C_1\times E_1)^{\eta_1}
    \end{equation*}
    is a composition of finite morphisms. Thus, the image of a rational curve in $Z'$ is a rational curve in $(C_1\times E_1)^{\eta_1}$. It hence suffices to show that $(C_1\times E_1)^{\eta_1}$ does not contain any rational curves. As the genus of $C_1$ is positive, the projection $\pi_1\colon (C_1\times E_1)^{\eta_1}\rightarrow C_1$ contracts any rational curve in $(C_1\times E_1)^{\eta_1}$. Thus, any rational curve in $(C_1\times E_1)^{\eta_1}$ lies in a fiber of $\pi_1$. However, as the fibers of $\pi_1$ are isomorphic to the elliptic curve $E_1$, $(C_1\times E_1)^{\eta_1}$ cannot contain any rational curve. To finish the proof of the claim, let $\mu\colon\tilde{Z}\rightarrow Z'$ be a desingularization, obtained by blowing up smooth points in the singular locus, and such that $\mu$ does not contract any $(-1)$-curves. Then, there exists a diagram as in the proof of \Cref{bimeromorphic_to_elliptic_torsor_over_curve_implies_smooth}:
    \[\begin{tikzcd}
	   {\tilde{Z}} & {(C_2\times E_2)^{\eta_2}} \\
	   {Z'} & {C_2,}
	   \arrow["b", from=1-1, to=1-2]
	   \arrow["\mu"', from=1-1, to=2-1]
	   \arrow["{\pi_2}", from=1-2, to=2-2]
	   \arrow[from=2-1, to=2-2]
    \end{tikzcd}\]
    Applying the same arguments as in the proof of \Cref{bimeromorphic_to_elliptic_torsor_over_curve_implies_smooth}, we see that $\mu\colon\tilde{Z}\rightarrow Z'$ and $b$ are isomorphisms. Thus, $Z'\cong (C_2\times E_2)^{\eta_2}$.\par
    By construction, there is a commutative diagram
    \[\begin{tikzcd}
	   {(C_2\times E_2)^{\eta_2}\cong Z'} & {Z\times_{(C_1\times E_1)^{\eta_1}}(C_2\times E_1)^{h^*\eta_1}} & {(C_2\times E_1)^{h^*\eta_1}} \\
	   & {C_2} & {.}
	   \arrow[from=1-1, to=1-2]
	   \arrow[from=1-1, to=2-2]
	   \arrow[from=1-2, to=1-3]
	   \arrow[from=1-2, to=2-2]
	   \arrow[from=1-3, to=2-2]
    \end{tikzcd}\]
    We hence obtain fiber-wise a morphism $E_2\rightarrow E_1$. As the upper row is a composition of finite morphisms, the induced morphisms $E_2\rightarrow E_1$ are also finite. Since $E_1$ and $E_2$ are elliptic curves, the Riemann--Hurwitz formula shows that the finite morphisms $E_2\rightarrow E_1$ are \'etale.\par
    Next, we show that $X_2\rightarrow X_1$ is finite \'etale. Due to the choice of $h\colon C_2\rightarrow C_1$, a local analysis shows that
    \begin{equation*}
        Z'\longrightarrow Z
    \end{equation*}
    is finite quasi-\'etale. Thus, also $X_1\times_ZZ'\rightarrow X_1$ is finite quasi-\'etale. Since $Z'\cong (C_2\times E_2)^{\eta_2}$, $X_2\rightarrow X_1$ is finite quasi-\'etale. As $X_1$ is smooth, this morphism is finite \'etale.\par
    It remains to prove that the induced morphism $g_2\colon X_2\rightarrow (C_2\times E_2)^{\eta_2}$ is an elliptic fibration. Note that by construction, the finite morphism in the Stein factorization of $g_2\colon X_2\rightarrow (C_2\times E_2)^{\eta_2}$ is given by $\id\colon (C_2\times E_2)^{\eta_2}\rightarrow (C_2\times E_2)^{\eta_2}$. Therefore, the morphism has connected fibers. Denote by $f_2$ the natural morphism $f_2\colon X_2\rightarrow C_2$. For a general $c\in C_2$, we obtain the commutative diagram
    \[\begin{tikzcd}
	   {f^{-1}_2(c)} && {E_2} \\
	   & {Alb(f^{-1}_2(c))} && {,}
	   \arrow[from=1-1, to=1-3]
	   \arrow[from=1-1, to=2-2]
	   \arrow[from=2-2, to=1-3]
    \end{tikzcd}\]
    where $f^{-1}_2(c)$ is bielliptic or a 2-torus, the horizontal morphism is induced by $g_2$, and the lower half of the diagram comes from the universal property of the Albanese. If $f^{-1}_2(c)$ is a 2-torus, the horizontal map must be an elliptic fibration since it has connected fibers. If $f^{-1}_2(c)$ is bielliptic, the morphism to $Alb(f^{-1}_2(c))$ is an elliptic fibration onto an elliptic curve. In particular, the morphism $Alb(f^{-1}_2(c))\rightarrow E_2$ is finite. Since the horizontal morphism has connected fibers, $Alb(f^{-1}_2(c))\rightarrow E_2$ is an isomorphism. Hence, the horizontal morphism is isomorphic to the elliptic fibration $f_2^{-1}(c)\rightarrow Alb(f_2^{-1}(c)$. This shows that $g_2$ is an elliptic fibration, as we want.
\end{proof}
Recall the definition of an extension of \'etale covers of tori to \'etale covers of torus fiber bundles, \Cref{definition_extension_of_etale_cover_of_elliptic_curves}: Let $Y$ be a compact connected K\"ahler manifold and let $\varphi\colon A'=\C^g/\Lambda'\rightarrow A=\C^g/\Lambda$ be a finite \'etale cover of tori. Suppose $\eta\in H^1(Y,\OO_Y^{\oplus g}/\Lambda)$ is a cohomology class with trivial Chern class. Then there exists a non-unique cohomology class $\eta'\in H^1(Y,\OO_Y^{\oplus g}/\Lambda')$ and a finite \'etale cover
\begin{equation*}
    (Y\times A')^{\eta'}\longrightarrow(Y\times A)^{\eta}.
\end{equation*}
We call $(Y\times A')^{\eta'}\rightarrow(Y\times A)^{\eta}$ an extension of $\varphi\colon A'\rightarrow A$.
\begin{lemma}\label{step3_key_proposition_kodaira_1_G_equivariant}
    In the notation of \Cref{step2_key_proposition_kodaira_1_G_equivariant}, there is a finite \'etale cover $E_3\rightarrow E_2$, such that for an extension $(C_2\times E_3)^{\eta_3}\rightarrow(C_2\times E_2)^{\eta_2}$ of $E_3\rightarrow E_2$, the general fiber of the morphism
    \begin{equation*}
        f_3\colon X_3:=X_2\times_{(C_2\times E_2)^{\eta_2}}(C_2\times E_3)^{\eta_3}\longrightarrow C_2
    \end{equation*}
    is a 2-torus, the natural morphism $X_3\rightarrow X_2$ is finite \'etale and the morphism $g_3\colon X_3\rightarrow(C_2\times E_3)^{\eta_3}$ is still an elliptic fibration.
\end{lemma}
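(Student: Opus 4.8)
The plan is to isolate the behaviour of $f_2$ along its bielliptic fibres and then to invoke \Cref{lemma_extension_of_etale_cover_of_elliptic_curves}. First, if the general fibre of $f_2$ is a $2$-torus, one takes $E_3:=E_2$ and $\eta_3:=\eta_2$, so that $X_3=X_2$, $f_3=f_2$, $g_3=g_2$; then the general fibre of $f_3$ is a $2$-torus and the remaining claims are trivial. By Ehresmann's theorem $f_2$ is a $C^\infty$-fibre bundle over the (connected) dense Zariski-open $U\subseteq C_2$ on which it is smooth, so all smooth fibres of $f_2$ are diffeomorphic; hence we may assume that $f_2^{-1}(c)$ is a bielliptic surface for every $c\in U$.

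Next, recall from the proof of \Cref{step2_key_proposition_kodaira_1_G_equivariant} that for general $c$ the restriction of $g_2$ to $f_2^{-1}(c)$ is the Albanese map $f_2^{-1}(c)\to Alb(f_2^{-1}(c))$, and that the fibre of $(C_2\times E_2)^{\eta_2}\to C_2$ over $c$ is $Alb(f_2^{-1}(c))\cong E_2$. Writing a bielliptic surface as $B=(A\times Q)/G$ with $G$ a finite group acting on the elliptic curve $A$ by translations and on the elliptic curve $Q$ by automorphisms with $Q/G\cong\mathbb{P}^1$, one has $Alb(B)=A/G$, the quotient map $\rho\colon A\to A/G=Alb(B)$ is finite \'etale, and $B\times_{Alb(B)}A\cong A\times Q$ (the projection). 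Applying this to $B=f_2^{-1}(c)$ yields, for every general $c$, a finite \'etale cover $\rho_c\colon A_c\to Alb(f_2^{-1}(c))\cong E_2$ with $f_2^{-1}(c)\times_{E_2}A_c\cong A_c\times Q_c$. \textbf{The main point}, and the only nontrivial step, is that $\rho_c$ is independent of $c$: the cover $\rho_c$ is determined by the topology of the fibration $f_2^{-1}(c)\to Alb(f_2^{-1}(c))$ (equivalently, by the subgroup of $H_1(E_2,\Z)=\Lambda_2$ that corresponds to it), and over the connected base $U$ the family $f_2|_U$ together with $g_2$ is topologically locally trivial, while the torus bundle $(C_2\times E_2)^{\eta_2}\to C_2$ has trivial monodromy, so that its fibres $E_2$ are canonically identified; hence that subgroup of $\Lambda_2$ is locally constant, and therefore constant, on $U$. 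Let $E_3=\C/\Lambda_3\to E_2$ be the resulting finite \'etale cover, so that $E_3\to E_2$ realises $\rho_c$ for every general $c$.

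Finally I would produce $\eta_3$ and $X_3$. Since $C_2$ is a curve, $H^2(C_2,\Lambda_2)$ is torsion-free, and as $(C_2\times E_2)^{\eta_2}$ is K\"ahler, $c(\eta_2)$ is torsion by \Cref{fundamental_properties_special_torus_bundles}, hence $c(\eta_2)=0$. Therefore \Cref{lemma_extension_of_etale_cover_of_elliptic_curves} provides a class $\eta_3\in H^1(C_2,\OO_{C_2}/\Lambda_3)$ and an extension $(C_2\times E_3)^{\eta_3}\to (C_2\times E_2)^{\eta_2}$ of $E_3\to E_2$, which is finite \'etale. Put $X_3:=X_2\times_{(C_2\times E_2)^{\eta_2}}(C_2\times E_3)^{\eta_3}$. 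Then $X_3\to X_2$ is a base change of a finite \'etale morphism, hence finite \'etale, and $g_3\colon X_3\to (C_2\times E_3)^{\eta_3}$ is a base change of $g_2$, so its fibres coincide with those of $g_2$ and it is again an elliptic fibration. For general $c$, the fibre $f_3^{-1}(c)$ equals $f_2^{-1}(c)\times_{E_2}E_3$ (which depends only on $E_3\to E_2$, not on the twist $\eta_3$); since $E_3\to E_2$ is $\rho_c\colon A_c\to Alb(f_2^{-1}(c))$ and $g_2|_{f_2^{-1}(c)}$ is the Albanese map, this base change is $A_c\times Q_c$, a $2$-torus. This gives all three assertions; the crucial and least formal step is the uniformity of $\rho_c$, which we reduce to the triviality of the monodromy of $(C_2\times E_2)^{\eta_2}\to C_2$.
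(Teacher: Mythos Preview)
Your proof is correct and follows essentially the same route as the paper: split into the 2-torus versus bielliptic case, produce the étale cover $E_3\to E_2$ from the bielliptic structure, use \Cref{fundamental_properties_special_torus_bundles} together with torsion-freeness of $H^2(C_2,\Lambda_2)$ to get $c(\eta_2)=0$, apply \Cref{lemma_extension_of_etale_cover_of_elliptic_curves}, and conclude by base change. The one place where you do more than the paper is the uniformity of $\rho_c$: the paper simply asserts that there is a single $E_3\to E_2$ such that $f_2^{-1}(c)\times_{E_2}E_3$ is a 2-torus for general $c$, implicitly because it suffices to arrange this for \emph{one} general $c$ (Ehresmann applied to $f_3$ then forces all smooth fibres of $f_3$ to be diffeomorphic, hence 2-tori), whereas you argue directly that the subgroup of $\Lambda_2$ determining $\rho_c$ is locally constant via the trivial monodromy of $(C_2\times E_2)^{\eta_2}\to C_2$; both arguments are valid, yours is just more explicit.
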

\begin{proof}
    If the general fiber of $f_2$ was a 2-torus, we can take $E_3=E_2$. Suppose the general fiber of $f_2$ is bielliptic. Then there is a finite \'etale cover $E_3\rightarrow E_2$ such that $f^{-1}_2(c)\times_{E_2}E_3$ is a 2-torus for a general $c\in C_2$. By \Cref{fundamental_properties_special_torus_bundles}, as $(C_2\times E_2)^{\eta_2}$ is K\"ahler, the Chern class $c(\eta_2)\in H^2(C_2,\Lambda_2)$ is a torsion class, where $\Lambda_2\subset\C$ is a lattice such that $E_2\cong\C/\Lambda_2$. However, since $H^2(C_2,\Lambda_2)\cong\Lambda_2$ is torsion free, $c(\eta_2)=0$. Thus, choosing an extension $(C_2\times E_3)^{\eta_3}\rightarrow(C_2\times E_2)^{\eta_2}$ of $E_3\rightarrow E_2$ by \Cref{lemma_extension_of_etale_cover_of_elliptic_curves} shows that $f_3\colon X_3\rightarrow C_2$ has as general fiber a 2-torus.\par
    Note that by construction, $(C_2\times E_3)^{\eta_3}\rightarrow (C_2\times E_2)^{\eta_2}$ is finite \'etale. Since $X_3$ is defined as a fiber product, also $X_3\rightarrow X_2$ is finite \'etale.\par
    To see that $g_3$ is still an elliptic fibration, note that the fibers are not affected by the extension of $E_3\rightarrow E_2$.
\end{proof}
We are now in the position to prove \Cref{key_proposition_kodaira_1_G-equivariant} stated above.
\begin{proof}[Proof of \Cref{key_proposition_kodaira_1_G-equivariant}]
    \begin{case}\label{case1_proof_key_proposition_kodaira_1_G-equivariant}
        Suppose first that the genus of $C$ is positive. We can then perform the steps above to arrive at the situation of \Cref{step3_key_proposition_kodaira_1_G_equivariant}. In the notation of \Cref{step0_key_proposition_kodaira_1_G_equivariant} -- \Cref{step3_key_proposition_kodaira_1_G_equivariant}, denote by $U\subset C_2$ the non-empty Zariski-open subset over which $f_3\colon X_3\rightarrow C_2$ is smooth. Then $g_3\colon X_3\rightarrow (C_2\times E_3)^{\eta_3}$ is smooth over $\pi^{-1}_3(U)$. In particular, the discriminant divisor of $g_3$ consists of finitely many fibers of $\pi_3\colon(C_2\times E_3)^{\eta_3}\rightarrow C_2$. The multiplicity of the singular fibers induces an orbifold structure on $C_2$. Let $e\colon C_3\rightarrow C_2$ be a finite orbifold cover with trivial orbifold structure as in \Cref{classification_orbifolds_curves}. Define $X_4$ to be the normalization of
        \begin{equation*}
            X_3\times_{(C_2\times E_3)^{\eta_3}}(C_3\times E_3)^{e^*\eta_3}.
        \end{equation*}
        We set $X':=X_4$, $C':=C_3$, $E':=E_3$, $\eta':=e^*\eta_3$, $G:=0$ and claim that these satisfy all conditions in \Cref{key_proposition_kodaira_1_G-equivariant}.\par
        As $e\colon C_3\rightarrow C_2$ is a finite orbifold cover with trivial orbifold structure, a local analysis shows that the natural map $X'/G=X_4\rightarrow X_3$ is finite \'etale. Moreover, the composition
        \begin{equation*}
            X'/G=X_4\longrightarrow X_3\longrightarrow X_2\longrightarrow X_1\longrightarrow X_0\longrightarrow X
        \end{equation*}
        is a composition of finite \'etale covers by \Cref{step0_key_proposition_kodaira_1_G_equivariant}, \Cref{step1_key_proposition_kodaira_1_G_equivariant}, \Cref{step2_key_proposition_kodaira_1_G_equivariant}, \Cref{step3_key_proposition_kodaira_1_G_equivariant}. Therefore, \cref{item1_key_proposition_kodaira_1_G-equivariant} holds true. To prove \cref{item3_key_proposition_kodaira_1_G-equivariant}, note that the induced morphism $g'\colon X'\rightarrow (C'\times E')^{\eta'}$ has at most finitely many multiple fibers. As the discriminant divisor of $g'$ consists again of finitely many fibers of $\pi'\colon(C'\times E')^{\eta'}\rightarrow C'$, $g'$ is smooth away from a smooth divisor. Therefore, by \cite[][Thm 3.3.3]{Nakayama_elliptic-fibrations} $g'$ is locally projective. By combining \cite[][Thm. 5.3.3]{Nakayama_elliptic-fibrations} and \cite[][Cor. 4.3.3]{Nakayama_elliptic-fibrations}, we see that $g'$ has local meromorphic sections over every point of $(C'\times E')^{\eta'}$. To prove \cref{item4_key_proposition_kodaira_1_G-equivariant}, note that by \Cref{step3_key_proposition_kodaira_1_G_equivariant}, the general fiber of $f_3\colon X_3\rightarrow C_2$ is a 2-torus. By construction, the general fibers of $f'\colon X'\rightarrow C'$ thus also a 2-torus. Finally, \cref{item2_key_proposition_kodaira_1_G-equivariant}, \cref{item5_key_proposition_kodaira_1_G-equivariant} and \cref{item6_key_proposition_kodaira_1_G-equivariant} hold true by construction.
    \end{case}
    \begin{case}\label{case2_proof_key_proposition_kodaira_1_G-equivariant}
        Suppose now $C\cong\mathbb{P}^1$. First, note that the image of the Albanese morphism $X\rightarrow Alb(X)$ is an elliptic curve. Indeed, by assumption, the general fiber of the Iitaka fibration is mapped to a translate of a fixed elliptic curve $E\subset Alb(X)$. Define $A:=Alb(X)/E$. Then there is a morphism $C\rightarrow A$ that has the same image as $X\rightarrow A$. However, since $C\cong\mathbb{P}^1$, the image of $C\rightarrow A$ must be a point. Therefore, the image of $X\rightarrow Alb(X)$ equals $E$. Applying the Stein factorization to $g\colon X\rightarrow E$ allows us to assume that $g$ has connected fibers. Step 1 in \cite[][Sec. 6.3]{Hao_Schreieder_3-folds} shows that there is a finite \'etale cover $E_1\rightarrow E$ inducing a finite \'etale cover $X_1\rightarrow X$ such that the fiber over a general $c\in C$ splits into a product of two elliptic curves:
        \begin{equation*}
            (X_1)_c\cong F_c\times (E_1)_c.
        \end{equation*}
        By step 2 in \cite[][Sec. 6.3]{Hao_Schreieder_3-folds}, we can find a finite cover $C_1\rightarrow C$ such that the morphism from the normalization $X_2$ of $X_1\times_CC_1\rightarrow X_1$ is finite \'etale and such that the morphism $X_2\rightarrow C_1\times E_1$ has connected fibers. It is thus an elliptic fibration with discriminant locus given by finitely many fibers of the projection $\pi_1\colon C_1\times E_1\rightarrow C_1$. If $C_1$ has positive genus or $C_1\cong\mathbb{P}^1$ and the orbifold structure induced by the multiple fibers of $X_2\rightarrow C_1\times E_1$ is good, we are in the same situation as in \cref{case1_proof_key_proposition_kodaira_1_G-equivariant} of the proof. It is thus enough to deal with the case $C_1\cong\mathbb{P}^1$ with bad orbifold structure. We can find a finite Galois cover $C_2\rightarrow C_1$ with Galois group $G$ that is branched over the orbifold points with the correct multiplicities, but has possibly other branch points. Define $X'$ as the normalization of $X_2\times_{C_1}C_2$, $C':=C_2$, $E':=E_1$, $\eta':=0$. We claim that these satisfy all conditions in \Cref{key_proposition_kodaira_1_G-equivariant}.\par
        By construction, $X'/G\cong X_1\rightarrow X$ is finite \'etale. This shows \cref{item1_key_proposition_kodaira_1_G-equivariant}. To prove \cref{item3_key_proposition_kodaira_1_G-equivariant}, we argue as in \cref{case1_proof_key_proposition_kodaira_1_G-equivariant}. Since the general fiber of $X_1\rightarrow C$ is a 2-torus, the construction of $X'$ shows that the general fiber of $X'\rightarrow C'$ is a 2-torus as well. This proves \cref{item4_key_proposition_kodaira_1_G-equivariant}. Finally, \cref{item2_key_proposition_kodaira_1_G-equivariant}, \cref{item5_key_proposition_kodaira_1_G-equivariant} and \cref{item6_key_proposition_kodaira_1_G-equivariant} hold true by construction.
    \end{case}
\end{proof}
Suppose from now on that we are in the situation described in \Cref{key_proposition_kodaira_1_G-equivariant}:
\[\begin{tikzcd}
    {X'} && {(C'\times E')^{\eta'}} \\
	& {C'} && {.}
	\arrow["{g'}", from=1-1, to=1-3]
	\arrow["{f'}"', from=1-1, to=2-2]
	\arrow["{\pi'}", from=1-3, to=2-2]
\end{tikzcd}\]
Note that in view of \Cref{Kodaira_dim_1_Weierstrass}, the elliptic fibrations are in the wrong order: the fibration $g'$ contracts the elliptic curves that do not contribute to the holomorphic 1-forms satisfying condition \ref{conditionC}, while the fibration $\pi'$ contracts the elliptic curves that do. What we are looking for is a diagram of the form
\[\begin{tikzcd}
	{(T\times E'')^\eta} && T \\
	& {C'/G} && {,}
	\arrow["{\pi''}", from=1-1, to=1-3]
	\arrow[from=1-1, to=2-2]
	\arrow[from=1-3, to=2-2]
\end{tikzcd}\]
where $T\rightarrow C'/G$ is an elliptic surface of Kodaira dimension 1 that does not satisfy condition \ref{conditionC} and such that there is a finite \'etale cover $(T\times E'')^\eta\rightarrow X'/G\rightarrow X$. In the following, we will prove that such a diagram exists and that it is induced by a finite \'etale cover $E''\rightarrow E'$.\par
To shorten the notation, set $Y':=(C'\times E')^{\eta'}$. Note that as the general fiber of $f'$ is smooth, $g'$ is smooth outside finitely many fibers of $\pi'$, i.e., the discriminant locus $\Delta\subset Y'$ of $g'$ is the pullback of an effective divisor on $C'$. Denote the respective complements by $j\colon Y^*\hookrightarrow Y'$ and $i\colon C^*\hookrightarrow C'$. Let $H:=R^1g'_*\Z\vert_{Y^*}$ be the variation of Hodge structures associated to $g'$ in \Cref{key_proposition_kodaira_1_G-equivariant}.
\begin{lemma}
    Let $W(\mathcal{L},\alpha,\beta)\rightarrow Y'$ be the unique $G$-equivariant minimal Weierstraß fibration associated to $H$, from \cite[][Cor. 2.7]{Nakayama_Weierstrass}. Then there is a $G$-equivariant variation of Hodge structures $K$ on $C^*$ and a $G$-equivariant line bundle $\mathcal{M}$ on $C'$, such that $H\cong(\pi')^*K$ and $\mathcal{L}\cong(\pi')^*\mathcal{M}$. In particular, by viewing $\alpha$ and $\beta$ as global sections of $\mathcal{M}^{-4}$, resp.~ $\mathcal{M}^{-6}$, there is a $G$-equivariant isomorphism
    \begin{equation*}
        W(\mathcal{L},\alpha,\beta)\cong W(\mathcal{M},\alpha,\beta)\times_{C'}Y'
    \end{equation*}
    over $Y'$.
\end{lemma}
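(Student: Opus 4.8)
The plan is to prove the statement in two moves: first descend the variation of Hodge structures $H$ along $\pi'$ to a $G$-equivariant variation $K$ on $C^*$ (and the associated Hodge bundle), and then use the \emph{uniqueness} of minimal $G$-equivariant Weierstraß models to identify $W(\mathcal L,\alpha,\beta)$ with the pullback of the minimal Weierstraß model downstairs.

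\textbf{Step 1: descending $H$.} The restriction $\pi'|_{Y^*}\colon Y^*\to C^*$ is a proper submersion, hence by Ehresmann's theorem a topological fiber bundle with fiber an elliptic curve $\cong E'$. Pick a general $c_0\in C^*$, so that $F_{c_0}:=f'^{-1}(c_0)=g'^{-1}(\pi'^{-1}(c_0))$ is a smooth $2$-torus by \Cref{key_proposition_kodaira_1_G-equivariant}\,\cref{item4_key_proposition_kodaira_1_G-equivariant}. Then $g'|_{F_{c_0}}\colon F_{c_0}\to E'_{c_0}:=\pi'^{-1}(c_0)$ is a surjective morphism with connected fibers from a $2$-torus to an elliptic curve, hence, after a translation, the quotient homomorphism of $F_{c_0}$ by a one-dimensional subtorus; in particular $H_\Z|_{E'_{c_0}}$ has trivial monodromy. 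From the homotopy exact sequence of the fiber bundle $Y^*\to C^*$ (using that the fiber $E'_{c_0}$ is connected), the image of $\pi_1(E'_{c_0})$ in $\pi_1(Y^*)$ equals the kernel of $\pi_1(Y^*)\to\pi_1(C^*)$; since the monodromy of $H_\Z$ kills that image it factors through $\pi_1(C^*)$, so $H_\Z\cong(\pi')^*K_\Z$ for a rank-$2$ local system $K_\Z$ on $C^*$. Consequently $H_\Z$ has trivial monodromy along \emph{every} fiber $E'_c$, $c\in C^*$, so the period map $E'_c\to\mathbb H$ into the Siegel upper half space is genuinely defined and holomorphic; as $E'_c$ is compact and $\mathbb H$ is a bounded domain it is constant. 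Hence the Hodge bundle $\mathcal F^1\subset\mathcal H=(\pi')^*(K_\Z\otimes\OO_{C^*})$ is fiberwise constant over $C^*$, so $(\pi'|_{Y^*})_*\mathcal F^1$ is a line bundle $\mathcal F^1_K$ with $(\pi')^*\mathcal F^1_K\cong\mathcal F^1$; together with the (flat, hence descended) polarization this gives a polarized weight-$1$ variation of Hodge structures $K$ on $C^*$ with $(\pi')^*K\cong H$, realized as the variation of its Jacobian elliptic fiber bundle $J_K\to C^*$. Finally, since $\pi'$ has connected fibers, $(\pi')_*\OO_{Y'}=\OO_{C'}$ and $(\pi'|_{Y^*})_*\Z=\Z$, so $(\pi')^*$ is fully faithful on local systems and on coherent sheaves; hence the $G$-equivariant structure on $H$ (coming from the $G$-equivariant $g'$ and $\pi'$) descends uniquely to a $G$-equivariant structure on $K$. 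Note also that $\Delta_C:=\pi'(\Delta)$ is a $G$-stable reduced divisor on $C'$ with $\Delta=(\pi')^{*}\Delta_C$ and $C^*=C'\setminus\Delta_C$.

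\textbf{Step 2: the Weierstraß model downstairs and pullback.} Apply \Cref{existence_unique_eq_Weierstrass_model} to $(C',\Delta_C,K)$: there is a unique $G$-equivariant minimal Weierstraß model $p_C\colon W(\mathcal M,\alpha',\beta')\to C'$ that is smooth over $C^*$ and induces $K$, with $\mathcal M$ a $G$-equivariant line bundle on $C'$ and $\alpha'\in H^0(C',\mathcal M^{-4})$, $\beta'\in H^0(C',\mathcal M^{-6})$. Base change along $\pi'$ carries a Weierstraß model to a Weierstraß model: one has $W(\mathcal M,\alpha',\beta')\times_{C'}Y'=W((\pi')^*\mathcal M,(\pi')^*\alpha',(\pi')^*\beta')$ over $Y'$, with the pulled-back $G$-action. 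This model is smooth over $Y^*$ (base change of smoothness) and induces the $G$-equivariant variation $(\pi')^*K\cong H$ over $Y^*$. It is also minimal: $\mathrm{div}((\pi')^*\alpha')$ and $\mathrm{div}((\pi')^*\beta')$ are pullbacks of divisors on the curve $C'$, so any prime divisor of the surface $Y'$ contained in their support is an irreducible fiber $E'_p=(\pi')^{-1}(p)$, and $E'_p$ would witness non-minimality of $W((\pi')^*\mathcal M,\dots)$ exactly when $p$ does for $p_C$, which is excluded. Thus $W((\pi')^*\mathcal M,(\pi')^*\alpha',(\pi')^*\beta')\to Y'$ is a $G$-equivariant minimal Weierstraß model, smooth over $Y^*$, inducing $H$.

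\textbf{Step 3: conclusion.} By the uniqueness in \Cref{existence_unique_eq_Weierstrass_model}, $W((\pi')^*\mathcal M,(\pi')^*\alpha',(\pi')^*\beta')$ is $G$-equivariantly isomorphic over $Y'$ to $W(\mathcal L,\alpha,\beta)$. Applying $R^1(-)_*\OO$ gives the $G$-equivariant isomorphism $\mathcal L\cong(\pi')^*\mathcal M$; under it, using $H^0(Y',(\pi')^*\mathcal M^{-k})=H^0(C',\mathcal M^{-k})$ (again since $(\pi')_*\OO_{Y'}=\OO_{C'}$), the sections $\alpha,\beta$ are identified with $(\pi')^*\alpha',(\pi')^*\beta'$, i.e. may be regarded as elements of $H^0(C',\mathcal M^{-4})$ and $H^0(C',\mathcal M^{-6})$, and the isomorphism of Weierstraß models reads $W(\mathcal L,\alpha,\beta)\cong W(\mathcal M,\alpha,\beta)\times_{C'}Y'$, as claimed. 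The $G$-equivariant variation $K$ and line bundle $\mathcal M$ furnished by Step 1 and Step 2 are the required data.

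\textbf{Main obstacle.} The crux is Step 1, specifically the descent of the Hodge bundle $\mathcal F^1$: descending the local system $H_\Z$ is a clean $\pi_1$-argument using the general fiber, but one must then notice that \emph{after} this descent the monodromy along every fiber $E'_c$ is automatically trivial, so the period-map/compactness argument applies to all fibers, not merely the general one. The remaining steps are formal, the full faithfulness of $(\pi')^*$ (from $\pi'$ having connected fibers) handling all the $G$-equivariance bookkeeping.
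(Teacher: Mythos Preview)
Your proof is correct but takes a genuinely different route from the paper's. Both arguments agree on Step~1 in spirit: the paper observes that for every $c\in C^*$ the restriction $g'|_{F_c}\colon F_c\to E'_c$ is (up to translation) a quotient homomorphism of tori, so the VHS $H$ is constant along $E'_c$ and descends to $K$ on $C^*$; you do the same via the homotopy exact sequence and then the period-map/bounded-domain argument, which has the virtue of cleanly separating the descent of the local system from the descent of the Hodge filtration. The real divergence is in how you obtain $\mathcal L\cong(\pi')^*\mathcal M$. The paper argues directly: once $H\cong(\pi')^*K$, the Jacobian fibration over $Y^*$ is visibly pulled back from $C^*$, so $\mathcal L|_{E'_c}$ is trivial for $c\in C^*$; then an upper-semicontinuity trick (applied to both $h^0(\mathcal L|_{E'_c})$ and $h^0(\mathcal L^\vee|_{E'_c})$) extends this triviality across the discriminant to all of $C'$, yielding $\mathcal M$ with $\mathcal L\cong(\pi')^*\mathcal M$. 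You instead construct the minimal Weierstra\ss\ model downstairs via \Cref{existence_unique_eq_Weierstrass_model}, pull it back, verify minimality, and invoke uniqueness to identify it with $W(\mathcal L,\alpha,\beta)$; the isomorphism $\mathcal L\cong(\pi')^*\mathcal M$ then drops out from $R^1p_*\OO$. Your approach is more conceptual and avoids the semicontinuity step entirely, at the cost of the extra minimality check; the paper's argument is shorter and more elementary.
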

\begin{proof}
    First, note that $H$ is constant along the fibers of $\pi'\colon Y'\rightarrow C'$. Indeed, over a fiber of $\pi'$ that is contained in $Y^*$, the elliptic fibration $g'$ is a surjective homomorphism from a 2-torus to an elliptic curve. In particular, the restriction of $H$ to this fiber is given by the Hodge structure of the kernel of the homomorphism. Therefore, there is a variation of Hodge structures $K$ on $C^*$ such that $H\cong(\pi'\vert_{Y^*})^*K$. The $G$-action on $H$ induces a $G$-action on $K$.\par
    Similarly, note that the line bundle $\mathcal{L}$ restricts on the fiber of $\pi'$ over a point in $C^*$ to the trivial line bundle. Indeed, since $H$ is isomorphic to the pullback of $K$, the Jacobian fibration $J_H\rightarrow Y^*$ is isomorphic to $J_K\times_{C^*}Y^*\rightarrow Y^*$. Since $W(\mathcal{L},\alpha,\beta)\vert_{Y^*}\rightarrow Y^*$ is isomorphic to $J_H\rightarrow Y^*$, the assertion follows. The set of points $c\in C'$, where $\mathcal{L}\vert_{(\pi')^{-1}(c)}\cong\OO_{(\pi')^{-1}(c)}$ is closed. Indeed, the subsets
    \begin{align*}
        \{c\in C\mid h^0((\pi')^{-1}(c),\mathcal{L}\vert_{(\pi')^{-1}(c)})\geq 1\},\\
        \{c\in C\mid h^0((\pi')^{-1}(c),\mathcal{L}^\vee\vert_{(\pi')^{-1}(c)})\geq 1\}
    \end{align*}
    are closed by upper semicontinuity \cite[][Thm. 12.8]{Hartshorne_AG} and their intersection is the locus where $\mathcal{L}\vert_{(\pi')^{-1}(c)}\cong\OO_{(\pi')^{-1}(c)}$. Hence, there is a line bundle $\mathcal{M}$ on $C'$ such that $\mathcal{L}\cong (\pi')^*\mathcal{M}$. The $G$-action on $\mathcal{L}$ induces a $G$-action on $\mathcal{M}$.
\end{proof}
Let us now fix the notation for the rest of this section. We abbreviate the $G$-equivariant Weierstraß models $W(\mathcal{L},\alpha,\beta)\rightarrow Y'$, resp.~ $W(\mathcal{M},\alpha,\beta)\rightarrow C'$, by $p\colon W\rightarrow Y'$, resp.~ $q\colon V\rightarrow C'$. Recall from \Cref{section_Weierstrass_models} that $\mathcal{W}(\mathcal{L},\alpha,\beta)^\#$ is the sheaf of local sections of $p\colon W\rightarrow Y'$ with image in the locus $W^\#\subset W$ where $p$ is smooth, and that $\mathcal{W}(\mathcal{L},\alpha,\beta)^{mer}$ is the sheaf of local meromorphic sections of $p\colon W\rightarrow Y'$. We will denote these sheaves by $\mathcal{W}^\#$, resp.~ $\mathcal{W}^{mer}$. Similarly, the sheaf $\mathcal{W}(\mathcal{M},\alpha,\beta)^\#$ of local sections of $q\colon V\rightarrow C'$ with image in the locus $V^\#\subset V$ where $q$ is smooth is denoted by $\mathcal{V}^\#$. Twists of the $G$-equivariant Weierstraß models $p\colon W\rightarrow Y'$, resp.~ $q\colon V\rightarrow C'$ by a cohomology class $\gamma\in H^1_G(Y',\mathcal{W}^\#)$, resp.~ $\delta\in H^1_G(C',\mathcal{V}^\#)$ are denoted by $p^\gamma\colon W^\gamma\rightarrow Y'$, resp.~ by $q^\delta\colon V^\delta\rightarrow C'$. Recall furthermore that $\mathcal{E}_G(Y',\Delta,H)$ was defined as the set of equivalence classes of $G$-equivariant elliptic fibrations $h\colon Z\rightarrow Y'$ that satisfy the following properties:
\begin{enumerate}[label=(\roman*)]
    \item $h\colon Z\rightarrow Y'$ has local meromorphic sections over every point of $Y'$,
    \item the elliptic fibration $h^{-1}(Y^*)\rightarrow Y^*:=Y'\setminus\Delta$ is bimeromorphic to a smooth elliptic fibration over $Y^*$, and
    \item this smooth elliptic fibration induces $H$ as its $G$-equivariant variation of Hodge structures;
\end{enumerate}
where two such elliptic fibrations are equivalent if they are $G$-equivariantly bimeromorphic. Moreover, recall that there is a sequence of maps of sets, \cref{composition_construction_injection_into_H1(Wmer)},
\begin{equation*}
    H_G^1(Y',\mathcal{W}^\#)\longrightarrow\mathcal{E}_G(Y',\Delta,H)\longhookrightarrow H_G^1(Y',\mathcal{W}^{mer})
\end{equation*}
that equals the map in cohomology induced by the inclusion $\mathcal{W}^\#\subset\mathcal{W}^{mer}$. Denote by $\theta\in H^1_G(Y',\mathcal{W}^{mer})$ the cohomology class defined by $g'$. By \Cref{tautological_models}, there is a positive integer $m$ such that $m\theta$ lifts to a cohomology class $\theta'\in H^1_G(Y',\mathcal{W}^\#)$, and we can construct the $G$-equivariant tautological model
\[\begin{tikzcd}
	{\mathcal{X}} && {W(\mathcal{L},\alpha,\beta)^{\theta'}} \\
	& {Y'}
	\arrow[from=1-1, to=1-3]
	\arrow[from=1-1, to=2-2]
	\arrow[from=1-3, to=2-2]
\end{tikzcd}\]
of $g'$. Denote by $(\pi')^{-1}\mathcal{V}^\#$ and $(\pi')^{-1}\mathcal{M}$ the sheaves on $Y'$ obtained by applying the inverse image functor to the sheaves $\mathcal{V}^\#$, resp.~ $\mathcal{M}$ on $C'$.
\begin{lemma}\label{no_invariant_global_sections}
    $H^0_G(C',\mathcal{M})=0=H^0_G(Y',\mathcal{L})$.
\end{lemma}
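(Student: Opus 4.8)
The plan is to reduce the vanishing to the curve $C'$, bound the degree of $\mathcal{M}$ using its Weierstra\ss\ data, and invoke \Cref{assumption} to exclude the one degenerate case. First, the degree-$0$ part of the Hochschild--Serre spectral sequence identifies $H^0_G(Y',\mathcal{L})$ with $H^0(Y',\mathcal{L})^G\subseteq H^0(Y',\mathcal{L})$, and similarly for $\mathcal{M}$ on $C'$, so it is enough to show $H^0(Y',\mathcal{L})=0$ and $H^0(C',\mathcal{M})=0$. Since $\pi'\colon Y'\to C'$ is a torus fiber bundle we have $\pi'_*\OO_{Y'}=\OO_{C'}$, so the preceding lemma ($\mathcal{L}\cong(\pi')^*\mathcal{M}$) and the projection formula give $H^0(Y',\mathcal{L})\cong H^0(C',\mathcal{M})$; hence everything reduces to $H^0(C',\mathcal{M})=0$.

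Next, $\mathcal{M}\cong R^1q_*\OO_V$ carries Weierstra\ss\ data $\alpha\in H^0(C',\mathcal{M}^{-4})$, $\beta\in H^0(C',\mathcal{M}^{-6})$ with $4\alpha^3+27\beta^2$ a nonzero section of $\mathcal{M}^{-12}$. Its divisor is effective of degree $-12\deg\mathcal{M}$, whence $\deg\mathcal{M}\leq 0$. If $\deg\mathcal{M}<0$, or if $\deg\mathcal{M}=0$ and $\mathcal{M}\not\cong\OO_{C'}$, then $H^0(C',\mathcal{M})=0$; so it remains only to exclude $\mathcal{M}\cong\OO_{C'}$.

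Assume $\mathcal{M}\cong\OO_{C'}$. Then $\alpha,\beta$ are constants, $V\cong C'\times E''$ for an elliptic curve $E''$, and $q$ has empty discriminant; as $\mathcal{L}\cong\OO_{Y'}$, so does $W(\mathcal{L},\alpha,\beta)\cong Y'\times E''$. Since $g'$ has local meromorphic sections everywhere by \Cref{key_proposition_kodaira_1_G-equivariant} \cref{item3_key_proposition_kodaira_1_G-equivariant}, it has no multiple fibers, so its discriminant equals that of its minimal Weierstra\ss\ model and is empty; thus $g'$ is a smooth isotrivial elliptic fiber bundle with trivial monodromy, on which $G$ acts trivially. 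By \Cref{trivial_vhs_constant_j_all_invariant} (or \Cref{classification_torus_fiber_bundles} together with \Cref{corollary_isotrivial_and_trivial_vhs}), $X'$, and hence the finite \'etale cover $X'/G\to X$, is $G$-equivariantly bimeromorphic over $C'$ to $(Y'\times E'')^\xi$, resp.\ to $(Y'\times E'')^\xi/G$, for some $\xi\in H^1(Y',\OO_{Y'}/\Lambda'')$ --- with $G$ acting by the given action on $Y'$ and by translations on $E''$ --- and $(Y'\times E'')^\xi$ is K\"ahler because $X'$ is, using \Cref{Weierstrass_model_Kaehler} and \Cref{fundamental_properties_special_torus_bundles} \cref{torus_bundle_kaehler}. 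Now $Y'=(C'\times E')^{\eta'}\to C'$ and $(Y'\times E'')^\xi\to Y'$ are isotrivial torus fiber bundles with trivial monodromy over K\"ahler bases, and $G$ acts on both by translations on the torus directions. Applying \Cref{fundamental_properties_special_torus_bundles} \cref{1-forms_on_torus_bundles} to each of them --- and noting that a general fiber $A''$ of $(Y'\times E'')^\xi\to C'$ is a $2$-torus obtained by restricting $(Y'\times E'')^\xi\to Y'$ over a fiber $(\pi')^{-1}(c)\cong E'$ --- shows that the restriction of $1$-forms from $(Y'\times E'')^\xi$ to $A''$ is surjective onto $H^0(A'',\Omega^1_{A''})$: pulled-back $1$-forms from $Y'$ span the $E'$-subspace, while a $1$-form restricting nontrivially to an $E''$-fiber surjects onto the quotient $H^0(E'',\Omega^1_{E''})$. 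Since $G$ acts trivially on fiberwise $1$-forms of a torus, one may choose such $1$-forms $G$-invariantly; they descend to $(Y'\times E'')^\xi/G$ and still span the $1$-forms of a general fiber of $(Y'\times E'')^\xi/G\to C'/G$. Consequently the general fiber of $X'/G\to C'/G$ --- a $2$-torus --- is mapped onto a $2$-dimensional subtorus of $Alb(X'/G)$, hence not contracted to a curve. As $X'/G\to X$ is finite \'etale and $X'/G\to C'/G\to C$ is the Stein factorization of $X'/G\to X\to C$, this contradicts \Cref{assumption}, so $\mathcal{M}\not\cong\OO_{C'}$.

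The main obstacle is the last paragraph: one must check with care that the $G$-action really is the prescribed action on $C'$ together with fiberwise translations --- so that it acts trivially on fiberwise $1$-forms and descends as claimed --- and that $X'/G\to C'/G$ is precisely the fibration appearing in \Cref{assumption}. The remaining steps are routine.
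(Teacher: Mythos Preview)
Your reduction to $C'$ via the projection formula and the degree bound $\deg\mathcal{M}\le 0$ coming from $4\alpha^3+27\beta^2\ne 0$ is a clean alternative to the paper's argument (which works on $Y'$ and uses that one of $\alpha,\beta$ is nonzero to force the assumed section to be nowhere vanishing). However, your decision to prove the stronger vanishing $H^0(C',\mathcal{M})=0$ rather than the statement $H^0_G(C',\mathcal{M})=0$ creates exactly the gap you flag at the end. If you only assume $\mathcal{M}\cong\OO_{C'}$ non-equivariantly, the $G$-linearization on $\mathcal{M}$ could be via a nontrivial character, and correspondingly the $G$-action on the lattice $K_\Z\subset\mathcal{M}$ need not be trivial; then \Cref{trivial_vhs_constant_j_all_invariant} does not apply, and your assertion that $G$ acts on $E''$ by translations is unjustified. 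In fact in that scenario $H^0_G(C',\mathcal{M})=0$ already holds while $H^0(C',\mathcal{M})=\C\ne 0$, so the stronger statement you aim for may simply be false.

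The paper avoids this by arguing contrapositively from the correct hypothesis: a nonzero \emph{$G$-invariant} section of $\mathcal{L}$ gives a $G$-equivariant map $\OO_{Y'}\to\mathcal{L}$ (with $\OO_{Y'}$ carrying the trivial $G$-action), and the Weierstra\ss\ constraints force this to be a $G$-equivariant isomorphism. Hence $\mathcal{L}\cong_G\OO_{Y'}$ with trivial $G$-action, so the $G$-action on $H_\Z\subset\mathcal{L}$ is trivial for free, and \Cref{trivial_vhs_constant_j_all_invariant} yields $X'$ $G$-equivariantly bimeromorphic to $(Y'\times F)^\theta$ with $G$ acting on $F$ by translations via some $\sigma\colon G\to F$. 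The paper then sets $G'=\ker\sigma$, identifies $(Y'\times F)^\theta/G'\cong(Y'/G'\times F)^\gamma$, notes that $G/G'$ acts freely, and invokes \Cref{reduction_to_bimeromorphic_problem} to upgrade ``bimeromorphic to $X'/G$'' to an explicit finite \'etale cover of $X$ whose Iitaka fibers are manifestly not contracted to a curve --- contradicting \Cref{assumption}. Your $1$-form argument targets the same contradiction but only works once $G$ is known to act by translations on the $E''$-direction; the simplest fix is to keep your degree argument but start from a nonzero $G$-invariant section (on a curve, a nonzero section of a degree-zero line bundle is nowhere vanishing, giving the needed $G$-equivariant trivialization $\mathcal{M}\cong_G\OO_{C'}$), after which either your route or the paper's goes through.
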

\begin{proof}
    Note that it suffices to prove $H^0_G(Y',\mathcal{L})=0$. Indeed, as $\pi'\colon Y'\rightarrow C'$ is dominant, the pullback map
    \begin{equation*}
        H_G^0(C',\mathcal{M})\longhookrightarrow H_G^0(Y',(\pi')^*\mathcal{M}\cong\mathcal{L})
    \end{equation*}
    is injective.\par
    Suppose that $\mathcal{L}$ has a non-trivial $G$-invariant global section. We then obtain a non-trivial $G$-equivariant map
    \begin{equation}\label{G-equivariant_isom_to_OO}
        \OO_Y\longrightarrow\mathcal{L}.
    \end{equation}
    By taking tensor products on both sides, we also obtain non-trivial maps $\OO_Y\rightarrow\mathcal{L}^4$, $\OO_Y\rightarrow\mathcal{L}^6$. Note that at least one of $\alpha\in H^0(Y',\mathcal{L}^{-4})$, $\beta\in H^0(Y',\mathcal{L}^{-6})$ is non-trivial, as otherwise $4\alpha^3+27\beta^2=0$, which is a contradiction, see \Cref{definition_Weierstrass_models}. Therefore, at least one of the maps $\OO_Y\rightarrow\mathcal{L}^4$, $\OO_Y\rightarrow\mathcal{L}^6$ is an isomorphism. Thus, also \cref{G-equivariant_isom_to_OO} must be an isomorphism, i.e. $\mathcal{L}\cong_G\OO_{Y'}$, where $\OO_{Y'}$ is endowed with the trivial $G$-action. Hence, $\alpha$, $\beta\in\C$ and the Weierstraß model is given by
    \begin{equation*}
        Y'\times F\longrightarrow Y',
    \end{equation*}
    where $F$ is the elliptic curve $\{y^2z-(x^3+\alpha xz^2+\beta z^3)=0\}\subset\mathbb{P}^2$. This shows that the variation of Hodge structures $H$ must be $G$-equivariantly isomorphic to $\Gamma\subset\C$, where $F\cong\C/\Gamma$. From \Cref{trivial_vhs_constant_j_all_invariant} it follows that $X'$ is $G$-equivariantly bimeromorphic to $(Y'\times F)^\theta\rightarrow Y'$, where $G$ acts via $g\times tr(\sigma(g))$ for some group homomorphism $\sigma\colon G\rightarrow F$. Therefore, any non-trivial holomorphic 1-form on $(Y'\times F)^\theta$ coming from $F$ is invariant under the $G$-action. Define $G'$ to be the kernel of $\sigma\colon G\rightarrow F$. Note that $Y'/G'$ is a smooth minimal K\"ahler surface by construction. Hence, $(Y'\times F)^\theta/G'\rightarrow Y'/G'$ is an elliptic fiber bundle over a complex manifold. By \Cref{classification_torus_fiber_bundles}, there is a class $\gamma\in H^1(Y'/G',\OO_{Y'/G'}/\Gamma)$ such that $(Y'\times F)^\theta/G'\cong (Y'/G'\times F)^{\gamma}$. Note that $G/G'$ acts fixed-point freely on $(Y'/G'\times F)^{\gamma}$ and that its quotient is a smooth compact K\"ahler threefold bimeromorphic to $X'/G$. Thus, by \Cref{reduction_to_bimeromorphic_problem},
    \begin{equation*}
        (Y'/G'\times F)^{\gamma}\longrightarrow (Y'/G'\times F)^{\gamma}/(G/G')\cong X'/G\longrightarrow X'
    \end{equation*}
    is a finite \'etale cover. However, the fibers of $(Y'/G'\times F)^{\gamma}\rightarrow Y'/G'\rightarrow C'/G'$ are not contracted to a curve by the Albanese morphism; this is a contradiction to \Cref{assumption}.
\end{proof}
\begin{lemma}\label{lift_of_theta_to_inverse_image}
    The following holds true.
    \begin{enumerate}[label=(\roman*)]
        \item\label{item_map_on_first_cohom_V_to_W_injective} The natural map
        \begin{equation}\label{map_on_first_cohom_V_to_W_injective}
            H^1_G(Y',(\pi')^{-1}\mathcal{V}^\#)\longhookrightarrow H^1_G(Y',\mathcal{W}^\#)
        \end{equation}
        is injective.
        \item\label{item_theta'_in_image_of_map_on_first_cohom_V_to_W} There is a unique cohomology class $\theta_K\in H^1_G(Y',(\pi')^{-1}\mathcal{V}^\#)$ that is mapped to $\theta'$ by \cref{map_on_first_cohom_V_to_W_injective}.
        \item\label{differential_H1_V_to_H2_K_sends_theta'_to_torsion} The map
        \begin{equation}\label{differential_H1_V_to_H2_K}
            H^1_G(Y',(\pi')^{-1}\mathcal{V}^\#)\longrightarrow H^2_G(Y',(\pi')^{-1}i_*K)
        \end{equation}
        sends $\theta_K$ to a torsion class.
    \end{enumerate}
\end{lemma}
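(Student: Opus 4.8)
The plan is to compare the Weierstraß exact sequence \cref{ses_vhs_line_bundle_W} on $Y'$ with the pullback along $\pi'$ of the corresponding sequence $0\to i_*K\to\mathcal{M}\to\mathcal{V}^\#\to 0$ on $C'$. Since the square formed by $j\colon Y^*\hookrightarrow Y'$, $i\colon C^*\hookrightarrow C'$ and the restrictions of $\pi'$ is cartesian and $\pi'$ is smooth, smooth base change identifies $(\pi')^{-1}(i_*K)$ with $j_*H_\Z$ compatibly with the inclusions into $\mathcal{M}$ and $\mathcal{L}=(\pi')^*\mathcal{M}$; applying the exact functor $(\pi')^{-1}$ and mapping the result into \cref{ses_vhs_line_bundle_W} (using $W\cong V\times_{C'}Y'$), the snake lemma yields a $G$-equivariant short exact sequence
\[0\longrightarrow(\pi')^{-1}\mathcal{V}^\#\longrightarrow\mathcal{W}^\#\longrightarrow Q\longrightarrow 0,\qquad Q:=\mathcal{L}\big/(\pi')^{-1}\mathcal{M},\]
where $(\pi')^{-1}\mathcal{M}$ is viewed as a subsheaf of $\mathcal{L}$. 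Tensoring the relative de Rham sequence $0\to(\pi')^{-1}\OO_{C'}\to\OO_{Y'}\to\Omega^1_{Y'/C'}\to 0$ by $(\pi')^{-1}\mathcal{M}$ identifies $Q\cong\Omega^1_{Y'/C'}\otimes\mathcal{L}$; and since $K_{Y'}\cong(\pi')^*K_{C'}$ (as used in the proof of \Cref{bimeromorphic_to_elliptic_torsor_over_curve_implies_smooth}), the relative cotangent line bundle $\Omega^1_{Y'/C'}$ is trivial, so $Q\cong\mathcal{L}$, $G$-equivariantly.

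Part \cref{item_map_on_first_cohom_V_to_W_injective} is then immediate: $H^0_G(Y',Q)\cong H^0_G(Y',\mathcal{L})=0$ by \Cref{no_invariant_global_sections}, so the long exact sequence of the displayed short exact sequence makes $H^1_G(Y',(\pi')^{-1}\mathcal{V}^\#)\to H^1_G(Y',\mathcal{W}^\#)$ injective. In particular the class $\theta_K$ of \cref{item_theta'_in_image_of_map_on_first_cohom_V_to_W}, if it exists, is unique.

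For the existence of $\theta_K$ we must show that the image $q(\theta')$ of $\theta'$ in $H^1_G(Y',Q)$ vanishes. The Leray spectral sequence of $\pi'$, together with $R^1\pi'_*\OO_{Y'}\cong\OO_{C'}$ (this is $\mathcal{H}^{0,1}$, cf.\ \Cref{corollary_isotrivial_and_trivial_vhs}) and $H^0_G(C',\mathcal{M})=0$ (\Cref{no_invariant_global_sections}), identifies $H^1_G(Y',\mathcal{L})$ with $H^1_G(C',\mathcal{M})$ via pullback; hence $H^1_G(Y',(\pi')^{-1}\mathcal{M})\to H^1_G(Y',\mathcal{L})$ is surjective, and therefore the connecting map $\delta\colon H^1_G(Y',Q)\to H^2_G(Y',(\pi')^{-1}\mathcal{M})$ of $0\to(\pi')^{-1}\mathcal{M}\to\mathcal{L}\to Q\to 0$ is injective. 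A diagram chase in the $3\times3$ diagram relating the two Weierstraß sequences shows that $\delta(q(\theta'))$ is the image of the boundary class $\partial(\theta')$ of $\theta'$ under \cref{ses_vhs_line_bundle_W}; this boundary class is torsion by the ($G$-equivariant form of the) Kähler criterion \Cref{Weierstrass_model_Kaehler}, which applies because $W(\mathcal{L},\alpha,\beta)^{\theta'}$ is bimeromorphic to a compact Kähler manifold, being finitely dominated by the tautological model $\mathcal{X}$ (see \Cref{tautological_models}), which is $G$-equivariantly bimeromorphic to the compact Kähler threefold $X'$ — and the fact that $H^{\geq 1}$ of a finite group is annihilated by its order upgrades the statement to $H^2_G$. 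Since $Q\cong\mathcal{L}$ is a coherent sheaf on the compact surface $Y'$, the group $H^1_G(Y',Q)$ is torsion-free (characteristic $0$); as $\delta$ is injective and $\delta(q(\theta'))$ is torsion, $q(\theta')=0$, which produces the lift $\theta_K$.

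Finally, \cref{differential_H1_V_to_H2_K_sends_theta'_to_torsion} follows by naturality of connecting maps for the morphism from $0\to(\pi')^{-1}i_*K\to(\pi')^{-1}\mathcal{M}\to(\pi')^{-1}\mathcal{V}^\#\to 0$ to \cref{ses_vhs_line_bundle_W}, whose left-hand vertical map is the identity under $(\pi')^{-1}i_*K\cong j_*H_\Z$: the map \cref{differential_H1_V_to_H2_K} sends $\theta_K$ to the boundary class $\partial(\theta')$, which was just shown to be torsion. The main obstacle is the bookkeeping around the $3\times3$ diagram — the $G$-equivariant identification $Q\cong\mathcal{L}$, the compatibility of the three relevant connecting homomorphisms, and the equivariant application of \Cref{Weierstrass_model_Kaehler} to $W(\mathcal{L},\alpha,\beta)^{\theta'}$; once these are pinned down, torsion-freeness of $H^1_G(Y',Q)$ and injectivity of $\delta$ close the argument.
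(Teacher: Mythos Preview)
Your overall architecture mirrors the paper's: set up the $3\times 3$ diagram relating the Weierstra\ss\ sequences on $C'$ and $Y'$, identify $\mathcal{W}^\#/(\pi')^{-1}\mathcal{V}^\#$ with $Q:=\mathcal{L}/(\pi')^{-1}\mathcal{M}$, use $H^0_G(Y',Q)=0$ for part~(i), and kill $q(\theta')\in H^1_G(Y',Q)$ by pushing it via $\delta$ into an integral cohomology group where the K\"ahler criterion (plus Hochschild--Serre) forces it to be torsion. That is exactly the paper's strategy.

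The genuine gap is your identification $Q\cong\mathcal{L}$. The ``relative de Rham sequence'' $0\to(\pi')^{-1}\OO_{C'}\to\OO_{Y'}\xrightarrow{d}\Omega^1_{Y'/C'}\to 0$ is \emph{not} short exact: the relative differential is not surjective (a nowhere-vanishing relative $1$-form on an elliptic fiber is not exact), so the cokernel $\OO_{Y'}/(\pi')^{-1}\OO_{C'}$ is not $\Omega^1_{Y'/C'}$. In fact $Q$ is not even a coherent $\OO_{Y'}$-module, since $(\pi')^{-1}\mathcal{M}$ is only an $(\pi')^{-1}\OO_{C'}$-module; so your appeal to ``$Q$ coherent on a compact surface'' for torsion-freeness of $H^1_G(Y',Q)$ is also misfiring.

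Both conclusions you need about $Q$ are nonetheless true and are what the paper proves directly via Leray. Comparing the spectral sequences $H^p_G(C',R^q\pi'_*(\pi')^{-1}\mathcal{M})\Rightarrow H^{p+q}_G(Y',(\pi')^{-1}\mathcal{M})$ and $H^p_G(C',R^q\pi'_*\mathcal{L})\Rightarrow H^{p+q}_G(Y',\mathcal{L})$, together with $H^0_G(C',\mathcal{M})=0$, one gets that $H^i_G(Y',(\pi')^{-1}\mathcal{M})\to H^i_G(Y',\mathcal{L})$ is an isomorphism for $i=0,1$, whence $H^0_G(Y',Q)=0$ and $H^1_G(Y',Q)\cong H^1_G(C',\mathcal{M})$, a $\C$-vector space, hence torsion-free. (Alternatively, $Q$ is a sheaf of $\C$-vector spaces, so $H^i_G(Y',Q)$ is automatically a $\C$-vector space by Maschke; this suffices for torsion-freeness without knowing $H^1_G(Y',Q)$ explicitly.) Once you replace your $Q\cong\mathcal{L}$ step with either of these arguments, the rest of your proof --- injectivity of $\delta$, the $3\times3$ compatibility giving $\delta(q(\theta'))$ as the image of $\partial(\theta')$, and the equivariant upgrade of \Cref{Weierstrass_model_Kaehler} --- goes through and matches the paper.
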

\begin{proof}
    The Leray spectral sequences
    \begin{align*}
        E^{p,q}_2=H^p_G(C',R^q\pi'_*(\pi')^{-1}\mathcal{M})&\Longrightarrow H^{p+q}_G(Y',(\pi')^{-1}\mathcal{M})\\
        'E^{p,q}_2=H^p_G(C',R^q\pi'_*\mathcal{L})&\Longrightarrow H^{p+q}_G(Y',\mathcal{L})
    \end{align*}
    together with $H^0_G(C',\mathcal{M})=0=H^0_G(Y',\mathcal{L})$ show that the maps $H^i_G(Y',(\pi')^{-1}\mathcal{M})\stackrel{\sim}{\rightarrow}H^i_G(Y',\mathcal{L})$ are isomorphisms for $i=0$, 1 and that for $i=2$ this map is isomorphic to the map
    \begin{equation*}
        H^1_G(C',\mathcal{M})\otimes_\Z\Lambda'\longrightarrow H^1_G(C',\mathcal{M})\otimes_\C\C,
    \end{equation*}
    induced by $\Lambda'\subset\C$, where $E'\cong\C/\Lambda'$. Since $\Lambda'\cong\Z^2$, this map is surjective with kernel isomorphic to $H^1_G(C',\mathcal{M})$. Therefore,
    \begin{align}
        \label{H0_of_L_over_inverse_image_M} H^0_G(Y',\mathcal{L}/(\pi')^{-1}\mathcal{M})&=0,\\
        \label{H1_of_L_over_inverse_image_M} H^1_G(Y',\mathcal{L}/(\pi')^{-1}\mathcal{M})&\cong H^1_G(C',\mathcal{M}).
    \end{align}
    The commutative diagram
    \begin{equation}\label{diagram_KMV_HLW}
        \begin{tikzcd}
	       & 0 & 0 & 0 \\
	       0 & {(\pi')^{-1}i_*K} & {(\pi')^{-1}\mathcal{M}} & {(\pi')^{-1}\mathcal{V}^\#} & 0 \\
	       0 & {j_*H} & {\mathcal{L}} & {\mathcal{W}^\#} & 0 \\
	       & 0 & {\mathcal{L}/(\pi')^{-1}\mathcal{M}} & {\mathcal{W}^\#/(\pi')^{-1}\mathcal{V}^\#} & 0 \\
	       && 0 & 0
	       \arrow[from=1-2, to=2-2]
	       \arrow[from=1-3, to=2-3]
	       \arrow[from=1-4, to=2-4]
	       \arrow[from=2-1, to=2-2]
	       \arrow[from=2-2, to=2-3]
	       \arrow[from=2-2, to=3-2]
	       \arrow[from=2-3, to=2-4]
	       \arrow[from=2-3, to=3-3]
	       \arrow[from=2-4, to=2-5]
	       \arrow[from=2-4, to=3-4]
	       \arrow[from=3-1, to=3-2]
	       \arrow[from=3-2, to=3-3]
	       \arrow[from=3-2, to=4-2]
	       \arrow[from=3-3, to=3-4]
	       \arrow[from=3-3, to=4-3]
	       \arrow[from=3-4, to=3-5]
	       \arrow[from=3-4, to=4-4]
	       \arrow[from=4-2, to=4-3]
	       \arrow[from=4-3, to=4-4]
	       \arrow[from=4-3, to=5-3]
	       \arrow[from=4-4, to=4-5]
	       \arrow[from=4-4, to=5-4]
        \end{tikzcd}
    \end{equation}
    has exact rows and columns and therefore induces an isomorphism $\mathcal{L}/(\pi')^{-1}\mathcal{M}\stackrel{\sim}{\rightarrow}\mathcal{W}^\#/(\pi')^{-1}\mathcal{V}^\#$. Hence, the cohomology computations \cref{H0_of_L_over_inverse_image_M}, \cref{H1_of_L_over_inverse_image_M} imply
    \begin{align*}
        H^0_G(Y',\mathcal{W}^\#/(\pi')^{-1}\mathcal{V}^\#)&=0,\\
        H^1_G(Y',\mathcal{W}^\#/(\pi')^{-1}\mathcal{V}^\#)&\cong H^1_G(C',\mathcal{M}).
    \end{align*}
    Applying $G$-equivariant cohomology to the column on the right-hand side of diagram (\ref{diagram_KMV_HLW}) thus yields an exact sequence
    \begin{equation*}
        0\longrightarrow H^1_G(Y',(\pi')^{-1}\mathcal{V}^\#)\longrightarrow H^1_G(Y',\mathcal{W}^\#)\longrightarrow H^1_G(C',\mathcal{M}),
    \end{equation*}
    which proves \cref{item_map_on_first_cohom_V_to_W_injective}.\par
    To prove \cref{item_theta'_in_image_of_map_on_first_cohom_V_to_W}, consider the commutative diagram
    \[\begin{tikzcd}
    	{H^1_G(Y',(\pi')^{-1}\mathcal{M})} & {H^1_G(Y',(\pi')^{-1}\mathcal{V}^\#)} & {H^2_G(Y',(\pi')^{-1}i_*K)} \\
    	{H^1_G(Y',\mathcal{L})} & {H^1_G(Y',\mathcal{W}^\#)} & {H^2_G(Y',j_*H)}
    	\arrow[from=1-1, to=1-2]
    	\arrow["\cong"', from=1-1, to=2-1]
    	\arrow[from=1-2, to=1-3]
    	\arrow[hook, from=1-2, to=2-2]
    	\arrow["\cong", from=1-3, to=2-3]
    	\arrow[from=2-1, to=2-2]
    	\arrow[from=2-2, to=2-3]
    \end{tikzcd}\]
    with exact rows.
    \begin{claim}\label{claim_theta'_to_torsion}
        The horizontal map on the lower right-hand side of the diagram sends $\theta'$ to a torsion class.
    \end{claim}
    Suppose for a moment that we have proven \cref{claim_theta'_to_torsion}. A positive multiple of $\theta'$ thus comes from $H^1_G(Y',(\pi')^{-1}\mathcal{V}^\#)$ by a diagram chase, i.e. $\theta'$ defines a torsion element in the cokernel of $H^1_G(Y',(\pi')^{-1}\mathcal{V}^\#)\hookrightarrow H^1_G(Y',\mathcal{W}^\#)$. However, the cokernel is a subgroup of $H^1_G(C',\mathcal{M})$, which is torsion-free. This proves \cref{item_theta'_in_image_of_map_on_first_cohom_V_to_W}.\par
    Let us now prove \cref{claim_theta'_to_torsion}. Since $X'$ is K\"ahler, the class $\theta'$ is mapped to a torsion class in $H^2(Y',j_*H)$ by the composition
    \begin{equation*}
        H^1_G(Y',\mathcal{W}^\#)\longrightarrow H^1(Y',\mathcal{W}^\#)^G\subset H^1(Y',\mathcal{W}^\#)\longrightarrow H^2(Y',j_*H),
    \end{equation*}
    \Cref{Weierstrass_model_Kaehler}. The commutative diagram
    \[\begin{tikzcd}
	   {H^1_G(Y',\mathcal{W}^\#)} & {H^1(Y',\mathcal{W}^\#)^G} & {H^1(Y',\mathcal{W}^\#)} \\
	   {H^2_G(Y',j_*H)} & {H^2(Y',j_*H)^G} & {H^2(Y',j_*H)}
	   \arrow[from=1-1, to=1-2]
	   \arrow[from=1-1, to=2-1]
	   \arrow[hook, from=1-2, to=1-3]
	   \arrow[from=1-2, to=2-2]
	   \arrow[from=1-3, to=2-3]
	   \arrow[from=2-1, to=2-2]
	   \arrow[hook, from=2-2, to=2-3]
    \end{tikzcd}\]
    shows that a multiple of $\theta'$ is mapped by the left vertical arrow to a class in the kernel of $H^2_G(Y',j_*H)\rightarrow H^2(Y',j_*H)^G$. The seven-term exact sequence associated to the Hochschild--Serre spectral sequence
    \begin{equation*}
        ''E^{p,q}_2=H^p(G,H^q(Y',j_*H))\Rightarrow H^{p+q}_G(Y',j_*H)
    \end{equation*}
    gives rise to the exact sequence
    \begin{equation*}
        H^2(G,H^0(Y',j_*H))\longrightarrow\ker(H^2_G(Y',j_*H)\longrightarrow H^2(Y',j_*H)^G)\longrightarrow H^1(G,H^1(Y',j_*H)).
    \end{equation*}
    Since the higher cohomology groups of finite groups are torsion, $\ker(H^2_G(Y',j_*H)\rightarrow H^2(Y',j_*H)^G)$ is also torsion. In particular, a further multiple of $\theta'$ is mapped to zero via $H^1_G(Y',\mathcal{W}^\#)\rightarrow H^2_G(Y',j_*H)$. This concludes the proof of \cref{claim_theta'_to_torsion}.\par
    To show \cref{differential_H1_V_to_H2_K_sends_theta'_to_torsion}, consider the commutative diagram
    \[\begin{tikzcd}
	   {H^1_G(Y',(\pi')^{-1}\mathcal{V}^\#)} & {H^1_G(Y',\mathcal{W}^\#)} \\
	   {H^2_G(Y',(\pi')^{-1}i_*K)} & {H^2_G(Y',j_*H).}
	   \arrow[hook, from=1-1, to=1-2]
	   \arrow[from=1-1, to=2-1]
	   \arrow[from=1-2, to=2-2]
	   \arrow["\sim", from=2-1, to=2-2]
    \end{tikzcd}\]
    By \cref{item_map_on_first_cohom_V_to_W_injective}, the upper horizontal map is injective, and by \cref{claim_theta'_to_torsion} the vertical map on the right-hand side sends $\theta'$ to a torsion class. The assertion follows.
\end{proof}
\begin{lemma}\label{commutative_diagram_inverse_images_pullback}
    There exists a commutative diagram
    \begin{equation}\label{diagram_VVVotimesK_KKKotimesK}
        \begin{tikzcd}
	       0 & {H^1_G(C',\mathcal{V}^\#)} & {H^1_G(Y',(\pi')^{-1}\mathcal{V}^\#)} & {H^0_G(C',\mathcal{V}^\#\otimes\Lambda')} \\
	       0 & {H^2_G(C',i_*K)} & {H^2_G(Y',(\pi')^{-1}i_*K)} & {H^1_G(C',i_*K\otimes\Lambda'),}
	       \arrow[from=1-1, to=1-2]
	       \arrow[from=1-2, to=1-3]
	       \arrow[from=1-2, to=2-2]
	       \arrow[from=1-3, to=1-4]
	       \arrow[from=1-3, to=2-3]
	       \arrow[hook, from=1-4, to=2-4]
	       \arrow[from=2-1, to=2-2]
	       \arrow[from=2-2, to=2-3]
	       \arrow[from=2-3, to=2-4]
    \end{tikzcd}
    \end{equation}
    where $\Lambda'\subset\C$ denotes the lattice in \Cref{key_proposition_kodaira_1_G-equivariant}, the rows of this diagram are exact and the vertical maps are the boundary maps induced by the short exact sequences $0\rightarrow i_*K\rightarrow\mathcal{M}\rightarrow\mathcal{V}^\#\rightarrow 0$, $0\rightarrow(\pi')^{-1}i_*K\rightarrow (\pi')^{-1}\mathcal{M}\rightarrow (\pi')^{-1}\mathcal{V}^\#\rightarrow 0$, resp.~ $0\rightarrow i_*K\otimes\Lambda'\rightarrow\mathcal{M}\otimes\Lambda'\rightarrow\mathcal{V}^\#\otimes\Lambda'\rightarrow 0$. Moreover, the vertical map on the right-hand side is injective.
\end{lemma}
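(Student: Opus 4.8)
The plan is to obtain both exact rows as low-degree exact sequences of the $G$-equivariant Leray spectral sequence for the locally trivial elliptic fiber bundle $\pi'\colon Y'=(C'\times E')^{\eta'}\rightarrow C'$, and to deduce the commutativity and the injectivity from functoriality of that spectral sequence together with \Cref{no_invariant_global_sections}. The first step is to compute the relative cohomology sheaves. Since $\pi'$ is locally trivial with fiber $E'$ and trivial monodromy, $H^\bullet(E',\Z)$ is torsion free, and the $G$-action on the trivial rank-two local system $R^1\pi'_*\Z\cong\Lambda'$ ($\cong\Z^2$, as in the statement) is trivial, the relative Künneth formula yields, for every sheaf of abelian groups $\mathcal{F}$ on $C'$, $G$-equivariant isomorphisms $R^0\pi'_*(\pi')^{-1}\mathcal{F}\cong\mathcal{F}$, $R^1\pi'_*(\pi')^{-1}\mathcal{F}\cong\mathcal{F}\otimes_\Z\Lambda'$, $R^2\pi'_*(\pi')^{-1}\mathcal{F}\cong\mathcal{F}$, and $R^q\pi'_*(\pi')^{-1}\mathcal{F}=0$ for $q\geq 3$; moreover $R^q\pi'_*\circ(\pi')^{-1}$ is an exact functor for each $q$, since $R^q\pi'_*\Z$ is $\Z$-free. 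Hence one has the spectral sequence $E_2^{p,q}=H^p_G(C',R^q\pi'_*(\pi')^{-1}\mathcal{F})\Rightarrow H^{p+q}_G(Y',(\pi')^{-1}\mathcal{F})$, with nonzero columns only for $q\in\{0,1,2\}$.

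For the top row, apply this with $\mathcal{F}=\mathcal{V}^\#$: the initial segment of the five-term exact sequence of the spectral sequence is exactly $0\rightarrow H^1_G(C',\mathcal{V}^\#)\rightarrow H^1_G(Y',(\pi')^{-1}\mathcal{V}^\#)\rightarrow H^0_G(C',\mathcal{V}^\#\otimes\Lambda')$. For the bottom row, apply it with $\mathcal{F}=i_*K$. The key point is that $i_*K$ is a subsheaf of $\mathcal{M}$, so tensoring $i_*K\hookrightarrow\mathcal{M}$ with the flat $\Z$-module $\Lambda'$ gives injections $H^0_G(C',i_*K)\hookrightarrow H^0_G(C',\mathcal{M})$ and $H^0_G(C',i_*K\otimes\Lambda')\hookrightarrow H^0_G(C',\mathcal{M}\otimes\Lambda')\cong H^0_G(C',\mathcal{M})^{\oplus 2}$, all of which vanish by \Cref{no_invariant_global_sections}. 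Thus the edge terms $E_2^{0,2}=H^0_G(C',i_*K)$ and $E_2^{0,1}=H^0_G(C',i_*K\otimes\Lambda')$ vanish, so no differential hits $E_2^{2,0}$; therefore $E_\infty^{2,0}=E_2^{2,0}=H^2_G(C',i_*K)$ and $E_\infty^{0,2}=0$, and the Leray filtration on $H^2_G(Y',(\pi')^{-1}i_*K)$ degenerates to the exact sequence $0\rightarrow H^2_G(C',i_*K)\rightarrow H^2_G(Y',(\pi')^{-1}i_*K)\rightarrow H^1_G(C',i_*K\otimes\Lambda')$, whose last arrow is the composite $H^2_G(Y',(\pi')^{-1}i_*K)\twoheadrightarrow E_\infty^{1,1}\hookrightarrow E_2^{1,1}$. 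This is the bottom row.

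For the commutativity, observe that the three short exact sequences in the statement are the one on $C'$, its inverse image under $\pi'$ (still exact, as $(\pi')^{-1}$ is exact), and its tensor product with $\Lambda'$; by the exactness of $R^q\pi'_*\circ(\pi')^{-1}$ all three remain short exact after applying this functor for each $q$. Functoriality of the Leray spectral sequence in the coefficient sheaf then produces a morphism of spectral sequences, hence commuting squares between the two edge sequences, and one checks that the induced vertical maps are precisely the connecting homomorphisms of the three short exact sequences; this gives the commutative diagram \cref{diagram_VVVotimesK_KKKotimesK}. The right-hand vertical map is the connecting homomorphism of $0\rightarrow i_*K\otimes\Lambda'\rightarrow\mathcal{M}\otimes\Lambda'\rightarrow\mathcal{V}^\#\otimes\Lambda'\rightarrow 0$, and its kernel is the image of $H^0_G(C',\mathcal{M}\otimes\Lambda')\cong H^0_G(C',\mathcal{M})^{\oplus 2}=0$, so it is injective.

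The main obstacle I expect is the bottom row: it is not the naive five-term exact sequence (which lives in cohomological degrees $\leq 1$), and isolating the displayed degree-two sequence genuinely relies on the two vanishings $E_2^{0,1}=E_2^{0,2}=0$ furnished by \Cref{no_invariant_global_sections}. A more routine but fiddly point is matching the edge maps of the spectral sequence with the connecting homomorphisms of the given short exact sequences, which is what makes the squares of \cref{diagram_VVVotimesK_KKKotimesK} commute.
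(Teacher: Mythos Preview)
Your proposal is correct and follows essentially the same approach as the paper: both arguments run the $G$-equivariant Leray spectral sequence for $\pi'$ with coefficients $(\pi')^{-1}\mathcal{V}^\#$ and $(\pi')^{-1}i_*K$, use \Cref{no_invariant_global_sections} to kill $E_2^{0,1}$ and $E_2^{0,2}$ in the second case so that the degree-two filtration yields the bottom row, and invoke functoriality of the spectral sequence in the coefficient sheaf for the commutativity and the same vanishing for the injectivity on the right. Your version is slightly more explicit about the relative K\"unneth identification of $R^q\pi'_*(\pi')^{-1}\mathcal{F}$, but the substance is identical.
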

\begin{proof}
    First, consider the Leray spectral sequence
    \begin{equation*}
        E^{p,q}_2=H^p_G(C',R^q\pi'_*(\pi')^{-1}\mathcal{V}^\#)\Longrightarrow H^{p+q}_G(Y',(\pi')^{-1}\mathcal{V}^\#).
    \end{equation*}
    This gives rise to the short exact sequence
    \begin{equation*}
        0\longrightarrow E^{1,0}_\infty\longrightarrow H^1(Y',(\pi')^{-1}\mathcal{V}^\#)\longrightarrow E^{0,1}_\infty\longrightarrow 0.
    \end{equation*}
    For degree reasons we have $E^{1,0}_\infty=E^{1,0}_2$, and $E^{0,1}_\infty=\ker(d^{0,1}_2\colon E^{0,1}_2\rightarrow E^{2,0}_2)\subset E^{0,1}_2$. Now, consider the Leray spectral sequence
    \begin{equation*}
        'E^{p,q}_2=H^p_G(C',R^q\pi'_*(\pi')^{-1}i_*K)\Longrightarrow H^{p+q}_G(Y',(\pi')^{-1}i_*K).
    \end{equation*}
    The short exact sequence $0\rightarrow i_*K\rightarrow\mathcal{M}\rightarrow\mathcal{V}^\#\rightarrow 0$ yields an injection $H^0_G(C',i_*K)\hookrightarrow H^0_G(C',\mathcal{M})=0$ by \Cref{no_invariant_global_sections}. Thus, $'E^{0,1}_2\cong H^0_G(C',i_*K)\otimes\Lambda'=0$ and $'E^{0,2}_2\cong H^0_G(C',i_*K)=0$. Therefore, $'E^{2,0}_\infty='E^{2,0}_2=H^2_G(C',i_*K)$ and $'E^{0,2}_\infty=0$. The spectral sequence induces a filtration
    \begin{equation*}
        0=F^3\subset F^2\subset F^1\subset F^0=H^2(Y',(\pi')^{-1}i_*K)
    \end{equation*}
    with quotients $F^i/F^{i+1}\cong 'E^{i,2-i}_\infty$. Since $'E^{0,2}_\infty=0$, we get $F^1=F^0$. Furthermore, $F^1/F^2\cong E^{1,1}_\infty=\ker(d^{1,1}_2\colon E^{1,1}_2\rightarrow E^{3,0}_2)\subset E^{1,1}_2$. Hence, the short exact sequence $0\rightarrow F^2\rightarrow F^1\rightarrow F^1/F^2\rightarrow 0$ induces the lower row in diagram (\ref{diagram_VVVotimesK_KKKotimesK}). The functoriality of the Leray spectral sequence implies that the vertical arrows in diagram (\ref{diagram_VVVotimesK_KKKotimesK}) are given by the respective boundary maps. The injectivity of the vertical map on the right-hand side of diagram (\ref{diagram_VVVotimesK_KKKotimesK}) again follows from $H^0_G(C',\mathcal{M})=0$.
\end{proof}
\begin{lemma}
    The class $\theta_K$ is mapped to a torsion element via
    \begin{equation*}
        H^1_G(Y',(\pi')^{-1}\mathcal{V}^\#)\longrightarrow H^0_G(C',\mathcal{V}^\#\otimes\Lambda')
    \end{equation*}
    in the diagram (\ref{diagram_VVVotimesK_KKKotimesK}).
\end{lemma}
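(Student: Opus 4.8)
The plan is to obtain the assertion by a short diagram chase in (\ref{diagram_VVVotimesK_KKKotimesK}), combining the torsion statement of \Cref{lift_of_theta_to_inverse_image} \cref{differential_H1_V_to_H2_K_sends_theta'_to_torsion} with the injectivity of the right-hand vertical map from \Cref{commutative_diagram_inverse_images_pullback}. Write $\bar{\theta}_K\in H^0_G(C',\mathcal{V}^\#\otimes\Lambda')$ for the image of $\theta_K$ under the upper horizontal map of (\ref{diagram_VVVotimesK_KKKotimesK}); the goal is to show that $\bar{\theta}_K$ is torsion.

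First I would chase $\theta_K$ around the right-hand square of (\ref{diagram_VVVotimesK_KKKotimesK}). Going right and then down sends $\theta_K$ to the image of $\bar{\theta}_K$ under the boundary map $H^0_G(C',\mathcal{V}^\#\otimes\Lambda')\to H^1_G(C',i_*K\otimes\Lambda')$. Going down and then right sends $\theta_K$ first to its image in $H^2_G(Y',(\pi')^{-1}i_*K)$ under the middle vertical map --- which is precisely the map (\ref{differential_H1_V_to_H2_K}), both being the connecting homomorphism of $0\to(\pi')^{-1}i_*K\to(\pi')^{-1}\mathcal{M}\to(\pi')^{-1}\mathcal{V}^\#\to 0$ --- and then along the lower horizontal map of (\ref{diagram_VVVotimesK_KKKotimesK}). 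By commutativity these two elements of $H^1_G(C',i_*K\otimes\Lambda')$ agree. Now \Cref{lift_of_theta_to_inverse_image} \cref{differential_H1_V_to_H2_K_sends_theta'_to_torsion} tells us that the image of $\theta_K$ in $H^2_G(Y',(\pi')^{-1}i_*K)$ is torsion, hence so is its further image in $H^1_G(C',i_*K\otimes\Lambda')$; therefore the boundary image of $\bar{\theta}_K$ is torsion.

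It then remains to pass from the boundary image of $\bar{\theta}_K$ being torsion to $\bar{\theta}_K$ itself being torsion. Here I would use that the right vertical map $H^0_G(C',\mathcal{V}^\#\otimes\Lambda')\hookrightarrow H^1_G(C',i_*K\otimes\Lambda')$ is injective by \Cref{commutative_diagram_inverse_images_pullback}, and that an injective homomorphism of abelian groups reflects torsion: if $N\bar{\theta}_K$ maps to $0$ for some positive integer $N$, then already $N\bar{\theta}_K=0$, so $\bar{\theta}_K$ is torsion. This finishes the proof. There is essentially no obstacle left at this point, since the real content has already been packed into \Cref{no_invariant_global_sections}, \Cref{lift_of_theta_to_inverse_image} and \Cref{commutative_diagram_inverse_images_pullback}; the only detail deserving a moment's attention is checking that the vertical map through which the torsion statement is transported coincides with the middle column of (\ref{diagram_VVVotimesK_KKKotimesK}), which holds because both are the connecting map of the inverse-image sequence $0\to(\pi')^{-1}i_*K\to(\pi')^{-1}\mathcal{M}\to(\pi')^{-1}\mathcal{V}^\#\to 0$.
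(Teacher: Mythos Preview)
Your proof is correct and is exactly the diagram chase the paper has in mind: its one-line proof ``Combine \Cref{lift_of_theta_to_inverse_image} and \Cref{commutative_diagram_inverse_images_pullback}'' unpacks precisely to what you wrote, using the torsion statement from \Cref{lift_of_theta_to_inverse_image}\,\cref{differential_H1_V_to_H2_K_sends_theta'_to_torsion} together with commutativity of the right square and injectivity of the right vertical map in (\ref{diagram_VVVotimesK_KKKotimesK}). Your check that the middle vertical arrow agrees with (\ref{differential_H1_V_to_H2_K}) is also explicitly recorded in \Cref{commutative_diagram_inverse_images_pullback}, so nothing further is needed.
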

\begin{proof}
    Combine \Cref{lift_of_theta_to_inverse_image} and \Cref{commutative_diagram_inverse_images_pullback}.
\end{proof}
\begin{construction}\label{construction_X''}
    Let $n$ be a positive integer that annihilates the image of $\theta_K$ in $H^0(C',\mathcal{V}^\#\otimes\Lambda')$. Define $\Lambda'':=n\cdot\Lambda'$, and $E'':=\C/\Lambda''$. Denote the finite \'etale morphism $E''\rightarrow E'$ by $\varphi$, and choose a $G$-equivariant extension $\psi\colon (C'\times E'')^{\eta''}\rightarrow (C'\times E')^{\eta'}$ of $\varphi\colon E''\rightarrow E'$, see \Cref{definition_extension_of_etale_cover_of_elliptic_curves}. A priori, it is not clear why this extension exists $G$-equivariantly. However, by \Cref{key_proposition_kodaira_1_G-equivariant}, $G$ is either trivial or $C'/G\cong\mathbb{P}^1$. In the first case, note that since $(C'\times E')^{\eta'}$ is K\"ahler, the Chern class $c(\eta')\in H^2(C',\Lambda')$ is a torsion class by \Cref{fundamental_properties_special_torus_bundles}. However, since $H^2(C',\Lambda')\cong\Lambda'$ is torsion free, $c(\eta')=0$. Hence, we can apply \Cref{lemma_extension_of_etale_cover_of_elliptic_curves} to obtain the desired extension. In the second case, $C$ must have been isomorphic to $\mathbb{P}^1$. Thus, by \Cref{key_proposition_kodaira_1_G-equivariant}, $\eta'=0$, and hence we can take $\eta''=0$. Again, to simplify the notation, we define $Y'':=(C'\times E'')^{\eta''}$, and denote by $\pi''\colon Y''\rightarrow C'$ the natural map. Define
    \begin{equation*}
        X'':=X'\times_{Y'}Y'',
    \end{equation*}
    and note that the natural map $X''\rightarrow X'$ is a finite $G$-equivariant \'etale cover. In particular, $X''/G\rightarrow X'/G\rightarrow X$ is still finite \'etale. Since Weierstraß fibrations behave well under flat pullback, the elliptic fibration $g''\colon X''\rightarrow Y''$ has the Weierstraß model
    \begin{equation*}
        W(\psi^*\mathcal{L}\cong(\pi'')^*\mathcal{M},\alpha,\beta)\longrightarrow Y'',
    \end{equation*}
    and the variation of Hodge structures is given by $\psi^*H$.
\end{construction}
\begin{lemma}
    There is a class $\theta''\in H^1_G(C',\mathcal{V}^\#)$ and a $G$-equivariant isomorphism
    \begin{equation*}
        W(\psi^*\mathcal{L},\alpha,\beta)^{\psi^*\theta'}\cong V^{\theta''}\times_{C'}Y''
    \end{equation*}
    over $Y''$.
\end{lemma}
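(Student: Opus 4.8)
The plan is as follows. By \Cref{construction_X''} we already have, over $Y''$, a $G$-equivariant identification $W(\psi^*\mathcal{L},\alpha,\beta)\cong V\times_{C'}Y''$ coming from $\psi^*\mathcal{L}\cong(\pi'')^*\mathcal{M}$; and since twisted Weierstraß models are built by gluing patches along a \v{C}ech cocycle of local sections, twisting by a class that is pulled back from $C'$ commutes with this base change. It therefore suffices to produce a class $\theta''\in H^1_G(C',\mathcal{V}^\#)$ whose pullback $(\pi'')^*\theta''$, regarded inside $H^1_G(Y'',\mathcal{W}(\psi^*\mathcal{L})^\#)$ via $(\pi'')^{-1}\mathcal{V}^\#\hookrightarrow\mathcal{W}(\psi^*\mathcal{L})^\#$, equals $\psi^*\theta'$.

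First I would transport the class $\theta_K\in H^1_G(Y',(\pi')^{-1}\mathcal{V}^\#)$ from \Cref{lift_of_theta_to_inverse_image} along $\psi$. As $\psi^{-1}(\pi')^{-1}\mathcal{V}^\#=(\pi'')^{-1}\mathcal{V}^\#$, this gives $\psi^*\theta_K\in H^1_G(Y'',(\pi'')^{-1}\mathcal{V}^\#)$, and the functoriality of diagram~(\ref{diagram_KMV_HLW}) shows that the image of $\psi^*\theta_K$ in $H^1_G(Y'',\mathcal{W}(\psi^*\mathcal{L})^\#)$ is $\psi^*\theta'$. It remains to check that $\psi^*\theta_K$ descends to $C'$, i.e.\ that it lies in the image of the edge map $H^1_G(C',\mathcal{V}^\#)\hookrightarrow H^1_G(Y'',(\pi'')^{-1}\mathcal{V}^\#)$ of the Leray spectral sequence of $\pi''$ (injective since $\pi''$ has connected fibers); equivalently, that $\psi^*\theta_K$ maps to $0$ in $H^0_G(C',\mathcal{V}^\#\otimes R^1\pi''_*\Z)$.

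This is the one place where the choice of $E''$ enters. The morphism $\psi$ induces a commutative square between the Leray edge maps of $\pi'$ and $\pi''$, with bottom row $H^1_G(Y'',(\pi'')^{-1}\mathcal{V}^\#)\to H^0_G(C',\mathcal{V}^\#\otimes R^1\pi''_*\Z)$, top row $H^1_G(Y',(\pi')^{-1}\mathcal{V}^\#)\to H^0_G(C',\mathcal{V}^\#\otimes\Lambda')$ as in diagram~(\ref{diagram_VVVotimesK_KKKotimesK}), and right vertical arrow $\mathrm{id}\otimes\psi^*$, where $\psi^*\colon R^1\pi'_*\Z\cong\Lambda'\to R^1\pi''_*\Z$ is fibrewise $\varphi^*\colon H^1(E',\Z)\to H^1(E'',\Z)$. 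Since $E''=\C/n\Lambda'$ and $E'=\C/\Lambda'$, the isogeny $\varphi$ becomes the multiplication-by-$n$ endomorphism of $E'$ after the natural identification $E''\cong E'$, so $\varphi^*$ is multiplication by $n$ once $R^1\pi''_*\Z$ is identified with $R^1\pi'_*\Z\cong\Z^2$; as this is a free constant local system, $\mathrm{id}\otimes\varphi^*$ is then multiplication by $n$ on $H^0_G(C',\mathcal{V}^\#\otimes\Lambda')$. The image of $\theta_K$ in that group is torsion by the lemma preceding \Cref{construction_X''}, and $n$ was chosen in \Cref{construction_X''} to annihilate it; hence the image of $\psi^*\theta_K$ in $H^0_G(C',\mathcal{V}^\#\otimes R^1\pi''_*\Z)$ vanishes, and $\psi^*\theta_K=(\pi'')^*\theta''$ for a unique $\theta''\in H^1_G(C',\mathcal{V}^\#)$.

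Finally I would read off the isomorphism of total spaces. Represent $\theta''$ by a $G$-invariant \v{C}ech cocycle $(\theta''_{ij})$ on a $G$-invariant open cover $\{U_i\}$ of $C'$ with values in $\mathcal{V}^\#$; then $(\theta''_{ij}\circ\pi'')$ represents $\psi^*\theta'$ with values in $(\pi'')^{-1}\mathcal{V}^\#\subset\mathcal{W}(\psi^*\mathcal{L})^\#$, and the action of $(\pi'')^{-1}\mathcal{V}^\#$ on $W(\psi^*\mathcal{L},\alpha,\beta)\cong V\times_{C'}Y''$ is the pullback of the translation action of $\mathcal{V}^\#$ on $V$. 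Gluing the patches $(V\times_{C'}Y'')\vert_{(\pi'')^{-1}(U_i)}$ along translation by $\theta''_{ij}$ in the $V$-factor therefore yields $V^{\theta''}\times_{C'}Y''$ over $Y''$, and the $G$-invariance of $(\theta''_{ij})$ makes this identification $G$-equivariant. The main obstacle is the penultimate step: one must keep track of the identifications of $R^1\pi'_*\Z$ and $R^1\pi''_*\Z$ carefully enough to see that the comparison factor is exactly multiplication by $n$, so that the torsion class annihilated in \Cref{construction_X''} is precisely the obstruction to descending $\psi^*\theta_K$ to the base $C'$.
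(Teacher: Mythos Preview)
Your proof is correct and follows essentially the same approach as the paper. Both arguments use the Leray-type exact sequence from \Cref{commutative_diagram_inverse_images_pullback}, applied to both $\pi'$ and $\pi''$, to obtain a commutative ladder whose right vertical arrow is multiplication by $n$ on $H^0_G(C',\mathcal{V}^\#\otimes\Lambda')$; since $n$ was chosen in \Cref{construction_X''} to annihilate the image of $\theta_K$ there, the class $\psi^*\theta_K$ descends to the desired $\theta''\in H^1_G(C',\mathcal{V}^\#)$. Your write-up is in fact more explicit than the paper's on two points: you spell out why the comparison map on $R^1\pi_*\Z$ is multiplication by $n$ (via the identification $E''\cong E'$ turning $\varphi$ into the multiplication-by-$n$ isogeny), and you explain how the isomorphism of total spaces is read off from a \v{C}ech cocycle pulled back from $C'$, whereas the paper leaves these implicit.
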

\begin{proof}
    The exact sequence from \Cref{commutative_diagram_inverse_images_pullback} applied to $X'\rightarrow Y'\rightarrow C'$ and $X''\rightarrow Y''\rightarrow C'$ induces the commutative diagram
    \[\begin{tikzcd}
	   0 & {H^1_G(C',\mathcal{V}^\#)} & {H^1_G(Y',(\pi')^{-1}\mathcal{V}^\#)} & {H^0_G(C',\mathcal{V}^\#\otimes\Lambda')} \\
	   0 & {H^1_G(C',\mathcal{V}^\#)} & {H^1_G(Y'',(\pi'')^{-1}\mathcal{V}^\#)} & {H^0_G(C',\mathcal{V}^\#\otimes\Lambda'').}
	   \arrow[from=1-1, to=1-2]
	   \arrow[from=1-2, to=1-3]
	   \arrow[from=1-2, to=2-2]
	   \arrow[from=1-3, to=1-4]
	   \arrow["\psi^*", from=1-3, to=2-3]
	   \arrow["n", from=1-4, to=2-4]
	   \arrow[from=2-1, to=2-2]
	   \arrow[from=2-2, to=2-3]
	   \arrow[from=2-3, to=2-4]
    \end{tikzcd}\]
    Since $n$ annihilates the image of $\theta_K$ in $H^0_G(C',\mathcal{V}^\#\otimes\Lambda')$, there is a class $\theta''\in H^1_G(C',\mathcal{V}^\#)$ that is mapped to $\psi^*\theta'$.
\end{proof}
Consider the Stein factorization
\[\begin{tikzcd}
	{\mathcal{X}\times_{Y'}Y''} & {W(\psi^*\mathcal{L},\alpha,\beta)^{\psi^*\theta'}\cong V^{\theta''}\times_{C'}Y''} & {V^{\theta''}} \\
	& S & {.}
	\arrow[from=1-1, to=1-2]
	\arrow[from=1-1, to=2-2]
	\arrow[from=1-2, to=1-3]
	\arrow[from=2-2, to=1-3]
\end{tikzcd}\]
\begin{lemma}\label{last_situation_we_consider}
    The morphism $s\colon S\rightarrow V^{\theta''}\rightarrow C'$ is a $G$-equivariant elliptic fibration. Moreover, the elliptic fibration $S\times_{C'}Y''\rightarrow Y''$ is $G$-equivariantly bimeromorphic to $g''\colon X''\rightarrow Y''$.
\end{lemma}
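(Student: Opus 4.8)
The proof checks the two assertions in turn; the guiding observation is that every space and morphism entering the definition of $S$ is $G$-equivariant, so that the Stein factorization $\mathcal X\times_{Y'}Y''\to S\to V^{\theta''}$, being functorial, is $G$-equivariant too. Concretely, $\mathcal X\to Y'$ together with the degree-$m$ map $\mathcal X\to W(\mathcal L,\alpha,\beta)^{\theta'}$ are $G$-equivariant by \Cref{tautological_models}, the base change along the $G$-equivariant cover $\psi\colon Y''\to Y'$ of \Cref{construction_X''} is $G$-equivariant, the isomorphism $W(\psi^*\mathcal L,\alpha,\beta)^{\psi^*\theta'}\cong V^{\theta''}\times_{C'}Y''$ is $G$-equivariant by the preceding lemma, and the projection to $V^{\theta''}$ is $G$-equivariant since $\theta''\in H^1_G(C',\mathcal V^\#)$ and $q\colon V\to C'$ is a $G$-equivariant Weierstraß model. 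Hence $S$ inherits a $G$-action and $s\colon S\to V^{\theta''}\to C'$ is a chain of $G$-equivariant morphisms.

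To see that $s$ is an elliptic fibration, note first that $\mathcal X\times_{Y'}Y''$ is a tautological model of $g''\colon X''\to Y''$ (tautological models behave well under the flat cover $\psi$, cf.\ \Cref{construction_X''}), hence is $G$-equivariantly bimeromorphic to $X''$, whose structure morphism to $C'$ is a base change of the Iitaka fibration and therefore has connected fibers; as $\mathcal X\times_{Y'}Y''\to S$ is surjective, $s$ has connected fibers as well. For a general $c\in C'$ the elliptic fibration $g'$ is smooth over the corresponding points, so $(\mathcal X\times_{Y'}Y'')_c\cong A_c$ is the $2$-torus $(X'')_c$, and the induced morphism $A_c\to(V^{\theta''})_c$ to the elliptic curve $(V^{\theta''})_c$ is, after a translation, a surjective homomorphism of abelian varieties; all of its fibers are then translates of its kernel with the same finite number of connected components, so its Stein factorization $A_c\to A_c/B\to(V^{\theta''})_c$ (with $B$ the identity component of the kernel) has elliptic-curve middle term $A_c/B$ and étale, finite second map. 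Since $\mathcal X\times_{Y'}Y''\to S$ is, over such $c$, the fibrewise quotient by these connected components, $(S)_c\cong A_c/B$; together with connectedness of its fibers, $s$ is an elliptic fibration, $G$-equivariant by the first paragraph.

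For the last assertion, the identity $S\times_{C'}Y''=S\times_{V^{\theta''}}\bigl(V^{\theta''}\times_{C'}Y''\bigr)$ and the universal property of fibre products yield a $G$-equivariant morphism $\mathcal X\times_{Y'}Y''\to S\times_{C'}Y''$ over $Y''$. Since $\mathcal X\times_{Y'}Y''$ is $G$-equivariantly bimeromorphic to $X''$ over $Y''$, it suffices to show this morphism is bimeromorphic, and as both sides are irreducible threefolds this may be tested on the fibre over a general $c\in C'$: there it becomes the map of abelian surfaces $A_c\to(S)_c\times(Y'')_c$ whose components are the surjective homomorphisms $A_c\to(S)_c$ and $A_c\to(Y'')_c$ (the latter being $g''$ restricted to the fibre), and such a map is an isomorphism exactly when the identity components of the two kernels meet only in the origin. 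This is where the choice of $E''=\C/\Lambda''$ enters: the integer $n$ of \Cref{construction_X''} was selected so that $\psi^*\theta'$ descends to the class $\theta''$ on $C'$, which makes $V^{\theta''}\times_{C'}Y''\to V^{\theta''}$ the pullback from $C'$ of the elliptic fibre bundle $Y''\to C'$; combined with \Cref{tautological_models}(iv) — that $\mathcal X\times_{Y'}Y''\to V^{\theta''}\times_{C'}Y''$ is finite of degree $m$ and is the multiplication-by-$m$ map over the smooth locus — this pins the generic degree of $\mathcal X\times_{Y'}Y''\to S\times_{C'}Y''$ down to $1$. Hence the morphism is bimeromorphic over $Y''$, so $S\times_{C'}Y''\to Y''$ is $G$-equivariantly bimeromorphic to $g''$.

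The main obstacle is precisely this last point: converting the descent of $\psi^*\theta'$ to $\theta''$ into the concrete statement that, on a general fibre of $f''$, the kernel of $g''$ and the identity component of the kernel of the map to $(V^{\theta''})_c$ are complementary sub-elliptic-curves of the $2$-torus $A_c$ — equivalently, that the natural morphism $\mathcal X\times_{Y'}Y''\to S\times_{C'}Y''$ has generic degree $1$. Granting that, the elliptic-fibration property of $s$ and the bimeromorphy are formal consequences of the functoriality of Stein factorizations together with the structure theory of torus fibre bundles and of (twisted, $G$-equivariant) Weierstraß and tautological models recalled above.
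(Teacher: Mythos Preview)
Your treatment of $G$-equivariance and of the elliptic nature of the general fibre of $s$ is fine and matches the paper's reasoning. The genuine issue is the second assertion: you correctly isolate the crux---that the natural morphism $\mathcal X\times_{Y'}Y''\to S\times_{C'}Y''$ has generic degree $1$---but you do not actually prove it. Saying that the descent of $\psi^*\theta'$ to $\theta''$ together with \Cref{tautological_models}(iv) ``pins the generic degree down to $1$'' is an assertion, not an argument; and you then explicitly flag this as ``the main obstacle'' and proceed by ``granting that''. As it stands this is a gap: nothing you have written rules out, for a general $c\in C'$, that the two sub-elliptic-curves $\ker(A_c\to(S)_c)$ and $\ker(A_c\to(Y'')_c)$ coincide or have nontrivial intersection.

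The paper closes this gap by working not on a single fibre but over the entire smooth locus $C^*$, using the explicit description of tautological models as twisted Jacobian fibrations. Over $Y^*$ one has $\mathcal X\vert_{Y^*}=J_H^\theta$ and $W^{\theta'}\vert_{Y^*}=J_H^{m\theta}$, with the degree-$m$ map being fibrewise multiplication by $m$. After pulling back along $\psi$ and using $H\cong(\pi')^*K$, one gets $J_{\psi^*H}^{m\theta}\cong J_K^{\theta''}\times_{C^*}(C^*\times E'')^{\eta''}$ as group objects over $C^*$. The Stein factorization then forces $S^*\to J_K^{\theta''}$ itself to be a multiplication-by-$m$ map, so $S^*\cong J_K^{\theta'''}$ for some $\theta'''$ with $m\theta'''=\theta''$, and one reads off directly that $J_{\psi^*H}^\theta\cong J_K^{\theta'''}\times_{C^*}(C^*\times E'')^{\eta''}$. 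This is exactly the bimeromorphy (indeed isomorphy over $C^*$) you need, obtained without any separate degree computation. The point you are missing is that the product structure $J_{\psi^*H}\cong J_K\times_{C^*}(C^*\times E'')$ of the \emph{untwisted} Jacobian, together with the fact that the multiplication-by-$m$ map respects it, is what forces the twisted picture to split as a product too; your fibrewise formulation loses this global structure and leaves you unable to conclude.
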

\begin{proof}
    The construction of the Stein factorization allows to endow $S$ with a natural $G$-action such that the diagram of the Stein factorization is $G$-equivariant. Therefore, the morphism $s\colon S\rightarrow C'$ is $G$-equivariant as well. Note that over $Y^*\subset Y'$ the tautological model $\mathcal{X}\rightarrow Y'$ is given by $J_H^\theta\rightarrow Y^*$, and $W(\mathcal{L},\alpha,\beta)^{\theta'}\rightarrow Y'$ is given by $J_H^{\theta'=m\theta}\rightarrow Y^*$, where we identified by abuse of notation the classes $\theta\in H_G^1(Y',\mathcal{W}^{mer})$, resp.~ $\theta'\in H_G^1(Y',\mathcal{W}^\#)$ with their images under the restriction maps
    \begin{align*}
        H_G^1(Y',\mathcal{W}^{mer})&\longrightarrow H_G^1(Y^*,\mathcal{W}^{mer}\vert_{Y^*}\cong\mathcal{J}_H),\\
        H_G^1(Y',\mathcal{W}^\#)&\longrightarrow H_G^1(Y^*,\mathcal{W}^\#\vert_{Y^*}\cong\mathcal{J}_H).
    \end{align*}
    Moreover, the map $\mathcal{X}\rightarrow W(\mathcal{L},\alpha,\beta)^{\theta'}$ looks locally like the multiplication by $m$-map
    \begin{equation*}
        J_H^\theta\stackrel{m}{\longrightarrow}J_H^{m\theta}.
    \end{equation*}
    After pulling back along $\psi$ we thus obtain the $G$-equivariant commutative diagram
    \[\begin{tikzcd}
	   {J_{\psi^*H}^\theta} & {J_{\psi^*H}^{m\theta}\cong J_K^{\theta''}\times_{C^*}(C^*\times E'')^\eta} & {J_K^{\theta''}} \\
	   & {S^*} & {C^*,}
	   \arrow[from=1-1, to=1-2]
	   \arrow[from=1-1, to=2-2]
	   \arrow[from=1-2, to=1-3]
	   \arrow[from=1-3, to=2-3]
	   \arrow[from=2-2, to=1-3]
	   \arrow["{s\vert_{S^*}}"', from=2-2, to=2-3]
    \end{tikzcd}\]
    where $S^*:=s^{-1}(C^*)$ and we identified $\eta$ with its image under the restriction map to $C^*$. It follows, that $S^*\rightarrow J_K^{\theta''}$ is also a multiplication by $m$-map on each fiber of $J_K^{\theta''}\rightarrow C^*$. In particular, $s\vert_{S^*}\colon S^*\rightarrow C^*$ is an elliptic fiber bundle with variation of Hodge structure $K$. Thus, there is a cocycle $\theta'''\in H_G^1(C^*,\mathcal{J}_K)$, such that $\theta''=m\theta'''$ and $S^*\rightarrow J_K^{\theta''}$ is the multiplication by $m$-map
    \begin{equation*}
        J_K^{\theta'''}\stackrel{m}{\longrightarrow}J_K^{m\theta'''}.
    \end{equation*}
    Therefore, $J_{\psi^*H}^\theta\cong J_K^{\theta'''}\times_{C^*}(C^*\times E'')^\eta$.
\end{proof}
We are now in the situation to prove \Cref{Kodaira_dim_1_Weierstrass} under \Cref{assumption}.
\begin{proof}[Proof of \Cref{Kodaira_dim_1_Weierstrass} under \Cref{assumption}]
    Suppose we are in the situation of \Cref{assumption}. By \Cref{key_proposition_kodaira_1_G-equivariant}, there is a finite group $G$ and a $G$-equivariant commutative diagram of compact connected K\"ahler manifolds
    \[\begin{tikzcd}
	   {X'} && {(C'\times E')^{\eta'}} \\
	   & {C'} && {,}
	   \arrow["{g'}", from=1-1, to=1-3]
	   \arrow["{f'}"', from=1-1, to=2-2]
	   \arrow["{\pi'}", from=1-3, to=2-2]
    \end{tikzcd}\]
    such that the following holds:
    \begin{enumerate}[label=(\roman*)]
        \item There is a finite \'etale cover $X'/G\rightarrow X$ and a finite cover $C'/G\rightarrow C$.
        \item $E'=\C/\Lambda'$ is an elliptic curve and $\eta'\in H^1_G(C',\OO_{C'}/\Lambda')$ is a cohomology class.
        \setcounter{enumi}{4}
        \item $G$ is either trivial or $C'/G\cong\mathbb{P}^1$.
        \item If $C\cong\mathbb{P}^1$ (for example if $G$ is non-trivial) then $\eta'=0$.
    \end{enumerate}
    Moreover, by \Cref{construction_X''} and \Cref{last_situation_we_consider} there is a finite \'etale cover $(C'\times E'')^{\eta''}\rightarrow (C\times E')^{\eta''}$ and a normal compact K\"ahler surface $S$ with elliptic fibration $s\colon S\rightarrow C'$ such that
    \begin{equation*}
        X''=X'\times_{(C'\times E')^{\eta'}}(C'\times E'')^{\eta''}
    \end{equation*}
    is $G$-equivariantly bimeromorphic to 
    \begin{equation*}
        S\times_{C'}(C'\times E'')^{\eta''}.
    \end{equation*}
    \setcounter{case}{0}
    \begin{case}
        Suppose that $G$ is trivial. Let $\mu\colon S_1\rightarrow S$ be a desingularization. Then $X''$ is bimeromorphic to
        \begin{equation*}
            S_1\times_{C'}(C'\times E'')^{\eta''}\cong (S_1\times E'')^{\mu^*s^*\eta''}.
        \end{equation*}
        We can thus apply \Cref{reduction_to_bimeromorphic_problem} to conclude that there is a smooth minimal K\"ahler surface $T$ such that $X''/G=X''$ is isomorphic to $(T\times E'')^{\eta'''}$ for some $\eta'''\in H^1(T,\OO_T/\Lambda'')$.
    \end{case}
    \begin{case}
        Suppose that $G$ is non-trivial. By \Cref{key_proposition_kodaira_1_G-equivariant} \cref{item5_key_proposition_kodaira_1_G-equivariant} and \cref{item6_key_proposition_kodaira_1_G-equivariant} and \Cref{construction_X''}, we can assume that $\eta''$ is trivial. Thus,
        \begin{equation*}
            S\times_{C'}(C'\times E'')^{\eta''}\cong S\times E''.
        \end{equation*}
        \setcounter{claim}{0}
        \begin{claim}\label{claim_diagonal_action}
            $G$ acts diagonally on $S\times E''$.
        \end{claim}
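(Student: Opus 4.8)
The plan is to deduce \Cref{claim_diagonal_action} from \Cref{last_situation_we_consider} by tracking the $G$-action through the fibre product $S\times_{C'}Y''$. Recall that \Cref{last_situation_we_consider} produces a normal compact K\"ahler surface $S$ with a biregular $G$-action --- constructed via a Stein factorization, hence compatible with the $G$-equivariant elliptic fibration $s\colon S\to C'$ --- together with a $G$-equivariant bimeromorphism over $Y''$ between $g''\colon X''\to Y''$ and the projection $S\times_{C'}Y''\to Y''$, where $S\times_{C'}Y''$ carries the diagonal $G$-action induced by the $G$-equivariant maps $s\colon S\to C'$ and $\pi''\colon Y''\to C'$. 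Since $G$ is non-trivial, \Cref{key_proposition_kodaira_1_G-equivariant} \cref{item5_key_proposition_kodaira_1_G-equivariant} and \cref{item6_key_proposition_kodaira_1_G-equivariant} together with \Cref{construction_X''} force $\eta'=0$ and $\eta''=0$, so $Y''=C'\times E''$ and $\pi''$ is the first projection. Hence $(x,(c,e))\mapsto(x,e)$ is an isomorphism $S\times_{C'}(C'\times E'')\stackrel{\sim}{\longrightarrow}S\times E''$ (with $c=s(x)$ forced), and composing it with the bimeromorphism of \Cref{last_situation_we_consider} realizes $X''$ as $G$-equivariantly bimeromorphic to $S\times E''$. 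It thus remains to show that under this isomorphism the diagonal action on the left corresponds to a product action on the right.

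The key step is to describe the $G$-action on $Y''=C'\times E''$, and for this I would trace back to \Cref{case2_proof_key_proposition_kodaira_1_G-equivariant} of the proof of \Cref{key_proposition_kodaira_1_G-equivariant}. There $(C'\times E')^{\eta'}=C'\times E'$ arises as the pullback of the elliptic fibration $X_2\to C_1\times E_1$ along the Galois cover $C_2\to C_1$ with group $G$, so $G$ acts on $C'\times E'$ by the given action on $C'=C_2$ and trivially on $E'=E_1$. By \Cref{construction_X''} the morphism $\psi\colon C'\times E''\to C'\times E'$ is the $G$-equivariant extension $\mathrm{id}_{C'}\times\varphi$ of the isogeny $\varphi\colon E''\to E'$; I would then argue that any automorphism of $C'\times E''$ lifting the (product) action on the target and commuting with $\mathrm{id}_{C'}\times\varphi$ must, on each fibre of the second projection, differ from the identity only by a deck transformation of $\varphi$ --- that is, by a translation in $\ker\varphi$ --- because $\varphi$ is a finite isogeny. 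In particular no mixing of the $C'$- and $E''$-directions and no non-trivial linear part on $E''$ can occur, so $G$ acts on $Y''$ by $g\cdot(c,e)=(g\cdot c,\rho(g)\cdot e)$ for a homomorphism $\rho\colon G\to E''$ (possibly trivial).

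Granting this, the conclusion is immediate: on $S\times_{C'}(C'\times E'')$ the diagonal action reads $g\cdot(x,(c,e))=(g\cdot x,(g\cdot c,\rho(g)\cdot e))$, and since $c=s(x)$ and $s$ is $G$-equivariant we have $g\cdot c=g\cdot s(x)=s(g\cdot x)$, so this transports under the isomorphism above to $g\cdot(x,e)=(g\cdot x,\rho(g)\cdot e)$ on $S\times E''$, which is the asserted diagonal action. The only step needing genuine care is the middle one --- checking that the chosen $G$-equivariant extension $\psi$ contributes no linear part and no base--fibre mixing on $E''$ --- and this is exactly where the explicit origin of $(C'\times E')^{\eta'}$ from the Galois base change in \Cref{case2_proof_key_proposition_kodaira_1_G-equivariant} is essential.
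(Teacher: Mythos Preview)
Your argument is correct and follows essentially the same route as the paper: both hinge on the fact that the $G$-action on $S\times_{C'}Y''$ is the diagonal one coming from the equivariant maps $s$ and $\pi''$, and that under the identification $S\times_{C'}(C'\times E'')\cong S\times E''$ this transports to a product action. The paper's justification is terser --- it simply observes that the action on each slice $S\times\{e\}$ is the Stein-factorization action on $S$ (hence independent of $e$) and asserts directly that $G$ acts \emph{trivially} on $E''$ --- whereas you more carefully deduce that the $E''$-component of the action is at worst a translation by an element of $\ker\varphi$. Both suffice for the claim as stated, but note that the paper's stronger assertion (triviality on $E''$, which is the choice made in \Cref{construction_X''} when one takes $\eta''=0$ with the obvious trivial $G$-structure) is what is actually invoked in the very next line to conclude $(S\times E'')/G\cong S/G\times E''$; your weaker conclusion would require the extra remark that one may choose $\rho=0$.
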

        Tho prove this claim, simply observe that the $G$ action on $S\times\{e\}\cong S$ is given by the one in \Cref{last_situation_we_consider}. In particular, it does not depend on $e\in E''$. As $G$ acts trivially on $E''$, the $G$-action on $S\times E''$ must be diagonal.\par
        Therefore, as $G$ acts trivially on $E''$,
        \begin{equation*}
            (S\times_{C'}(C'\times E'')^{\eta''})/G\cong (S\times E'')/G\cong S/G\times E''.
        \end{equation*}
        Let $\mu\colon S_1\rightarrow S/G$ be a desingularization. Then $X''/G$ is bimeromorphic to $S_1\times E''$. We can thus apply \Cref{reduction_to_bimeromorphic_problem} to conclude that there is a smooth minimal K\"ahler surface $T$ such that $X''/G$ is isomorphic to $T\times E''$.
    \end{case}
\end{proof}
\begin{remark}
    The authors from \cite{Hao_Schreieder_3-folds} informed me that their proof of step 3 in \cite[][Sec. 6.3]{Hao_Schreieder_3-folds} only covers a special case. They completed the proof in \cite[][Case 2]{Hao_Schreieder_erratum} by invoking Theorem 3.1 of \cite{Hao_Schreieder_BMY}. Note that the results in this paper yield an alternative fix of the aforementioned issue. Indeed, we arrive in a similar situation as in \cite[][Step 3, Sec. 6.3]{Hao_Schreieder_3-folds} in \cref{claim_diagonal_action} in the proof of \Cref{Kodaira_dim_1_Weierstrass} under \Cref{assumption}. The condition that $G$ acts diagonally follows from the fact that there is a prescribed action on both factors of the considered product.
\end{remark}
\subsection{The general fiber is contracted to a point via the Albanese morphism}
\begin{lemma}\label{Iitaka_fiber_contracted_cannot_occur}
    Let $X$ be a smooth minimal K\"ahler threefold of Kodaira dimension 1 and let $f\colon X\rightarrow C$ be its Iitaka fibration. Then the general fiber of $f$ is not contracted to a point by the Albanese morphism.
\end{lemma}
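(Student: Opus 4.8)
Suppose, for contradiction, that the Albanese morphism $a\colon X\to\mathrm{Alb}(X)$ contracts a general fibre $F$ of $f$ to a point. By the K\"ahler analogue of \Cref{smooth_fibers_2-tori_bielliptic} we may assume $F$ is a $2$-torus or a bielliptic surface. Since $f_*\OO_X=\OO_C$, the morphism $a$ is constant on every fibre of $f$ and therefore factors as $a=a'\circ f$ for some $a'\colon C\to\mathrm{Alb}(X)$. Using that $a^*\colon H^0(\mathrm{Alb}(X),\Omega^1)\to H^0(X,\Omega^1_X)$ is an isomorphism, that $a^*=f^*\circ(a')^*$, and that $f^*$ is injective, one obtains $q(X)=\dim\mathrm{Alb}(X)=g(C)$ and that $f^*\colon H^0(C,\Omega^1_C)\to H^0(X,\Omega^1_X)$ is an isomorphism; in particular $a'$ is, up to an isomorphism of tori, the Albanese map of $C$, so $f$ agrees with the Albanese morphism of $X$ and every holomorphic $1$-form on $X$ is pulled back from $C$.

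Next I would pin down $C$. Let $\omega\in H^0(X,\Omega^1_X)$ realise condition \ref{conditionC} and write $\omega=f^*\omega_C$. Because $f$ is smooth and projective over a dense open subset of $C$ and the Leray spectral sequence of $f$ degenerates with a splitting $Rf_*\C\cong\bigoplus_q R^qf_*\C[-q]$ over that locus, exactness of the complex $(H^\bullet(X,\C),\wedge\omega)$ forces exactness of its degree-zero Leray piece $(H^\bullet(C,\C),\wedge\omega_C)$. A direct computation on a curve shows this holds only if $g(C)\le 1$; and $g(C)=0$ is impossible, since then $\omega_C=0$ and $(H^\bullet(X,\C),0)$ is not exact. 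Hence $C$ is an elliptic curve, $\omega_C$ is nowhere vanishing, and $f\colon X\to C$ is, up to isomorphism, the Albanese morphism of $X$.

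I would then reduce to the case where $f$ is smooth over all of $C$, and conclude. For the reduction one removes multiple and other singular fibres by a finite \'etale cover of $X$ induced from a cover of $C$, keeping the base of genus at most $1$ (so that it stays elliptic, by the previous paragraph applied to the cover) and preserving minimality, the K\"ahler condition, $\kappa=1$, and the property that the Albanese contracts the general fibre; here one uses that the local monodromies of $f$ are finite — the K\"ahler analogues of the relevant statements in \cite{Hao_Schreieder_3-folds}, whose proofs do not use projectivity — the classification of good orbifold structures on curves (\Cref{classification_orbifolds_curves}), and quasi-smoothness of the fibration (when a non-vertical holomorphic $2$-form is available, \cite{Campana_Peternell_2-forms}, i.e.\ \Cref{smooth_fibers_after_cover}). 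Once $f\colon X\to C$ is smooth over the elliptic curve $C$, the period map of the weight-$1$ variation of Hodge structure attached to the fibrewise Albanese is a holomorphic map from $C$ to a quotient of a bounded symmetric domain; composing with the universal covering $\C\to C$ gives a bounded holomorphic map on $\C$, hence constant. Therefore this variation of Hodge structure is constant, the monodromy of $f$ is finite, and — after one further finite \'etale cover trivialising the monodromy (and, in the bielliptic case, the bielliptic quotient) — $f$ becomes an isotrivial torus fibre bundle with trivial monodromy; by \Cref{corollary_isotrivial_and_trivial_vhs} and \Cref{fundamental_properties_special_torus_bundles} its total space is then a locally trivial torus fibre bundle over an elliptic curve, so its canonical bundle is the pullback of the trivial bundle $K_C=\OO_C$ and hence $\kappa=0$. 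This contradicts $\kappa(X)=1$, since the constructed manifold is a finite \'etale cover of $X$.

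The main obstacle is the reduction to a smooth fibration over an elliptic curve: a naive base change killing multiple fibres raises the genus of $C$ and would destroy the Liouville-type rigidity in the last step, so one has to combine the finiteness of the local monodromies, minimality of $X$, and the constraint from condition \ref{conditionC} (which keeps every $1$-form — and hence the base — of genus $\le 1$) in order to choose the \'etale cover of $X$ so that its base stays elliptic.
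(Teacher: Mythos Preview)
Your argument has the right architecture (Albanese factors through $f$, hence $C$ is elliptic; reduce to a smooth fibration; derive $\kappa=0$), and the first two paragraphs agree with the paper. The genuine gap is in the reduction step: you invoke \Cref{smooth_fibers_after_cover} only ``when a non-vertical holomorphic $2$-form is available'', but you never establish that one is. Two things are missing. First, even when $h^{2,0}(X)\neq 0$, you must check that a nonzero $\xi\in H^{2,0}(X)$ is non-vertical. This follows from exactness together with the fact that every holomorphic $1$-form is pulled back from $C$: if $\xi\wedge\omega=0$ then $\xi=\alpha\wedge\omega$ for some $\alpha\in H^{1,0}(X)=f^*H^{1,0}(C)$, whence $\xi\in f^*H^{2,0}(C)=0$; so $\xi\wedge\omega=\xi\wedge f^*\tau\neq 0$, which is impossible for a vertical $\xi$. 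Second, and more seriously, you do not treat the case $h^{2,0}(X)=0$ at all. In that case there is no $2$-form whatsoever, so \Cref{smooth_fibers_after_cover} is unavailable and your ``finite local monodromies'' remark does not by itself yield quasi-smoothness. The paper closes this gap by producing a finite \'etale cover $X'\to X$ with $h^{2,0}(X')\neq 0$, using Koll\'ar's torsion-freeness for higher direct images (valid in the K\"ahler setting via Saito), and then reducing to the previous case; this step is essential and is absent from your proposal.

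A smaller caution about your endgame: the Liouville argument requires the period domain to be a bounded symmetric domain, hence a polarization on the weight-$1$ variation. In the non-projective K\"ahler setting the fibres are $2$-tori that need not carry a natural polarization, so this point also needs justification. The paper avoids this by arguing directly that, after the orbifold cover, $f'\colon X'\to C'$ is smooth with $2$-torus fibres over an elliptic curve and then concluding $\kappa(X')=0$; your period-map route would be a valid alternative once the polarization issue is handled. Finally, what you flag as ``the main obstacle'' (keeping the base elliptic under base change) is not actually an obstacle: since the cover of $X$ is \'etale, condition \ref{conditionC} passes to it, and the same exactness argument forces the new base to be elliptic automatically.
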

\begin{proof}
    We first follow \cite[][Sec. 6.4]{Hao_Schreieder_3-folds} and then show that this case cannot occur. As the general fiber of the Iitaka fibration $f\colon X\rightarrow C$ is contracted to a point via the Albanese morphism, the same must hold for every fiber. The Albanese morphism, therefore, factors through $f\colon X\rightarrow C\rightarrow Alb(C)$. The universal property of the Albanese hence implies $Alb(X)\cong Alb(C)$. In particular, $b_1(X)=b_1(C)$. Since $X$ admits the non-zero holomorphic 1-form $\omega$ that induces exactness on $H^\bullet(X,\C)$, $b_1(C)\neq 0$ and thus $C$ has positive genus. As $b_1(X)=b_1(C)$, $\omega$ must be a pullback of a holomorphic 1-form $\tau$ on $C$. The exactness of $\wedge\omega$ implies that also $\wedge\tau$ induces an exact complex $(H^\bullet(C,\C),\wedge\tau)$. Thus, $C$ is an elliptic curve.\par
    Suppose first $h^{2,0}(X)\neq 0$. Let $\xi\in H^0(X,\Omega^2_X)$ be a non-zero element. Exactness of $\wedge\omega$ implies $\xi\wedge\omega\neq 0$. As $\omega=f^*\tau$, this shows that $\xi$ is not vertical with respect to $f$. Therefore, by \Cref{smooth_fibers_after_cover}, $f$ is quasi-smooth with typical fiber $F$, which is an abelian surface. The multiplicities induce an orbifold structure on $C$, which is good since $C$ has positive genus. Let $C'\rightarrow C$ be a finite orbifold cover with trivial orbifold structure, see \Cref{classification_orbifolds_curves}. Denote by $X'$ the normalization of the fiber product $X\times_CC'$. Then $X'\rightarrow X$ is finite \'etale. In particular, the pullback $\omega'$ of $\omega$ to $X'$ still has the property that the complex $(H^\bullet(X',\C),\wedge\omega')$
    \begin{equation*}
        \cdots\stackrel{\wedge\omega'}{\longrightarrow} H^{i-1}(X',\C)\stackrel{\wedge\omega'}{\longrightarrow} H^i(X',\C)\stackrel{\wedge\omega'}{\longrightarrow} H^{i+1}(X',\C)\stackrel{\wedge\omega'}{\longrightarrow}\cdots
    \end{equation*}
    is exact. Let $\tau'$ be the pullback of $\tau$ to $C'$ and let $f'\colon X'\rightarrow C'$ be the induced fibration. Since $\omega'=(f')^*\tau'$, also $(H^\bullet(C',\C),\wedge\tau')$ is exact. Hence, $C'$ is elliptic, and thus $X'$ must be a 3-torus. In particular, the Kodaira dimension $\kappa(X')$ of $X'$ equals 0. Since $\kappa(X)=\kappa(X')$, we obtain a contradiction.\par
    In the other two cases, $h^{2,0}(X)=0$ and the general fiber of $f$ is a 2-torus, resp.~ bielliptic, the same argument as in \cite[][Sec. 6.4]{Hao_Schreieder_3-folds} shows that there is a finite \'etale cover $X'\rightarrow X$ with $h^{2,0}(X')\neq 0$. This is possible since Koll\'ar's torsion-free theorem \cite[][Thm. I.2.1]{Kollar_higher_direc_images} also holds in the K\"ahler case \cite[][Thm. 3.21, Rem. 2]{Saito_mixed_hodge_modules}. By the above argument, $X'$ cannot have Kodaira dimension 1, and thus these cases cannot occur either.
\end{proof}
We are now in the position the prove \Cref{Kodaira_dim_1_Weierstrass}.
\begin{proof}[Proof of \Cref{Kodaira_dim_1_Weierstrass}]
    Let $X$ be a smooth minimal K\"ahler threefold of Kodaira dimension 1 and let $f\colon X\rightarrow C$ be its Iitaka fibration. By \Cref{Iitaka_fiber_contracted_cannot_occur}, the general fiber of $f$ cannot be contracted to a point by the Albanese morphism. The general fiber of $f$ is thus either contracted to a curve or not contracted by the Albanese morphism.
    \setcounter{case}{0}
    \begin{case}\label{case_Iitaka_fiber_not_contracted}
        If there is a finite \'etale cover $X'\rightarrow X$ such that the general fiber of the Iitaka fibration $f'\colon X'\rightarrow C'$ is not contracted by the Albanese morphism, we replace $X$ by $X'$. In this case, \Cref{Kodaira_dim_1_Weierstrass} is proven in \Cref{Iitaka_fiber_not_contracted}.
    \end{case}
    \begin{case}
        Suppose that there is no finite \'etale cover $X'\rightarrow X$ such that the general fiber of the Iitaka fibration $f'\colon X'\rightarrow C'$ is not contracted by the Albanese morphism. Then, by \Cref{Iitaka_fiber_contracted_cannot_occur}, for all finite \'etale covers $X'\rightarrow X$, the general fiber of the Iitaka fibration $f'\colon X'\rightarrow C'$ is contracted to a curve by the Albanese morphism. Note that the Iitaka fibration $f'\colon X'\rightarrow C'$ is also given by the Stein factorization of $X'\rightarrow X\rightarrow C$:
        \[\begin{tikzcd}
	       {X'} & X & C \\
	       & {C'} && {.}
	       \arrow[from=1-1, to=1-2]
	       \arrow[from=1-1, to=2-2]
	       \arrow["f", from=1-2, to=1-3]
	       \arrow[from=2-2, to=1-3]
        \end{tikzcd}\]
        Thus, \Cref{assumption} holds true, and \Cref{Kodaira_dim_1_Weierstrass} is proven in \Cref{Iitaka_fiber_contracted_to_curve}.
    \end{case}
\end{proof}
\section{Main result}
\begin{theorem}\label{structure_theorem_Kaeler_3-folds}
    Let $X$ be a smooth compact K\"ahler threefold that satisfies condition \ref{conditionC} from \Cref{conjecture_Kotschick}. Then there is a finite \'etale cover $X'\rightarrow X$ and a sequence of blow-downs $X'=:X'_0\rightarrow\cdots\rightarrow X'_n$ along elliptic curves, a compact K\"ahler manifold $Y$, a positive-dimensional torus $A=\C^n/\Lambda$, a cohomology class $\eta\in H^1(Y,\OO_Y/\Lambda)$ such that $(Y\times A)^\eta$ is K\"ahler, and a smooth morphism
    \begin{equation*}
        X'_n\longrightarrow (Y\times A)^\eta.
    \end{equation*}
    The morphism $X'_n\longrightarrow (Y\times A)^\eta$ is either an isomorphism, or all fibers are isomorphic to $\mathbb{P}^1$, or to $\mathbb{P}^2$ or to a Hirzebruch surface. Moreover, if $\omega\in H^0(X,\Omega^1_X)$ is a holomorphic 1-form such that $(X,\omega)$ satisfies condition \ref{conditionC}, then the induced holomorphic 1-form $\omega_i$ on $X'_i$ restricts non-trivially on the center of the blow-up $X'_{i-1}\rightarrow X'_i$, and $\omega_n$ restricts non-trivially on the fibers of $(Y\times A)^\eta\rightarrow Y$.
\end{theorem}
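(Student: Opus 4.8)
The plan is to assemble the statement from the minimal model reduction of \Cref{reduction_minimal_model_Mori_fiber_space}, the surface case \Cref{structure_thm_surfaces}, the Kodaira dimension $1$ and $2$ results \Cref{Kodaira_dim_1_Weierstrass} and \Cref{Kodaira_dim_2_Weierstrass}, and the arguments of Hao--Schreieder \cite{Hao_Schreieder_3-folds} for Kodaira dimensions $3$, $0$ and $-\infty$, and then to lift the resulting finite \'etale cover back along the blow-down morphism. First I would fix a $1$-form $\omega\in H^0(X,\Omega^1_X)$ satisfying \ref{conditionC} and apply \Cref{reduction_minimal_model_Mori_fiber_space}, obtaining a sequence of blow-downs $X=X_0\to\cdots\to X_m$ along elliptic curves $E_i\subset X_i$ with $X_m$ either minimal or a Mori fibre space, such that each $(X_i,\omega_i)$ satisfies \ref{conditionC} and $\omega_i$ is non-trivial on $E_i$. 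By the K\"ahler abundance theorem, $X_m$ is minimal precisely when $\kappa(X)=\kappa(X_m)\ge 0$ and a Mori fibre space precisely when $\kappa(X)=-\infty$.

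Next I would run a case distinction on $\kappa(X)$ to produce a finite \'etale cover $\sigma\colon Z\to X_m$ together with a smooth morphism $Z\to(Y\times A)^\eta$ of the asserted form, under which the $1$-form induced by $\omega_m$ restricts non-trivially on the torus fibres of $(Y\times A)^\eta\to Y$. If $\kappa=3$ there is nothing to do: a minimal threefold of general type cannot satisfy \ref{conditionC}, since by \cite{Hao_Schreieder_3-folds} a finite \'etale cover would split off a positive-dimensional abelian variety. If $\kappa\in\{1,2\}$, then \Cref{Kodaira_dim_1_Weierstrass} resp.~\Cref{Kodaira_dim_2_Weierstrass} applied to $(X_m,\omega_m)$ already gives a finite \'etale cover $(S\times E)^\eta\to X_m$ (or $(C\times A)^\eta\to X_m$ with $A$ a $2$-torus in one subcase of $\kappa=1$), so one takes $Z=(Y\times A)^\eta$ and the morphism $Z\to(Y\times A)^\eta$ to be the identity; if $\kappa=0$, the Beauville--Bogomolov decomposition \cite{Beauville_decomposition} shows that after a finite \'etale cover $X_m$ is a product $Y\times A$ with $A$ a positive-dimensional torus carrying the pullback of $\omega_m$, so $Z=(Y\times A)^\eta$ with $\eta=0$. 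In each of these cases the non-triviality of the induced $1$-form on the torus fibres is exactly the ``moreover'' part of the cited result.

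The case $\kappa(X)=-\infty$, where $X_m$ is a Mori fibre space, is where the remaining work lies. Here I would follow Hao--Schreieder's analysis, which is available in the K\"ahler category thanks to the K\"ahler minimal model program and abundance: after a finite \'etale cover $X_m$ either admits a del Pezzo fibration over an elliptic curve, which a further finite \'etale cover turns into a smooth fibre bundle over an elliptic curve $E$ with fibres $\mathbb{P}^2$ or a Hirzebruch surface (so $Y$ is a point, $A=E$, $\eta=0$), or $X_m$ is a conic bundle over a smooth compact K\"ahler surface $S$ satisfying \ref{conditionC}. In the conic bundle case I would apply \Cref{structure_thm_surfaces} to get a finite \'etale cover $S'\to S$ and a smooth morphism $S'\to(Y\times A)^\eta$ that is an isomorphism or a $\mathbb{P}^1$-bundle, pull back the conic bundle to $S'$, and remove its degeneration locus by a further finite \'etale cover sweeping it out along the torus directions of $S'\to Y$, producing a smooth $\mathbb{P}^1$-bundle $Z\to S'$. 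Composing with $S'\to(Y\times A)^\eta$ then yields the required smooth morphism: a $\mathbb{P}^1$-bundle when $S'\cong(Y\times A)^\eta$, and a smooth morphism with Hirzebruch fibres when $S'\to(Y\times A)^\eta$ is itself a $\mathbb{P}^1$-bundle over an elliptic curve $(Y\times A)^\eta$ (recall $\mathbb{P}^1\times\mathbb{P}^1$ is the Hirzebruch surface $\mathbb{F}_0$). I expect this step to be the main obstacle: one has to verify that the discriminant of the conic bundle really is a union of torus fibres, so that it disappears after the \'etale cover, and this is the point where the K\"ahler argument must use \Cref{structure_thm_surfaces} in place of Hao--Schreieder's morphism to an abelian variety.

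Finally I would lift everything back to $X$ by forming $X':=X\times_{X_m}Z$. Since $\sigma$ is finite \'etale and $X\to X_m$ is bimeromorphic, $X'\to X$ is a connected finite \'etale cover, and by base change $X'\to Z$ is a sequence of blow-downs $X'=X'_0\to\cdots\to X'_n$ along elliptic curves, namely the connected \'etale covers of the centers $E_i$; taking $X'_n:=Z$ gives the morphism $X'_n\to(Y\times A)^\eta$. The $1$-form $\omega$ pulls back to $X'$ and its induced forms on the $X'_i$ are the pullbacks of the $\omega_i$; since a non-zero holomorphic $1$-form on an elliptic curve pulls back non-trivially to any connected \'etale cover, it follows that $\omega_i$ restricts non-trivially on the center of $X'_{i-1}\to X'_i$, and since the form on $X'_n$ is the pullback of $\omega_m$, its non-triviality on the torus fibres of $(Y\times A)^\eta\to Y$ follows from the case analysis above.
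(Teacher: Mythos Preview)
Your overall architecture matches the paper's proof: first run \Cref{reduction_minimal_model_Mori_fiber_space} to reach a minimal model or Mori fibre space $X_m$, treat each Kodaira dimension separately using \Cref{Kodaira_dim_2_Weierstrass}, \Cref{Kodaira_dim_1_Weierstrass}, Beauville--Bogomolov, and the Hao--Schreieder Mori fibre space analysis, and then pull everything back by fibre product. The cases $\kappa\ge 0$ are handled exactly as the paper does.

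The genuine gap is in your treatment of the $\kappa=-\infty$ Mori fibre space case, both for conic bundles and for del Pezzo fibrations. For a conic bundle $f\colon X_m\to S$, you propose to ``remove its degeneration locus by a further finite \'etale cover sweeping it out along the torus directions of $S'\to Y$''. This cannot work: a finite \'etale base change does not alter the fibres, so every degenerate conic (a pair of lines meeting in a point) remains degenerate after pullback. Moreover, the discriminant components need not be torus fibres of $S'\to Y$; in cases (a) and (b) of \Cref{structure_thm_surfaces} they are \'etale multisections of $S'\to E$ or arbitrary elliptic curves in a $2$-torus. What the paper actually does (following \cite[Thm.~8.3]{Hao_Schreieder_3-folds}) is different: over each component $D'\subset\Delta'$ the two lines are permuted by a finite monodromy, and one first passes to a finite \'etale cover $S''\to S'$ (constructed via \Cref{lemma_extension_of_etale_cover_of_elliptic_curves} in case (c)) which trivialises this monodromy. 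Once the two families of lines are globally distinguished, one performs a divisorial contraction of one family over each component. This produces \emph{additional} blow-downs $X''_m\to\cdots\to X''_n$ along elliptic curves, with $X''_n\to S''$ a genuine $\mathbb{P}^1$-bundle; in particular $n>m$ in general, which your fibre-product step does not account for.

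The del Pezzo case has the same flaw. An \'etale cover trivialising the monodromy of $R^2f_*\C$ does not by itself force the fibres to be $\mathbb{P}^2$ or Hirzebruch surfaces: the pulled-back fibration may have relative Picard rank $>1$. The paper applies \Cref{reduction_minimal_model_Mori_fiber_space} \emph{again} to this cover, obtaining further blow-downs along elliptic curves until one reaches a Mori fibre space of relative Picard rank $1$ with trivial monodromy, hence a $\mathbb{P}^2$-bundle. Finally, note that the Hirzebruch-surface fibres do not arise from the del Pezzo case at all; they come from the conic bundle case of type (a), where $X''_n\to S''\to E$ exhibits the fibres over $E$ as $\mathbb{P}^1$-bundles over $\mathbb{P}^1$.
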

For the proof, we need the following two statements.
\begin{proposition}\label{not_of_general_type}\cite[][Cor. 3.3]{Hao_Schreieder_3-folds}
    Let $X$ be a smooth projective threefold of general type. Then $X$ does not satisfy condition \ref{conditionC} from \Cref{conjecture_Kotschick}.
\end{proposition}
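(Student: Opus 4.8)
The plan is to argue by contradiction: assume $X$ is a smooth projective threefold of general type that satisfies condition \ref{conditionC} via a holomorphic $1$-form $\omega$. First I would reduce to a smooth minimal model. Since $X$ is projective it is K\"ahler, so \Cref{reduction_minimal_model_Mori_fiber_space} applies and yields a sequence of blow-downs $X=X_0\to\cdots\to X_n$ of smooth compact K\"ahler threefolds along elliptic curves, with each $(X_i,\omega_i)$ again satisfying condition \ref{conditionC} (so in particular $(H^\bullet(X_i,\C),\wedge\omega_i)$ is exact), such that $X_n$ is a minimal model or a Mori fiber space. Since $\kappa$ is a bimeromorphic invariant we have $\kappa(X_n)=\kappa(X)=3$, whereas a Mori fiber space has $\kappa=-\infty$; hence $X_n$ is a smooth minimal threefold of general type, and being bimeromorphic to the projective variety $X$ it is again projective (it is Moishezon and K\"ahler). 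In particular $K_{X_n}$ is nef and big, so $K_{X_n}^3>0$. It therefore suffices to derive a contradiction from the existence of a smooth projective minimal threefold $Z:=X_n$ of general type carrying a $1$-form $\nu:=\omega_n$ for which $(H^\bullet(Z,\C),\wedge\nu)$ is exact.

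Next I would extract from this exactness a constraint on $\chi(\OO_Z)$. Since $\nu\in H^0(Z,\Omega^1_Z)\subseteq H^{1,0}(Z)$, cup product with $\nu$ sends $H^{p,q}(Z)$ to $H^{p+1,q}(Z)$, so under the Hodge decomposition the complex $(H^\bullet(Z,\C),\wedge\nu)$ splits as the direct sum over $q$ of the complexes
\begin{equation*}
    H^q(Z,\OO_Z)\xrightarrow{\wedge\nu}H^q(Z,\Omega^1_Z)\xrightarrow{\wedge\nu}H^q(Z,\Omega^2_Z)\xrightarrow{\wedge\nu}H^q(Z,\Omega^3_Z).
\end{equation*}
Exactness of the total complex forces each summand to be exact, so its Euler characteristic vanishes: $\sum_p(-1)^p h^{p,q}(Z)=0$ for every $q$. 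Taking $q=0$ and using Hodge symmetry $h^{p,0}=h^{0,p}$ gives $\chi(Z,\OO_Z)=\sum_p(-1)^p h^{0,p}(Z)=0$. Then Riemann--Roch on the smooth projective threefold $Z$ gives $24\,\chi(Z,\OO_Z)=c_1(Z)\cdot c_2(Z)$, hence $c_1(Z)\cdot c_2(Z)=0$, i.e. $K_Z\cdot c_2(Z)=0$ because $c_1(Z)=-K_Z$.

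Finally I would invoke the three-dimensional Bogomolov--Miyaoka--Yau (Miyaoka) inequality for the minimal projective threefold $Z$ (cf.\ \cite{Hao_Schreieder_BMY}): $\bigl(3c_2(Z)-c_1(Z)^2\bigr)\cdot K_Z\ge 0$. Using $c_1(Z)^2=K_Z^2$ this reads $K_Z^3\le 3\,(c_2(Z)\cdot K_Z)=0$, contradicting $K_Z^3>0$. The one genuinely new ingredient is the Hodge-theoretic observation $\chi(\OO_Z)=0$; the point I expect to require the most care is the reduction itself, namely that the MMP run under condition \ref{conditionC} terminates at a \emph{smooth} minimal model, so that ordinary Riemann--Roch and Miyaoka's inequality apply verbatim — this is exactly what \Cref{reduction_minimal_model_Mori_fiber_space} provides, since under condition \ref{conditionC} every relevant extremal contraction is a blow-down of an elliptic curve. (One might be tempted to instead quote the theorem of Popa--Schnell \cite{Popa_Schnell_Kodaira_dimension_zeros_1-forms} that a variety of general type admits no zero-free holomorphic $1$-form, but deducing that $\nu$ is zero-free from condition \ref{conditionC} is precisely \Cref{conjecture_Kotisch_true_in_dim_3}, the statement being proven, so that route would be circular.)
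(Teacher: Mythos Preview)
The paper does not prove this proposition; it merely cites \cite[Cor.~3.3]{Hao_Schreieder_3-folds}. Your argument is correct and is precisely the standard one: reduce via \Cref{reduction_minimal_model_Mori_fiber_space} to a smooth minimal projective threefold $Z$ still satisfying exactness, observe that $\wedge\nu$ respects the Hodge decomposition so that the $q=0$ strand forces $\chi(\OO_Z)=0$, hence $K_Z\cdot c_2(Z)=0$ by Riemann--Roch, and then Miyaoka's inequality $(3c_2-c_1^2)\cdot K_Z\ge 0$ gives $K_Z^3\le 0$, contradicting bigness of $K_Z$. This is exactly the argument in the cited corollary of Hao--Schreieder (their Cor.~3.3 follows their Cor.~3.2 for this reason), so your reconstruction matches the intended proof; your remark about the Popa--Schnell route being circular in this context is also well taken. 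One cosmetic point: Miyaoka's pseudoeffectivity of $3c_2-c_1^2$ for minimal projective threefolds is Miyaoka's 1987 result and does not need \cite{Hao_Schreieder_BMY}, which treats a different situation.
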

\begin{proposition}\label{geometry_conic_bundle}\cite[][Thm. 8.3]{Hao_Schreieder_3-folds}
    Let $X$ be a K\"ahler threefold with holomorphic 1-form $\omega\in H^0(X,\Omega^1_X)$ that satisfies condition \ref{conditionC} from \Cref{conjecture_Kotschick}. Suppose that $X$ admits the structure of a Mori fiber space $f\colon X\rightarrow S$ over a surface $S$. Then the following holds true:
    \begin{enumerate}[label=(\roman*)]
        \item $S$ is a smooth compact K\"ahler surface and there is a holomorphic 1-form $\tilde{\omega}\in H^0(S,\Omega^1_S)$ such that $f^*\tilde{\omega}=\omega$. Moreover, $\tilde{\omega}$ satisfies condition \ref{conditionC}.
        \item $f$ is a conic bundle of relative Picard rank 1, and its discriminant divisor is a disjoint union of elliptic curves in $S$.
    \end{enumerate}
\end{proposition}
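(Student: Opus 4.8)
The plan is to read off the geometry of $f$ from the Kähler minimal model program, transfer condition \ref{conditionC} from $X$ to $S$ by a cohomological splitting adapted to the conic bundle, and then pin down the discriminant by feeding \ref{conditionC} into a one-dimensional computation.

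First I would fix the structure of $f$. Since $X$ is a smooth compact Kähler threefold and $f\colon X\to S$ is a fibre-type extremal contraction to a surface, the contraction is projective, so as in \Cref{blow-downs_to_elliptic_center} the morphism is described by \cite[][Thm. 3.3]{Mori_3-folds_not_nef} (available in the Kähler category by \cite{Hoering_Peternell_Kaehler-MMP}, \cite{Hoering_Peternell_Kaehler-MFS}); this shows that $S$ is smooth, that $f$ is a conic bundle of relative Picard rank $1$, and that the discriminant $\Delta\subset S$ is a reduced curve whose singular points are nodes, over which the fibre degenerates to a double line. Kählerness of the smooth surface $S$ follows from that of $X$ by Varouchas \cite{Varouchas_Kaehler_spaces_proper_open_morphisms}. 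Every fibre of $f$ is a tree of rational curves, hence is contracted by the Albanese map, and by rigidity the Albanese morphism factors as $X\stackrel{f}{\to}S\stackrel{\phi}{\to}Alb(X)$. As every holomorphic $1$-form on the compact Kähler manifold $X$ comes from $Alb(X)$, taking $\tilde\omega\in H^0(S,\Omega^1_S)$ to be the pullback along $\phi$ of the $1$-form on $Alb(X)$ corresponding to $\omega$ gives $f^*\tilde\omega=\omega$. This establishes the first assertions of (i) and of (ii).

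Next I would transfer condition \ref{conditionC}. Since $f$ is a conic bundle between smooth compact Kähler manifolds, all fibres are simply connected, so $R^1f_*\C=0$, and the decomposition theorem for the Kähler morphism $f$ (Saito \cite{Saito_mixed_hodge_modules}) degenerates the Leray spectral sequence and yields a $\wedge\omega$-compatible splitting
\[
H^k(X,\C)\;\cong\;H^k(S,\C)\,\oplus\,H^{k-2}(S,\C)\,\oplus\,H^{k-2}(\Delta,L),
\]
where $L$ is the rank-$1$ local system on the smooth locus of $\Delta$ determined by the two components of the singular fibres (the sign local system of the double cover $\tilde\Delta\to\Delta$). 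Cup product with $\omega=f^*\tilde\omega$ is $f^*$-linear, hence preserves the decomposition and acts as $\wedge\tilde\omega$ on the first two summands and as $\wedge(\tilde\omega|_\Delta)$ on the third. The same splitting holds after any finite étale base change $S'\to S$, since $X'=X\times_SS'\to S'$ is again a conic bundle and $X'\to X$ is finite étale. Applying \ref{conditionC} for $(X,\omega)$ to $X'$ and restricting the resulting exactness to the summand $f'^*H^\bullet(S',\C)$ shows that $(H^\bullet(S',\C),\wedge\tilde\omega')$ is exact; this is \ref{conditionC} for $(S,\tilde\omega)$, completing (i).

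Finally I would analyse $\Delta$, which is where the real work lies. Restricting the exactness from \ref{conditionC} to the third summand forces $(H^\bullet(\Delta,L),\wedge(\tilde\omega|_\Delta))$, together with all of its finite-étale-cover analogues, to be exact. If $\Delta$ had a node, the decomposition theorem would contribute a skyscraper summand supported at the corresponding point of $S$; on such a class $\wedge\omega$ acts by zero while the class lies outside the image of $\wedge\omega$, contradicting exactness. Hence $\Delta$ is smooth, i.e.\ a disjoint union of smooth curves $\Delta_i$. On each $\Delta_i$ the twisted exactness after all finite étale covers is the one-dimensional, $L$-twisted instance of condition \ref{conditionC}: if $\tilde\omega|_{\Delta_i}=0$ then exactness forces $\chi(\Delta_i,L_i)=0$ with $L_i$ nontrivial, while if $\tilde\omega|_{\Delta_i}\neq0$ one argues as in the surface case \Cref{structure_thm_surfaces}; either way $\Delta_i$ is elliptic. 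Thus $\Delta$ is a disjoint union of elliptic curves, proving (ii). The main obstacle is precisely this last step: making the decomposition-theorem bookkeeping for the Kähler conic bundle sharp enough to (a) produce the skyscraper summand at a hypothetical node and (b) identify the discriminant contribution with $H^\bullet(\Delta,L)$, so that \ref{conditionC} reduces to a clean computation on a curve.
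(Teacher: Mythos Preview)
The paper does not give an independent argument here: its entire proof is the observation that a Mori fibre space is a projective morphism, so Mori's classification \cite[][Thm.~3.5]{Mori_3-folds_not_nef} applies and the proof of \cite[][Thm.~8.3]{Hao_Schreieder_3-folds} in the projective case goes through verbatim. What you have written is therefore not a variant of the paper's proof but a reconstruction of the cited Hao--Schreieder argument, and your outline is indeed close to how that argument runs (conic-bundle cohomology splitting, transfer of condition~\ref{conditionC} to $S$, and a separate analysis of the discriminant summand).

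Two points deserve tightening. First, for the transfer of \ref{conditionC} you only test covers of the form $X\times_S S'$ with $S'\to S$ finite \'etale, whereas \ref{conditionC} demands exactness on \emph{all} finite \'etale covers of $X$; you should say explicitly that these coincide because every fibre of $f$ is a tree of rational curves, hence simply connected, so $\pi_1(X)\to\pi_1(S)$ is an isomorphism. Second, your endgame on $\Delta$ is still a sketch: the claim that a node of $\Delta$ produces a skyscraper summand on which $\wedge\omega$ visibly fails exactness, and the dichotomy ``$\tilde\omega|_{\Delta_i}=0$ forces $\chi(\Delta_i,L_i)=0$ with $L_i$ nontrivial / $\tilde\omega|_{\Delta_i}\neq 0$ forces genus $1$'', are the substantive computations, and you flag them yourself as the main obstacle. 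In the nontriviality of $L_i$ you are implicitly using relative Picard rank $1$ (otherwise one family of lines over $\Delta_i$ could be contracted), and in the $\tilde\omega|_{\Delta_i}\neq 0$ case you still need to run the twisted one-variable exactness argument rather than invoke \Cref{structure_thm_surfaces}, which is about surfaces. None of this is wrong, but as written it is a plan rather than a proof; the paper sidesteps all of it by citing the source.
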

\begin{proof}
    Note that the argument in \cite[][Thm. 8.3]{Hao_Schreieder_3-folds} still holds true since Mori fiber spaces are projective morphisms and hence described by \cite[][Thm. 3.5]{Mori_3-folds_not_nef}.
\end{proof}
\begin{proof}[Proof of \Cref{structure_theorem_Kaeler_3-folds}]
    First, we apply \Cref{reduction_minimal_model_Mori_fiber_space} to obtain a sequence of blow-downs
    \begin{equation*}
        X=:X_0\longrightarrow\cdots\longrightarrow X_m
    \end{equation*}
    along elliptic curves $E_i\subset X_i$ such that $X_m$ is a smooth minimal model, or a smooth Mori fiber space. Moreover, if $\omega_i\in H^0(X_i,\Omega^1_{X_i})$ denotes the holomorphic 1-form induced by $\omega$, then $\omega_i\vert_{E_i}\neq 0$ and each $(X_i,\omega_i)$ satisfies condition \ref{conditionC} as well.\par
    Note that by Moishezon's theorem, any smooth compact K\"ahler threefold of Kodaira dimension 3 is projective. By \Cref{not_of_general_type} we can thus assume $\kappa(X_m)\leq 2$.\par
    If $\kappa(X_m)=2$, \Cref{Kodaira_dim_2_Weierstrass} implies that there is a smooth compact K\"ahler surface $S$ of general type, an elliptic curve $E=\C/\Lambda$, a cohomology class $\eta\in H^1(S,\OO_S/\Lambda)$ and a finite \'etale cover
    \begin{equation*}
        (S\times E)^\eta\longrightarrow X_m.
    \end{equation*}
    In particular, $(S\times E)^\eta$ is K\"ahler. Define $X'_i:=X_i\times_{X_m}(S\times E)^\eta$. Then the natural morphism $X':=X'_0\rightarrow X$ is finite \'etale and the natural morphism $X'_{i-1}\rightarrow X'_{i}$ is a blow-up along the pullback $E'_{i}\subset X'_{i}$ of $E_{i}\subset X_{i}$. As $X'_{i}\rightarrow X_{i}$ is finite \'etale and finite \'etale covers of elliptic curves are disjoint unions of elliptic curves, we can factor $X'_{i-1}\rightarrow X'_{i}$ as a sequence of blow-ups along the components of $E'_{i}$. Thus, $X'\rightarrow (S\times E)^\eta$ satisfies \Cref{structure_theorem_Kaeler_3-folds}.\par
    If $\kappa(X_m)=1$, \Cref{Kodaira_dim_1_Weierstrass} implies that one of the following cases holds true:
    \setcounter{case}{0}
    \begin{case}
        There is either a smooth curve $C$ of genus at least 2, a 2-torus $A\cong\C^2/\Lambda$, a cohomology class $\eta\in H^1(C,\OO_C^{\oplus 2})$, and a finite \'etale cover
        \begin{equation*}
            (C\times A)^\eta\rightarrow X_m.
        \end{equation*}
        In this case, we define $X'_i:=X_i\times_{X_m}(C\times A)^\eta$. Then the natural morphism $X':=X'_0\rightarrow X$ is finite \'etale and the natural morphism $X'_{i-1}\rightarrow X'_{i}$ is a blow-up along the pullback $E'_{i}\subset X'_{i}$ of $E_{i}\subset X_{i}$. As $X'_{i}\rightarrow X_{i}$ is finite \'etale and finite \'etale covers of elliptic curves are disjoint unions of elliptic curves, we can factor $X'_{i-1}\rightarrow X'_{i}$ as a sequence of blow-ups along the components of $E'_{i}$. Thus, $X'\rightarrow (C\times A)^\eta$ satisfies \Cref{structure_theorem_Kaeler_3-folds}.
    \end{case}
    \begin{case}
        There is a smooth K\"ahler surface $S$ of Kodaira dimension 1 that does not satisfy condition \ref{conditionC}, an elliptic curve $E=\C/\Lambda$, a cohomology class $\eta\in H^1(S,\OO_S/\Lambda)$, and a finite \'etale cover
        \begin{equation*}
            (S\times E)^\eta\rightarrow X_m.
        \end{equation*}
        In this case, we define $X'_i:=X_i\times_{X_m}(S\times E)^\eta$. Then the natural morphism $X':=X'_0\rightarrow X$ is finite \'etale and the natural morphism $X'_{i-1}\rightarrow X'_{i}$ is a blow-up along the pullback $E'_{i}\subset X'_{i}$ of $E_{i}\subset X_{i}$. As $X'_{i}\rightarrow X_{i}$ is finite \'etale and finite \'etale covers of elliptic curves are disjoint unions of elliptic curves, we can factor $X'_{i-1}\rightarrow X'_{i}$ as a sequence of blow-ups along the components of $E'_{i}$. Thus, $X'\rightarrow (S\times E)^\eta$ satisfies \Cref{structure_theorem_Kaeler_3-folds}.
    \end{case}
    If $\kappa(X_m)=0$, there is a positive integer $d$ such that $dK_{X_m}$ is globally generated by the abundance theorem \cite[][Thm. 1.1]{Campana_Hoering_Peternell_Kaehler-abundance}, \cite{Erratum_Addendum_Kaehler-abundance}. Thus, $dK_{X_m}$ is trivial. In particular, $c_1(K_{X_m})=0$ in $H^2(X_m,\mathbb{R})$. The Beauville--Bogomolov decomposition \cite[][Thm. 2]{Beauville_decomposition} together with the fact $b_1(X_m)>0$ shows that there is a finite \'etale cover $Y\times A\rightarrow X_m$, where $A$ is a positive-dimensional torus and $Y$ is a compact connected K\"ahler manifold. Define $X':=X\times_{X_m}(Y\times A)$. Then we argue as before that $X'\rightarrow X$ is finite \'etale and that $X'\rightarrow Y\times A$ satisfies the condition in \Cref{structure_theorem_Kaeler_3-folds}.\par
    Suppose now $\kappa(X_m)=-\infty$. We adapt the arguments from \cite[][Thm. 8.3, Thm. 8.4]{Hao_Schreieder_3-folds}. Suppose first that $X_m$ admits the structure of a Mori fiber space $f\colon X_m\rightarrow S$ over a surface $S$. By \Cref{geometry_conic_bundle} and \Cref{structure_thm_surfaces} we can find a finite \'etale cover $S'\rightarrow S$ such that $S'$ is one of the following
    \begin{enumerate}[label=(\alph*)]
        \item $S'$ is a ruled surface $S'\rightarrow E$ over an elliptic curve $E$,
        \item $S'$ is a 2-torus, or
        \item $S'\cong (C\times E)^\eta\rightarrow C$ for a smooth compact curve $C$ of genus at least 2, an elliptic curve $E\cong\C/\Lambda$, and a cohomology class $\eta\in H^1(C,\OO_C/\Lambda)$ such that $(C\times E)^\eta$ is K\"ahler.
    \end{enumerate}
    Define $X':=X\times_SS'$. If the conic bundle $f'\colon X'\rightarrow S'$ is smooth, there is nothing to show. Suppose that $f'$ is not smooth. By \Cref{geometry_conic_bundle}, the discriminant locus $\Delta'\subset S'$ of $f'$ is a disjoint union of elliptic curves. Let $D'\subset\Delta'$ be a connected component. By \cite[][Prop. 7.1.8 (i)]{Parsin_Shafarevic_AG_V}, the fiber of $f'$ over a point in $D'$ consists of two distinct lines that meet in a point. The monodromy of the components is finite over $D'$. Thus, there is a finite \'etale cover $D''\rightarrow D'$ that trivializes the monodromy. Note that we can extend this to get a finite \'etale cover $S''\rightarrow S'$ that restricts over $D'$ to $D''\rightarrow D'$. Indeed, if $S'$ is of type (a) or (b), this is clear. If $S'$ is of type (c), this follows from \Cref{lemma_extension_of_etale_cover_of_elliptic_curves}. Indeed, as $C$ has genus at least 2, the map $D'\rightarrow C$ is constant and thus $D'$ is an elliptic fiber of $S'\cong (C\times E)^\eta\rightarrow C$. In any case, $S''$ is again of the same type as $S'$. Repeating this process, we may assume that the monodromy over each component of $\Delta'$ is trivial. Denote the resulting conic bundle by $f''\colon X''_m\rightarrow S''$ and denote its discriminant divisor by $\Delta''\subset S''$. Note that we have a divisorial contraction for each component of $\Delta''\subset S''$ that contracts exactly one of the two families of lines over the respective component. Define $X''_i:=X_i\times_{X_m}X''_m$. We hence obtain a sequence of contractions
    \begin{equation*}
        X''=:X''_0\longrightarrow\cdots\longrightarrow X''_m\longrightarrow\cdots\longrightarrow X''_n,
    \end{equation*}
    where $X''_n\rightarrow S''$ is a smooth conic bundle. The contractions $X''_{i-1}\rightarrow X''_{i}$ are blow-ups along the respective component of $\Delta''$ for $m+1\leq i\leq n$. Thus, $X''\rightarrow X$ is finite \'etale and $X''\rightarrow X''_n$ satisfies the condition in \Cref{structure_theorem_Kaeler_3-folds}. Depending on the type of $S''$, we obtain one of the following:
    \begin{enumerate}[label=(\alph*)]
        \item a smooth morphism $X''_n\rightarrow S''\rightarrow E'$ to the elliptic curve $E'$ such that the fibers are ruled surfaces over $\mathbb{P}^1$, i.e. Hirzebruch surfaces,
        \item a smooth morphism $X''_n\rightarrow S''$ to a 2-torus with fibers isomorphic to $\mathbb{P}^1$, or
        \item a smooth morphism $X''_n\rightarrow (C\times E')^{\eta'}$ with fibers isomorphic to $\mathbb{P}^1$, where $E'\rightarrow E$ is a finite \'etale cover and $(C\times E')^{\eta'}\rightarrow (C\times E)^{\eta}$ is an extension of $E'\rightarrow E$ due to \Cref{lemma_extension_of_etale_cover_of_elliptic_curves}.
    \end{enumerate}
    Suppose now that $X_m$ admits the structure of a Mori fiber space $f\colon X_m\rightarrow C$ over a smooth compact curve $C$. By \cite[][Sec. 1]{Hoering_Peternell_Kaehler-MFS}, $X_m$ must be projective. We can thus apply \cite[][Thm. 8.4]{Hao_Schreieder_3-folds} to conclude that the fibration $f\colon X_m\rightarrow C$ is a smooth del Pezzo fibration over an elliptic curve and that the local system $R^2f_*\C$ has finite monodromy. We can hence find a finite \'etale cover $E\rightarrow C$ that trivializes the monodromy. Denote by $X'_m\rightarrow X_m$ the induced finite \'etale cover and define $X'_i:=X_i\times_{X_m}X'_m$. The induced smooth del Pezzo fibration $X'_m\rightarrow E$ may not be of relative Picard rank 1. However, applying \Cref{reduction_minimal_model_Mori_fiber_space} yields a sequence of contractions
    \begin{equation*}
        X'=:X'_0\longrightarrow\cdots\longrightarrow X'_m\longrightarrow\cdots\longrightarrow X'_n,
    \end{equation*}
    that satisfies the condition in \Cref{structure_theorem_Kaeler_3-folds} such that the induced fibration $f'\colon X'_n\rightarrow E$ is a Mori fiber space. As $R^2f'_*\C$ has trivial monodromy and $f'$ has relative Picard rank 1, it follows that $f'\colon X'_n\rightarrow E$ is smooth with fibers isomorphic to $\mathbb{P}^2$. The case of a Mori fiber space over a point cannot occur since, in this case, $X$ would have to be Fano, and hence, it does not satisfy condition \ref{conditionC}.
\end{proof}
\printbibliography
\end{document}